\documentclass[11pt, a4paper]{article}
\usepackage[english]{babel}
\usepackage{amsmath,amssymb,graphicx,latexsym,amsthm,mathrsfs,stmaryrd}
\usepackage[margin=2.2cm]{geometry}
\usepackage[hidelinks]{hyperref}

\newcommand{\Longdownarrow}{{\mbox{\rotatebox[origin=c]{-90}{$\Longrightarrow$}}}}
\newcommand{\Longuparrow}{{\mbox{\rotatebox[origin=c]{90}{$\Longrightarrow$}}}}
\newcommand{\assign}{:=}
\newcommand{\cdummy}{\cdot}
\newcommand{\colons}{\,:\,}
\newcommand{\infixand}{\text{ and }}
\newcommand{\nin}{\not\in}
\newcommand{\tmmathbf}[1]{\ensuremath{\boldsymbol{#1}}}
\newcommand{\tmop}[1]{\ensuremath{\operatorname{#1}}}
\newcommand{\tmstrong}[1]{\textbf{#1}}
\newcommand{\tmtextbf}[1]{\text{{\bfseries{#1}}}}
\newcommand{\tmtextit}[1]{\text{{\itshape{#1}}}}

\DeclareMathOperator{\supp}{supp}

\newtheorem{theorem}{Theorem}[section]
\newtheorem{lemma}[theorem]{Lemma}
\newtheorem{proposition}[theorem]{Proposition}
\newtheorem{corollary}[theorem]{Corollary}
\newtheorem{definition}[theorem]{Definition}
\newtheorem{remark}[theorem]{Remark}

\newcommand{\R}{\ensuremath{\mathbb{R}}}
\newcommand{\AssumptionA}{\hyperref[assumption:A]{Assumption A}}
\newcommand{\AssumptionAMixed}{\hyperref[assumption:A-mixed]{Assumption A-mixed}}
\newcommand{\AssumptionB}{\hyperref[assumption:B]{Assumption B}}
\newcommand{\AssumptionC}{\hyperref[assumption:C]{Assumption C}}
\newcommand{\AssumptionP}{\hyperref[assumption:P]{Assumption P}}
\newcommand{\AssumptionPContinuous}{\hyperref[assumption:P-continuous]{Assumption P-continuous}}
\newcommand{\AssumptionBLip}{\hyperref[assumption:B-Lip]{Assumption B-Lip}}
\newcommand{\assA}{\hyperref[assumption:A]{A}}
\newcommand{\assAMixed}{\hyperref[assumption:A-mixed]{A-mixed}}
\newcommand{\assB}{\hyperref[assumption:B]{B}}
\newcommand{\assC}{\hyperref[assumption:C]{C}}
\newcommand{\assP}{\hyperref[assumption:P]{P}}
\newcommand{\assPContinuous}{\hyperref[assumption:P-continuous]{P-continuous}}
\newcommand{\assBLip}{\hyperref[assumption:B-Lip]{B-Lip}}
\newcommand{\para}{\varolessthan}
\newcommand{\arap}{\varogreaterthan}

\begin{document}

\title{Viscosity Solutions for Singular HJB Equations: \\BSDE Representations and Stochastic Control}

\author{%
Dirk Becherer%
\thanks{Institut f\"ur Mathematik,
Humboldt-Universit\"at zu Berlin, Berlin, Germany.
Email:
\href{mailto:becherer@hu-berlin.de}
{\nolinkurl{becherer@hu-berlin.de}}.},
Nicolas Perkowski%
\thanks{Institut f\"ur Mathematik,
Freie Universit\"at Berlin, Berlin, Germany;
and Max Planck Institute for Mathematics in the Sciences,
Leipzig, Germany.
Email:
\href{mailto:perkowski@math.fu-berlin.de}
{\nolinkurl{perkowski@math.fu-berlin.de}}.},
Yuchen Sun%
\thanks{Humboldt-Universit\"at zu Berlin
and Technische Universit\"at Berlin,
Berlin, Germany.
Email:
\href{mailto:suyuchen@hu-berlin.de}
{\nolinkurl{suyuchen@hu-berlin.de}}.}
\text{ }and Carlos Villanueva Mariz%
\thanks{Institut f\"ur Mathematik,
Freie Universit\"at Berlin, Berlin, Germany.
Email:
\href{mailto:cv7031fu@fu-berlin.de}
{\nolinkurl{cv7031fu@fu-berlin.de}}.}
}

\maketitle

\begin{abstract}

	We introduce a notion of viscosity solution for Hamilton--Jacobi--Bellman
(HJB) equations with distributional drift, based on paracontrolled test
functions and related through a Zvonkin transformation to classical viscosity
theory. The equations considered are of the form
\[
\left(\partial_t+\frac12\Delta+b\cdot\nabla\right)h(t,x)
=-H(t,x,h(t,x),\nabla h(t,x)),
\]
where $b$ is singular in the sense of \cite{paradistrib} and has regularity
$\mathcal C^{-\alpha}$ for $\alpha\in(1/2,2/3)$. Using
doubling-of-variables arguments, we derive a priori gradient estimates that
also cover Hamiltonians with slightly superquadratic growth in $\nabla h$.
We also obtain probabilistic representations through singular control
problems for convex $H$ and weak singular forward--backward SDEs for possibly
nonconvex Hamiltonians with at most quadratic growth.
\end{abstract}

\begin{quotation}
	\noindent\textbf{Keywords:} Singular SPDEs, paracontrolled calculus, viscosity solutions, stochastic control, BSDEs
	
	\noindent\textbf{MSC 2020:} 35D40, 49L25, 60L40, 60H30.
\end{quotation}

\section{Introduction}
In this paper, we study Hamilton-Jacobi-Bellman equations of the form
\begin{equation}
	\label{eq:singular-HJB} \left( \partial_t + \frac{1}{2} \Delta + b \nabla
	\right) h (t, x) = -H (t, x, h (t, x), \nabla h (t, x)), \qquad h
	(T, \cdot) = \bar{h} (\cdummy),
\end{equation}
where \(b\in C([0,T];\mathcal{C}^{-\alpha}(\mathbb{R}^d;\mathbb{R}^d))\) and \(\alpha\in(1/2,2/3)\). For this distributional drift, Schauder estimates suggest that \(h\in\mathcal{C}^{(2-\alpha)-}\), so \(\nabla h\in\mathcal{C}^{(1-\alpha)-}\) and even at the linear level the product \(b\cdot\nabla h\) is singular. In order to deal with the local solution theory, one can use regularity structures \cite{Hairer2014}, paracontrolled distributions \cite{paradistrib} or the flow approach \cite{Duch2025}. Important examples arise both from singular SPDEs and from diffusions in rough environments. In particular, when \(H\equiv \frac{|\nabla h|^2}{2}\) and \(d=1\), this equation is closely related to the KPZ equation on the torus, see \cite{hairer_solving_2013} and \cite{KPZreloaded}.

A global solution theory for singular HJB equations via paracontrolled calculus was established in \cite{zhang_singular_2022}. Therein, \(L^p\) PDE theory and Zvonkin's transform are used to give a priori bounds for \eqref{eq:singular-HJB}. \(H\) is allowed to be subquadratic, or quadratic if \(d=1\), and Cole-Hopf's transform is (mostly) avoided. Additionally, solutions in weighted spaces are considered, a difficulty which we will not treat.

{
Our approach relies on extending the notion of viscosity solution to the
singular equation \eqref{eq:singular-HJB}. The classical deterministic theory
was developed in \cite{CrandallLions1983,CIL1992}. Viscosity solution concepts
for stochastic partial differential equations were subsequently introduced in
\cite{lions_fully_1998,lions_nonsmooth_1998} and further developed in
\cite{buckdahn_ma_2001a,buckdahn_ma_2001b}. Rough-path formulations followed in
\cite{caruana_rough_2011,gubinelli_tindel_torrecilla_2014}, and were recently
applied to pathwise relaxed optimal control in
\cite{chakraborty_pathwise_2024}. In these stochastic and rough settings, the
additional driving signal has low temporal regularity and formally differentiating
it produces a distribution in time, so the solution need not possess a
classical time derivative. In our setting, the source of the singularity is
instead spatial, since \(b\) is distribution-valued in space
and \(b\cdot\nabla h\) is not classically defined. More precisely, for an
ordinary smooth test function \(\varphi\), the term \(b\cdot\nabla\varphi\) is
only a distribution and therefore has no pointwise value, so the classical viscosity inequality cannot be formulated directly.

Following the ideas of
\cite{gubinelli_tindel_torrecilla_2014,chakraborty_pathwise_2024}, we
incorporate the singular part of the equation directly into the test functions, which will be
paracontrolled solutions of
\[
\left(\partial_t+\frac{1}{2}\Delta+b\cdot\nabla\right)\varphi=g,
\]
for some continuous \(g\). Although still singular, this linear equation is well understood and its solutions have regularity \(\mathcal{C}^{(2-\alpha)-}\); see
\cite{cannizzaro_multidimensional_2018}. Consequently, if \(h-\varphi\) has a
local extremum at \((t_0,x_0)\), the viscosity inequality can be formulated
entirely in terms of the pointwise quantities \(g(t_0,x_0)\),
\(\varphi(t_0,x_0)\), and \(\nabla\varphi(t_0,x_0)\). Our definition reduces to the
classical one when \(b\) is sufficiently regular, and it is also consistent with the usual notion of
paracontrolled solution, in the sense that every paracontrolled solution will be a
viscosity solution.

A recurring technique in stochastic and rough viscosity theories is the use of flow transformations which remove the irregular temporal term
from the equation (\cite{buckdahn_ma_2001a}, \cite{gubinelli_tindel_torrecilla_2014}). In a similar spirit, we will use Zvonkin's transformation
\cite{Zvonkin1974} to remove the spatially singular
term \(b\cdot\nabla h\). We prove that there is an appropriate time-dependent
change of variables \(\Psi\) such that, if \(h\) is a viscosity solution of
\eqref{eq:singular-HJB}, then
\(h(\cdot,\Psi(\cdot,\cdot))\) is a standard viscosity solution of a uniformly
elliptic PDE with Hölder continuous coefficients. Zvonkin's transformation was
already used in the paracontrolled analysis of \cite{zhang_singular_2022}, but our framework makes the tools of classical viscosity theory
available for the singular equation.

Following ideas from \cite{PorrettaPriola2013} and \cite{LeyNguyen2017}, we apply a doubling of variables argument to the transformed
equation and obtain a priori Lipschitz estimates.
In the convex setting, these estimates also apply to Hamiltonians whose growth
in the gradient variable is slightly above quadratic. Undoing the transformation
yields Lipschitz estimates for \eqref{eq:singular-HJB}, which in turn can be used to establish global well-posedness of paracontrolled solutions.

	Another aspect of the paper concerns stochastic representations of
	\eqref{eq:singular-HJB}. When the Hamiltonian is convex, one can express the
	paracontrolled solution as the value function of a singular stochastic control
	problem. This was already done for KPZ in \cite{KPZreloaded} in order to obtain
	an \(L^\infty\) maximum principle which, combined with the Cole--Hopf
	transform, yielded global well-posedness of the equation. It was also employed
	in \cite{perkowski_coming_2025} to study ``coming up from \(-\infty\)''
	properties (of KPZ). In the present work, the control formulation: provides a
	priori \(L^\infty\) bounds, which are later used in the Lipschitz estimates;
	shows that paracontrolled (in our setting, ``classical'') solutions are
	viscosity solutions; and yields existence and uniqueness of viscosity solutions for terminal data of
	low regularity, leveraging stability under uniform limits.

    The control formulation provides a natural framework for stochastic control in singular random environments. For instance, distributional drifts arise in one-dimensional diffusions in random potentials, such as Brox diffusion, and in tracer dynamics driven by stationary Burgers fields related to KPZ. In such examples, \eqref{eq:singular-HJB} describes the quenched value function of a controlled diffusion in a fixed rough environment. This opens the possibility of studying questions of stochastic homogenization, effective Hamiltonians and quenched large deviations for diffusions with distributional drift.

	Without convexity, we replace the variational representation by a singular
	BSDE representation. This leads to analogous viscosity and stability results
	for Hamiltonians that may depend on the solution variable and have at most
	quadratic growth in the gradient. Since the weak formulation needed for this
	representation is less standard, we discuss it in more detail next, before
	summarising our main results.
}

{For regular coefficients, the stochastic representation of semilinear
parabolic equations by Markovian backward stochastic differential equations is
known as the nonlinear Feynman--Kac formula and goes back to
{\cite{pardoux_peng_1992}}. We establish an analogue of this correspondence
for the singular HJB equation \eqref{eq:singular-HJB}. The associated
forward--backward system is formally given by
\begin{align}
	dX_t^{s,x}
	&=b(t,X_t^{s,x})\,dt+dW_t,
	\qquad X_s^{s,x}=x,
	\label{eq:singular-SDE}\\
	-dY_t^{s,x}
	&=H(t,X_t^{s,x},Y_t^{s,x},Z_t^{s,x})\,dt
	-Z_t^{s,x}\,dW_t,
	\qquad Y_T^{s,x}=\bar h(X_T^{s,x}).
	\label{eq:quadratic-BSDE}
\end{align}
Because $b$ is distribution-valued, the forward equation cannot be read in the
classical It\^o sense; its rigorous interpretation will be part of the weak
formulation described below. We show that a paracontrolled solution of the
singular HJB equation, evaluated along this singular diffusion, yields a
solution of \eqref{eq:singular-SDE}--\eqref{eq:quadratic-BSDE}. In the converse
direction, the FBSDE gives rise to the value field
\[
	u(t,x):=Y_t^{t,x},
\]
Although the forward equation is defined only weakly, this quantity is
deterministic and independent of the chosen natural-filtration rough weak
solution. Once $u$ is shown to be a continuous
universal decoupling field, it is a viscosity solution of the singular HJB
equation. This second direction is particularly useful for bounded continuous
terminal conditions, which are usually not regular enough to study paracontrolled solutions. 
For bounded continuous terminal data, continuity of the
value field is recovered through monotone approximation by smooth terminal
conditions and stability of quadratic BSDEs.

The BSDE representation complements the control formulation: it does not
require convexity and allows the Hamiltonian to depend on the solution variable
$y$, at the price of restricting its growth in $z$ to at most quadratic and
imposing a slightly stronger local Lipschitz condition. Besides yielding the
usual upper bound, it shows how, under a quadratic lower-growth condition, a
``coming up from \(-\infty\)'' estimate analogous to
{\cite{perkowski_coming_2025}} can be obtained.

Before proving this correspondence, we need a solution theory for the singular
FBSDE which does not presuppose a solution of the nonlinear HJB equation. The
difficulty already appears in the forward component: for the distributional
drifts considered here, \eqref{eq:singular-SDE} is available only through the
martingale-problem formulation of {\cite{cannizzaro_multidimensional_2018}} or
as a rough weak solution in the sense of {\cite{kremp_rough_2025}}. It is
therefore natural to formulate the entire forward--backward system weakly. We
work on the completed, right-continuous natural filtration of the forward
process, where the martingale representation property needed for the backward
equation is available, and prove well-posedness both for a fixed forward process
and in law across different realizations of the rough weak solution.

Weak formulations of FBSDEs have previously been studied, for instance, in
{\cite{antonelli_weak_2003,ma_weak_2008}}. In those works the weak formulation
is motivated by the coupling between the forward and backward equations. Our
system is decoupled: the need for a weak formulation comes instead from the
distributional drift in the forward equation.

Similar FBSDEs with distributional coefficients were studied by Issoglio and
Jing {\cite{issoglio_jing_2020}} in negative Sobolev spaces, building on the
singular SDE theory of {\cite{flandoli_issoglio_russo_2017}}. Here we work
instead in the rougher paracontrolled regime of enhanced distributional drifts,
using the singular SDE theory of
{\cite{cannizzaro_multidimensional_2018,kremp_rough_2025}}, and allow the
backward generator to have quadratic growth in the control variable.

A related probabilistic representation for the KPZ equation was obtained
earlier in {\cite{almada_budhiraja_2014}}. Since this work predates the
development of regularity structures and paracontrolled distributions, the
singular KPZ equation and its connection to FBSDEs are understood there through
mollified and renormalized approximations, rather than via an intrinsic
singular FBSDE formulation.

}

Taken together, the control and BSDE representations lead to the same basic
analytic picture. For the reader's convenience, we summarise its main
consequences. {Let \(\delta\in(0,T]\), and assume that either
Assumptions \assAMixed{} and \assB{} or Assumptions \assA{} and \assC{} hold,
together with Assumptions \assP{} and \assPContinuous{}.}
\begin{theorem}\label{thm:main1}If \(h\) is a paracontrolled solution of \eqref{eq:singular-HJB} defined on \([T-\delta,T]\), then it is a viscosity solution on that interval. Furthermore, there exists a constant \(C_1(T,\|\bar{h}\|_{L^\infty},H)>0\) independent of \(\delta\) such that
\[
\|h\|_{L^\infty_{[T-\delta,T]}L^{\infty}}\leqslant C_1(T,\|\bar{h}\|_{L^\infty},H)
\]
\end{theorem}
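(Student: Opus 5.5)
We prove the two claims separately. The viscosity property uses a comparison-at-touching-points argument through the linear singular equation. The $L^\infty$ bound uses the stochastic representations.

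\emph{Viscosity property.} Let $h$ be a paracontrolled solution on $[T-\delta,T]$. Take a paracontrolled test function $\varphi$ solving $(\partial_t+\frac12\Delta+b\cdot\nabla)\varphi=g$ with $g$ continuous. Suppose $h-\varphi$ has a local maximum at $(t_0,x_0)$, with $t_0<T$. Then $w:=h-\varphi$ formally solves
\[
(\partial_t+\tfrac12\Delta+b\cdot\nabla)w=-H(\cdot,h,\nabla h)-g.
\]
Since both $h$ and $\varphi$ are paracontrolled by the same $b$-dependent objects, the singular parts cancel only up to a remainder. To handle this, I would apply the Zvonkin transform $\Psi$ (taking as given the earlier statement that it maps the singular operator to a uniformly elliptic operator with Hölder coefficients). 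Under $\Psi$, $\tilde w=w(\cdot,\Psi(\cdot,\cdot))$ solves a classical linear equation $\partial_t\tilde w+\frac12 \mathrm{tr}(a D^2\tilde w)+\tilde c\cdot\nabla\tilde w=f$ with continuous right-hand side. This $\tilde w$ is $C^{1,2-\alpha-}$, not $C^{1,2}$, so the classical maximum principle does not apply directly.

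I would therefore argue probabilistically. Let $X$ be the rough weak solution of \eqref{eq:singular-SDE} started at $(t_0,x_0)$. By the Itô formula for paracontrolled functions along $X$ (as in \cite{cannizzaro_multidimensional_2018,kremp_rough_2025}),
\[
\mathbb E\, w(t_0+\varepsilon,X_{t_0+\varepsilon})-w(t_0,x_0)=\mathbb E\int_{t_0}^{t_0+\varepsilon}\big(-H(s,X_s,h,\nabla h)-g\big)(s,X_s)\,ds .
\]
Local maximality, together with a localisation by the exit time from a small ball, makes the left side $\le$ a term that is $o(\varepsilon)$. Dividing by $\varepsilon$ and using continuity of $H(\cdot,h,\nabla h)$ and $g$ gives
\[
-H(t_0,x_0,h,\nabla h)-g(t_0,x_0)\le 0 .
\]
At a maximum, $\nabla h(t_0,x_0)=\nabla\varphi(t_0,x_0)$ holds because $h-\varphi\in C^{1+}$ in space, and $h(t_0,x_0)=\varphi(t_0,x_0)$ after normalisation. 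These give exactly the subsolution inequality. The supersolution case is symmetric.

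\emph{$L^\infty$ bound.} Under A-mixed and B (the convex case), I would invoke the control representation: $h(t,x)=\sup_u \mathbb E[\bar h(X^u_T)+\int_t^T \ldots]$. Under A and C, I would use the BSDE representation $h(t,x)=Y^{t,x}_t$. In both cases $\delta$ enters only through the length of the horizon. For the upper bound, I would use the growth assumption on $H$, namely $H(t,x,y,0)\le C(1+|y|)$. The sup over controls is then bounded via a Girsanov or linearisation comparison with the constant control, which gives $|h|\le (\|\bar h\|_\infty+CT)e^{CT}$. For the lower bound, I would choose the zero control, or linearise the BSDE as $H(y,z)-H(y,0)=\beta\cdot z$ with $\beta$ a BMO process, and apply Gronwall. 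Since everything is monotone in the horizon, the bound depends on $T$ but not on $\delta$.

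\emph{Main obstacle.} The main difficulty is the Itô formula and localisation step in the viscosity argument. We must justify $\mathbb E$-Itô for $h-\varphi$ along the singular diffusion with an exit time. We also need the $o(\varepsilon)$ control using only local maximality rather than a global one. I would first try to reduce to a strict global maximum by modifying $\varphi$ with a smooth bump. This requires showing that adding a smooth function keeps $\varphi$ in the admissible paracontrolled class, with an explicitly computable change in $g$.
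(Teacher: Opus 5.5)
Your proof of the viscosity property is correct, and it takes a genuinely different, more elementary route than the paper. The paper never argues directly at the touching point. Under Assumption C it goes through verification and the dynamic programming principle, and for the subsolution it also uses $\varepsilon$-optimal controls and strictness of the maximum (Theorem~\ref{paravicosity}). Under Assumptions A-mixed and B it identifies $(X,h(\cdot,X),\nabla h(\cdot,X))$ as the FBSDE solution (Proposition~\ref{prop:converse-FK}). It then compares $\phi(X)$ with $Y$ through a linearised BSDE, BMO bounds and Girsanov (Theorem~\ref{thm:nonlinear-FK}).

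You use only two ingredients. The first is the linear martingale property of $h$ and of $\varphi$ along the uncontrolled diffusion; for $h$ the forcing is $-H(\cdot,h,\nabla h)\in C_TL^\infty$, which is where Assumption P-continuous enters. The second is stopping at $\tau_\varepsilon=\sigma\wedge(t_0+\varepsilon)$, where $\sigma$ is the exit time of a small ball. Optional stopping gives $\mathbb E\int_{t_0}^{\tau_\varepsilon}f(s,X_s)\,ds\leq0$ exactly, so no $o(\varepsilon)$ term arises. Dominated convergence then gives $f(t_0,x_0)\leq0$, and spatial $\mathcal C^\gamma$ regularity with $\gamma>1$ gives $\nabla h(t_0,x_0)=\nabla\varphi(t_0,x_0)$.

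This is a probabilistic maximum principle for the singular linear operator. It needs neither convexity nor quadratic growth, only the continuity properties in Assumptions P and P-continuous, and it covers non-strict extrema directly. Under Assumptions A and C, run it with the martingale solution of Theorem~\ref{thm:martingale-problem-wellposedness}, since rough weak solutions require A-mixed; only the expectation identity is used. What the paper's route buys is results that do not differentiate $h$: Theorem~\ref{thm:nonlinear-FK} applies to any bounded continuous universal decoupling field, and the DPP identifies $h$ as a value function. For Theorem~\ref{thm:main1} itself, your argument suffices.

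Some of the surrounding scaffolding should go.
\begin{itemize}
\item The Zvonkin detour is unnecessary.
\item Nothing cancels only ``up to a remainder''. By linearity, $h-\varphi$ is exactly the paracontrolled solution with forcing $-H(\cdot,h,\nabla h)-g$; alternatively, apply the martingale property to $h$ and $\varphi$ separately.
\item The reduction to a strict global maximum is not needed, and as stated it would fail. Adding a smooth bump $q$ to $\varphi$ leaves $\mathcal T^b$, because $b\cdot\nabla q$ is in general only a distribution in $C_T\mathcal C^{-\alpha}$. The paper's device is $q(t,\Phi(t,x))$ with the Zvonkin map (Lemma~\ref{zvonkin_chain_rule}, Corollary~\ref{touching}).
\end{itemize}

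For the $L^\infty$ bound you have swapped the assumptions, and as written both halves fail. Under B the Hamiltonian need not be convex, so there is no control representation. Under C it may be superquadratic, and without A-mixed there is no rough weak solution carrying the Brownian motion the BSDE needs. With the pairing corrected, the argument works.

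In the control case it is the paper's Corollary~\ref{maxpple}. The zero control gives the lower bound. The bound $-L(s,x,v)\leq H(s,x,0)$ gives the upper bound, but only once $h=J(t,x;u^\star)$ for an optimal control, which the paper builds by measurable selection plus Girsanov. Part 1 of Proposition~\ref{verification} alone only gives $h\geq J$.

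In the BSDE case, your linearisation $H(Y,Z)-H(Y,0)=\beta\cdot Z$ with Girsanov and Gronwall replaces the paper's exponential-transform estimate (Theorem~\ref{thm:quadratic-BSDE-apriori-estimates}). It is legitimate here because $Z=\nabla h(\cdot,X)$ is bounded on $[T-\delta,T]$, so $\beta$ is bounded and Novikov applies. The quantity $\|\nabla h\|_\infty$, which may depend on $\delta$, enters only qualitatively, so $(\|\bar h\|_\infty+CT)e^{CT}$ is indeed $\delta$-independent. The same linearisation, using the local Lipschitz bound in $p$ from Assumption P and the paper's Girsanov argument for the martingale problem, also handles the control case without the representation. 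The paper's estimate, by contrast, holds for every bounded solution with $Z\in\mathbb H^2$ and supplies the BMO bound used later.
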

\begin{theorem}\label{thm:main2}
If \(h\) is a bounded viscosity solution of \eqref{eq:singular-HJB}  on \([T-\delta,T]\) and \(\bar{h}\in W^{1,\infty}\), there exists a constant \[C_2(T,\|h\|_{L^\infty_{[T-\delta,T]}L^{\infty}},\|\bar{h}\|_{W^{1,\infty}},b,H)>0\] such that
\[
\|h\|_{L^\infty_{[T-\delta,T]}W^{1,\infty}}\leqslant C_2(T,\|h\|_{L^\infty_{[T-\delta,T]}L^{\infty}},\|\bar{h}\|_{W^{1,\infty}},b,H).
\]
\end{theorem}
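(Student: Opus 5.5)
The plan is to reduce Theorem~\ref{thm:main2} to a classical gradient estimate via the Zvonkin transformation described in the introduction, and then undo the transformation.

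\emph{Step 1: transformation.} Fix a smooth approximating enhancement of $b$ and construct, on $[T-\delta,T]$ (subdivided into short subintervals if needed, with constants uniform in $\delta$), the time-dependent map $\Psi(t,\cdot)=\mathrm{id}+\phi(t,\cdot)$. Here each component of $\phi$ solves the paracontrolled equation $(\partial_t+\tfrac12\Delta+b\cdot\nabla)\phi^i=\lambda\phi^i-b^i$ with zero terminal value, and $\lambda$ is chosen large so that $\|\nabla\phi\|_{L^\infty}\le 1/2$. Then $\Psi(t,\cdot)$ is a $C^{1,\gamma}$-diffeomorphism with inverse bounds depending only on $b$. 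The key claim is that if $h$ is a viscosity solution in the paracontrolled sense, then $u(t,y)=h(t,\Psi^{-1}(t,y))$ is a classical viscosity solution of a uniformly parabolic equation
\[
\partial_t u+\tfrac12\operatorname{tr}(A\nabla^2u)+\lambda\phi\circ\Psi^{-1}\cdot(D\Psi^{-1})^{\top}\nabla u=-H\big(t,\Psi^{-1},u,(D\Psi\circ\Psi^{-1})^{\top}\nabla u\big),
\]
with $A=(D\Psi D\Psi^{\top})\circ\Psi^{-1}$ Hölder continuous and uniformly elliptic. To verify this, take a classical test function $\psi$ touching $u$ at $(t_0,y_0)$. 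Then $\psi\circ\Psi$ is of the paracontrolled type allowed as a test function: by the chain rule for paracontrolled functions, $(\partial_t+\tfrac12\Delta+b\cdot\nabla)(\psi\circ\Psi)$ is a continuous function $g$ that equals the left-hand side above evaluated at $\Psi$. The viscosity inequality for $h$ therefore transfers directly.

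\emph{Step 2: Lipschitz estimate for $u$.} Apply a doubling-of-variables argument in the spirit of \cite{PorrettaPriola2013,LeyNguyen2017}. Consider
\[
\sup_{t,y,z}\big\{u(t,y)-u(t,z)-L\,\omega(|y-z|)-K(T-t)\big\},
\]
where $\omega$ is concave, $\omega(r)\approx r-\kappa r^{1+\gamma}$ near $0$. Suppose the supremum is positive. The terminal data are excluded because $\bar h\circ\Psi^{-1}(T)$ is Lipschitz. At an interior maximum, Ishii's lemma gives jets $p=L\omega'(|y-z|)e$ at both points. Subtract the two viscosity inequalities and handle the terms as follows.
\begin{itemize}
\item The second-order term gives a good negative contribution $L\omega''$. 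The mismatch $A(y)-A(z)$ is controlled by Hölder continuity of $A$ times $L|y-z|^{\gamma}$, and this is absorbed by choosing the $r^{1+\gamma}$ correction in $\omega$.
\item The drift terms are Lipschitz in $y$ and are bounded by $CL|y-z|$.
\item The Hamiltonian difference uses Assumption~\assB{} or~\assC{}, that is, local Lipschitz bounds in $x$ with growth $|p|^{2+\epsilon}$ (convex case) or $|p|^2$.
\end{itemize}
Monotonicity in $u$ is handled by the $L^\infty$ bound $\|h\|_{L^\infty}$, which enters here.

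\emph{Main obstacle.} I expect the hardest point to be the superquadratic regime. There the Hamiltonian difference $|H(y,p')-H(z,p)|$ can be as large as $L^{2+\epsilon}|y-z|$ and is not dominated by the diffusion term for large $L$. My first attempt is the Porretta--Priola idea: localize the supremum to $|y-z|\le r_0$, where $r_0$ depends on $\|u\|_\infty/L$, so that $|y-z|\lesssim\|u\|_\infty/L$. Then, using convexity to write $H(p')-H(p)\ge\partial_pH(p)\cdot(p'-p)$, the bad term becomes essentially $L^{1+\epsilon}\|u\|_\infty$, which $L^2\omega''(r)$-type terms beat when $\epsilon$ is small enough, as dictated by the admissible growth. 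The regions $|y-z|>r_0$ are trivial by the $L^\infty$ bound.

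\emph{Step 3: undoing the transformation.} The argument yields $\|\nabla u\|_{L^\infty}\le C$, with $C$ depending on $T$, $\|h\|_\infty$, $\|\bar h\|_{W^{1,\infty}}$, $H$ and the norms of $\Psi$ (hence on $b$). Then $h=u\circ\Psi$ with $\|D\Psi\|_\infty\le 3/2$ gives $\|h\|_{L^\infty_{[T-\delta,T]}W^{1,\infty}}\le C_2$. If the construction of $\Psi$ used subintervals, the Lipschitz bound is propagated backward interval by interval. The number of subintervals depends only on $b$ and $T$, and each step uses the previous interval's terminal Lipschitz bound, so $C_2$ remains independent of $\delta$.
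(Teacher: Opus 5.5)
Your architecture is the paper's. You transfer the viscosity property to $u=h(t,\Psi^{-1}(t,\cdot))$ (Theorem~\ref{thm:uniqueness}), run a Porretta--Priola/Ley--Nguyen doubling of variables with a concave modulus (Theorem~\ref{thm:viscosity-lipschitz-estimate}), and undo the transform (Corollary~\ref{cor:singular-viscosity-lipschitz-estimate}). The gap is in the Hamiltonian term of Step~2, and it breaks the superquadratic case you single out.

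\emph{Where the regularity in $x$ comes from.} Assumptions \assB{} and \assC{} give no quantitative regularity of $H$ in $x$. The relevant hypothesis is \AssumptionP{}, which only gives $\beta$-H\"older dependence in $x$ weighted by $1+|p|^{\kappa}$, with $\kappa<2+\beta'$ and $\beta'=(\gamma-1)\wedge\beta$. Here $\kappa$ is the growth exponent, not the constant in your $\omega$, and $\gamma\in(4/3,2-\alpha)$ is the paper's exponent. More importantly, even for $H$ smooth in $x$, the transformed Hamiltonian sees the space variable through $(D\Psi\circ\Psi^{-1})^{T}p$. Since $D\Psi$ is only $(\gamma-1)$-H\"older with $\gamma-1<1/2$, the best available bound is \eqref{eq:apriori-hhat}: a difference of order $|y-z|^{\beta'}(1+|p|^{\kappa})$.

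\emph{Why Step~2 fails.} At the maximum, $r^*=|y^*-z^*|\lesssim\|u\|_\infty/L$ and $|p|\approx L$. So the Hamiltonian difference is of order $L^{\kappa-\beta'}$, not $L^{1+\epsilon}\|u\|_\infty$ as you claim. The diffusion gain is of order $L(r^*)^{\nu-1}\gtrsim L^{2-\nu}$, where $\nu$ is the exponent of the concave correction in $\omega$. You tie $\nu$ to the H\"older exponent of $A$, which is $\gamma-1\geq\beta'$. Then $2-\nu\leq\kappa-\beta'$ as soon as $\kappa\geq2$, so the gain cannot beat the Hamiltonian term. Convexity, which plays no role in the paper's estimate, does not rescue this.

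\emph{The missing idea.} Treat $\nu$ as a free small parameter,
$0<\nu<\alpha\wedge\beta'\wedge(2-q)\wedge(2+\beta'-\kappa)$. Decreasing $\nu$ only helps against the diffusion mismatch, which is of order $L(r^*)^{2\gamma-3}$ and requires $\nu-1<2\gamma-3$. The condition $\nu<2+\beta'-\kappa$ gives $L^{\kappa-\beta'}=o(L^{2-\nu})$; this is exactly where the growth threshold $\kappa<2+\beta'$ comes from. The condition $\nu<2-q$ absorbs the solution-variable term $C(1+L^{q})\,2\|u\|_\infty$, which is where $\|h\|_\infty$ enters.

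\emph{Two further repairs.}
\begin{itemize}
\item On $\mathbb{R}^d\times\mathbb{R}^d$ the supremum of your doubled function need not be attained, even though $u$ is bounded and continuous. The paper subtracts $\frac{\mu}{2}(|y|^2+|z|^2)$. This shifts the jets to $p+\mu y^*$ and $p-\mu z^*$ and adds $\mu$-terms to Ishii's matrix inequality. These are controlled uniformly because $\mu(|y^*|^2+|z^*|^2)<2\|u\|_\infty$, so $L$ can be fixed independently of $\mu$ before letting $\mu\to0$.
\item Drop the term $K(T-t)$, or send $K\to0$. With $K>0$ fixed, your conclusion reads $u(t,y)-u(t,z)\leq L\omega(|y-z|)+K(T-t)$, which is not a Lipschitz bound. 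It is also unnecessary, since every bounded or zeroth-order contribution is $o(L^{2-\nu})$.
\end{itemize}

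\emph{Minor points.} No subdivision of $[T-\delta,T]$ is needed: Proposition~\ref{equl} gives $\|\nabla\phi\|_\infty<1/2$ on all of $[0,T]$ for large $\lambda$, so $\delta$-independence is automatic. The first-order term of the transformed equation is $\lambda\,\phi\circ\Psi^{-1}\cdot\nabla u$, without the factor $(D\Psi^{-1})^{T}$; this is harmless for the estimate.
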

\begin{corollary}\label{corol:main1}
    For smooth enough \(\bar{h}\), there exists a global paracontrolled solution, which is also the unique bounded viscosity solution of \eqref{eq:singular-HJB}. Furthermore, it satisfies
    \[
    \|h\|_{L^\infty_{[0,T]}W^{1,\infty}}\leqslant C_3(T,\|\bar{h}\|_{W^{1,\infty}},b,H),
    \]
	for some \(C_3>0\).
\end{corollary}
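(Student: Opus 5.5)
The corollary follows by combining Theorems~\ref{thm:main1} and \ref{thm:main2} with a local well-posedness theory for paracontrolled solutions, a continuation argument, and a comparison principle for viscosity solutions obtained through the Zvonkin transformation.

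\textbf{Step 1: local existence and a priori bounds.} For $\bar h$ regular enough (say in $\mathcal{C}^{2-\alpha-}$ with $\bar h\in W^{1,\infty}$), I first invoke local paracontrolled well-posedness of \eqref{eq:singular-HJB}. This is a fixed point in a paracontrolled space on $[T-\delta,T]$. The existence time $\delta$ depends only on a Lipschitz-type norm of the datum, since the nonlinearity $H(t,x,h,\nabla h)$ is locally Lipschitz in $(h,\nabla h)$ by Assumptions \assP{} and \assPContinuous{}. On any interval $[T-\delta,T]$ where a paracontrolled solution exists, Theorem~\ref{thm:main1} does two things. It shows the solution is a viscosity solution, and it gives $\|h\|_{L^\infty}\leqslant C_1$ with $C_1$ independent of $\delta$. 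Theorem~\ref{thm:main2} then yields $\|h\|_{L^\infty W^{1,\infty}}\leqslant C_2(T,C_1,\|\bar h\|_{W^{1,\infty}},b,H)=:C_3$, which is again independent of $\delta$.

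\textbf{Step 2: continuation to $[0,T]$.} The main obstacle is that the local existence time of the paracontrolled fixed point may depend on the paracontrolled norm (for instance the $\mathcal{C}^{2-\alpha-}$ norm of the restarted datum), not only on $\|h\|_{W^{1,\infty}}$.
\begin{itemize}
\item I would first try to show that, once $\nabla h$ is bounded, $H(\cdot,h,\nabla h)$ is bounded on $[T-\delta,T]$.
\item The linear paracontrolled Schauder theory of \cite{cannizzaro_multidimensional_2018} then bounds the paracontrolled norm of $h$ on $[T-\delta,T]$ in terms of $C_3$ and the regularity of $\bar h$.
\item Restarting at time $T-\delta$ with datum $h(T-\delta)$ is therefore possible with a uniform step size $\delta_0$.
\end{itemize}
If the growth of $H$ prevents this directly, I would fall back on a truncated Hamiltonian $H_R$, with $R>C_3$, that is globally Lipschitz in the gradient. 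The truncated equation has a global solution by standard paracontrolled theory. The a priori bounds of Theorem~\ref{thm:main2} (whose constants depend on $H$ only through structural constants uniform in $R$) show $|\nabla h|\leqslant C_3<R$. Hence the truncation is inactive and the solution solves the original equation on $[0,T]$. The same $W^{1,\infty}$ bound then holds on all of $[0,T]$.

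\textbf{Step 3: uniqueness among bounded viscosity solutions.} Let $\tilde h$ be another bounded viscosity solution.
\begin{itemize}
\item Theorem~\ref{thm:main2} shows that $\tilde h$ is also Lipschitz.
\item Apply the Zvonkin transform $\Psi$. Both $h(\cdot,\Psi)$ and $\tilde h(\cdot,\Psi)$ are classical viscosity solutions of a uniformly elliptic equation with Hölder coefficients.
\item Both are Lipschitz, so the Hamiltonian may be replaced by a truncated one that is Lipschitz in the gradient without changing either solution.
\item The standard comparison principle for bounded (semi)continuous viscosity solutions \cite{CIL1992} on $\mathbb{R}^d$ then applies. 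It needs monotonicity in $h$, obtained after an exponential change $e^{\lambda t}h$ if $H$ is Lipschitz in $h$.
\end{itemize}
This gives $h=\tilde h$. Undoing $\Psi$ shows that the paracontrolled solution is the unique bounded viscosity solution.
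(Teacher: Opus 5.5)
Your Steps 1--2 follow the paper's existence argument (Theorem~\ref{thm:quadratic-paracontrolled-global}): truncation and local solution, an $L^\infty$ bound from the control or BSDE representation, a Lipschitz bound via Zvonkin, then continuation. Step 3, however, has a genuine gap. After the Zvonkin transform the diffusion matrix $a=(\nabla\Phi\nabla\Phi^T)\circ\Phi^{-1}$ is only $\mathcal C^{\gamma-1}$ in space. Since $\alpha>1/2$, we have $\gamma-1<1-\alpha<1/2$, and nothing better is available because $u_\lambda$ is itself only $\mathcal C^{(2-\alpha)-}$. The comparison principle of \cite{CIL1992} needs a structure condition that holds when $a=\sigma\sigma^T$ with $\sigma$ Lipschitz. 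In the doubling argument the second-order error is of size $|\sigma(x)-\sigma(y)|^2/\varepsilon$. Even using that both solutions are Lipschitz, so that $|x-y|\lesssim\varepsilon$ at the maximum, this is only bounded by $\varepsilon^{2(\gamma-1)-1}$, which diverges; this way of exploiting Lipschitz bounds only closes for Hölder exponent $>1/2$. Truncating $H$ in the gradient does not touch this term. The concave-modulus trick of Theorem~\ref{thm:viscosity-lipschitz-estimate} gives regularity of one solution, not comparison of two: the maximum of $u(x)-v(y)-K\omega(|x-y|)$ may lie on the diagonal, where the penalization is not smooth. So the claim that ``standard comparison applies'' fails exactly in the regime of this paper.

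The missing idea is that the paracontrolled solution $h$ is itself an admissible test function. It lies in $\mathcal T^b$ with right-hand side $g=-H(\cdot,h,\nabla h)\in C_T\mathcal C_b$, so you can compare an arbitrary bounded sub- or supersolution $v$ directly with $h$, with no doubling of variables. Maximize $e^{\lambda(t-T)}(v-h-\varepsilon e^{\lambda(T-t)})-\psi_\delta$, where $\psi_\delta=\delta\rho(\Phi(t,x)-\Phi(t_\delta,x_\delta))$. By Lemma~\ref{zvonkin_chain_rule}, $\psi_\delta$ lies in $\mathcal T^b$ with gradient and forcing of order $\delta$. Touch $v$ from above by $h+\varepsilon e^{\lambda(T-t)}+e^{\lambda(T-t)}(R_\delta+\psi_\delta)$, and use the $y$-Lipschitz and $p$-continuity properties of $H$ to get a contradiction for large $\lambda$. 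This is Proposition~\ref{smoothcomparison}. It needs no regularity of the second-order coefficients, and not even the Lipschitz bound on $v$.

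A smaller point concerns your Step 2 fallback. Applying Theorems~\ref{thm:main1} and \ref{thm:main2} to the truncated equation requires the truncated Hamiltonian to satisfy their hypotheses. Under Assumption \assC{} a truncation such as $H(\pi_R(p))$, for example $\min(|p|^2,R^2)$, is generally not convex, so the control-based $L^\infty$ bound is lost. Under \assB{} you must also truncate in $y$ to get a globally Lipschitz generator. The paper avoids the first issue by applying the a priori bounds only on the maximal interval where the cutoff is inactive, where $u_R$ solves the original equation, and then using time continuity (Proposition~\ref{prop:local-lipschitz-paracontrolled}).
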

\begin{corollary}\label{corol:main2}
    If \(\bar{h}\) bounded and uniformly continuous, there exists a unique bounded viscosity solution \(h\) of \eqref{eq:singular-HJB} on \([0,T]\). Furthermore, it satisfies
    \[
\|h\|_{L^\infty_{[0,T]}L^{\infty}}\leqslant C_1(T,\|\bar{h}\|_{L^\infty},H),
    \]
    and for some \(\nu\equiv\nu(H,b)\in(0,1)\) and \(C_4>0\)
    \[
    \|\nabla h(t,\cdot)\|_{L^{\infty}}\leqslant\frac{C_4(T,\|\bar{h}\|_{L^\infty},b,H)}{(T-t)^\frac{1}{2-\nu}},\quad t\in[0,T).
    \]
\end{corollary}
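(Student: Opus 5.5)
Existence and uniqueness come from approximation and stability, and the gradient bound from a local-in-time version of Theorem~\ref{thm:main2}.

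\textbf{Step 1: approximation.} Approximate $\bar h$ uniformly by smooth terminal data $\bar h_n$ (e.g.\ mollifications) with $\|\bar h_n\|_{L^\infty}\leqslant\|\bar h\|_{L^\infty}$. Corollary~\ref{corol:main1} gives global paracontrolled solutions $h_n$, which are viscosity solutions. Theorem~\ref{thm:main1} bounds $\|h_n\|_{L^\infty}$ by $C_1(T,\|\bar h\|_{L^\infty},H)$, uniformly in $n$.

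\textbf{Step 2: passage to the limit.} Show that $(h_n)$ is Cauchy in $L^\infty([0,T]\times\R^d)$. In the convex case (A-mixed, B), compare the control representations: replacing the terminal cost changes the value by at most $\|\bar h_n-\bar h_m\|_{L^\infty}$, since the cost enters additively and the dynamics do not depend on it. In the case A, C, use the BSDE representation $u(t,x)=Y^{t,x}_t$ and the standard stability estimate for quadratic BSDEs with terminal data bounded uniformly, applied with fixed $b$. Stability of viscosity solutions under uniform limits (available through the Zvonkin transformation, where the test-function class is fixed) then shows the limit $h$ is a viscosity solution. The $L^\infty$ bound passes to the limit.

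\textbf{Step 3: uniqueness.} Transform any two bounded viscosity solutions via $h(\cdot,\Psi(\cdot,\cdot))$ into classical viscosity solutions of a uniformly elliptic equation with H\"older coefficients and the same terminal datum $\bar h\circ\Psi(T,\cdot)$, which is bounded and uniformly continuous. Then apply the classical comparison principle for bounded uniformly continuous data. Comparison requires uniform continuity of the solutions; for sub/supersolutions this comes from the standard doubling argument with a penalization at infinity, using the local Lipschitz structure of $H$ from Assumption~P. Alternatively, compare each solution with the smooth approximants $h_n$, which are Lipschitz on $[0,T-\epsilon]$.

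\textbf{Step 4: gradient blow-up rate.} This is the main obstacle. Theorem~\ref{thm:main2} requires $W^{1,\infty}$ terminal data, and $\bar h$ is only uniformly continuous. I would revisit the doubling-of-variables argument on the transformed equation with a time-weighted test function. Concretely, bound
\[
\sup_{x,y}\bigl(h(t,x)-h(t,y)-L(t)\,\omega(|x-y|)\bigr)
\]
with $L(t)=K(T-t)^{-1/(2-\nu)}$, where $\omega$ is concave near $0$. The exponent $\nu<1$ reflects the H\"older regularity of the transformed coefficients, coming from $\nabla\Psi\in\mathcal C^{1-\alpha-}$. This mirrors the Porretta--Priola/Ley--Nguyen strategy.

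At an interior maximum point, the time derivative $L'(t)\omega$ must be absorbed by the strict ellipticity gain of order $L\,\omega''$. The coefficient H\"older errors are of order $L|x-y|^{\nu}\omega'$, and the Hamiltonian growth terms are controlled by the $L^\infty$ bound. Balancing these at the scale $|x-y|\sim(T-t)^{1/(2-\nu)}$ should give exactly this power. At $t=T$ the weight $L=\infty$ makes the supremum nonpositive automatically, since $h$ is bounded, so no regularity of $\bar h$ is needed. The bound is first proved for the smooth $h_n$ with constants depending only on $\|\bar h\|_{L^\infty}$, then passed to the limit.

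Undoing $\Psi$, which is bi-Lipschitz with bounded gradient, gives the stated estimate on $\|\nabla h(t,\cdot)\|_{L^\infty}$.
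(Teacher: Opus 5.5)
Your overall architecture matches the paper's. You use smooth approximants, an $L^\infty$-Cauchy estimate from the representation formula, stability of viscosity solutions under uniform limits, and the time-weighted modulus $K(T-t)^{-1/(2-\nu)}\omega(|x-y|)$ for the gradient bound, as in the remark after Theorem~\ref{thm:viscosity-lipschitz-estimate}. Steps 1, 2 and 4 go through, with one correction: in Step 2 you have swapped the two cases.
\begin{itemize}
\item The control representation belongs to Assumptions A and C. There $H$ is convex and possibly superquadratic, so no quadratic BSDE theory is available.
\item The identity $h_n(t,x)=Y^{t,x,n}_t$ and the bound $\|h_n-h_m\|_\infty\le e^{L_yT}\|\bar h_n-\bar h_m\|_\infty$ belong to Assumptions A-mixed and B. There $H$ may be nonconvex and $y$-dependent, so there is no control representation.
\end{itemize}

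The genuine gap is your primary uniqueness argument in Step 3.
\begin{itemize}
\item After Zvonkin's transform, the diffusion matrix $a=(\nabla\Phi\nabla\Phi^T)\circ\Phi^{-1}$ is only in $\mathcal C^{\gamma-1}$ with $\gamma-1<1-\alpha<1/2$. Also, $\hat H$ can grow like $|p|^\kappa$ with $\kappa>2$ while being only H\"older in space.
\item The comparison theorems of \cite{CIL1992} need $a=\sigma\sigma^T$ with $\sigma$ Lipschitz. Doubling with $|x-y|^2/2\varepsilon$ leaves the error $\varepsilon^{-1}|\sigma(x)-\sigma(y)|^2\lesssim\varepsilon^{-1}|x-y|^{2(\gamma-1)}$. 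Even if one function is Lipschitz, so that $|x-y|\lesssim\varepsilon$, the only bound is $\varepsilon^{2\gamma-3}$, which blows up.
\item The concave-modulus trick of Theorem~\ref{thm:viscosity-lipschitz-estimate} uses ellipticity to control the oscillation of a single solution. It does not compare two different functions.
\item Bounded USC subsolutions are not uniformly continuous in general, so that part of your justification also fails.
\end{itemize}

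Your ``alternatively'' route is what the paper actually does, and it is the one to use. Proposition~\ref{smoothcomparison} compares any bounded sub/supersolution with a paracontrolled solution $h$ without doubling variables. The test function is $h$ itself, corrected by $\varepsilon e^{\lambda(T-t)}$ and by the Zvonkin bump $\delta\rho(\Phi(t,x)-\Phi(t_\delta,x_\delta))$ from Lemma~\ref{zvonkin_chain_rule}.

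To apply it you need ordered terminal data $\bar h_n^-\le\bar h\le\bar h_n^+$. You can get these by mollifying $\bar h\pm1/n$, or by shifting your approximants by $\pm\|\bar h_n-\bar h\|_\infty$. Then $h_n^-\le v\le h_n^+$, together with $\|h_n^+-h_n^-\|_\infty\to0$ from Step 2, gives $v=h$. The property of $h_n$ that is actually used is that $h_n\in\mathcal T^b$ with $\nabla h_n$ bounded on all of $[0,T]$. Lipschitz bounds on $[0,T-\epsilon]$ are not what matters.
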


The remainder of the paper is organized as follows. In
Section~\ref{sec:prelims}, we recall the required facts from paracontrolled
calculus and collect the assumptions on the drift and the Hamiltonian. In
Section~\ref{sec:maindef}, we introduce the notion of viscosity solution and
establish its basic properties. Section~\ref{sec:control} treats the convex case
through the singular stochastic control problem. In Section~\ref{sec::zvonkin},
we develop the Zvonkin transformation and derive the a priori Lipschitz
estimates by doubling of variables arguments. Section~\ref{sec:BSDE} develops
the weak theory of singular quadratic FBSDEs and proves the nonlinear
Feynman--Kac correspondence in the non-convex setting. The paracontrolled
well-posedness results used throughout are collected in the Appendix.

\section{Preliminaries}\label{sec:prelims}

Let $\mathcal{S} (\mathbb{R}^d)$ denote the Schwartz space on $\mathbb{R}^d$
and $\mathcal{S}' (\mathbb{R}^d)$ its dual, the space of tempered
distributions. Let $(\rho_j)_{j \ge - 1} \subset C_c^{\infty} (\mathbb{R}^d)$
be non-negative functions such that
\begin{itemize}
	\item the support of $\rho_{- 1}$ is contained in a ball and the support of
	      $\rho_0$ is contained in an annulus,

	\item $\rho_j (\xi) = \rho_0  (2^{- j} \xi)$ for $j \ge 0$ and $\xi \in
	      \mathbb{R}^d$,

	\item $\sum_{j \ge - 1} \rho_j (\xi) = 1$, $\xi \in \mathbb{R}^d$,

	\item $\supp (\rho_i) \cap \supp (\rho_j) = \varnothing$ whenever $|i - j| >
	      1$.
\end{itemize}
Such a family $(\rho_j)_{j \ge - 1}$ is called a dyadic partition of unity.
Let $\mathcal{F}$ denote the Fourier transform
\[ \mathcal{F}f (z) = \int_{\mathbb{R}^d} e^{- 2 \pi ix \cdot z} f (x)
	\hspace{0.17em} dx, \qquad z \in \mathbb{R}^d, \]
and let $\mathcal{F}^{- 1}$ denote the inverse Fourier transform. For $u \in
	\mathcal{S}' (\mathbb{R}^d)$ we define the Littlewood--Paley blocks by
\[ \Delta_j u =\mathcal{F}^{- 1}  (\rho_j \mathcal{F}u), \qquad j \ge - 1. \]

\begin{definition}
	\label{def:holder-besov}For $\beta \in \mathbb{R}$ the H{\"o}lder--Besov
	space $\mathcal{C}^{\beta} (\mathbb{R}^d) \equiv B_{\infty,
			\infty}^{\beta} (\mathbb{R}^d)$ is defined by
	\[ \mathcal{C}^{\beta} (\mathbb{R}^d) = \{ u \in \mathcal{S}' (\mathbb{R}^d)
		: \|u\|_{\beta} \assign \sup_{j \ge - 1} 2^{j \beta} \| \Delta_j
		u\|_{L^{\infty} (\mathbb{R}^d)} < \infty \} . \]
	Such a notation is justified by the fact that for $\beta > 0$, $\beta \nin
		\mathbb{N}$, the space $\mathcal{C}^{\beta} (\mathbb{R}^d)$ coincides with
	the classical space of $\beta$-H{\"o}lder continuous functions. We also use
	the notation $\mathcal{C}^{\beta} (\mathbb{R}^d ; \mathbb{R}^d) \assign
		(\mathcal{C}^{\beta} (\mathbb{R}^d))^d$. Choosing a different dyadic partition of unity yields an equivalent norm.
\end{definition}

We can formally decompose the product of two tempered distributions $f, g \in
	\mathcal{S}' (\mathbb{R}^d)$ as
\[ f \cdummy g = f \para g + f \arap g + f \odot g, \]
where, denoting $S_j \equiv \sum_{i = - 1}^{j - 1} \Delta_i$,
\[ f \para g \assign \sum_{j \geqslant - 1} S_{j - 1} f \Delta_j g \quad
	\tmop{and} \quad f \arap g \assign \sum_{j \geqslant - 1} S_{j - 1} g
	\Delta_j f \]
are called \tmtextit{paraproducts}, and
\[ f \odot g \assign \sum_{j \geqslant - 1} \sum_{| i - j | \leqslant 1}
	\Delta_i f \Delta_j g \]
is the \tmtextit{resonant product}.

\begin{proposition}
	\label{paraestimates}$\tmmathbf{(\tmop{Paraproduct} \tmop{estimates})}$ For
	all $\beta \in \mathbb{R}$ and $f, g \in \mathcal{S}' (\mathbb{R}^d)$ we
	have
	\[ \| f \para g \|_{\mathcal{C}^{\beta}} \lesssim \| f \|_{L^{\infty}} \| g
		\|_{\mathcal{C}^{\beta}} . \]
	If $\alpha < 0$,
	\[ \| f \para g \|_{\mathcal{C}^{\alpha + \beta}} \lesssim \| f
		\|_{\mathcal{C}^{\alpha}} \| g \|_{\mathcal{C}^{\beta}}, \]
	and if furthermore $\alpha + \beta > 0$,
	\[ \| f \odot g \|_{\mathcal{C}^{\alpha + \beta}} \lesssim \| f
		\|_{\mathcal{C}^{\alpha}} \| g \|_{\mathcal{C}^{\beta}} . \]
\end{proposition}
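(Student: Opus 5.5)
We argue blockwise, using only that Littlewood--Paley blocks of a sum localised in an annulus (or a ball) are controlled by the pieces meeting that region. The standard fact we use is the following. If $u=\sum_j u_j$, where each $\mathcal{F}u_j$ is supported in a ball $2^j\mathcal{B}$ (resp. an annulus $2^j\mathcal{A}$) and $\|u_j\|_{L^\infty}\lesssim 2^{-j\gamma}M$, then $\|u\|_{\mathcal{C}^\gamma}\lesssim M$. In the annulus case this holds for every $\gamma\in\mathbb{R}$; in the ball case it requires $\gamma>0$.

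The proof of this fact is short. In the annulus case, $\Delta_k u_j\neq0$ only for $|j-k|\le N$ with $N$ fixed. Moreover $\|\Delta_k v\|_{L^\infty}\lesssim\|v\|_{L^\infty}$, since $\Delta_k$ is convolution with a rescaled $L^1$ kernel of uniformly bounded norm. Hence $\|\Delta_k u\|_{L^\infty}\lesssim\sum_{|j-k|\le N}2^{-j\gamma}M\lesssim 2^{-k\gamma}M$. In the ball case, $\Delta_k u_j\neq0$ only for $j\ge k-N$. Then $\|\Delta_k u\|_{L^\infty}\lesssim\sum_{j\ge k-N}2^{-j\gamma}M\lesssim 2^{-k\gamma}M$, and the geometric sum converges precisely because $\gamma>0$.

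\emph{Paraproduct, first estimate.} Write $f\para g=\sum_j S_{j-1}f\,\Delta_j g$. Since $\mathcal{F}(S_{j-1}f)$ lies in a ball of radius $c\,2^{j-2}$ and $\mathcal{F}\Delta_j g$ lies in the annulus $2^j\mathcal{A}$, the Fourier support of the product lies in a slightly enlarged annulus $2^j\tilde{\mathcal{A}}$. This is where the index shift $j-1$ is used: it keeps the support away from the origin. We also have $\|S_{j-1}f\|_{L^\infty}\lesssim\|f\|_{L^\infty}$, because $S_{j-1}$ is convolution with an $L^1$-bounded kernel. This gives $\|S_{j-1}f\,\Delta_jg\|_{L^\infty}\lesssim\|f\|_{L^\infty}2^{-j\beta}\|g\|_{\mathcal{C}^\beta}$. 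The annulus case of the fact then yields the estimate for every $\beta\in\mathbb{R}$. (The blocks $j=-1,0$, where $S_{j-1}=0$ or the ball is degenerate, contribute nothing or are harmless.)

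\emph{Paraproduct with $\alpha<0$.} The only change is the bound $\|S_{j-1}f\|_{L^\infty}\le\sum_{i\le j-2}\|\Delta_i f\|_{L^\infty}\le\sum_{i\le j-2}2^{-i\alpha}\|f\|_{\mathcal{C}^\alpha}\lesssim 2^{-j\alpha}\|f\|_{\mathcal{C}^\alpha}$. This geometric sum is dominated by its top term exactly because $-\alpha>0$. Combining with the bound on $\Delta_jg$ gives terms of size $2^{-j(\alpha+\beta)}$ with annular Fourier support, and the conclusion follows as before.

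\emph{Resonant product.} Here $\Delta_if\,\Delta_jg$ with $|i-j|\le1$ has Fourier support only in a ball of radius $\sim2^j$; there is no annulus, since low frequencies can arise from cancellation of two high frequencies. Each term is bounded by $2^{-i\alpha-j\beta}\|f\|_{\mathcal{C}^\alpha}\|g\|_{\mathcal{C}^\beta}\lesssim2^{-j(\alpha+\beta)}\|f\|_{\mathcal{C}^\alpha}\|g\|_{\mathcal{C}^\beta}$. The ball case of the fact applies because $\alpha+\beta>0$.

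The main subtlety is this last step: the loss of annular localisation, which is exactly why the hypothesis $\alpha+\beta>0$ is needed. A further point is that the series defining $f\odot g$ only makes sense, say in $\mathcal{S}'$, through the convergence just established. For general $f,g\in\mathcal{S}'$ the estimates are understood with the convention that the right-hand side is infinite if a norm is not finite.
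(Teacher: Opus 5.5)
Your argument is correct and is exactly the standard Littlewood--Paley proof from Chapter~2 of Bahouri--Chemin--Danchin: a localisation lemma for sums whose pieces have Fourier support in dyadic annuli or balls, followed by blockwise bounds on $S_{j-1}f\,\Delta_j g$ and $\Delta_i f\,\Delta_j g$. The paper cites precisely this reference instead of giving a proof. The two details you leave implicit are routine and are treated there as well: convergence of $\sum_j u_j$ in $\mathcal{S}'$ in the annulus case when $\gamma\le 0$, and the fact that $\tfrac12\operatorname{supp}\rho_{-1}+\operatorname{supp}\rho_0$ avoids the origin (automatic for the usual radial partition, since $\operatorname{supp}\rho_{-1}\cap\operatorname{supp}\rho_1=\emptyset$).
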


For a more detailed study of Besov spaces and paraproducts, the interested
reader may consult Chapter~2 in {\cite{bahouri_fourier_2011}}. From now on, we
fix $\alpha \in (1 / 2, 2 / 3)$ and $T > 0$.

\begin{definition}[Enhanced drift]
	\label{Def:enhanced-drift}\label{enhanceddrift}We say that $b \equiv
		(b^1, \ldots, b^d) \in C_T \mathcal{C}^{- \alpha} (\mathbb{R}^d ;
		\mathbb{R}^d)$ {is enhanced if there exists
	$B\equiv(B^{i,j})_{i,j=1,\ldots,d}\in
	C_T\mathcal C^{1-2\alpha}(\mathbb R^d;\mathbb R^{d\times d})$
	such that $(b,B)\in\mathcal X^\alpha$, where
	\[
		\mathcal X^\alpha
		\assign\mathrm{cl}_{C_T\mathcal C^{-\alpha}\times
		C_T\mathcal C^{1-2\alpha}}
		\left\{\left(\eta,
		\bigl(J^T(\partial_{x_j}\eta^i)\odot\eta^j\bigr)_{i,j=1,\ldots,d}
		\right):
			\eta\in C_T\mathcal C^\infty(\mathbb R^d;\mathbb R^d)\right\}.
	\]
	Here $\mathrm{cl}$ means closure in $C_T \mathcal{C}^{-\alpha}
		(\mathbb{R}^d ; \mathbb{R}^d) \times C_T \mathcal{C}^{1-2\alpha}
		(\mathbb{R}^d ; \mathbb{R}^{d \times d})$, $J^T (u)$ is the solution of
	\[ \left( \partial_t + \frac{1}{2} \Delta \right) J^T (u) = -u, \qquad J^T
		(u) (T) = 0, \]}
	and $\odot$ denotes the resonant product of two distributions; see
	Chapter~2 of {\cite{bahouri_fourier_2011}} for more details.
\end{definition}

We are now ready to state the definition of the martingale solution to
\eqref{eq:singular-SDE}.

\begin{definition}[Martingale problem]
	\label{def:martingale-problem}Let $T > 0$ and $b \in C ([0, T] ;
		\mathcal{C}^{- \alpha} (\mathbb{R}^d ; \mathbb{R}^d))$ be a distributional
	drift admitting an enhancement $(b, B) \in \mathcal{X}^{\alpha}$.
	Consider the filtered probability space $(\Omega^c,
		\mathcal{F}^c, (\mathcal{F}_t^c)_{t \in [0, T]})$ with $\Omega^c = C ([0,
		T] ; \mathbb{R}^d)$, where $\mathcal{F}^c$ is the Borel
	$\sigma$-algebra on $\Omega^c$ and $(\mathcal{F}_t^c)$ is the canonical
	filtration.

	For $s \in [0, T]$ and $x \in \mathbb{R}^d$, a probability measure
	$\mathbb{P}^{s, x}$ on $(\Omega^c, \mathcal{F}^c, (\mathcal{F}_t^c)_{t \in
			[s, T]})$ is said to solve the martingale problem associated to the singular
	SDE \eqref{eq:singular-SDE} if the canonical process $X_t (\omega) = \omega
		(t)$ satisfies the following two properties under $\mathbb{P}^{s, x}$:
	\begin{enumerate}
		\item $\mathbb{P}^{s, x}  (X_s = x) = 1$;

		\item for all $f \in C_T L^{\infty} (\mathbb{R}^d)$ and every $u_T \in
		      \mathcal{C}^{2-\alpha} (\mathbb{R}^d)$, the process
		      \begin{equation}
			      \label{eq:martingale-problem-process} \left\{ u (t, X_t) - u (s, x) -
			      \int_s^t f (r, X_r) \hspace{0.17em} dr \right\}_{t \in [s, T]}
		      \end{equation}
		      is a square-integrable martingale, where $u$ is the unique solution of the
		      Kolmogorov backward equation
		      \begin{equation}
			      \label{eq:Kolmogorov-backward} \partial_t u + \frac{1}{2} \Delta u + b
			      \cdot \nabla u = f, \qquad u (T, \cdot) = u_T .
		      \end{equation}
	\end{enumerate}
\end{definition}

The existence and uniqueness, together with the Markovianity, of the
martingale solution is guaranteed by
{\cite[Theorem~4.3]{cannizzaro_multidimensional_2018}}.

\begin{theorem}[{\cite{cannizzaro_multidimensional_2018}}, Theorem~4.3]
	\label{thm:martingale-problem-wellposedness}Let $T > 0$ and $b \in C ([0,
		T] ; \mathcal{C}^{- \alpha} (\mathbb{R}^d ; \mathbb{R}^d))$ be a
	distributional drift admitting an enhancement $(b, B) \in
		\mathcal{X}^{\alpha}$. Then for each $s \in [0, T]$
	and $x \in \mathbb{R}^d$ there exists a unique probability measure
	$\mathbb{P}^{s, x}$ on $(\Omega^c, \mathcal{F}^c, (\mathcal{F}_t^c)_{t \in
			[s, T]})$ which solves the martingale problem associated to the singular SDE
	\eqref{eq:singular-SDE}. Moreover, the canonical process $(X_t)_{t \in [s,
			T]}$ under $\mathbb{P}^{s, x}$ is strong Markov.
\end{theorem}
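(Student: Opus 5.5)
This is a cited result, Theorem~4.3 of \cite{cannizzaro_multidimensional_2018}. I would reprove it along the lines of that paper: solve the Kolmogorov equation by paracontrolled methods, then build $\mathbb{P}^{s,x}$ as a limit of measures for mollified drifts.

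\textbf{Step 1: the Kolmogorov equation.} Fix $f\in C_TL^\infty$ and $u_T\in\mathcal{C}^{2-\alpha}$. I would solve \eqref{eq:Kolmogorov-backward} with the paracontrolled ansatz
\[
u=\nabla u\para J^T(b)+u^\sharp,\qquad u^\sharp\in C_T\mathcal{C}^{2\gamma},
\]
for some $\gamma<2-\alpha$ close to it. The resonant term $\nabla u\odot b$ is defined through the commutator lemma together with the enhancement $B$. The key input is $2(1-\alpha)-\alpha>0$, i.e.\ $\alpha<2/3$. A fixed point on a short interval, iterated using linearity, gives a unique solution on $[0,T]$. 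The solution map $(b,B,f,u_T)\mapsto u$ is continuous and locally uniform in the data.

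\textbf{Step 2: existence.} Take smooth $b^n$ with $(b^n,B^n)\to(b,B)$ in $\mathcal{X}^\alpha$, and let $X^n$ be the classical SDE solutions started at $(s,x)$. For tightness, apply It\^o's formula to the classical solutions $u^n$ of the Kolmogorov equation with data $f=0$ and $u_T=\mathrm{id}$ (componentwise, suitably localized). By Step~1 these satisfy $u^n(t,y)=y+O(T-t)^{\varepsilon}$ and have uniformly bounded gradients. This turns $X^n$ into a martingale plus a uniformly H\"older remainder, which gives Kolmogorov-type moment bounds and hence tightness. Along a subsequence the laws converge weakly to some $\mathbb{P}^{s,x}$. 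The martingale property of $u(t,X_t)-u(s,x)-\int_s^t f(r,X_r)\,dr$ passes to the limit by the continuity of the solution map in Step~1, using uniform convergence of $u^n\to u$ on compacts and uniform $L^2$ bounds. For bounded measurable $f$, one approximates $f$ and uses the Krylov-type estimates that follow from Step~1.

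\textbf{Step 3: uniqueness and strong Markov.} Uniqueness of one-dimensional marginals follows by taking $f=0$: then $\mathbb{E}[u_T(X_T)]=u(s,x)$ is determined for all $u_T$ in a separating class, and the same holds on any subinterval. Finite-dimensional distributions are then handled by the standard Stroock--Varadhan conditioning argument. This gives well-posedness, and the strong Markov property follows from it by regular conditional probabilities.

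\textbf{Main obstacle.} I expect Step~1 to be the hardest part: the commutator estimates and the continuity of the solution map in the rough regime $\alpha\in(1/2,2/3)$, together with handling the unbounded terminal condition $u_T=\mathrm{id}$ needed for tightness.
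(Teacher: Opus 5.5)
Your outline follows the standard argument of \cite{cannizzaro_multidimensional_2018}. The paper gives no proof of its own and simply cites that theorem, so your route is the one it relies on.

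Two small corrections. First, the remainder can only be expected in $C_T\mathcal{C}^{2\gamma-1}$ (about $3-2\alpha$, cf.\ the space $\mathscr D^{a,\theta,\rho}_{\bar T,T,b}$ in the Appendix), not in $\mathcal{C}^{2\gamma}$. Second, for tightness you need $\|\nabla(u^n-\mathrm{id})\|_\infty<1$ uniformly in $n$, not just bounded gradients, in order to absorb the term $\|\nabla(u^n-\mathrm{id})\|_\infty|X^n_t-X^n_r|$. Your $O((T-t)^\varepsilon)$ bound gives this only near $T$, so you would have to restart on short intervals; it is obtained more cleanly on all of $[0,T]$ from the Zvonkin equation \eqref{linearZvonkin} with $\lambda$ large (Proposition~\ref{equl}).
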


{
\subsection{Summary of the main assumptions}

We collect here the common assumption on the singular drift and the three main
sets of assumptions on the Hamiltonian used below. The latter are not mutually
exclusive: the control representation uses the control assumptions, the
intrinsic FBSDE theory uses the BSDE assumptions, and the identification with
viscosity solutions combines the BSDE assumptions with the PDE assumptions.
Conditions on the terminal datum will be stated separately in each result.

\phantomsection\label{assumption:A}
\paragraph{Assumption A (enhanced drift).}
For some $\alpha\in(1/2,2/3)$, let
$b\in C_T\mathcal C^{-\alpha}(\mathbb R^d;\mathbb R^d)$ be a distributional
drift admitting an enhancement $(b,B)\in\mathcal X^\alpha$. \\

Throughout the remainder of the paper, \AssumptionA{} is in force and will not
be repeated in individual statements. {Later, when working with rough weak solutions in
Section~\ref{sec:BSDE}, we require the slightly stronger condition that the
mixed resonant products associated with the enhanced drift converge.}

{\phantomsection\label{assumption:A-mixed}
\paragraph{Assumption A-mixed (mixed enhanced drift).}
Assumption A holds. Moreover, there exists a fixed sequence
$b_n\in C_T(C_b^\infty(\mathbb R^d;\mathbb R^d))$ such that
\[
	\lim_{m,n\to\infty}\Bigg(
	\|b_n-b\|_{C_T\mathcal C^{-\alpha}}
	+\Big\|
		\bigl(J^T(\partial_{x_j}b_m^i)\odot b_n^j\bigr)_{i,j=1,\ldots,d}
		-B
	\Big\|_{C_T\mathcal C^{1-2\alpha}}
	\Bigg)=0.
\]
}

\phantomsection\label{assumption:C}
\paragraph{Assumption C (Control).}
The Hamiltonian is independent of the solution variable and is of the form
$H=H(t,x,p)$. There exists a jointly continuous function
$L:[0,T]\times\mathbb R^d\times\mathbb R^d\to\mathbb R$ such that
\begin{equation}\label{eq:legendre}
	H(t,x,p)=\sup_{v\in\mathbb R^d}\{p\cdot v-L(t,x,v)\}.
\end{equation}

Moreover, $L$ is uniformly superlinear in $v$, in the sense that
\[
	\lim_{R\to\infty}
	\inf_{\substack{t\in[0,T],\,x\in\mathbb R^d\\ |v|\geq R}}
	\frac{L(t,x,v)}{|v|}=+\infty.
\]
\[
	\sup_{t\in[0,T],\,x\in\mathbb R^d}
	\bigl(|H(t,x,0)|+|L(t,x,0)|\bigr)<\infty.
\]

\phantomsection\label{assumption:B}
\paragraph{Assumption B (Quadratic BSDE).}
The generator
$H:[0,T]\times\mathbb R^d\times\mathbb R\times\mathbb R^d\to\mathbb R$
is Borel measurable and continuous in $(y,z)$. There exists $C>0$ such that,
for every $t,x,y,y',z,z'$,
\[
	|H(t,x,y,z)-H(t,x,y',z')|
	\leq C\left[|y-y'|
	+(1+|y|+|y'|+|z|+|z'|)|z-z'|\right].
\]
\begin{equation}
	|H(t,x,y,z)|\leq C(1+|y|+|z|^2).
	\label{assump:quadratic growth}
\end{equation}

\phantomsection\label{assumption:P}
\paragraph{Assumption P (PDE).}
The Hamiltonian
$H:[0,T]\times\mathbb R^d\times\mathbb R\times\mathbb R^d\to\mathbb R$
is continuous. There exist $\beta\in(0,1]$, $C>0$,
\[
	\kappa\in\bigl[1,2+((\gamma-1)\wedge\beta)\bigr),
	\qquad q\in[0,2),
\]
such that, for all $t,x,x',y,y',p,p'$,
\[
\begin{aligned}
	|H(t,x,y,p)-H(t,x',y,p)|
	&\leq C|x-x'|^\beta(1+|p|^\kappa),\\
	|H(t,x,y,p)-H(t,x,y,p')|
	&\leq C\bigl(1+(|p|\vee|p'|)^{\kappa-1}\bigr)|p-p'|,\\
	|H(t,x,y,p)-H(t,x,y',p)|
	&\leq C(1+|p|^q)|y-y'|.
\end{aligned}
\]

\phantomsection\label{assumption:P-continuous}
\paragraph{Assumption P-continuous.}
The time dependence of the Hamiltonian is continuous in $L^\infty$ locally
uniformly in $(y,p)$: for every $R>0$,
\[
	\lim_{s\to t}
	\sup_{\substack{x\in\mathbb R^d\\|y|+|p|\leq R}}
	|H(s,x,y,p)-H(t,x,y,p)|=0.
\]
}

\section{Definition of viscosity solutions}\label{sec:maindef}

We are interested in defining viscosity solutions for singular HJB equations
of the form
\begin{equation}
	\left( \partial_t + \frac{1}{2} \Delta + b \nabla \right) h (t, x) = - H (t,
	x, h (t, x), \nabla h (t, x)), \quad h (T, \cdummy) =
	\overline{h} (\cdummy), \label{singularHJB}
\end{equation}
where $(t, x) \in [0, T) \times \mathbb{R}^d$ and $b$ is enhanced in the sense of Definition
\ref{enhanceddrift}. Suppose for a moment that $b \in C_T
	\mathcal{C}^{\infty}_b$, and assume $\varphi \in C^{1, 2} ([0, T) \times
	\mathbb{R}^d)$ is such that
\begin{enumerate}
	\item $\left( \partial_t + \frac{1}{2} \Delta + b (t, x) \nabla \right)
		      \varphi (t, x) = g (t, x)$ for every $(t, x) \in [0, T) \times
		      \mathbb{R}^d$, for some $g \in C_T \mathcal{C}_b$;

	\item $h - \varphi$ has a strict local maximum at $(t_0, x_0) \in [0, T)
		      \times \mathbb{R}^d$.
\end{enumerate}
In that case $h$ and $\varphi$ are classical solutions of their respective
PDEs, so it is clear (since $\Delta (h - \varphi) (t_0, x_0) \leqslant 0$ and
$\nabla (h - \varphi) (t_0, x_0) = 0$) that
\begin{equation}
	g (t_0, x_0) \geqslant - H (t_0, x_0, h (t_0, x_0), \nabla \varphi (t_0,
	x_0) ) . \label{viscineq}
\end{equation}
Notice that the last expression involves only derivatives of $\varphi$. Going
back to the singular case, i.e. enhanced $b$, this suggests that our test
functions should be solutions of (linear) paracontrolled PDEs
\[ \left( \partial_t + \frac{\Delta}{2} + b \nabla \right) \varphi = g, \quad
	\varphi (T, \cdummy) = \varphi^T . \]
Such equations have unique solutions in the paracontrolled sense: this means roughly that for a subspace of functions depending on the enhancement, one can make sense of the product \(b\star\nabla\varphi\) and look for mild solutions of the form
\[ \varphi (t, \cdummy) = e^{(T - t) \frac{\Delta}{2}} \left( \varphi^T \right)
	(\cdummy) + \int_t^T e^{(s - t) \frac{\Delta}{2}} (b \star \nabla \varphi (s,
	\cdummy)- g (s, \cdummy)) d s, \]
see the Appendix for more details. Such solutions have regularity $C_T \mathcal{C}^{(2 - \alpha) -} \cap C^{\left( \frac{2 -
				\alpha}{2} \right) -} L^{\infty}$, so in
particular they are differentiable in space, and
\eqref{viscineq} still makes sense. This prompts the following

\begin{definition}
	\tmtextbf{(Test functions)} \label{testfunctdef}For any $4 / 3 < \gamma < 2
		- \alpha$, we say that $\varphi \in C_T \mathcal{C}^{\gamma} (\mathbb{R}^d)
		\cap C_T^{\frac{\gamma}{2}} L^{\infty} (\mathbb{R}^d)$ is a test function,
	and denote it by $\varphi \in \mathcal{T}^b$, if there exists some $g \in
		C_T \mathcal{C}_b (\mathbb{R}^d)$ and $\varphi^T \in \mathcal{C}^{\gamma}
		(\mathbb{R}^d)$ such that $\varphi$ solves the paracontrolled PDE
	\begin{equation}
		\left( \partial_t + \frac{\Delta}{2} + b \nabla \right) \varphi = g\quad
		\text{in}\quad [0, T) \times \mathbb{R}^d, \quad \varphi (T, \cdummy) =
		\varphi^T (\cdummy) . \label{testeqdef}
	\end{equation}
\end{definition}

We now have all the ingredients necessary to define viscosity
(sub/super)solutions.

\begin{definition}
	\label{viscositydef}We say that $h \in \tmop{USC} ([0, T] \times
		\mathbb{R}^d)  (\tmop{respectively}, \tmop{LSC})$ is a
	$\tmop{subsolution}$ (resp. $\tmop{supersolution}$) of \eqref{singularHJB}
	if
	\begin{enumerate}
		\item For any $\varphi \in \mathcal{T}^b$ such that $h - \varphi$ has a
		      strict local maximum (minimum) at $(t_0, x_0) \in [0, T) \times
			      \mathbb{R}^d$,
		      \[ g (t_0, x_0) \geqslant (\leqslant) - H (t_0, x_0, h (t_0, x_0), \nabla
			      \varphi (t_0, x_0) ) ; \]
		\item $h (T, \cdummy) \leqslant (\geqslant) \overline{h} (\cdummy)$.
	\end{enumerate}
	We say that $h$ is a solution if it is both a sub and supersolution (so in
	particular $h \in C ([0, T] \times \mathbb{R}^d)$ and $h (T, \cdummy) =
		\overline{h}$).
\end{definition}

By adding or subtracting a constant, we may always assume that $h (t_0, x_0)
	= \varphi (t_0, x_0)$ at the local extrema. If $b$ is smooth, our notion of
viscosity solution coincides with the usual one: if $\varphi$ is an arbitrary
$C_b^{1, 2} ([0, T] \times \mathbb{R}^d)$ function, we can just take $g$ to be
\[ \partial_t \varphi + \frac{\Delta}{2} \varphi + b \nabla \varphi \in C_T
	\mathcal{C}_b (\mathbb{R}^d) . \]
In the usual viscosity setting, considering non strict local extrema
yields an equivalent definition. This is also the case here, but the proof requires Zvonkin's transform, see Corollary \ref{touching} later. Another result that follows from that Section, and we will need presently, is the following
\begin{lemma}[Chain rule]\label{zvonkin_chain_rule}
	There exists a (time dependent) \(C^1\) diffeomorphism of \(\mathbb{R}^d\), \(\Phi\), and bounded Lipschitz functions \(c\equiv(c_i)\) such that: if \(q\in C_b^{1,2}([0,T]\times\mathbb{R}^d)\) and we let
	\[
		\psi(t,x):=q(t,\Phi(t,x)),
	\]
	then $\psi\in\mathcal T^b$ and
	\[
		\left(\partial_t+\frac{\Delta}{2}+b\nabla\right)\psi=g,
		\qquad \psi(T,\cdot)=q(T,\cdot),
	\]
	where
	\begin{equation}
		g(t,x):=(\partial_tq)(t,\Phi(t,x))
		+\frac{1}{2}(\nabla\Phi\nabla\Phi^T)(t,x)\colons
		\nabla^2q(t,\Phi(t,x))
		+c(t,x)\cdot\nabla q(t,\Phi(t,x)).
		\label{eq:zvonkin-chain-force}
	\end{equation}
	In particular, there exists a constant $C$, depending only on \(\Phi\) and \(c\), such that
	\[
		\|\nabla\psi\|_\infty+\|g\|_\infty
		\leq C\|q\|_{C_b^{1,2}}.
	\]

\end{lemma}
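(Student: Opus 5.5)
We construct $\Phi$ by a Zvonkin transform. Fix $\lambda>0$ large and let $u=(u^1,\dots,u^d)$ solve, componentwise in the paracontrolled sense (Appendix), the linear resolvent-type backward equation
\[
\Bigl(\partial_t+\tfrac12\Delta+b\nabla\Bigr)u^k=\lambda u^k-b^k,\qquad u^k(T,\cdot)=0,\qquad k=1,\dots,d.
\]
The forcing $-b^k\in C_T\mathcal C^{-\alpha}$ is singular. Still, the paracontrolled theory for the enhanced drift $(b,B)\in\mathcal X^\alpha$ gives a unique solution $u\in C_T\mathcal C^{\gamma}\cap C^{\gamma/2}_TL^\infty$ for every $\gamma<2-\alpha$. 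Since $2-\alpha>4/3>1$, $\nabla u$ is bounded and Hölder. The resolvent estimates give $\|\nabla u\|_{C_T L^\infty}\lesssim \lambda^{-\theta}$ for some $\theta>0$, so for $\lambda$ large $\|\nabla u\|_\infty\le 1/2$. We then set $\Psi(t,x):=x+u(t,x)$. For each $t$ this is a $C^1$ diffeomorphism of $\mathbb R^d$ by the global inverse function theorem (Hadamard), because $\nabla\Psi=I+\nabla u$ is uniformly invertible. We define $\Phi(t,\cdot)$ to be $\Psi(t,\cdot)$ itself, so that $\psi=q(t,\Psi(t,x))$. The name $\Phi$ in the statement is simply this forward map; the inverse is used later to pass to the transformed equation.

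Formally, the chain rule gives
\[
\Bigl(\partial_t+\tfrac12\Delta+b\nabla\Bigr)\psi
=(\partial_tq)\circ\Psi+\tfrac12(\nabla\Psi\nabla\Psi^T):\nabla^2q\circ\Psi+\sum_k\Bigl(\partial_t+\tfrac12\Delta+b\nabla\Bigr)\Psi^k\,(\partial_kq)\circ\Psi .
\]
Moreover, $(\partial_t+\frac12\Delta+b\nabla)\Psi^k=b^k+\lambda u^k-b^k=\lambda u^k$. The singular drift thus cancels, and we obtain \eqref{eq:zvonkin-chain-force} with $c_k:=\lambda u^k$. The function $c$ is bounded and Lipschitz, since $u\in C_T\mathcal C^\gamma$ with $\gamma>1$.

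The main step is making this chain rule rigorous in the paracontrolled sense. We need to show that $\psi$ is the paracontrolled solution of \eqref{testeqdef} with this $g$, and this is where the difficulty lies. We approximate $(b,B)$ by smooth $(b_n,B_n)$ with $B_n=(J^T(\partial_jb_n^i)\odot b_n^j)$, which is possible by the definition of $\mathcal X^\alpha$. For each $n$ we solve the corresponding equation for $u_n$ classically, set $\Psi_n=x+u_n$, and note that the classical chain rule gives the identity above with $g_n$ built from $\Psi_n$ and $c_n=\lambda u_n$. By continuity of the paracontrolled solution map in the enhanced drift (Appendix), $u_n\to u$ in $C_T\mathcal C^\gamma$. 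Hence $\Psi_n\to\Psi$ in $C^1$, $g_n\to g$ uniformly, and $\psi_n\to\psi$ in $C_T\mathcal C^\gamma\cap C_T^{\gamma/2}L^\infty$, using $q\in C^{1,2}_b$ and composition estimates. The $\psi_n$ are classical, hence paracontrolled, solutions with drift $b_n$ and forcing $g_n$. Passing to the limit with the same continuity statement, which is jointly continuous in drift, forcing and terminal datum, shows that $\psi$ solves \eqref{testeqdef} with drift $b$, forcing $g$ and terminal value $q(T,\Psi(T,\cdot))=q(T,\cdot)$, since $u(T)=0$. This gives $\psi\in\mathcal T^b$.

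Finally, the bounds are immediate: $\nabla\psi=\nabla\Psi^T(\nabla q)\circ\Psi$, and $g$ involves only $\partial_tq$, $\nabla q$ and $\nabla^2q$ multiplied by the bounded quantities $\nabla\Psi\nabla\Psi^T$ and $c$. Therefore $\|\nabla\psi\|_\infty+\|g\|_\infty\le C\|q\|_{C^{1,2}_b}$ with $C$ depending only on $\|\nabla\Psi\|_\infty$ and $\|c\|_\infty$.
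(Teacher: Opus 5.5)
Your proposal is correct and follows the paper's own argument. It uses the Zvonkin map $\Phi=x+u_\lambda$ from Proposition~\ref{equl}, the formal chain-rule computation at the start of Section~\ref{sec::zvonkin} (which yields $c=\lambda u_\lambda$), and a rigorous justification by smooth approximation of the enhanced drift together with continuity of the paracontrolled solution map, exactly as in the proof of Theorem~\ref{thm:uniqueness}. The only variation is which object you hold fixed: you keep $q$ fixed and let the forcing $g_n$ vary, whereas the paper keeps $g$ fixed and lets $\varphi_n$ solve the approximate transformed (non-singular) equation, relying on its stability instead. Your choice requires joint continuity of $S_r$ in drift and forcing, and $g_n\to g$ uniformly; the latter needs $\partial_t q$ and $\nabla^2 q$ to be uniformly continuous in $x$, which holds for the test functions actually used in the paper.
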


Again let $4 / 3 < \gamma < 2
		- \alpha$. A "classical" paracontrolled solution of the singular HJB
equation \eqref{singularHJB} is a function $h \in C_T \mathcal{C}^{\gamma} 
		\cap C_T^{\frac{\gamma}{2}} L^{\infty}$
satisfying
\[ h (t, \cdummy) = e^{(T - t) \frac{\Delta}{2}} \left( \overline{h} \right)
	(\cdummy) + \int_t^T e^{(s - t) \frac{\Delta}{2}} \left(b \star \nabla h (s,
	\cdummy) + H (s, \cdummy, \nabla h (s, \cdummy))\right) d s. \]
In our language, this means that \(h\) is a test function with RHS equal to
\(-H (s, x, \nabla h (s, x))\). Contrary to the usual setting, it is not
immediately clear that \(h\) is a viscosity solution in the sense of
Definition~\ref{viscositydef}. This is established under \AssumptionC{} in
Theorem~\ref{paravicosity} and under \AssumptionB{} in
Corollary~\ref{cor:paracontrolled-viscosity-B}. However, we do have the following
\begin{proposition}
	\label{smoothcomparison}Assume there exists $h$, a paracontrolled solution
	of \eqref{singularHJB} with terminal condition $h (T, \cdummy) =
		\overline{h} \in \mathcal{C}^{\gamma}$ for some $\gamma \in (4/3,
		2-\alpha)$. {Assume moreover that, for every $R,S>0$, there exist
		$C_{R,S}>0$ and a modulus of continuity $\omega_{R,S}$ such that, for all
		$t,x,y,y',p,p'$ with $|y|,|y'|\leq R$ and $|p|,|p'|\leq S$,
		\[
			|H(t,x,y,p)-H(t,x,y',p')|
			\leq C_{R,S}|y-y'|+\omega_{R,S}(|p-p'|).
		\]}
	Then, if $v$ is a bounded subsolution (supersolution) with $v ({T}, \cdummy)
		\leqslant (\geqslant) \overline{h}$ in $\mathbb{R}^d$, we have that
	\[ v \leqslant (\geqslant) h \text{ in }  [0, T] \times \mathbb{R}^d . \]
	In particular, if there exists a bounded viscosity solution $v$ with $v (T, \cdummy)
		= \overline{h}$, necessarily $v = h$.

	\begin{proof}
		Let $v$ be a bounded subsolution. Choose
		$R>\|v\|_{L^\infty}\vee\|h\|_{L^\infty}$,
		$S>\|\nabla h\|_{L^\infty}+1$ and $\lambda>C_{R,S}$. For
		$\varepsilon>0$, set
		\[
			h_{\varepsilon}(t,x):=h(t,x)+\varepsilon e^{\lambda(T-t)}.
		\]
		Suppose that
		\[
			M_{\varepsilon}:=\sup_{(t,x)\in[0,T]\times\mathbb{R}^d}
			e^{\lambda(t-T)}\bigl(v(t,x)-h_{\varepsilon}(t,x)\bigr)>0.
		\]
		Fix a smooth function $\rho:\mathbb{R}^d\to[0,2]$ such that $\rho(0)=0$
		and $\rho(y)=2$ for $|y|\geq1$. For
		$0<\delta<M_{\varepsilon}/2$, choose $(t_{\delta},x_{\delta})$ such that
		\[
			e^{\lambda(t_{\delta}-T)}\bigl(v(t_{\delta},x_{\delta})
			-h_{\varepsilon}(t_{\delta},x_{\delta})\bigr)>M_{\varepsilon}-\delta
		\]
		and, using Lemma \ref{zvonkin_chain_rule}, set
		\[
			\psi_{\delta}(t,x):=\delta\rho\bigl(\Phi(t,x)
			-\Phi(t_{\delta},x_{\delta})\bigr).
		\]
		The function
		\[
			(t,x)\longmapsto e^{\lambda(t-T)}\bigl(v(t,x)-h_{\varepsilon}(t,x)\bigr)
			-\psi_{\delta}(t,x)
		\]
		attains its maximum at some $(s_{\delta},z_{\delta})$. Indeed, at
		$(t_{\delta},x_{\delta})$ its value is larger than
		$M_{\varepsilon}-\delta$, while on
		$|\Phi(t,x)-\Phi(t_{\delta},x_{\delta})|\geq1$ it is at most
		$M_{\varepsilon}-2\delta$. The maximum is therefore attained in a compact
		set. Its value, denoted by $R_{\delta}$, is positive, and the terminal
		condition implies $s_{\delta}<T$.

		Lemma~\ref{zvonkin_chain_rule} shows that this change of variables gives
		$\psi_{\delta}\in\mathcal T^b$ and, uniformly in the centre,
		\[
			\|\nabla\psi_{\delta}\|_{L^\infty}
			+\left\|\left(\partial_t+\frac{\Delta}{2}+b\nabla\right)
			\psi_{\delta}\right\|_{L^\infty}\leq C\delta.
		\]
		Consequently,
		\[
			\varphi_{\delta}(t,x):=h_{\varepsilon}(t,x)
			+e^{\lambda(T-t)}\bigl(R_{\delta}+\psi_{\delta}(t,x)\bigr)
		\]
		is a test function touching $v$ from above at
		$(s_{\delta},z_{\delta})$. At the contact point,
		\[
			v-h=e^{\lambda(T-s_{\delta})}
			\bigl(\varepsilon+R_{\delta}+\psi_{\delta}\bigr)>\varepsilon.
		\]
		The viscosity inequality gives, with all functions on the right-hand side
		evaluated at $(s_{\delta},z_{\delta})$,
		\[
			\lambda(v-h)\leq H\bigl(s_{\delta},z_{\delta},v,\nabla h
			+e^{\lambda(T-s_{\delta})}\nabla\psi_{\delta}\bigr)
			-H\bigl(s_{\delta},z_{\delta},h,\nabla h\bigr)
			+e^{\lambda(T-s_{\delta})}
			\left(\partial_t+\frac{\Delta}{2}+b\nabla\right)\psi_{\delta}.
		\]
		By the assumptions on $H$ and the preceding bounds, this yields
		\[
			(\lambda-C_{R,S})(v-h)(s_{\delta},z_{\delta})\leq o_{\delta}(1).
		\]
		Since the left-hand side is bounded below by
		$(\lambda-C_{R,S})\varepsilon>0$, letting $\delta\to0$ gives a contradiction.
		Thus $v\leq h_{\varepsilon}$, and then $v\leq h$ by letting
		$\varepsilon\to0$. The lower comparison follows in the same way by taking
		$h_{\varepsilon}=h-\varepsilon e^{\lambda(T-t)}$ and penalizing
		$e^{\lambda(t-T)}(h_{\varepsilon}-v)$.
	\end{proof}
\end{proposition}
The previous Proposition tells us that whenever a paracontrolled solution
$h$ exists, then it is the only reasonable candidate for a (bounded) viscosity solution.

Our strategy in what remains can be summarised as follows
\[ \begin{array}{ccc}
     h (\tmop{local}) \tmop{paracontrolled} &  & \tmop{Global}
     \tmop{paracontrolled}\\
     \tmop{Control} \Longdownarrow \tmop{BSDE} &  & \Longuparrow\\
     h \tmop{viscosity} + L^{\infty} \tmop{bound} &
     \overset{\tmop{Zvonkin}}{\Longrightarrow} & a \tmop{priori} W^{1,
     \infty}_x
   \end{array} \]

\section{Convex Hamiltonian and singular control problem}\label{sec:control}
Throughout this section, we will work under \AssumptionC{}. We will prove that whenever a paracontrolled solution $h$ exists, then
\begin{equation}
	h (t, x) = \underset{v}{\sup} \mathbb{E}_{t, x} \left[ \overline{h}
		(\gamma^{t, x, v}_T) - \int_t^T L
		(s, \gamma_s^{t, x, v}, v_s) d s \right] = \underset{v}{\sup} J (t, x ; u)
	\label{valuefunc},
\end{equation}
where
\begin{equation}
	\gamma^{t, x, v}_s = x + \int_t^s (v_u + b (u, \gamma^{t, x, v}_s)) d u +
	(W_s - W_t), \label{sde} \quad s \in [t, T] ,
\end{equation}
the precise meaning of which is explained bellow. Once this is established, we prove that the
Dynamic Programming Principle holds (Proposition \ref{strongDPP}), which we
can use in turn to see that any paracontrolled solution is a viscosity
solution. To summarize,
\[ \begin{array}{ccc}
		h \tmop{paracontrolled}        &                 & h \tmop{viscosity} \tmop{solution} \\
		\Longdownarrow                 &                 & \Longuparrow                       \\
		h \tmop{value} \tmop{function} & \Longrightarrow & \tmop{DPP}
	\end{array} \]
    
With the stochastic representation we also obtain, in a straightforward manner, global \(L^\infty\) bounds. These will allow us to establish a priori Lipschitz bounds via viscosity techniques, see Subsection \ref{sec::zvonkin}. 

The well posedness of the paracontrolled solution on the whole interval \([0,T]\) is not clear in
principle: in \cite{zhang_singular_2022}, the best result in this direction,
the Hamiltonian is allowed to grow subquadratically in the gradient, or
quadratically if \(d=1\), while \AssumptionC{} also permits superquadratic
Hamiltonians. Global well posedness can be established a posteriori
using the Lipschitz bounds (so to be fully rigourous, one should work in this
section up to a blow-up time \([T-T^*,T]\), and deduce afterwards that we can
take \(T^*\rightarrow0\)).

The first step is defining the controlled singular diffusions, after which we prove a verification result.
\begin{definition}\label{controlledSDE}
Let $(\Omega,\mathcal F,(\mathcal F_t)_{t\in[0,T]},\mathbb P)$ be a filtered probability space
and let $v$ be a progressively measurable process. For an enhanced drif
$b\in C_T C^{-\alpha}$, we say that an adapted
and continuous stochastic process $\gamma$ is a \emph{controlled martingale solution} of
\[
\gamma_s
=
x+\int_t^s \bigl(b(u,\gamma_u)+v_u\bigr)\,du+W_s-W_t,
\qquad t\le s\le T,
\]
if $\mathbb P(\gamma_t=x)=1$ and whenever
$\varphi_T\in C^\gamma$, $\gamma\in(4/3,2-\alpha)$, $f\in C_T L^\infty$, and $\varphi$ solves the
paracontrolled PDE
\[
\left(\partial_t+\frac12\Delta+b\partial_x\right)\varphi=f,
\qquad
\varphi(T)=\varphi_T
\]
on $[0,T]$, then
\[
\varphi(s,\gamma_s)-\varphi(t,x)
-
\int_t^s
\Bigl(
f(u,\gamma_u)+\partial_x\varphi(u,\gamma_u)v_u
\Bigr)\,du,
\qquad s\in[t,T],
\]
is a $\mathbb{P}$-martingale. We say that $(\Omega,\mathcal F,(\mathcal F_t)_{t\in[0,T]},\mathbb{P},v)$ (or for convenience, $v$) is \emph{admissible} if there exists a $\gamma$ on this space solving the martingale problem.
\end{definition}
Notice that the underlying filtered space may vary, and that for an admissible $v$ we make no claims about the uniqueness of $\gamma$. If $v=0$ and we fix the canonical space, we recover Definition \ref{def:martingale-problem}, and uniqueness does hold.

For the payoff functional we keep the notation $J(t,x;u)$ for simplicity, but rigorously it is a function of the sextuple $(\Omega,\mathcal F,(\mathcal F_t)_{t\in[0,T]},\mathbb{P},v,\gamma)$. 
\begin{proposition}
	$\tmmathbf{(\tmop{Verification})}$\label{verification}
	Let $h$ be a
	paracontrolled solution of \eqref{singularHJB}. Then
	\begin{enumerate}
		\item $h (t, x) \geqslant J (t, x ; u)$ for any admissible drift $u$;

		\item If there exists admissible $(\Omega^{\star}, \{
			      \mathcal{F}_s^{\star} \}, \mathbb{P}^{\star}, \gamma^{\star}, u^{\star})$
		      such that
		      \begin{equation}
			      \begin{array}{lll}
				      u^{\star} (s) & \in & \underset{v}{\arg \max} [v \cdummy \nabla h (s,
					      \gamma^{\star}_s) - L (s, \gamma_s^\star, v)]
			      \end{array} \label{optdrift}
		      \end{equation}
		      for $\tmop{Leb} \times \mathbb{P}^{\star}$ a.e. $(s, \omega) \in [t, T]
			      \times \Omega^{\star}$, then $h (t, x) = J (t, x ; u^{\star})$, and
		      therefore
		      \[ h (t, x) = \underset{v}{\sup} \mathbb{E}_{t, x} \left[ \overline{h}
				      (\gamma^{t, x, v}_T) -
				      \int_t^T L (s, \gamma_s^{t, x, v}, v_s) d s \right] , \]
            where the supremum is taken over the class of all admissible $v$.
	\end{enumerate}
	\begin{proof}
		The proof is similar to Theorem 7.13 in {\cite{KPZreloaded}}. For any base
		$(\Omega, \{ \mathcal{F}_r \}, \mathbb{P}, \gamma, u)$, and since $h$ is a
		paracontrolled solution
		\[ h (s, \gamma_s) - h (t, x) - \int_t^s (- H (r, \gamma_r, \nabla h
			(r, \gamma_r)))d r - \int_t^s u_r \cdummy
			\nabla h (r, \gamma_r) d r, \quad s \in [t, T], \]
		is a martingale by Definition \ref{controlledSDE}, and therefore
		(taking s=T, and by definition of $H$)
		\[ \begin{array}{lll}
				h (t, x) & =         & \mathbb{E}_{\mathbb{P}} \left[ \overline{h} (\gamma_T)
					 + \int_t^T (- u_s \cdummy \nabla h (s,
				\gamma_s) + H (s, \gamma_s, \nabla h (s, \gamma_s))) d s \right]              \\
				         & \geqslant & \mathbb{E}_{\mathbb{P}} \left[ \overline{h} (\gamma_T)
					- \int_t^T L (s, \gamma_s, u_s) d s
					\right] = J (t, x, u),
			\end{array} \]
		so we have that $h (t, x) \geqslant V (t, x)$. If there exists some base
		$(\Omega^{\star}, \{ \mathcal{F}_s^{\star} \}, \mathbb{P}^{\star},
			\gamma^{\star}, u^{\star})$ satisfying \eqref{optdrift}, the last
		inequality turns to an equality, that is
		\[ \begin{array}{lll}
				h (t, x) & = & \mathbb{E}_{\mathbb{P}^{\star}} \left[ \overline{h}
					(\gamma^{\star}_T) - \int_t^T
					L (s, \gamma^{\star}_s, u^{\star}_s) d s \right] = J (t, x ;
				u^{\star}) .
			\end{array} \]
	\end{proof}
\end{proposition}
Regarding the existence of such an optimal control, we have the following
\begin{lemma}
{For every paracontrolled solution
$h$ as in Proposition~\ref{verification}, there exists an optimal
$(\Omega^{\star}, \{\mathcal{F}_s^{\star}\}, \mathbb{P}^{\star},
\gamma^{\star}, u^{\star})$.}
                  
\begin{proof}
By the coercivity assumption, and since $\nabla h\in C_TC_b$, we can use a measurable selection theorem (for instance Corollary 14.6 in \cite{rockafellar_wets}) to guarantee the existence of $u^\star:[0,T]\times\mathbb{R}^d\xrightarrow{}\mathbb{R}^d$ such that
\begin{enumerate}
    \item $u^\star$ is Borel measurable and bounded;
    \item For $d t\times \text{Leb}$ almost every $(s,y)$,
    \begin{equation}\label{control_selector}
		\begin{array}{lll}
			u^{\star} (s,y) & \in & \underset{v}{\arg \max} [v \cdummy \nabla h (s,y) - L (s, y,v)].
		\end{array}
	\end{equation}
\end{enumerate}
Let \((\Omega^\star,\{\mathcal{F}_s^\star\},\gamma^\star)\) be the canonical path space, and \(\mathbb{P}_{t,x}\) the unique martingale solution of
\[
\gamma_s
=
x+\int_t^s b(u,\gamma_u)\,du+W_s-W_t,
\qquad t\le s\le T.
\]
Following the Girsanov argument of
\cite[Lemma~2.11]{perkowski_coming_2025}, and letting
\(\tilde{W_\cdot}:=W_\cdot-W_t\), define
\[
\frac{\mathrm{d}\mathbb{Q}^\star}{\mathrm{d}\mathbb{P}_{t,x}}:=\operatorname{exp}\left(\int_t^{T}u^\star(s,\gamma_s)d\tilde{W}_s-\frac{1}{2}\int_t^{T}|u^\star(s,\gamma_s)|^2ds\right),
\]
Then, under \(\mathbb{Q}^\star\), \(\gamma\) solves the controlled martingale
problem associated with \(v^\star\) (which is bounded, so Novikov's condition
holds). There remains only to check that \(dt\times\mathbb{Q}^\star\) a.s.
\[
		\begin{array}{lll}
			u^{\star} (s,\gamma_s) & \in & \underset{v}{\arg \max} [v \cdummy \nabla h (s,\gamma_s) - L (s, \gamma_s,v)].
		\end{array}
\]
This follows from the fact that \(\mathbb{Q}^\star\ll\mathbb{P}_{t,x}\), and that under \(\mathbb{P}_{t,x}\) the process \(\gamma\) has Gaussian-type densities (Corollary 2.15 in \cite{perkowski_coming_2025}). Indeed, letting \(\mathcal{N}\) be the null set of \([0,T]\times\mathbb{R}^d\) under which \eqref{control_selector} holds,
\[
\int_t^T\mathbb{P}_{t,x}[(s,\gamma_s)\in\mathcal{N}]d s=0.
\]
\end{proof}
\end{lemma}
\begin{remark}
	In the previous result, we made no assertions about uniqueness in law of the (optimal) controlled process. If for example $H (\nabla h) = \frac{1}{p} | \nabla h |^p =
		\underset{v \in \mathbb{R}^d}{\sup} \left\{ v \cdummy \nabla h -
		\frac{1}{p'} | v |^{p'} \right\}$ for $p>1$ and $\frac{1}{p}
		+ \frac{1}{p'} = 1$, then
	\[ u_s^{\star} = | \nabla h (s, \gamma^{\star}_s) |^{p-2}
		\nabla h (s, \gamma^{\star}_s) . \]
	For a paracontrolled solution $h$ of the corresponding HJB equation,
	\[ | \nabla h (\cdummy, \cdummy) |^{p-2} \nabla h (\cdummy,
		\cdummy) \in C_T \mathcal{C}^{\varepsilon}, \quad \tmop{for} \tmop{some}
		\varepsilon > 0, \]
	so $\tilde{b} = b + | \nabla h |^{p-2} \nabla h$ is also an enhanced drift, and the singular
	SDE
	\[ \gamma_s = x + \int_t^s \tilde{b} (r, \gamma_r) d u + (W_s - W_t), \quad
		s \in [t, T], \]
	has a unique solution (in the martingale sense).
\end{remark}

An immediate consequence of the stochastic representation is the following maximum principle.
\begin{corollary}\label{maxpple}
    Let \(h\) be a paracontrolled solution of
    \eqref{singularHJB}. Then,
    \[
\|h\|_\infty\leqslant\|\overline{h}\|_\infty+T\left(\sup_{t,x}L(t,x,0)\vee H(t,x,0)\right).
    \]
\end{corollary}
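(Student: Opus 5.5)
Both bounds come from the stochastic representation in Proposition~\ref{verification}, read in two directions: the trivial control $v\equiv 0$ gives a lower bound, and the optimal control gives an upper bound.

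\emph{Lower bound.} Apply part~1 of Proposition~\ref{verification} with $u\equiv 0$. This control is admissible: take the canonical space with the martingale solution $\mathbb{P}^{t,x}$ of Theorem~\ref{thm:martingale-problem-wellposedness}. In that case Definition~\ref{controlledSDE} reduces to Definition~\ref{def:martingale-problem}, since the extra term $\nabla\varphi\cdot v$ vanishes. Therefore
\[
h(t,x)\geqslant \mathbb{E}\Bigl[\overline h(\gamma_T)-\int_t^T L(s,\gamma_s,0)\,ds\Bigr]\geqslant -\|\overline h\|_\infty - T\sup_{s,y}L(s,y,0),
\]
and the right-hand side is finite by \AssumptionC{}.

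\emph{Upper bound.} Let $(\Omega^\star,\mathbb{P}^\star,\gamma^\star,u^\star)$ be the optimal control from the preceding lemma, so that $h(t,x)=J(t,x;u^\star)$. The Legendre duality \eqref{eq:legendre} with $p=0$ gives $H(s,y,0)=\sup_v\{-L(s,y,v)\}$, hence $-L(s,y,v)\leqslant H(s,y,0)$ for every $v$. It follows that
\[
h(t,x)=\mathbb{E}^\star\Bigl[\overline h(\gamma^\star_T)-\int_t^T L(s,\gamma^\star_s,u^\star_s)\,ds\Bigr]\leqslant \|\overline h\|_\infty+T\sup_{s,y}H(s,y,0).
\]
The same bound also follows without the optimal control. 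From the proof of Proposition~\ref{verification}, $h(t,x)=\mathbb{E}\bigl[\overline h(\gamma_T)+\int_t^T H(s,\gamma_s,\nabla h(s,\gamma_s))\,ds\bigr]$ along the uncontrolled diffusion. However, that route would need $H(\cdot,\cdot,p)\leqslant H(\cdot,\cdot,0)$, which fails in general, so I use the optimal control.

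\emph{Conclusion and the main difficulty.} Combining the two bounds gives $|h|\leqslant \|\overline h\|_\infty+T\max\bigl(\sup L(\cdot,\cdot,0),\sup H(\cdot,\cdot,0)\bigr)$. This is the claimed estimate, reading the quantity in the statement as the larger of the two suprema, or of their absolute values; both are finite by \AssumptionC{}.

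The only delicate point is integrability: the expectations must be finite for the manipulations in Proposition~\ref{verification} to be legitimate. Under $u\equiv0$ the integrand is bounded. Under $u^\star$ the control is bounded by the measurable selection in the lemma, and $L$ is continuous, so $L(s,\gamma^\star_s,u^\star_s)$ is bounded below by superlinearity together with the bound on $L(\cdot,\cdot,0)$. The one-sided inequality above then suffices. No further estimates are needed.
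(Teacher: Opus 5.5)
Your proof is correct and is essentially the paper's argument. The lower bound uses the admissible control $v\equiv0$. The upper bound uses the duality $-L(s,y,v)\leqslant H(s,y,0)$, which the paper applies inside $h=\sup_v J(t,x;v)$ and you apply inside $h=J(t,x;u^\star)$; these are equivalent given the optimal-control lemma.

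One small slip: your intermediate bounds should carry $(T-t)$ rather than $T$, as the paper's do, since $-T\sup L(\cdot,\cdot,0)$ and $T\sup H(\cdot,\cdot,0)$ are not valid bounds when those suprema are negative. This is harmless for the final estimate, because $L(t,x,0)\vee H(t,x,0)\geqslant0$ pointwise by the same duality.
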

\begin{proof}
    By Proposition \ref{verification}, for any \((t,x)\)
    \begin{align*}
        h(t,x)
        &\leqslant\|\overline{h}\|_\infty-\mathbb{E}_{t,x}\left[\int_t^T\inf_{v\in\mathbb{R}^d}L(s,\gamma_s,v)d s\right]\\
        &\leqslant\|\overline{h}\|_\infty+\mathbb{E}_{t,x}\left[\int_t^TH(s,\gamma_s,0)d s\right]\\
        &\leqslant\|\overline{h}\|_\infty+(T-t)\sup_{t,x}H(t,x,0).
    \end{align*}
    The lower bound is proved similarly by choosing the control \(v=0\).
\end{proof}

In a similar way to Proposition \ref{verification}, we can show that any paracontrolled solution satisfies the
Dynamic Programming Principle.

\begin{proposition}
	\label{strongDPP}Let $h$ be a paracontrolled solution of
	$\eqref{singularHJB}$. Then
	\begin{enumerate}
		\item For any base $(\Omega, \{ \mathcal{F}_s \}, \mathbb{P}, \gamma, u)$
		      and $\mathcal{F}_s$-stopping time $\theta$ such that $t \leqslant \theta
			      \leqslant T$
		      \[ h (t, x) \geqslant \mathbb{E}_{\mathbb{P}} \left[ h (\theta,
				      \gamma_{\theta}) -
				      \int_t^{\theta} L (s, \gamma_s, u_s) d s \right] \]
		\item If $(\Omega^{\star}, \{ \mathcal{F}_s^{\star} \},
			      \mathbb{P}^{\star}, \gamma^{\star}, u^{\star})$ is as in the Proposition
		      \ref{verification} and $\theta^{\star}$ is a $\mathcal{F}_s$-stopping time
		      with $t \leqslant \theta^{\star} \leqslant T$, then
		      \[ h (t, x) =\mathbb{E}_{\mathbb{P}^{\star}} \left[ h (\theta^{\star},
				      \gamma_{\theta^{\star}}) - \int_t^{\theta^{\star}} L (s, \gamma^{\star}_s,
				      u^{\star}_s) d s \right], \]
		      so in particular (by the first point),
		      \[ h (t, x) = \underset{v}{\sup} \mathbb{E}_{t, x} \left[ h (\theta,
				      \gamma_{\theta}) -
				      \int_t^{\theta} L (s, \gamma_s^{t, x, v}, v_s) d s \right] . \]
	\end{enumerate}
	\begin{proof}
		For any such base and stopping time, we have again that
		\[ h (s \wedge \theta, \gamma_{s \wedge \theta}) - h (t, x) - \int_t^{s
			\wedge \theta} (- H (r, \gamma_r, \nabla h (r, \gamma_r)))  d r - \int_t^{s \wedge \theta} u_r \cdummy \nabla h
			(r, \gamma_r) d r, \quad s \in [t, T], \]
		is a martingale. It follows as before, since $\theta \wedge T = \theta$,
		that
		\[ \begin{array}{lll}
				h (t, x) & \geqslant & \mathbb{E}_{\mathbb{P}} \left[ h (\theta,
					\gamma_{\theta}) -
					\int_t^{\theta} L (s, \gamma_s, u_s) d s \right] .
			\end{array} \]
		If instead $u^{\star}$ is an optimal control with base $(\Omega^{\star},
			\{ \mathcal{F}_s^{\star} \}, \mathbb{P}^{\star}, \gamma^{\star},
			u^{\star})$, the previous inequality is an equality.
	\end{proof}
\end{proposition}

We now have all the ingredients necessary to see that paracontrolled solutions
are viscosity solutions. The proof roughly follows Section 3.2 of
	{\cite{huyen}}.

\begin{theorem}
	\label{paravicosity}Let $h$ be a paracontrolled solution of
	\eqref{singularHJB}. Then $h$ is a viscosity solution.
\end{theorem}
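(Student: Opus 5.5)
Following Section 3.2 of \cite{huyen}, I would derive both viscosity inequalities from the Dynamic Programming Principle of Proposition~\ref{strongDPP}, using the controlled martingale problem (Definition~\ref{controlledSDE}) to handle the test function. The terminal condition $h(T,\cdot)=\overline h$ and continuity of $h$ hold because $h\in C_T\mathcal C^\gamma\cap C^{\gamma/2}_TL^\infty$. So only the interior inequalities need proof. Fix $\varphi\in\mathcal T^b$ with right-hand side $g\in C_T\mathcal C_b$, and a point $(t_0,x_0)$ with $t_0<T$ at which $h-\varphi$ has a strict local extremum. Normalise so that $h(t_0,x_0)=\varphi(t_0,x_0)$. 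Let $B_\rho$ be a closed parabolic neighbourhood on which $h\le\varphi$ (subsolution case) or $h\ge\varphi$ (supersolution case).

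\textbf{Subsolution inequality.} Fix a constant control $v\in\mathbb R^d$. It is admissible by the Girsanov construction used in the existence lemma for the optimal control, since a bounded control satisfies Novikov's condition. Let $\gamma$ be the corresponding controlled process started at $(t_0,x_0)$. For small $\varepsilon>0$ let $\theta_\varepsilon=(t_0+\varepsilon)\wedge\tau$, where $\tau$ is the exit time of $(s,\gamma_s)$ from $B_\rho$. Part 1 of Proposition~\ref{strongDPP} and $h\le\varphi$ on $B_\rho$ give
\[
\varphi(t_0,x_0)=h(t_0,x_0)\ge\mathbb E\Bigl[\varphi(\theta_\varepsilon,\gamma_{\theta_\varepsilon})-\int_{t_0}^{\theta_\varepsilon}L(s,\gamma_s,v)\,ds\Bigr].
\]
Because $\varphi$ is a test function, $\varphi(s,\gamma_s)-\varphi(t_0,x_0)-\int_{t_0}^s(g+v\cdot\nabla\varphi)(r,\gamma_r)\,dr$ is a martingale. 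Since $\theta_\varepsilon$ is bounded, optional stopping yields
\[
0\ge\mathbb E\int_{t_0}^{\theta_\varepsilon}\bigl(g+v\cdot\nabla\varphi-L(\cdot,\cdot,v)\bigr)(s,\gamma_s)\,ds .
\]
Divide by $\varepsilon$ and let $\varepsilon\to0$. This uses continuity of $g$, $\nabla\varphi$ and $L$, the bound $\mathbb P(\tau\le t_0+\varepsilon)\to0$ from path continuity, and $\mathbb E[\theta_\varepsilon-t_0]/\varepsilon\to1$. The result is $g(t_0,x_0)+v\cdot\nabla\varphi(t_0,x_0)-L(t_0,x_0,v)\le0$. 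Taking the supremum over $v$ and using \eqref{eq:legendre} gives $g(t_0,x_0)\le -H(t_0,x_0,\nabla\varphi(t_0,x_0))$.

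\textbf{Supersolution inequality.} This is the harder direction. The same optional stopping argument, now with the optimal base from part 2 of Proposition~\ref{strongDPP} and $h\ge\varphi$ on $B_\rho$, gives
\[
0\le\mathbb E^\star\int_{t_0}^{\theta_\varepsilon}\bigl(g+u^\star_s\cdot\nabla\varphi-L(\cdot,\cdot,u^\star_s)\bigr)(s,\gamma^\star_s)\,ds\le\mathbb E^\star\int_{t_0}^{\theta_\varepsilon}\bigl(g+H(\cdot,\cdot,\nabla\varphi)\bigr)(s,\gamma^\star_s)\,ds,
\]
where the second step uses \eqref{eq:legendre} pointwise. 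Because $u^\star$ is bounded, the process $\gamma^\star$ is a Girsanov transform of the uncontrolled martingale solution, so $\mathbb P^\star(\tau\le t_0+\varepsilon)\to0$. Continuity of $H$ and of $\nabla\varphi$ then lets the limit pass, giving $g(t_0,x_0)+H(t_0,x_0,\nabla\varphi(t_0,x_0))\ge0$.

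This route avoids the usual argument by contradiction with $\varepsilon$-optimal controls. The main technical point is that the optimal base exists. A secondary point is verifying the optional stopping and uniform integrability along $\theta_\varepsilon$: one needs the stopped martingale to remain a true martingale, which follows since $\varphi$, $g$, $\nabla\varphi$ and the control are bounded on $B_\rho$. The sign in the subsolution case requires the local maximum, and the sign in the supersolution case requires the local minimum, exactly as in Definition~\ref{viscositydef}.
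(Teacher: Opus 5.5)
\textbf{There is a genuine sign error: each of your two DPP arguments is attached to the wrong extremum, so as written neither viscosity property is proved.}

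In your subsolution step, part~1 of Proposition~\ref{strongDPP} gives $h(t_0,x_0)\ge\mathbb E[h(\theta_\varepsilon,\gamma_{\theta_\varepsilon})-\int_{t_0}^{\theta_\varepsilon}L(s,\gamma_s,v)\,ds]$. Replacing $h$ by $\varphi$ inside this lower bound requires $h\ge\varphi$ on $B_\rho$, i.e.\ a local \emph{minimum}. With $h\le\varphi$ you only get $\mathbb E[h(\theta_\varepsilon,\gamma_{\theta_\varepsilon})]\le\mathbb E[\varphi(\theta_\varepsilon,\gamma_{\theta_\varepsilon})]$, which points the wrong way and yields nothing.

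Symmetrically, in your supersolution step the equality from part~2 combined with $h\ge\varphi$ gives $\varphi(t_0,x_0)\ge\mathbb E^\star[\varphi(\theta_\varepsilon,\gamma^\star_{\theta_\varepsilon})-\int_{t_0}^{\theta_\varepsilon}L\,ds]$. Hence $0\ge\mathbb E^\star\int_{t_0}^{\theta_\varepsilon}(g+u^\star_s\cdot\nabla\varphi-L)\,ds$, not $0\le\cdots$.

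Accordingly, your conclusions are interchanged relative to Definition~\ref{viscositydef}. A subsolution must satisfy $g(t_0,x_0)\ge-H(t_0,x_0,\nabla\varphi(t_0,x_0))$ at a local maximum. (In the smooth case, $\nabla h=\nabla\varphi$, $\partial_t(h-\varphi)\le0$ and $\Delta(h-\varphi)\le0$ there give $(\partial_t+\tfrac12\Delta+b\cdot\nabla)(h-\varphi)=-H-g\le0$.) A supersolution must satisfy $g\le-H$ at a local minimum. You instead derive $g\le-H$ at a maximum and $g\ge-H$ at a minimum. Your closing claim that the signs match Definition~\ref{viscositydef} is therefore wrong.

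\textbf{The repair is to swap the two arguments.}
\begin{itemize}
\item Constant controls with part~1 of the DPP at a local minimum give $g\le-H$. This is exactly the paper's supersolution proof.
\item The optimal base with part~2 at a local maximum ($h\le\varphi$ on $B_\rho$) gives
$0\le\mathbb E^\star\int_{t_0}^{\theta_\varepsilon}\bigl(g+u^\star_s\cdot\nabla\varphi-L(s,\gamma^\star_s,u^\star_s)\bigr)\,ds\le\mathbb E^\star\int_{t_0}^{\theta_\varepsilon}\bigl(g+H(s,\gamma^\star_s,\nabla\varphi(s,\gamma^\star_s))\bigr)\,ds$.
Your limiting argument then yields $g(t_0,x_0)\ge-H(t_0,x_0,\nabla\varphi(t_0,x_0))$.
\end{itemize}

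With this correction, your subsolution half is a legitimate alternative to the paper's. The paper argues by contradiction, using an $\varepsilon$-optimal control and the gap $\delta$ between $(h-\varphi)(t_0,x_0)$ and its supremum on the boundary of the cylinder. Your direct route is shorter and does not use strictness of the maximum. However, it relies on the existence lemma for a bounded optimal feedback control (measurable selection, Girsanov, Gaussian densities). The paper's contradiction argument needs only the supremum form of the DPP, so it would survive in settings where optimal controls are unavailable.
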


\begin{proof}
	The continuity and terminal data properties are clear ($h \in C_T C^{(2
				- \alpha) -}$), so we need only check the sub/supersolution properties.

	\tmtextbf{(Supersolution)} Let $\varphi \in \mathcal{T}^b$ with
	\[ \left( \partial_t + \frac{\Delta}{2} + b \nabla \right) \varphi = g, \]
	and $(t_0, x_0) \in [0, T) \times \mathbb{R}^d$ such that $h - \varphi$ has
	a strict local minimum at $(t_0, x_0)$ (remember we may assume wlog that $h
		(t_0, x_0) = \varphi (t_0, x_0)$). In particular there exists a set
	\[ B_{\eta_1, \eta_2} (t_0, x_0) \assign \left\{ (s, x) : \quad t_0
		\leqslant s < t_0 + \eta_1, \quad | x - x_0 | < \eta_2 \right\} \subset
		[0, T) \times \mathbb{R}^d, \]
	such that
	\[ h (s, x) > \varphi (s, x), \quad \tmop{for} \tmop{every} \quad (s, x) \in
		B_{\eta_1, \eta_2} (t_0, x_0) \backslash \{ (t_0, x_0) \} . \]
	Define the stopping time
	\[ \tau = \inf \{ s \geqslant t_0 : (s, \gamma_s^{t_0, x_0}) \notin
		B_{\eta_1, \eta_2} (t_0, x_0) \} . \]
	Letting $(\varepsilon_m)$ be a sequence such that
	\[ \varepsilon_m < \eta_1 \tmop{for} \tmop{every} n \quad \tmop{and} \quad
		\varepsilon_m \overset{m \rightarrow \infty}{\rightarrow} 0, \]
	we can consider $\theta_m = \tau \wedge (t_0 + \varepsilon_m)$. Fix a
	constant, deterministic control $v \in \mathbb{R}^d$; by Proposition
	\ref{strongDPP},
	\[ h (t_0, x_0) \geqslant \mathbb{E} \left[ h (\theta_m,
			\gamma_{\theta_m}^{t_0, x_0, v})  - \int_{t_0}^{\theta_m} L (s, \gamma_s^{t_0,
				x_0, v}, v) d s \right], \]
	and by the definition of the stopping time,
	\[ \varphi (t_0, x_0) = h (t_0, x_0) \geqslant \mathbb{E} \left[ \varphi
			(\theta_m, \gamma_{\theta_m}^{t_0, x_0, v})  - \int_{t_0}^{\theta_m} L (s, \gamma_s^{t_0,
				x_0, v}, v) d s \right] . \]
	On the other hand (again by Definition \ref{controlledSDE})
	\[ \varphi (s \wedge \theta_m, \gamma_{s \wedge \theta_m}^{t_0, x_0, v}) -
		\varphi (t_0, x_0) - \int^{s \wedge \theta_m}_{t_0} (g + v \cdummy \nabla
		\varphi) (u \wedge \theta_m, \gamma_{u \wedge \theta_m}^{t_0, x_0, v}) d
		u = \text{ Martingale}, \]
	and then
	\[ 0 \geqslant \mathbb{E} \left[ \int^{\theta_m}_{t_0} (g + v \cdummy \nabla
			\varphi) (s, \gamma_s^{t_0, x_0, v}) d s - \int_{t_0}^{\theta_m} L (s, \gamma_s^{t_0,
				x_0, v}, v) d s \right], \]
	so dividing both sides by $\varepsilon_m$,
	\[ 0 \geqslant \frac{1}{\varepsilon_m} \mathbb{E} \left[
			\int^{\theta_m}_{t_0} (g + v \cdummy \nabla \varphi) (s, \gamma_s^{t_0,
				x_0, v}) d s -
			\int_{t_0}^{\theta_m} L (s, \gamma_s^{t_0, x_0, v}, v) d s \right] . \]
	Since for each $\omega$, $\theta_m (\omega) = t_0 + \varepsilon_m$ for big
	enough $m$ (by continuity of the sample paths), we can take $m \rightarrow
		\infty$ and use the mean value theorem to conclude that
	\[ g (t_0, x_0) \leqslant - (v \cdummy \nabla \varphi (t_0,
		x_0) - L (t_0, x_0, v)), \]
	and since this holds for any arbitrary $v \in \mathbb{R}^d$,
	\[ g (t_0, x_0) \leqslant  - H (t_0, x_0, \nabla \varphi (t_0,
		x_0)) . \]
	\tmtextbf{(Subsolution)} Assume now that $h - \varphi$ has a strict local
	maximum at $(t_0, x_0) \in (0, T) \times \mathbb{R}^d$ (again wlog $(h -
		\varphi) (t_0, x_0) = 0 ; \tmop{the} t_0 = 0$ case is similar); let us
	suppose that
	\[ g (t_0, x_0) <  - H (t_0, x_0, \nabla \varphi (t_0, x_0)),
	\]
	and we will arrive at a contradiction. By continuity of the functions
	involved, there exists a neighbourhood
	\[ \tilde{B}_{\eta_1, \eta_2} (t_0, x_0) : = \left\{ (s, x) : \quad | s -
		t_0 | < \eta_1, \quad | x - x_0 | < \eta_2 \right\} \subset [0, T) \times
		\mathbb{R} \]
	such that
	\begin{enumerate}
		\item $g (t , x ) <  - H (t, x, \nabla \varphi (t, x))$ for
		      every $(t, x) \in \tilde{B}_{\eta_1, \eta_2} (t_0, x_0) ;$

		\item $\underset{\overline{\tilde{B}_{\eta_1, \eta_2}} (t_0, x_0)}{\max}
			      (h - \varphi) (t, x) - \underset{\partial (\tilde{B}_{\eta_1, \eta_2}
				      (t_0, x_0))}{\sup} (h-\varphi) (t, x) = (h - \varphi) (t_0, x_0) -
			      \underset{\partial (\tilde{B}_{\eta_1, \eta_2} (t_0, x_0))}{\sup}(h-\varphi) (t, x)
			      \assign \delta > 0,$
	\end{enumerate}
	where $2$ is possible because we have a strict local maximum at $(t_0, x_0)$
	(here $\overline{B}$ and $\partial B$ denote the closure and boundary as
	subsets of $\mathbb{R} \times \mathbb{R}^d$). Similarly to before, choose
	the stopping time
	\[ \theta = \inf \left\{ s \geqslant t_0 : \quad (s, \gamma_s^{t_0, x_0})
		\notin \tilde{B}_{\eta_1, \eta_2} (t_0, x_0) \right\} . \]
	Fixing $\varepsilon = \delta / 2$, by Proposition \ref{strongDPP} there exists a
	control $v^{\varepsilon}$ such that
	\[ h (t_0, x_0) - \varepsilon \leqslant \mathbb{E} \left[ h (\theta,
		\gamma_{\theta}^{t_0, x_0, v^{\varepsilon}}) - \int_{t_0}^{\theta} L (s,
		\gamma_s^{t_0, x_0, v^{\varepsilon}}, v^{\varepsilon}_s) d s \right] . \]
	Now, since $(\theta, \gamma_{\theta}^{t_0, x_0, v^{\varepsilon}}) \in
		\partial (\tilde{B}_{\eta_1, \eta_2} (t_0, x_0))$ and on this set
	\[ (h - \varphi) + \delta \leqslant (h - \varphi) (t_0, x_0), \]
	it follows that
	\[ - \varepsilon \leqslant \mathbb{E} \left[ \varphi (\theta,
			\gamma_{\theta}^{t_0, x_0, v^{\varepsilon}}) - \varphi (t_0, x_0) -
			\int_{t_0}^{\theta} L (s, \gamma_s^{t_0, x_0, v^{\varepsilon}},
			v^{\varepsilon}_s) d s \right] - \delta . \]
	As before, by definition of the martingale problem
	\[ \mathbb{E} \left[ \int_{t_0}^{\theta} (- g - v^{\varepsilon}_s \cdummy
			\nabla \varphi) (s, \gamma_s^{t_0, x_0, v^{\varepsilon}})+
			\int_{t_0}^{\theta} L (s, \gamma_s^{t_0, x_0, v^{\varepsilon}},
			v^{\varepsilon}_s) d s + \right] \leqslant - \delta / 2, \]
	which implies (remembering that $H (t, x, p) = \underset{v \in
			\mathbb{R}^d}{\sup} \{ v \cdummy p - L (t, x, v) \}$)
	\[ \mathbb{E} \left[ \int_{t_0}^{\theta} - g (s, \gamma_s^{t_0, x_0,
				v^{\varepsilon}}) - \int_{t_0}^{\theta} H (s, \gamma_s^{t_0, x_0,
				v^{\varepsilon}}, \nabla \varphi (s, \gamma_s)) d s + \right] \leqslant -
		\delta / 2 < 0, \]
	contradicting the fact that
	\[ 0 < - g (t, x)- H (t, x, \nabla \varphi (t, x)) \quad
		\tmop{for} \tmop{every} (t, x) \in \tilde{B}_{\eta_1, \eta_2} (t_0, x_0)
		. \]
\end{proof}

Another application of the previous results is solving, in the viscosity
sense, singular HJB equations with uniformly continuous and bounded initial conditions.
This is beyond reach for {\cite{zhang_singular_2022}}, where the initial condition is
much smoother, and for {\cite{KPZreloaded}}, where working with modified
parabolic spaces (that allow blow-ups at the origin) this smoothness can be
reduced to H{\"o}lder continuity of any positive index.

\begin{proposition}
	\label{prop:control-bounded-terminal}
		{{Under Assumptions \assC{}, \assP{}, and \assPContinuous{},}
	{let $\overline{h} \in \operatorname{BUC} (\mathbb{R}^d)$, with
	$\operatorname{BUC} (\mathbb{R}^d)$ denoting the space of bounded uniformly
	continuous functions,}}. Define
	\[ h (t, x) \assign \underset{v}{\sup} \mathbb{E} \left[ \overline{h}
			(\gamma^{t, x, v}_T)  - \int_t^T
			L (s, \gamma_s^{t, x, v}, v_s) d s \right] . \]
	Then $h$ is the unique bounded viscosity solution of \eqref{singularHJB}.

	\begin{proof}
		Consider two sequences of smooth bounded approximations
		$\left( \overline{h}^+_n \right)_{n \in \mathbb{N}}$ and $\left(
			\overline{h}^-_n \right)_{n \in \mathbb{N}}$ of $\overline{h}$ such that
		\[ \overline{h}^-_1 \leqslant \ldots \leqslant \overline{h}^-_n \leqslant
			\ldots \leqslant \overline{h} \leqslant \ldots \leqslant
			\overline{h}^+_n \leqslant \ldots \leqslant \overline{h}^+_1  \infixand
			\quad \left\| \overline{h}_n^+ - \overline{h}^-_n \right\|_{\infty}
			\overset{n \rightarrow \infty}{\rightarrow} 0, \]
		for example by putting $\overline{h}^+_n = \rho_{\varepsilon_n} \ast
			\left( \overline{h} + 1 / n \right)$ ($\overline{h}^-_n =
			\rho_{\varepsilon_n} \ast \left( \overline{h} - 1 / n \right)$) with small
			enough $\varepsilon_n > 0$, and define
		\[ h^{\pm}_n (t, x) \assign \underset{v}{\sup} \mathbb{E}  \left[
				\overline{h}^{\pm}_n (\gamma^{t, x, v}_T) - \int_t^T L (s, \gamma_s^{t, x, v}, v_s) d s \right] . \]
		{By Theorem~\ref{thm:quadratic-paracontrolled-global}, there
		exists a global paracontrolled solution with terminal condition
		$\overline{h}^{\pm}_n$. Proposition~\ref{verification} identifies this
		solution with the control value function $h^{\pm}_n$ defined above, and
		Theorem~\ref{paravicosity} shows that it is a viscosity solution.} Let us
		see that
		\begin{equation}
			h^-_1 \leqslant \ldots \leqslant h^-_n \leqslant \ldots \leqslant h
			\leqslant \ldots \leqslant h^+_n \leqslant \ldots \leqslant h^+_1
			\infixand \quad \| h_n^+ - h^-_n \|_{\infty} \overset{n \rightarrow
				\infty}{\rightarrow} 0. \label{valueconv}
		\end{equation}
		The first property is clear from the choice of initial conditions and the
		definition of the value functions; for the second, let $(t, x) \in [0, T]
			\times \mathbb{R}^d$ and $v^{t, x}_n \equiv v^n$ such that
		\[ h_n^+ (t, x) \leqslant \mathbb{E}  \left[ \overline{h}^+_n (\gamma^{t,
					x, v^n}_T) - \int_t^T L (s,
				\gamma_s^{t, x, v^n}, v^n_s) d s \right] + \frac{1}{n} . \]
		In that case
		\[ \begin{array}{lll}
				| h^+_n (t, x) - h_n^- (t, x) | & =         & h^+_n (t, x) - h_n^- (t, x)                                   \\
				                                & \leqslant & \mathbb{E}  \left[ \overline{h}^+_n (\gamma^{t, x,
						v^n}_T)  - \int_t^T L (s,
				\gamma_s^{t, x, v^n}, v^n_s) d s \right] + \frac{1}{n}                                                      \\
				                                &           & -\mathbb{E}  \left[ \overline{h}^-_n (\gamma^{t, x, v^n}_T) - \int_t^T L (s,
				\gamma_s^{t, x, v^n}, v^n_s) d s \right]                                                                    \\
				                                & =         & \mathbb{E} \left[ \overline{h}^+_n (\gamma^{t, x, v^n}_T) -
			\overline{h}^-_n (\gamma^{t, x, v^n}_T) \right] + \frac{1}{n}                                               \\
				                                & \leqslant & \left\| \overline{h}^+_n - \overline{h}^-_n
				\right\|_{\infty} + \frac{1}{n},
			\end{array} \]
		and the last term goes to zero when $n \rightarrow \infty$.

		From \eqref{valueconv} it follows that $h \in C_T C_b (\mathbb{R}^d)$
		(clearly also $h (T, \cdummy) = \overline{h}$); for the subsolution
		property, let $\varphi \in \mathcal{T}^b$ such that $h - \varphi$ has a
		strict local maximum at $(t_0, x_0) \in [0, T) \times \mathbb{R}^d$. By
		uniform convergence, there exists $\{ (t_n, x_n) \} \subset [0, T] \times
			\mathbb{R}^d$ such that
		\begin{enumerate}
			\item $(t_n, x_n) \overset{n \rightarrow \infty}{\rightarrow} (t_0,
				      x_0)$, so wlog we may assume $t_n < T$;

			\item $h^+_n - \varphi$ has a local maximum at $(t_n, x_n)$.
		\end{enumerate}
		We therefore have (the local maximum may not be strict anymore, but this
		is not problem; see Corollary \ref{touching} in the next Section) that
		\[ g (t_n, x_n) \geqslant - H (t_n, x_n, \nabla \varphi (t_n, x_n) ), \]
		and taking the limit $n \rightarrow \infty$
		\[ g (t_0, x_0) \geqslant - H (t_0, x_0, \nabla \varphi (t_0, x_0) ). \]
		The supersolution property is similar. Uniqueness follows from the fact
		that, for any other bounded solution $v$ with $v (T, \cdummy) \equiv
			\overline{h}$, we can use Proposition \ref{smoothcomparison} to conclude
		\[ h_n^- \leqslant v \leqslant h_n^+, \quad \tmop{for} \tmop{every} n \in
			\mathbb{N}, \]
		from which $v = h$.
	\end{proof}
\end{proposition}

\section{Zvonkin's transform}\label{sec::zvonkin}
\subsection{Change of variables and transformed equation}
We would like to find a change of variables \(\Phi\equiv(\Phi^1,\dots,\Phi^d)\) so that
\[ h (t, x) = v (t, \Phi (t, x)), \]
where $v$ solves a different PDE, more amenable to viscosity techniques.

Assume that all the functions involved are smooth enough, and let us derive
the equation. First, using Einstein's convention,
\[ \begin{array}{lll}
		\partial_t h (t, x) & = & (\partial_t v) (t, \Phi (t, x)) +
		\partial_{x_j} v (t, \Phi (t, x)) \partial_t \Phi^j (t, x)                                  \\
		                    & = & \left( - \frac{\Delta}{2} h - H (\cdummy, h, \nabla h) - b \nabla
		h \right) (t, x) .
	\end{array} \]
On the other hand,
\[ \begin{array}{lll}
		\partial_{x_i} h (t, x)     & = & \partial_{x_j} v (t, \Phi (t, x))
		\partial_{x_i} \Phi^j (t, x) \quad (i.e. \nabla h (t, x) = (\nabla
		\Phi)^T (t, x) \nabla v (t, \Phi (t, x))),                            \\
		\partial_{x_i x_i} h (t, x) & = & \partial_{x_k} \partial_{x_j} v (t,
		\Phi (t, x)) (\partial_{x_i} \Phi^k \partial_{x_i} \Phi^j) (t, x) +
		\partial_{x_j} v (t, \Phi (t, x)) \partial_{x_i x_i} \Phi^j .
	\end{array} \]
Putting everything together (remember that $\nabla \Phi$ is a $d \times d$
matrix and $\Delta \Phi \equiv (\Delta \Phi^1, \ldots, \Delta \Phi^d)$)
\[ \begin{array}{lll}
		(\partial_t v) (t, \Phi (t, x)) + \nabla v (t, \Phi (t, x)) \partial_t
		\Phi (t, x) & = & - \frac{(\nabla \Phi \nabla \Phi^T)}{2} (t, x) \colons
		\nabla^2 v (t, \Phi (t, x)) - \nabla v (t, \Phi (t, x)) \frac{\Delta}{2}
		\Phi (t, x)                                                                           \\
		            &   & - H (t, x, v (t, \Phi (t, x)), (\nabla \Phi (t, x))^T \nabla v (t,
		\Phi (t, x)))                                                                         \\
		            &   & - b (t, x) (\nabla \Phi (t, x))^T \nabla v (t, \Phi (t, x)) .
	\end{array} \]
Now, if our diffeomorphism $\Phi$ solves a system of the form
\[ \left( \partial_t + \frac{\Delta}{2} + b \nabla \right) \Phi^i (t, x) = -
	c^i (t, x), \quad i = 1, \ldots, d \]
for some continuous field $c$, we conclude that
\[ \begin{array}{lll}
		(\partial_t v) (t, \Phi (t, x)) + \frac{(\nabla \Phi \nabla \Phi^T)}{2}
		(t, x) \colons \nabla^2 v (t, \Phi (t, x)) & = & - H (t, x, v (t, \Phi
		(t, x)), (\nabla \Phi (t, x))^T \nabla v (t, \Phi (t, x)))                                        \\
		                                           &   & + c (t, x) \nabla v (t, \Phi (t, x)),
	\end{array} \]
and putting $x \equiv x (t) = \Phi^{- 1} (t, y)$
\[ \begin{array}{lll}
		(\partial_t v) (t, y) + \frac{(\nabla \Phi \nabla \Phi^T)}{2} (t, \Phi^{-
		1} (t, y)) \colons \nabla^2 v (t, y) & = & - H (t, \Phi^{- 1} (t, y), v
		(t, y), (\nabla \Phi (t, \Phi^{- 1} (t, y)))^T \nabla v (t, y))                                              \\
		                                     &   & + c (t, \Phi^{- 1} (t, y)) \nabla v (t, y) .
	\end{array} \]
Provided $\Phi$ is taken appropriately, there are no singular terms anymore. Our
choice will be
\[ \Phi (t, x) = x + u_{\lambda} (t, x), \]
where $u_{\lambda}$ solves the system
\begin{equation}
	\left( \partial_t + \frac{\Delta}{2} + b \nabla \right) u^i_{\lambda} = -
	b^i + \lambda u_{\lambda}^i, \quad u_{\lambda}^i (T, \cdummy) = 0, \quad i =
	1, \ldots, d. \label{linearZvonkin}
\end{equation}
The following result guarantees that we can fix $\lambda_0 > 0$ big enough
such that
\[ \underset{i, j = 1, \ldots, d}{\sup} \| \partial_{x_i} u^j_{\lambda_0}
	\|_{C ([0, T] \times \mathbb{R}^d ; \mathbb{R}^{d \times d})} \]
is as small as we want, so that $\Phi (t, \cdummy)$ is indeed a
diffeomorphism of $\mathbb{R}^d$ for each $t \in [0, T]$.

\begin{proposition}
	$\left( \cite{zhang_singular_2022}, \tmop{Lemma} 3.4 \right) .$\label{equl}
	Under \AssumptionA{}, let $4 / 3
		< \gamma < 2 - \alpha$ and $u_{\lambda}$ be as in \eqref{linearZvonkin}.
	There exists $\tilde{\lambda}  > 0$ such that for every $\lambda \geqslant
		\tilde{\lambda}$,
	\[ \| u_{\lambda} \|_{C_T \mathcal{C}^{\gamma}} + \| u_{\lambda}
		\|_{C^{\gamma / 2}_T {L^{\infty}} } \leqslant (1 \vee
		\lambda)^{\frac{\gamma + \alpha}{2} - 1} c (\alpha, \gamma, \| b \|_{C_T
		\mathcal{C}^{- \alpha}}, \| B \|_{C_T \mathcal{C}^{1 - 2 \alpha}}), \]
	where $(b, B) \in \mathcal{X}^{\alpha}$ is an enhanced pair (remember
	Definition \ref{enhanceddrift}).
\end{proposition}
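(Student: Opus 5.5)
We prove the estimate by a Schauder/fixed-point argument in the mild (paracontrolled) formulation of \eqref{linearZvonkin}, keeping track of the explicit $\lambda$-dependence. This dependence comes from the damped semigroup $P^\lambda_t:=e^{-\lambda t}e^{t\Delta/2}$.

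\textbf{Mild formulation.} We rewrite \eqref{linearZvonkin} as
\[
u_\lambda(t)=\int_t^T P^\lambda_{s-t}\bigl(b\star\nabla u_\lambda(s)+b(s)\bigr)\,ds .
\]

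\textbf{Basic heat-kernel estimate.} The key scaling bound is
\[
\|P^\lambda_t f\|_{\mathcal C^{\theta+\delta}}\lesssim e^{-\lambda t}\bigl(1\wedge t\bigr)^{-\delta/2}\|f\|_{\mathcal C^\theta},
\]
which gives, for $\delta<2$,
\[
\int_0^T e^{-\lambda r}r^{-\delta/2}\,dr\lesssim (1\vee\lambda)^{\delta/2-1}.
\]
Applied to the source term $b\in C_T\mathcal C^{-\alpha}$ with $\delta=\gamma+\alpha$, this yields
\[
\Bigl\|\int_t^T P^\lambda_{s-t}b(s)\,ds\Bigr\|_{C_T\mathcal C^{\gamma}}\lesssim(1\vee\lambda)^{\frac{\gamma+\alpha}{2}-1}\|b\|_{C_T\mathcal C^{-\alpha}}.
\]
The exponent is negative because $\gamma+\alpha<2$. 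This is exactly the rate claimed.

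\textbf{Paracontrolled ansatz.} Next we handle the singular product through the ansatz
\[
u^i_\lambda=\nabla u^i_\lambda\para J^{T,\lambda}(b)+u^{\sharp,i},
\]
where $J^{T,\lambda}$ is the $\lambda$-damped resolvent. Then
\[
b\star\nabla u_\lambda=\nabla u_\lambda\para b+\nabla u_\lambda\arap b+\nabla u_\lambda\odot b,
\]
and the resonant term is expanded via the commutator lemma into $\nabla u_\lambda\cdot B$ plus a remainder of regularity $\mathcal C^{2\gamma-1-\alpha-1}>0$. Here we use $\gamma>4/3$ and $\alpha<2/3$ to guarantee $3\gamma-2-\alpha>\dots$, i.e.\ that the paracontrolled bookkeeping closes.

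Each term is then estimated through Proposition~\ref{paraestimates} and the damped Schauder bound above. The result is an inequality of the form
\[
\|u_\lambda\|_{\gamma,\lambda}\le C(1\vee\lambda)^{\frac{\gamma+\alpha}{2}-1}\bigl(\|b\|+\|b\|\|u_\lambda\|_{\gamma,\lambda}+\|B\|\|u_\lambda\|_{\gamma,\lambda}\bigr)
\]
in the norm $\|u\|_{\gamma,\lambda}:=\|u\|_{C_T\mathcal C^\gamma}+\|u^\sharp\|_{C_T\mathcal C^{2\gamma-1+\dots}}$. There is a subtlety when $\lambda$ changes the enhancement: using $J^T$ instead of $J^{T,\lambda}$ costs a smooth commutator. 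The key point is that every linear-in-$u$ term carries a negative power of $\lambda$.

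\textbf{Absorption.} Choose $\tilde\lambda$ so that
\[
C(1\vee\tilde\lambda)^{\frac{\gamma+\alpha}{2}-1}\bigl(\|b\|+\|B\|+\|b\|^2\bigr)\le \tfrac12 .
\]
For $\lambda\ge\tilde\lambda$ the map defining $u_\lambda$ is then a contraction on the whole of $[0,T]$, with no small-time iteration needed. Absorbing gives the $C_T\mathcal C^\gamma$ bound with the stated rate.

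\textbf{Time regularity.} The $C^{\gamma/2}_TL^\infty$ bound follows from the mild formula via $\|(P^\lambda_h-1)f\|_{L^\infty}\lesssim h^{\gamma/2}\|f\|_{\mathcal C^\gamma}$ together with the same integral estimates, giving the same $\lambda$-power.

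\textbf{Main obstacle.} The delicate step is the resonant/commutator term: one must show that the commutator remainder and the replacement of the enhancement $B$ (defined with $J^T$, undamped) by its $\lambda$-damped version contribute only with the factor $(1\vee\lambda)^{\frac{\gamma+\alpha}{2}-1}$. I would first try writing $J^{T,\lambda}-J^T$ as a smooth operator controlled by $\lambda\int e^{-\lambda r}\dots$, and check that it is bounded uniformly rather than growing in $\lambda$.
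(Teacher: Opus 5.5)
The paper gives no proof of this statement; it imports Lemma~3.4 of Zhang--Zhu--Zhu. Your overall architecture is the standard one for such a lemma: the damped Duhamel operator $J^{T,\lambda}$, a paracontrolled fixed point on all of $[0,T]$, absorption for large $\lambda$, and the rate dictated by the source $J^{T,\lambda}(b)$. As written, however, the fixed point does not close, for three concrete reasons.

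First, your ansatz $u^i_\lambda=\nabla u^i_\lambda\para J^{T,\lambda}(b)+u^{\sharp,i}$ ignores the inhomogeneity. The leading part of $u^i_\lambda$ is $J^{T,\lambda}(b^i)\in C_T\mathcal C^{(2-\alpha)-}$, and with your ansatz it lands in $u^{\sharp,i}$. But $b\odot\nabla u^\sharp$ is only defined for $u^\sharp\in\mathcal C^{1+\alpha+\varepsilon}$, and $2-\alpha<1+\alpha$; in particular $J^{T,\lambda}(b^i)$ is not in your remainder space $C_T\mathcal C^{2\gamma-1}$. The singular product is really $b\cdot\nabla\Phi^i$ with $\Phi=x+u_\lambda$, so the ansatz must be $u^i_\lambda=(e_i+\nabla u^i_\lambda)\para J^{T,\lambda}(b)+u^{\sharp,i}$. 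The enhancement then also produces a $u$-independent source $\sum_k J^{T,\lambda}(\partial_k b^i)\odot b^k$ of regularity $1-2\alpha$, which is $\sum_k B^{i,k}$ up to a damping correction.

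Second, the step you flag as the main obstacle is left open, and the check you propose will in general fail. From $J^{T,\lambda}-J^T=-\lambda J^{T,\lambda}J^T$ and $\|\lambda J^{T,\lambda}f\|_{C_T\mathcal C^{\beta+2\epsilon}}\lesssim(1\vee\lambda)^{\epsilon}\|f\|_{C_T\mathcal C^{\beta}}$ for $\epsilon\in[0,1)$, the correction $\bigl((J^{T,\lambda}-J^T)\partial_kb^i\bigr)\odot b^k$ is classically defined once $\epsilon>\alpha-\frac12$. It comes only with a bound of order $(1\vee\lambda)^{\epsilon}$, and there is no reason for it to be uniform in $\lambda$. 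Heuristically, the frequencies $|\xi|^2\lesssim\lambda$, where the two kernels differ at order one, contribute $\sum_{2^{2j}\lesssim\lambda}2^{j(2\alpha-1)}\sim\lambda^{\alpha-1/2}$. What rescues the argument is that this correction is a bounded function. The outer $J^{T,\lambda}$ maps $C_TL^\infty$ into $C_T\mathcal C^\gamma$ with norm $(1\vee\lambda)^{\frac\gamma2-1}$, so the net exponent $\epsilon+\frac\gamma2-1$ is negative for $\epsilon$ close to $\alpha-\frac12$, precisely because $\gamma+2\alpha<3$. This is the estimate you actually need to prove.

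Third, your norm $\|u\|_{C_T\mathcal C^\gamma}+\|u^\sharp\|_{C_T\mathcal C^{2\gamma-1}}$ contains no time regularity of $\nabla u_\lambda$. Yet the Schauder commutator $J^{T,\lambda}(f\para b)-f\para J^{T,\lambda}(b)$, with $f=\nabla\Phi$, gains more than two derivatives over $b$ only if $f$ is H\"older in time. Otherwise the piece $\int_t^T\bigl(f(s)-f(t)\bigr)\para P^\lambda_{s-t}b(s)\,ds$ is no more regular than $f\para J^{T,\lambda}(b)$ itself. This is exactly the caveat the paper raises in its Appendix.

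You must add, for example, $\|\nabla u_\lambda\|_{C^\rho_TL^\infty}$ with $\rho$ close to $(\gamma-1)/2$ to the fixed-point norm, or work with a modified paraproduct. You then need to check that this seminorm also carries a negative power of $\lambda$, which it does since $1+2\rho+\alpha<2$. The $C^{\gamma/2}_TL^\infty$ bound on $u_\lambda$ in your last step is an output and does not supply this input.

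With these three repairs the absorption for $\lambda\geq\tilde\lambda$ should go through as you describe. The rate $(1\vee\lambda)^{\frac{\gamma+\alpha}{2}-1}$ comes from $J^{T,\lambda}(b)$, and all enhancement contributions, including the damping correction, carry a smaller power of $\lambda$.
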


Fix from now on $\lambda > 0$ such that $\underset{(t, x) \in [0, T] \times
		\mathbb{R}^d}{\sup} \| \nabla u_{\lambda} (t, x) \|_{\mathcal{L}
	(\mathbb{R}^d)} < \frac{1}{2} $; for our choice of $\Phi_{\lambda}$, the
transformed equation is
\begin{equation}
	\begin{array}{lll}
		\partial_t v + \frac{(\nabla \Phi \nabla \Phi^T)}{2} (t, \Phi^{- 1} (t,
		y)) \colons \nabla^2 v & = & - H (t, \Phi^{- 1} (t, y), v, (\nabla \Phi
		(t, \Phi^{- 1} (t, y)))^T \nabla v)                                                        \\
		                       &   & {-} \lambda u_{\lambda} (t, \Phi^{- 1} (t, y)) \nabla v .
	\end{array} \label{transPDE}
\end{equation}
Even in the rough case (i.e. \(b\) enhanced), its coefficients are H{\"o}lder continuous and the diffusion matrix is uniformly elliptic ($\nabla
	\Phi_{\lambda} = I_{d \times d} + \nabla u_{\lambda}$). This is a very favorable structure, which will allow us to establish, in the next Subsection, a priori Lipschitz bounds for viscosity solutions of \eqref{transPDE}. In order to transfer these back to \(h\), we need the following

\begin{theorem}
	\label{thm:uniqueness}
	Let $h$ be a viscosity solution of
	\eqref{singularHJB}. Then
	\[ v (t, x) \assign h (t, \Phi^{- 1} (t, x)) \]
	is a viscosity solution of \eqref{transPDE} (with the same terminal
	condition).

	\begin{proof}
		It is clear that $v \in C ([0, T] \times \mathbb{R}^d)$ and
		\[ v (T, \cdummy) = h (T, (\cdummy + u_{\lambda} (T, \cdummy))^{- 1})
			\overset{u_{\lambda} (T, \cdummy) = 0}{=} h (T, \cdummy) = \overline{h}
			. \]
		Let us check the subsolution property. Similarly to what was remarked
		right after Definition \ref{viscositydef}, for equation \eqref{transPDE} our
		notion of viscosity solution (i.e. taking test functions that solve linear
		versions of \eqref{transPDE}) coincides with the usual one. With this in
		mind, let $\varphi$ be such that
		\begin{equation}
			\left( \partial_t + \frac{(\nabla \Phi \nabla \Phi^T) \circ \Phi^{-
						1}}{2} \colons \nabla^2 \right) \varphi = g \circ \Phi^{- 1} - \lambda
			u_{\lambda} \circ \Phi^{- 1} \nabla \varphi, \label{testtrans1}
		\end{equation}
		and $(v - \varphi)$ has a strict local maximum, wlog equal to zero, at
		$(t_0, x_0) \in [0, T) \times \mathbb{R}^d$. We need to see that
		\begin{equation}
			\begin{array}{lll}
				g (t_0, \Phi^{- 1} (t_{0,} x_0)) & \geqslant & -H (t_0, \Phi^{- 1}
				(t_0, x_0), \varphi (t_0, x_0), (\nabla \Phi (t_0, \Phi^{- 1} (t_0,
				x_0)))^T \nabla \varphi (t_0, x_0))         ,
			\end{array} \label{uniqineq}
		\end{equation}
		If we define $\psi
			(t, x) \assign \varphi (t, \Phi (t, x))$ and $(t_0, y_0) \assign (t_0,
			\Phi^{- 1} (t_{0,} x_0))$, inequality \eqref{uniqineq} is equivalent to
		\begin{equation}
			\begin{array}{lll}
				g (t_0, y_0) & \geqslant & -H (t_0, y_0, \psi (t_0, y_0), \nabla \psi
				(t_0, y_0)).
			\end{array} \label{uniqineq2}
		\end{equation}
		Since $(h - \psi)$ has a strict local maximum at $(t_0, y_0)$ ($\Phi (t,
			\cdummy)$ is a diffeomorphism for each $t$), by definition of viscosity
		solution \ref{uniqineq} is guaranteed if we can prove that $\psi \in
			\mathcal{T}_b$ and
		\begin{equation}
			\left( \partial_t + \frac{\Delta}{2} + b \nabla \right) \psi = g.
			\label{testtrans2}
		\end{equation}
		In order to do that, consider a smooth approximation of the enhanced
		drift, i.e. $b^n, B^n \in C_T \mathcal{C}^{\infty}$ satisfying
		\[ \begin{array}{ll}
				b_n \rightarrow b, & \tmop{in} C_T \mathcal{C}^{- \alpha},      \\
				\mathcal{J}^T (\partial_{x_j} b_n^i) \odot b_n^j \rightarrow B_{i,
				j},                & \tmop{in} C_T \mathcal{C}^{1 - 2 \alpha} .
			\end{array} \]
		If we let $u^n_{\lambda}$ solve
		\[ \left( \partial_t + \frac{\Delta}{2} + b^n \nabla \right) u^n_{\lambda}
			= - b^n + \lambda u^n_{\lambda}, \quad u_{\lambda}^n (T, \cdummy) = 0,
		\]
		we can make analogous definitions for $\varphi_n$ and $\Phi_n$ so that
		\[ \left( \partial_t + \frac{\left( \nabla \Phi^n {\nabla \Phi^n}^T
				\right) \circ (\Phi^n)^{- 1}}{2} \colons \nabla^2 \right) \varphi^n = g
			\circ (\Phi^n)^{- 1} - \lambda u^n_{\lambda} \circ (\Phi^n)^{- 1}
			\nabla \varphi^n . \]
		By direct computation (like at the beginning of the section, since all the
		terms involved are smooth) one can check that $\psi_n (t, x) \assign
			\varphi_n (t, \Phi_n (t, x))$ solves
		\[ \left( \partial_t + \frac{\Delta}{2} + b^n \nabla \right) \psi^n = g.
		\]
		We can apply continuity with respect to the enhanced drift (see Theorem
		\ref{solmap} in the Appendix) to conclude that $\psi_n$ converges in $C_T
			\mathcal{C}^{(2 - \alpha) -}$ to the unique paracontrolled solution of
		\eqref{testtrans2}. On the other hand, by continuity wrt the coefficients
		of \eqref{testtrans1} and Proposition
		\ref{equl},
		\[ \varphi^n (t, \Phi^n (t, x)) \rightarrow \varphi (t, \Phi (t, x)) =
			\psi (t, x), \]
		i.e. $\psi$ is the unique solution of \eqref{testtrans2}, as we wanted.
	\end{proof}
\end{theorem}

The crucial step in the previous proof was checking that
\[ \varphi (\cdummy, \cdummy) \in C^{1, 2}_b ([0, T] \times \mathbb{R}^d)
	\Longrightarrow \varphi (\cdummy, \Phi (\cdummy, \cdummy)) \in
	\mathcal{T}^b. \]
We can deduce other interesting facts from this.

\begin{corollary}
	\label{touching}\tmtextbf{(Touching by test functions)}
	For any $(t_0, x_0)
		\in [0, T) \times \mathbb{R}^d$, there is a test function $\psi \in
		\mathcal{T}^b$ such that
	\begin{enumerate}
		\item $\psi (t, x) > 0 = \psi (t_0, x_0), \quad \tmop{for} \tmop{every}
			      (t, x) \in [0, T] \times \mathbb{R}^d \backslash \{ (t_0, x_0) \}$.

		\item $g (t_0, x_0) = 0$, where $g \in C_T \mathcal{C}_b$ is the ``right-hand side'' of $\psi$, i.e.
		      \[ \left( \partial_t + \frac{\Delta}{2} + b \nabla \right) \psi = g
			      \text{ in } [0, T) \times \mathbb{R}^d . \]
	\end{enumerate}
	In particular, admitting non-strict local extrema in Definition
	\ref{viscositydef} yields an equivalent definition.

	\begin{proof}
		If we let $(t_0, y_0) \assign (t_0, \Phi (t_0, x_0))$, it is clear
		(choosing for example $| t - t_0 |^2 + | x - \Phi (t_0, x_0) |^4$ locally
		near that point) that there exists $\varphi \in C_b^{1, 2} ([0, T] \times
			\mathbb{R}^d)$ satisfying
		\begin{enumerate}
			\item $\varphi (t, y) > 0 = \varphi (t_0, y_0)$ for every $\tmop{for}
				      \tmop{every} (t, y) \in [0, T] \times \mathbb{R}^d \backslash \{ (t_0,
				      y_0) \}$,

			\item $\partial_t \varphi (t_0, y_0) = \partial_i \varphi (t_0, y_0) =
				      \partial_{i j} \varphi (t_0, y_0) = 0$ for each $i, j = 1, \ldots, d$.
		\end{enumerate}
		In that case, $\psi (t, x) \assign \varphi (t, \Phi (t, x))$ satisfies the
		desired conditions, since $g$ is given precisely by
		\[ g (\cdummy, \cdummy) = \left[ \left( \partial_t + \frac{(\nabla \Phi
					\nabla \Phi^T) \circ \Phi^{- 1}}{2} \colons \nabla^2 + \lambda
				u_{\lambda} \circ \Phi^{- 1} \nabla \right) \varphi \right] (\cdummy,
			\Phi (\cdummy, \cdummy)) \in {C_T \mathcal{C}_b (\mathbb{R}^d)},
		\]
		and therefore $g (t_0, x_0) = 0$,

		The last assertion follows from the fact that if $\xi \in \mathcal{T}^b$
		and $h - \xi$ has a local maximum (resp. min.) at $(t_0, x_0)$, then $h -
			(\xi + \psi)$ has a strict local maximum at that point (resp $h - (\xi -
			\psi)$ strict loc. min.), while the viscosity inequality \eqref{viscineq}
		``does not see'' $\psi$.
	\end{proof}
\end{corollary}

\subsubsection{A priori Lipschitz estimates}
{
Under \AssumptionP{}, it is easy to see that the transformed Hamiltonian
\begin{align*}
  \hat H(t,y,r,p) := H&\bigl(t,\Phi^{-1}(t,y),r,
  (\nabla\Phi)^T(t,\Phi^{-1}(t,y))p\bigr)\\
  &+\lambda u_\lambda(t,\Phi^{-1}(t,y))\cdot p
\end{align*}
satisfies
\begin{equation}
  \begin{aligned}
    |\hat H(t,x,r,p)-\hat H(t,y,r,p)|
    &\leq C |x-y|^{\beta'}(1+|p|^\kappa)+C,\\
    |\hat H(t,y,r,p)-\hat H(t,y,r,p')|
    &\leq C\bigl(1+(|p|\vee|p'|)^{\kappa-1}\bigr)|p-p'|,\\
    |\hat H(t,y,r,p)-\hat H(t,y,r',p)|
    &\leq C(1+|p|^q)|r-r'|.
  \end{aligned}
  \label{eq:apriori-hhat}
\end{equation}
where $\beta'=(\gamma-1)\wedge\beta$, and $\gamma\in(4/3,2-\alpha)$ is such that
$\Phi\in C^\gamma$ and \(\kappa\in[1,2+(\gamma-1)\wedge\beta)\).
}

Fix
$0<\nu<\alpha\wedge\beta'\wedge(2-q)\wedge(2+\beta'-\kappa)$ and let
$\omega:\mathbb{R}_+\to\mathbb{R}_+$
be such that
\begin{enumerate}
  \item For $r\in[0,1)$
  \[
    \omega(r)=r-\frac{1}{2(\nu+1)}r^{\nu+1};
  \]

  \item $\omega$ is linear outside of $[0,1)$ (with the appropriate slope).
\end{enumerate}
In particular, near zero
\[
  \omega'(r)=1-\frac{1}{2}r^\nu,
  \qquad
  \omega''(r)=-\frac{\nu}{2}r^{\nu-1},
\]
and globally
\[
  cr\leq\omega(r)\leq Cr,
\]
for positive constants \(c,C>0\). When applying Ishii's Lemma (Theorem 8.3 in \cite{CIL1992}), the uniform ellipticity of the diffusion matrix in the transformed equation will be crucial for exploiting the negative sign arising from \(\omega''\). A similar mechanism can be found in \cite{LeyNguyen2017} and \cite{PorrettaPriola2013}: the results in the latter are quite general, but for completeness we include a simpler proof, better fitted to our assumptions on the transformed Hamiltonian.

In the following Theorem, it is important to assume that the viscosity solution is bounded, as otherwise there is no guarantee that the doubled variable function achieves a maximum (unless we work in \(\mathbb{T}^d\) instead of \(\mathbb{R}^d\)).
{In the control setting, this boundedness follows from
Corollary \ref{maxpple}; in the general setting considered here, it is imposed
directly.}

In what follows, we denote \(a(t,x):=(\nabla\Phi\nabla\Phi^T)\circ\Phi^{-1}(t,x)\)
\begin{theorem}\label{thm:viscosity-lipschitz-estimate}
{Under \AssumptionP{}, let} $v$ be a bounded viscosity solution of
\eqref{transPDE} with Lipschitz terminal data \(\overline{v}\). Then,
for $K$ large enough and any $x,y\in\mathbb{R}^d$, $t\in(0,T]$,
\[
  v(t,x)-v(t,y)\leq \{K\omega(|x-y|)\}\wedge2\|v\|_\infty;
\]
In particular (exchanging $x$ and $y$),
\[
  |v(t,x)-v(t,y)|\leq C(|x-y|\wedge\|v\|_\infty),
\]
where \(C>0\) depends only on the data in Assumptions \assA{} and \assP{},
\(\|v\|_\infty\), and the Lipschitz constant of
\(\overline{v}\).
\end{theorem}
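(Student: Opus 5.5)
We argue by contradiction with a doubling-of-variables argument in the spirit of \cite{PorrettaPriola2013,LeyNguyen2017}. Fix $K$ large, to be chosen, and consider, for small $\eta>0$,
\[
  \Psi(t,x,y):=v(t,x)-v(t,y)-K\omega(|x-y|)-\frac{\eta}{t}-\eta\bigl(|x|^2+|y|^2\bigr)e^{\mu(T-t)},
\]
where the $\eta/t$ term is only needed if the initial slice is included. Suppose $\sup\Psi>0$. Boundedness of $v$ and the quadratic penalization give a maximizer $(\hat t,\hat x,\hat y)$ with $\hat x\neq\hat y$. We also have $\hat t<T$ once $K\geq C\,\mathrm{Lip}(\overline v)$, using $\omega(r)\geq cr$. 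Moreover $K\omega(|\hat x-\hat y|)\leq 2\|v\|_\infty$, which forces $r:=|\hat x-\hat y|\leq C\|v\|_\infty/K$. For $K$ large this gives $r<1$, so we are in the region where $\omega$ is the concave power perturbation. This already yields the cap $\wedge\,2\|v\|_\infty$ in the statement, and the ``in particular'' follows by symmetry and $\omega(r)\leq Cr$.

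At the maximum we apply Ishii's Lemma (Theorem 8.3 in \cite{CIL1992}) to the sub- and supersolution property of $v$. This is legitimate because the notion of viscosity solution for \eqref{transPDE} is the classical one, as remarked in the proof of Theorem~\ref{thm:uniqueness}. We obtain time derivatives $b_1-b_2=\partial_t(\text{penalty})$, gradients $p_x=K\omega'(r)e+O(\eta|\hat x|)$ and $p_y=K\omega'(r)e-O(\eta|\hat y|)$ with $e=(\hat x-\hat y)/r$, and matrices $X,Y$ satisfying the standard block inequality. Subtracting the two viscosity inequalities gives
\[
  \tfrac12\bigl(a(\hat t,\hat x):X-a(\hat t,\hat y):Y\bigr)\geq -\hat H(\hat t,\hat x,v(\hat x),p_x)+\hat H(\hat t,\hat y,v(\hat y),p_y)+\text{(penalty terms)}.
\]

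For the left-hand side we test the matrix inequality with $\bigl(\sigma(\hat x),\sigma(\hat y)\bigr)$, where $\sigma=a^{1/2}$, and add a rank-one direction along $e$. Uniform ellipticity of $a$ together with $\omega''<0$ gives the good term $\leq K\omega''(r)\,\lambda_{\min}+CK\|\sigma\|_{C^{\beta'}}^2r^{2\beta'-1}$, that is,
\[
  \leq -cK r^{\nu-1}+CKr^{2\beta'-1}.
\]
The first term dominates since $\nu<\beta'$ and $r<1$. For the Hamiltonian difference we use \eqref{eq:apriori-hhat} and $|p|\simeq K$. The $x$-variation contributes at most $Cr^{\beta'}(1+K^\kappa)+C$. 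The $p$-variation is $O(\eta\cdot K^{\kappa-1})\to0$. The $r$-variation is handled by monotonicity when $v(\hat x)>v(\hat y)$, which does hold at the maximum; if $\hat H$ is not monotone in $r$, we bound it by $C(1+K^q)\,2\|v\|_\infty$, or first apply the standard change $v\mapsto e^{\mu t}v$.

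The main obstacle is the exponent bookkeeping: we must show that $cKr^{\nu-1}$ beats $CK^\kappa r^{\beta'}+CK^q+C$ uniformly over $r\leq C/K$. Since $r^{\nu-1}\geq (K/C)^{1-\nu}$, the good term is at least $cK^{2-\nu}$. This dominates $K^q$ because $\nu<2-q$. It dominates $K^\kappa r^{\beta'}$ after writing $K^\kappa r^{\beta'}=K\,r^{\nu-1}\cdot K^{\kappa-1}r^{\beta'+1-\nu}$ and using $r\leq C/K$, which leaves the factor $K^{\kappa-2-\beta'+\nu}\to0$ by the choice $\nu<2+\beta'-\kappa$. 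The condition $\nu<\alpha$ presumably enters through the H\"older regularity of $a$, i.e.\ through $\beta'$. Choosing $K$ large then contradicts the inequality, after letting $\eta\to0$. Hence $\Psi\leq0$, and the theorem follows.
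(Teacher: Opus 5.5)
Your proposal is correct and follows essentially the same route as the paper. The shared ingredients are doubling of variables with the concave modulus $\omega$, Ishii's lemma, and extraction of the negative term $K\omega''(r)\sim -Kr^{\nu-1}$ from uniform ellipticity (you split off $\lambda\, e\otimes e$ where the paper splits off $\frac12 I$), with the H\"older square root of the remaining diffusion matrix controlling the cross terms, plus the same exponent bookkeeping ($r\lesssim 1/K$, $\nu<2-q$, $\nu<2+\beta'-\kappa$). Your extra $\eta/t$ penalty, which keeps the maximizer off the slice $t=0$ where Ishii's lemma on an open time interval is not directly available, is a small gain in rigor over the paper rather than a different approach.
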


\begin{proof}
Let $\mu\in(0,1)$ (to be taken $\to0$ later), and consider the doubled
variable function
\[
  \Psi(t,x,y):=v(t,x)-v(t,y)-K\omega(|x-y|)
  -\frac{\mu}{2}(|x|^2+|y|^2).
\]
Suppose that $\sup\Psi>0$; since $v$ is continuous and bounded, the supremum
is attained at some point $(t^*,x^*,y^*)$. By taking $K$ big enough, we may
assume (since $\bar v=v(T,\cdot)$ is Lipschitz) that
\[
  |x^*-y^*|\leq1,\qquad t^*\in[0,T),\qquad |x^*|\vee|y^*|\leq M_\mu.
\]
Letting $r^*:=|x^*-y^*|$, we have that
\[
  cKr^*\leq K\omega(r^*)+\frac{\mu}{2}(|x^*|^2+|y^*|^2)<2\|v\|_\infty,
\]
so $r^*\leq\frac{2\|v\|_\infty}{cK}$, and since the supremum is strictly
positive, $r>0$. Letting
\[
  p:=K\omega'(r^*)\frac{x^*-y^*}{r^*},
\]
we can apply Ishii's Lemma (Theorem 8.3 in \cite{CIL1992}) to conclude that there exist
$b_1,b_2\in\mathbb{R}$ and \(d\times d\) symmetric matrices $X,Y$ such that
\[
  b_1-b_2=0=\partial_t\left(K\omega(|x^*-y^*|)
+\frac{\mu}{2}(|x^*|^2+|y^*|^2)\right)
\]
and
\begin{equation}
  {
  \begin{aligned}
    b_1+\frac{1}{2}a(t^*,x^*)::X
    +\hat H(t^*,x^*,v(t^*,x^*),p+\mu x^*)&\geq0,\\
    b_2+\frac{1}{2}a(t^*,y^*)::Y
    +\hat H(t^*,y^*,v(t^*,y^*),p-\mu y^*)&\leq0.
  \end{aligned}
  }
  \label{eq:apriori-jets}
\end{equation}
Furthermore, $X$ and $Y$ satisfy the usual matrix identities (see below).
Subtracting the equations in \eqref{eq:apriori-jets},
\begin{equation}
  {
  \frac{1}{2}\bigl[a(t^*,x^*)::X-a(t^*,y^*)::Y\bigr]
  +\bigl[\hat H(t^*,x^*,v(t^*,x^*),p+\mu x^*)
  -\hat H(t^*,y^*,v(t^*,y^*),p-\mu y^*)\bigr]\geq0.
  }
  \label{eq:apriori-subtracted}
\end{equation}
We will fix $K$ large enough contradicting this inequality.

\noindent\textbf{Step 1:}
$\frac{1}{2}\bigl[a(t^*,x^*)::X-a(t^*,y^*)::Y\bigr]
\leq-\widetilde C K(r^*)^{\nu-1}$, for some $\widetilde C>0$ independent of
$K$.

Outside of the diagonal, we can compute the (two variable) Hessian
\[
  D^2_{x,y}\left(K\omega(|x-y|)+\frac{\mu}{2}(|x|^2+|y|^2)\right)
  =
  \begin{pmatrix}
    A+\mu I & -A\\
    -A & A+\mu I
  \end{pmatrix}
  =:\mathcal{A},
\]
where
\[
  A=K\left[
    \omega''(r)\frac{x-y}{r}\otimes\frac{x-y}{r}
    +\frac{\omega'(r)}{r}\left(
      I-\frac{x-y}{r}\otimes\frac{x-y}{r}
    \right)
  \right].
\]
From the Ishii Lemma, we know that for any $\varepsilon>0$ (to be fixed
later), $X$ and $Y$ can be chosen to satisfy
\begin{equation}
  \begin{pmatrix}
    X&0\\
    0&-Y
  \end{pmatrix}
  \leq\mathcal{A}+\varepsilon\mathcal{A}^2.
  \label{eq:apriori-ishii-matrix}
\end{equation}
Remembering that
\[
  a=(I+\nabla u_\lambda)(I+\nabla u_\lambda)^t\circ\Phi^{-1},
\]
we can split (taking larger \(\lambda\) if necessary)
\[
  a(t,x)=\frac{1}{2}I+\widetilde a(t,x),
  \qquad\text{where}\qquad
  \widetilde a(t,x)\geq\frac{1}{4}I,
\]
and the term we need to control becomes
\begin{equation}
  \frac{1}{4}\operatorname{tr}(X-Y)
  +\frac{1}{2}\bigl[\widetilde a(t^*,x^*)::X
  -\widetilde a(t^*,y^*)::Y\bigr].
  \label{eq:apriori-diffusion-split}
\end{equation}
From \eqref{eq:apriori-ishii-matrix}, one can see that $X-Y\leq 2(\varepsilon\mu^2+\mu)I$, and in
particular,
\[
  \begin{aligned}
  \operatorname{tr}(X-Y)
  &\leq
  \left(\frac{x-y}{r}\right)(X-Y)
  \left(\frac{x-y}{r}\right)^t+2(d-1)(\varepsilon\mu^2+\mu)\\
  &=
  \left(\frac{x-y}{r},\frac{y-x}{r}\right)
  \begin{pmatrix}
    X&0\\
    0&-Y
  \end{pmatrix}
  \left(\frac{x-y}{r},\frac{y-x}{r}\right)^t+2(d-1)(\varepsilon\mu^2+\mu).
  \end{aligned}
\]
On the other hand (using that
$\frac{x-y}{r}\otimes\frac{x-y}{r}$ is the projection onto
$\operatorname{span}\{\frac{x-y}{r}\}$) one can check that
\[
  \begin{aligned}
  &\left(\frac{x-y}{r},\frac{y-x}{r}\right)
  \mathcal{A}
  \left(\frac{x-y}{r},\frac{y-x}{r}\right)^t\\
  &\qquad=
  4\left(\frac{x-y}{r}\right)A
  \left(\frac{x-y}{r}\right)^t+2\mu
  =4K\omega''(r)+2\mu,
  \end{aligned}
\]
and
\[
  \begin{aligned}
  &\left(\frac{x-y}{r},\frac{y-x}{r}\right)
  \mathcal{A}^2
  \left(\frac{x-y}{r},\frac{y-x}{r}\right)^t\\
  &\qquad=
  8\left(\frac{x-y}{r}\right)A^2
  \left(\frac{x-y}{r}\right)^t
  +8\mu\left(\frac{x-y}{r}\right)A
  \left(\frac{x-y}{r}\right)^t+2\mu^2\\
  &\qquad=
  8(K\omega''(r))^2+8\mu K\omega''(r)+2\mu^2.
  \end{aligned}
\]
By \eqref{eq:apriori-ishii-matrix} and the previous identities,
\begin{align*}
  \frac{1}{4}\operatorname{tr}(X-Y)
  \leq&\frac{1}{4}\{
    4K\omega''(r)+2\mu
    +\varepsilon\left[8(K\omega''(r))^2
    +8\mu K\omega''(r)+2\mu^2\right]\\&+2(d-1)(\frac{r}{K}\mu^2+\mu)\},
\end{align*}
so choosing $\varepsilon=\frac{r}{K}$ and plugging
$\omega''(r)=-\frac{\nu}{2}r^{\nu-1}$ yields
\begin{equation}
  \begin{aligned}
  \frac{1}{4}\operatorname{tr}(X-Y)
  \leq\frac{1}{4}\biggl(
    &-2\nu Kr^{\nu-1}+2\mu
    +2\nu^2Kr^{2\nu-1}\\
    &-4\mu\nu r^\nu
    +2\frac{\mu^2}{K}r+2(d-1)(\frac{r}{K}\mu^2+\mu)
  \biggr).
  \end{aligned}
  \label{eq:apriori-trace}
\end{equation}
The right hand side of which is dominated by the negative term
$-\frac{1}{2}\nu Kr^{\nu-1}$. For the second term in
\eqref{eq:apriori-diffusion-split}, denote
\[
  \tilde{\sigma}(\cdot,\cdot):=\widetilde a^{1/2}(\cdot,\cdot);
\]
for the square root matrix. Since $\widetilde a\geq\frac{1}{4}I$, the operator norms satisfy (see Lemma 2.2 in \cite{Sch92})
\[
  \|\tilde{\sigma}(t,x)-\tilde{\sigma}(t,y)\|_{\operatorname{op}}
  \leq C\|\widetilde a(t,x)-\widetilde a(t,y)\|_{\operatorname{op}}
  \leq C'|x-y|^{\gamma-1}.
\]
We have that
\[
  \begin{aligned}
  &\frac{1}{2}\bigl[\widetilde a(t^*,x^*)::X
  -\widetilde a(t^*,y^*)::Y\bigr]\\
  &\qquad=
  \frac{1}{2}\sum_j
  \left(\widetilde\sigma_j(t^*,x^*),
        \widetilde\sigma_j(t^*,y^*)\right)
  \begin{pmatrix}
    X&0\\
    0&-Y
  \end{pmatrix}
  \begin{pmatrix}
    \widetilde\sigma_j(t^*,x^*)\\
    \widetilde\sigma_j(t^*,y^*)
  \end{pmatrix},
  \end{aligned}
\]
where $\widetilde\sigma_j$ is the $j$-th column of $\widetilde\sigma$, so
again using \eqref{eq:apriori-ishii-matrix} and writing
\[
  \mathcal{A}=
  \begin{pmatrix}
    A&-A\\
    -A&A
  \end{pmatrix}
  +\mu
  \begin{pmatrix}
    I&0\\
    0&I
  \end{pmatrix},
\]
\[
  \begin{aligned}
  &\frac{1}{2}\bigl[\widetilde a(t^*,x^*)::X
  -\widetilde a(t^*,y^*)::Y\bigr]\\
  &\quad\leq
  \frac{1}{2}\sum_j
  \left(\widetilde\sigma_j(t^*,x^*)
        -\widetilde\sigma_j(t^*,y^*)\right)
  (A+2\varepsilon A^2)
  \left(\widetilde\sigma_j(t^*,x^*)
        -\widetilde\sigma_j(t^*,y^*)\right)^t\\
  &\qquad
  +\varepsilon\mu\sum_j
  \left(\widetilde\sigma_j(t^*,x^*)
        -\widetilde\sigma_j(t^*,y^*)\right)
  A
  \left(\widetilde\sigma_j(t^*,x^*)
        -\widetilde\sigma_j(t^*,y^*)\right)^t\\
  &\qquad
  +(\mu+\varepsilon\mu^2)\frac{1}{2}\sum_j
  \left(
    |\widetilde\sigma_j(t^*,x^*)|^2
    +|\widetilde\sigma_j(t^*,y^*)|^2
  \right).
  \end{aligned}
\]
As $\mu\in(0,1)$ the last term is uniformly bounded. Since
\[
  A=K\left[
    \omega''(r)\frac{x-y}{r}\otimes\frac{x-y}{r}
    +\frac{\omega'(r)}{r}\left(
      I-\frac{x-y}{r}\otimes\frac{x-y}{r}
    \right)
  \right]
\]
is a sum of projection matrices and $\omega''\leq0$,
\begin{equation}
  \begin{aligned}
  &\frac{1+2\varepsilon\mu}{2}\sum_j
  \left(\widetilde\sigma_j(t^*,x^*)
        -\widetilde\sigma_j(t^*,y^*)\right)
  A
  \left(\widetilde\sigma_j(t^*,x^*)
        -\widetilde\sigma_j(t^*,y^*)\right)^t\\
  &\qquad\leq
  (1+2\varepsilon\mu)\sum_j K\frac{\omega'(r)}{r}
  |\widetilde\sigma_j(t^*,x^*)
   -\widetilde\sigma_j(t^*,y^*)|^2\\
  &\qquad\leq CK(1+2\varepsilon\mu)r^{2\gamma-3},
  \end{aligned}
  \label{eq:apriori-A}
\end{equation}
where we used that $\widetilde\sigma\in C^{\gamma-1}$ and $\omega'\leq1$.
On the other hand, $\|A\|_{\mathrm{op}}\leq C\frac{K}{r}$ and therefore
\begin{equation}
  \begin{aligned}
  &\frac{\varepsilon}{2}\sum_j
  \left(\widetilde\sigma_j(t^*,x^*)
        -\widetilde\sigma_j(t^*,y^*)\right)
  A^2
  \left(\widetilde\sigma_j(t^*,x^*)
        -\widetilde\sigma_j(t^*,y^*)\right)^t\\
  &\qquad\leq
  \frac{\varepsilon}{2}\sum_j\|A\|_{\mathrm{op}}^2
  |\widetilde\sigma_j(t^*,x^*)
   -\widetilde\sigma_j(t^*,y^*)|^2\\
  &\qquad\leq C\varepsilon K^2r^{2\gamma-4}\\
  &\qquad\leq CKr^{2\gamma-3},
  \end{aligned}
  \label{eq:apriori-A2}
\end{equation}
where in the last line we chose again
$\varepsilon=\frac{r}{K}$. Combining
\eqref{eq:apriori-diffusion-split}, \eqref{eq:apriori-trace},
\eqref{eq:apriori-A} and \eqref{eq:apriori-A2}, we get
\[
  \begin{aligned}
  \frac{1}{2}\bigl[a(t^*,x^*)::X-a(t^*,y^*)::Y\bigr]
  \leq
  &\frac{1}{4}\left(
    -2\nu Kr^{\nu-1}+2\mu
    +2\nu^2Kr^{2\nu-1}
    -4\mu\nu r^\nu
    +2\frac{\mu^2}{K}r
  \right)
  \\&+C(Kr^{2\gamma-3}+r^{2\gamma-2}+1).
  \end{aligned}
\]
Since
\[
  \left.
  \begin{array}{c}
    0<\nu<\alpha,\qquad \alpha\in(1/2,2/3)\\
    4/3<\gamma<2-\alpha
  \end{array}
  \right\}
  \Longrightarrow \nu-1<2\gamma-3,
\]
we get that for big enough \(K\) (remember that
$r^*\leq\frac{2\|v\|_\infty}{cK}$), the negative term
$-\frac{1}{4}\nu Kr^{\nu-1}$ dominates and therefore
\[
  \frac{1}{2}\bigl[a(t^*,x^*)::X-a(t^*,y^*)::Y\bigr]
  \leq-\widetilde C K(r^*)^{\nu-1},
\]
for some $\widetilde C>0$ independent of $K$ (and uniformly in \(\mu\in(0,1)\)).
\end{proof}

\noindent\textbf{Step 2:}
For big enough $K$, we show that
\begin{align*}
\hat H(t^*,x^*,v(t^*,x^*),p+\mu x^*)
-\hat H(t^*,y^*,v(t^*,y^*),p-\mu y^*) \leq\frac{\widetilde C}{2}K(r^*)^{\nu-1},
\end{align*}
where $\widetilde C>0$ is as in the previous step.

Write $v_x^*:=v(t^*,x^*)$ and $v_y^*:=v(t^*,y^*)$. By
\eqref{eq:apriori-hhat}, since
$p=K\omega'(r^*)\frac{x^*-y^*}{r^*}$ and
$\mu(|x^*|^2+|y^*|^2)<2\|v\|_\infty$,
\begin{align*}
&\hat H(t^*,x^*,v_x^*,p+\mu x^*)
-\hat H(t^*,y^*,v_y^*,p-\mu y^*)\\
&=\hat H(t^*,x^*,v_x^*,p+\mu x^*)
-\hat H(t^*,y^*,v_x^*,p+\mu x^*)\\
&\quad{+\hat H(t^*,y^*,v_x^*,p+\mu x^*)
-\hat H(t^*,y^*,v_y^*,p+\mu x^*)}\\
&\quad+\hat H(t^*,y^*,v_y^*,p+\mu x^*)
-\hat H(t^*,y^*,v_y^*,p-\mu y^*)\\
&\leq C(r^*)^{\beta'}(1+|p+\mu x^*|^\kappa)
{+C(1+|p+\mu x^*|^q)|v_x^*-v_y^*|}\\
&\quad+C(1+|p+\theta\mu x^*-(1-\theta)\mu y^*|^{\kappa-1})
\mu|x^*+y^*|\\
&\leq C(r^*)^{1+\beta'-\nu}(r^*)^{\nu-1}
\left\{1+(2\mu\|v\|_\infty)^{\frac{\kappa}{2}}+K^\kappa\right\}\\
&\quad+C\left(1+K^{\kappa-1}
+(2\mu\|v\|_\infty)^{\frac{\kappa-1}{2}}\right)
{+C(1+K^q)}\\
&\leq C\left(\frac{2\|v\|_\infty}{cK}\right)^{1+\beta'-\nu}
\left(1+(2\mu\|v\|_\infty)^{\frac{\kappa}{2}}+K^\kappa\right)
(r^*)^{\nu-1}\\
&\quad+o\bigl(K(r^*)^{\nu-1}\bigr){+C(1+K^q)}\\
&=C'K^{(\kappa-1)-(1+\beta'-\nu)}K(r^*)^{\nu-1}
+o\bigl(K(r^*)^{\nu-1}\bigr).
\end{align*}
{Indeed, $r^*\leq2\|v\|_\infty/(cK)$ implies
$K(r^*)^{\nu-1}\gtrsim K^{2-\nu}$, so
$1+K^q=o(K(r^*)^{\nu-1})$ by $q<2-\nu$.} The result now follows by taking
$K$ big enough and choosing $\nu$ also so that
\begin{align*}
\nu<2+\beta'-\kappa.
\end{align*}

Combining Steps 1 and 2, and by \eqref{eq:apriori-subtracted},
\[
  {
  \begin{aligned}
  0\leq{}&\frac{1}{2}\bigl[a(t^*,x^*)::X-a(t^*,y^*)::Y\bigr]\\
  &+\bigl[\hat H(t^*,x^*,v(t^*,x^*),p+\mu x^*)
  -\hat H(t^*,y^*,v(t^*,y^*),p-\mu y^*)\bigr]
  \leq-\frac{\widetilde C}{2}K(r^*)^{\nu-1},
  \end{aligned}
  }
\]
which is a contradiction. It follows that
\[
  v(t,x)-v(t,y)\leq K\omega(|x-y|)
  +\frac{\mu}{2}(|x|^2+|y|^2),
\]
where $K$ is big enough and does not depend on $\mu$. Then, taking
$\mu\to0$,
\[
  v(t,x)-v(t,y)\leq K\omega(|x-y|),
\]
as we wanted.
\begin{remark}
By taking the time dependent modulus of continuity
\[
  \frac{K}{(T-t)^{\frac{1}{2-\nu}}}\omega(|x-y|),
\]
one can prove the previous theorem without assuming that the terminal
condition is Lipschitz, yielding the interior Lipschitz estimate
\[
  |v(t,x)-v(t,y)|\leq
  \frac{K}{(T-t)^{\frac{1}{2-\nu}}}\omega(|x-y|),
\]
for any viscosity solution. Let us justify the exponent: repeating the proof
above would add the extra time derivative term in
\eqref{eq:apriori-subtracted}
\[
  \frac{d}{dt}\left(
    \frac{K}{(T-t)^\eta}\omega(|x-y|)
  \right)
  =\frac{\eta}{(T-t)}\frac{K}{(T-t)^\eta}\omega(|x-y|).
\]
At the maximum point,
\[
  \frac{K}{(T-t^*)^\eta}\omega(|x^*-y^*|)\leq2\|v\|_\infty,
\]
so
\begin{equation}
  \left.
  \frac{d}{dt}\left(
    \frac{K}{(T-t)^\eta}\omega(|x-y|)
  \right)
  \right|_{(t^*,x^*,y^*)}
  \leq\frac{2\eta\|v\|_\infty}{(T-t)}.
  \label{eq:apriori-time-derivative}
\end{equation}
On the other hand, since
$r^*\leq\frac{4\|v\|_\infty}{K}(T-t)^\eta$ and $\nu<1$,
\[
  \begin{aligned}
  \frac{K}{(T-t)^\eta}(r^*)^{\nu-1}
  &\geq\frac{K}{(T-t)^\eta}
  \left(\frac{4\|v\|_\infty}{K}(T-t)^\eta\right)^{\nu-1}\\
  &=(4\|v\|_\infty)^{\nu-1}K^{2-\nu}
  \left(\frac{1}{T-t}\right)^{\eta(2-\nu)}.
  \end{aligned}
\]
We can choose $\eta=\frac{1}{2-\nu}$, which together with \eqref{eq:apriori-time-derivative} yields
\begin{equation*}
  \left.
  \frac{d}{dt}\left(
    \frac{K}{(T-t)^\eta}\omega(|x-y|)
  \right)
  \right|_{(t^*,x^*,y^*)}
  {\leq C_\nu\|v\|_\infty^{2-\nu}K^{\nu-2}
  \frac{K}{(T-t)^\eta}(r^*)^{\nu-1}.}
\end{equation*}
{Thus, by taking $K$ sufficiently large (depending
on $\|v\|_\infty$) this term can be absorbed into the one coming from the
second derivative of the concave term, and we conclude as before.}
\end{remark}

\begin{corollary}\label{cor:singular-viscosity-lipschitz-estimate}
{Under \AssumptionP{}, let \(h\) be a bounded viscosity
solution of \eqref{singularHJB}.} If \(\overline{h}\) is Lipschitz continuous,
\[
  {\|h\|_{L_T^\infty W^{1,\infty}}
  {\leq C}};
\]
otherwise, the interior estimate
\[
{\|\nabla h(t,\cdot)\|_{L^\infty}
{\leqslant \frac{C}
{(T-t)^\frac{1}{2-\nu}}}}
\]
holds. {In both cases, $C>0$ depends only on the data in
Assumptions \assA{} and \assP{} and on $\|h\|_\infty$ (in the first
case, also on $\|\overline h\|_{\mathrm{Lip}}$).}
\end{corollary}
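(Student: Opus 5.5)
The plan is to transport the estimate through Zvonkin's change of variables. Set $v(t,y):=h(t,\Phi^{-1}(t,y))$, where $\Phi=\mathrm{id}+u_\lambda$ is the diffeomorphism fixed in this section (with $\sup\|\nabla u_\lambda\|<1/2$).

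\textbf{Step 1: transfer to the transformed equation.} By Theorem~\ref{thm:uniqueness}, $v$ is a viscosity solution of \eqref{transPDE} with terminal datum $\overline v=\overline h$, since $u_\lambda(T,\cdot)=0$. It is bounded, with $\|v\|_\infty=\|h\|_\infty$. Under \AssumptionP{}, the transformed Hamiltonian $\hat H$ satisfies \eqref{eq:apriori-hhat}, and the diffusion matrix $a$ is uniformly elliptic and $C^{\gamma-1}$, with constants depending only on $\|u_\lambda\|_{C_T\mathcal C^\gamma}$. By Proposition~\ref{equl}, these are controlled by $\|b\|_{C_T\mathcal C^{-\alpha}}$ and $\|B\|_{C_T\mathcal C^{1-2\alpha}}$, that is, by the data of \AssumptionA{}.

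\textbf{Step 2: Lipschitz bound for $v$.} If $\overline h$ is Lipschitz, apply Theorem~\ref{thm:viscosity-lipschitz-estimate}. This gives $|v(t,x)-v(t,y)|\le C|x-y|$ with $C=C(\text{A},\text{P},\|h\|_\infty,\|\overline h\|_{\rm Lip})$. If $\overline h$ is only bounded continuous, use the time-weighted modulus $K(T-t)^{-1/(2-\nu)}\omega(|x-y|)$ from the Remark following that theorem. This gives $|v(t,x)-v(t,y)|\le K(T-t)^{-1/(2-\nu)}\omega(|x-y|)\le CK(T-t)^{-1/(2-\nu)}|x-y|$, using $\omega(r)\le Cr$.

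\textbf{Step 3: return to $h$.} Since $h(t,x)=v(t,\Phi(t,x))$ and $\|\nabla\Phi\|_\infty\le 3/2$,
\[
|h(t,x)-h(t,x')|\le \mathrm{Lip}(v(t,\cdot))\,|\Phi(t,x)-\Phi(t,x')|\le \tfrac32\,\mathrm{Lip}(v(t,\cdot))\,|x-x'|.
\]
By Rademacher's theorem this yields $\|\nabla h(t,\cdot)\|_{L^\infty}\le \frac32\mathrm{Lip}(v(t,\cdot))$. Adding $\|h\|_\infty$ gives the $L^\infty_TW^{1,\infty}$ bound in the first case and the interior bound in the second.

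\textbf{Main obstacle.} The only delicate point is the second case. The Remark only sketches how the time-derivative term is absorbed, and the choice of $K$ there depends on $\|v\|_\infty$. So I would check that the contact point can still be localized with $t^*<T$. Here no Lipschitz terminal datum is available, but the weight blows up as $t\to T$, so $\Psi\le 0$ near $t=T$ because $v$ is continuous and bounded. I would also check that the remaining steps of the proof of Theorem~\ref{thm:viscosity-lipschitz-estimate} go through unchanged, uniformly in $\mu$. The constants then depend only on the data of \AssumptionA{} and \AssumptionP{}, on $\|h\|_\infty$, and in the first case also on $\|\overline h\|_{\rm Lip}$.
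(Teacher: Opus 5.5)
Your proposal is correct and matches the paper's argument: transfer to $v=h(t,\Phi^{-1}(t,\cdot))$ via Theorem~\ref{thm:uniqueness}, apply Theorem~\ref{thm:viscosity-lipschitz-estimate} (or, for non-Lipschitz $\overline h$, its time-weighted variant from the subsequent Remark), and pull the bound back using that $\Phi(t,\cdot)$ is globally Lipschitz. One small correction to your check of the second case: continuity of $v$ on the $\mu$-localized compact set gives $\limsup_{t\to T}\sup_{x,y}\Psi(t,x,y)\le 0$ rather than literally $\Psi\le 0$ near $t=T$, but this still forces $t^*<T$ whenever $\sup\Psi>0$, which is all you need.
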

\begin{proof}
    Follows from Theorem \ref{thm:uniqueness} and the previous estimates, since
    \({\Phi}\) is globally Lipschitz in \(x\).
\end{proof}

\section{Singular quadratic FBSDEs}
\label{sec:BSDE}
\label{section:singular-FBSDE}

We study the singular forward--backward system, formally written as
\begin{align*}
  X_t^{s,x}
  &=x+\int_s^t b(r,X_r^{s,x})\,dr+W_t-W_s,\\
  Y_t^{s,x}
  &=\bar h(X_T^{s,x})
    +\int_t^T H(r,X_r^{s,x},Y_r^{s,x},Z_r^{s,x})\,dr
    -\int_t^T Z_r^{s,x}\,dW_r,
  \qquad s\leq t\leq T.
\end{align*}
The forward drift $b$ is distribution-valued, while the backward generator
$H$ may have quadratic growth in $z$; we refer to this system as a singular
quadratic FBSDE. We begin with the singular forward equation. The
martingale-problem formulation of Definition~\ref{def:martingale-problem}
specifies a probability law on the canonical path space, but does not
intrinsically provide a Brownian motion on an underlying filtered probability
space. Since the backward equation must be driven by the same Brownian motion
as the forward equation, we work instead with the notion of rough weak
solution introduced in {\cite{kremp_rough_2025}}. Such a solution is given by
a tuple
\[
  (\Omega^{s,x},\mathcal F^{s,x},
  (\mathcal F_t^{s,x})_{t\in[s,T]},\mathbb P^{s,x},
  X^{s,x},G^{s,x},W^{s,x}),
\]
where $X^{s,x}$ is the forward process, $W^{s,x}$ is a Brownian motion, and
$G^{s,x}$ heuristically represents the singular drift integral
$\int_s^\cdot b(r,X_r^{s,x})\,dr$; all these processes are adapted to the
same filtration. In Subsection~\ref{subsec:singular-forward-equation}, we
recall the relevant results on rough weak solutions and their relation to the
martingale formulation, and establish an It{\^o} formula for such solutions in
Proposition~\ref{prop:ito-formula}.

Consequently, the singular FBSDE must itself be understood in a weak sense.
In Subsection~\ref{subsec:FBSDE-weak-wellposedness}, we introduce the
corresponding notion of weak solution and establish well-posedness under
relatively weak assumptions.

Fix $(s,x)$ and a rough weak solution as above. At negative regularity,
pathwise uniqueness may fail {\cite{hess_childs_rowan_2026}}, so the filtration
of a weak solution need not be generated by $W^{s,x}$. Hence the classical
martingale representation theorem for Brownian filtrations is not directly
available. However, {\cite[Proposition~2.8]{perkowski_coming_2025}} shows that
every square-integrable martingale on the completed, right-continuous natural
filtration $(\mathcal F_t^{X^{s,x}})_{t\in[s,T]}$ admits a representation with
respect to $W^{s,x}$. This is sufficient for the BSDE arguments below. We
therefore restrict throughout this section to rough weak solutions equipped
with this filtration. Such rough weak solutions exist by
Theorem~\ref{thm:weak-martingale-equivalence} and it may be viewed as the
canonical choice, since it is precisely the rough weak realization obtained
from the canonical martingale solution.

Once $(s,x)$ and such a forward rough weak solution are fixed, the backward
equation falls essentially within the classical theory of quadratic BSDEs
with bounded terminal conditions
{\cite{kobylanski_backward_2000,tevzadze_solvability_2008}}.
In particular, the standard a priori estimates, comparison results and
monotone-limit arguments carry over by exactly the same proofs as in the
classical case; we refer to
{\cite[Chapter~7]{zhang_backward_2017}} for a compact account of this theory.
Consequently, when working with a fixed $(s,x)$ and a fixed forward rough weak
solution, we shall invoke these results directly rather than restating or
reproving them. One exception is the proof of
Theorem~\ref{thm:FBSDE-wellposedness-bounded}, where we briefly recall the
well-posedness construction in order to make clear that the Brownian
filtration used in the classical formulation can be replaced by the
martingale representation property available here. The main technical
difficulty in this section is therefore to obtain uniqueness and estimates for the singular
FBSDE that are uniform in the initial condition $(s,x)$ and invariant under
the choice of natural-filtration rough weak solution.

We also establish a nonlinear Feynman--Kac correspondence between the
singular FBSDE and the associated semilinear PDE. We call
\begin{equation}
  \left(\partial_t+\frac12\Delta+b\cdot\nabla\right)u
  =-H(t,x,u,\nabla u),
  \qquad u(T,\cdot)=\bar h,
  \label{eq:section4-singular-HJB}
\end{equation}
the singular HJB equation. The first direction starts from a paracontrolled
solution $u$ of this equation. Formally applying the It\^o formula suggests that
\[
  Y_t^{s,x}=u(t,X_t^{s,x}),\qquad
  Z_t^{s,x}=\nabla u(t,X_t^{s,x})
\]
solves the backward equation; this implication is made rigorous in
Proposition~\ref{prop:converse-FK}. Conversely, the FBSDE suggests defining
\[
  u(t,x)\assign Y_t^{t,x},
\]
  as a candidate solution of the singular HJB equation. We prove in
  Theorem~\ref{thm:nonlinear-FK} that, when $H$ is jointly continuous, a bounded
  continuous universal decoupling field is a viscosity solution of the singular
  HJB equation.

For classical Markovian FBSDEs, continuity of the value field is usually a
direct consequence of stability estimates for the forward and backward
equations; see, for example, {\cite{el_karoui_backward_1997}}. The situation
here is more delicate: changing the initial condition $(t,x)$ also changes
the probability measure and potentially the stochastic basis on which the
weak solution of the singular forward equation is realized. Under the general
assumptions of Theorem~\ref{thm:FBSDE-wellposedness-bounded}, we do not
presently obtain the continuity of $u$. We give two ways to overcome this
difficulty. With additional regularity of the terminal condition and suitable
assumptions on $H$, Proposition~\ref{prop:FBSDE-lipschitz-regularity} identifies
$u$ directly with the paracontrolled solution of the HJB equation. For
bounded continuous terminal data, Theorem~
\ref{thm:FBSDE-quadratic-continuous-terminal} instead obtains continuity
through monotone approximation and quadratic BSDE stability.

\subsection{The singular forward equation}
\label{subsec:singular-forward-equation}

We begin by recalling the solution theory for the singular forward equation
that will be used throughout this section. A notion of rough weak solution has been proposed in
{\cite{kremp_rough_2025}}. We recall their definition and see in the next
theorem how its equivalence to the martingale solution strengthens our
understanding of the singular SDE.

We start by introducing the following metric that we need for the definition of
weak solution. For an adapted process $A$ and a two-parameter process
$\mathbb A=(\mathbb A_{u,v})_{s\leq u\leq v\leq T}$, set
\[
\begin{aligned}
	\|A\|_{\rho,p}
	&\assign \sup_{s\leq u<v\leq T}
		\frac{\|A_v-A_u\|_{L^p}}{|v-u|^\rho},
&	\|A\mid\mathcal F\|_{\rho,\infty}
	&\assign \sup_{s\leq u<v\leq T}
		\frac{\bigl\|\mathbb E[A_v-A_u\mid\mathcal F_u]\bigr\|_{L^\infty}}
		{|v-u|^\rho},
\\
	\|\mathbb A\|_{\rho,p}
	&\assign \sup_{s\leq u<v\leq T}
		\frac{\|\mathbb A_{u,v}\|_{L^p}}{|v-u|^\rho},
&	\|\mathbb A\mid\mathcal F\|_{\rho,\infty}
	&\assign \sup_{s\leq u<v\leq T}
		\frac{\bigl\|\mathbb E[\mathbb A_{u,v}\mid\mathcal F_u]\bigr\|_{L^\infty}}
		{|v-u|^\rho}.
\end{aligned}
\]

\begin{definition}[{\cite{kremp_rough_2025}}, Definition~3.3]
	\label{def:rough-weak-solution}
	{Suppose that \AssumptionAMixed{} holds} and fix
	$(s,x)\in[0,T]\times\mathbb R^d$. A rough weak solution of
	\[
		dX_t=b(t,X_t)\,dt+dW_t,\qquad X_s=x,
	\]
	starting from $(s,x)$ is a tuple
	\[
		\bigl(\Omega,\mathcal F,(\mathcal F_t)_{t\in[s,T]},
		\mathbb P,W,X,G,\mathbb G^b\bigr)
	\]
	satisfying the following properties.\\
	The process $W$ is a $d$-dimensional $(\mathcal F_t)$-Brownian motion,
	$G$ is continuous and adapted with $G_s=0$, and
	\[
		X_t=x+W_t-W_s+G_t,\qquad s\leq t\leq T.
	\]
	The process
	$\mathbb G^b=(\mathbb G^b_{u,v})_{s\leq u\leq v\leq T}$ is a continuous
	two-parameter process with values in $\mathbb R^{d\times d}$, where
	$\mathbb G^b_{u,v}$ is $\mathcal F_v$-measurable, and
	\[
	\begin{aligned}
		&\|G\|_{1-\alpha/2,2}
		+\|G\mid\mathcal F\|_{1-\alpha/2,\infty}
		+\|\mathbb G^b\|_{1-\alpha/2,2}
		+\|\mathbb G^b\mid\mathcal F\|_{3/2-\alpha,\infty}<\infty.
	\end{aligned}
	\]
	{Let $(b_n)$ be the fixed smooth approximation from
	\AssumptionAMixed{}. We require in addition that the following two
	conditions hold:}
	\begin{enumerate}
		\item With $G_t^n\assign\int_s^t b_n(r,X_r)\,dr$, one has
		      \[
			      \lim_{n\to\infty}\Bigl(
			      \|G^n-G\|_{1-\alpha/2,2}
			      +\|G^n-G\mid\mathcal F\|_{1-\alpha/2,\infty}
			      \Bigr)=0.
		      \]
		\item For $m,n\in\mathbb N$ and $s\leq u\leq v\leq T$, set $\mathbb G^{m,n}_{u,v}(i,j)
			      \assign\int_u^v
			      \Bigl(J^T(\partial_{x_i}b_m^j)(r,X_r)
			      -J^T(\partial_{x_i}b_m^j)(u,X_u)\Bigr)\,dG_r^{n,i}$.
			      Then
		      \begin{align*}
			      \lim_{m,n\to\infty}\Bigl(
			      \|\mathbb G^{m,n}-\mathbb G^b\|_{1-\alpha/2,2}
			      +\|\mathbb G^{m,n}-\mathbb G^b
				      \mid\mathcal F\|_{3/2-\alpha,\infty}
			      \Bigr)=0.
		      \end{align*}

	\end{enumerate}
	We also call $X$ a rough weak solution whenever such
	a tuple exists.
\end{definition}

We only state the most relevant results from {\cite{kremp_rough_2025}}
concerning the equivalence between weak and martingale solutions in the
following theorem.

\begin{theorem}
	\label{thm:weak-martingale-equivalence}{{Under \AssumptionAMixed{},}
	fix $(s,x)\in[0,T]\times\mathbb R^d$. Then the
	following statements hold.}
	\begin{enumerate}
		\item Let $X$ be a rough weak solution in the sense of
		      Definition~\ref{def:rough-weak-solution}, starting from $(s,x)$ on
		      an arbitrary stochastic basis. Then the law of $X$ on the canonical
		      path space is the unique solution $\mathbb P^{s,x}$ of the
		      martingale problem from
		      Theorem~\ref{thm:martingale-problem-wellposedness}. In particular,
		      rough weak solutions are unique in law.

		\item Let $\mathbb{P}^{s, x}$ be the solution to the martingale problem
		      from Theorem~\ref{thm:martingale-problem-wellposedness}, and let $X$
		      denote the canonical process under $\mathbb{P}^{s, x}$. Then there
		      exist a stochastic basis $(\Omega, \mathcal{F},
		      (\mathcal{F}_t)_{t\in [s, T]}, \mathbb{P})$ and a weak solution
		      $\tilde{X}$ in the sense of
		      Definition~\ref{def:rough-weak-solution} such that
	\begin{enumerate}
		\item $\mathcal{L} (\tilde{X}) =\mathcal{L} (X)$;

		\item $\mathcal{F}_t =\mathcal{F}_t^{\tilde{X}}$ for all $t \in [s, T]$,
		      where $(\mathcal{F}_t^{\tilde{X}})_{t \in [s, T]}$ denotes the
		      completed, right-continuous natural filtration generated by
		      $\tilde{X}$.
	\end{enumerate}
	\end{enumerate}
\end{theorem}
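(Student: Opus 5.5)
We split the argument along the two items. Item 1 is essentially a restatement of results in \cite{kremp_rough_2025}. Item 2 needs a canonical construction on path space together with a filtration argument.

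\textbf{Item 1 (rough weak $\Rightarrow$ martingale).} Let $X$ be a rough weak solution. We fix $u_T\in\mathcal C^{2-\alpha}$, $f\in C_TL^\infty$, and let $u$ solve the Kolmogorov backward equation \eqref{eq:Kolmogorov-backward}.
\begin{enumerate}
\item We first treat the smooth case. Take the approximating drifts $b_n$ from \AssumptionAMixed{} and the classical solutions $u^n$ of $(\partial_t+\frac12\Delta+b_n\cdot\nabla)u^n=f$, $u^n(T)=u_T$.
\item Applying It\^o's formula to $u^n(t,X_t)$, with $X=x+W-W_s+G$, gives
\[
u^n(t,X_t)-u^n(s,x)-\int_s^t f(r,X_r)\,dr=\int_s^t\nabla u^n(r,X_r)\,dW_r+\int_s^t\nabla u^n(r,X_r)\,(dG_r-b_n(r,X_r)\,dr).
\]
\item We then pass to the limit. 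The paracontrolled structure $u^n\approx\nabla u^n\para J^T(b_n)+u^{n,\sharp}$, combined with the rough stochastic integral bounds of Definition~\ref{def:rough-weak-solution} (conditions 1--2 via the controls $\|\cdot\|_{\rho,2}$ and $\|\cdot\mid\mathcal F\|_{\rho,\infty}$, a stochastic sewing argument), shows that the last integral vanishes in $L^2$ as $n\to\infty$. This uses continuity of the solution map (Theorem~\ref{solmap}), so that $u^n\to u$ in $C_T\mathcal C^{\gamma}$.
\item The limit process \eqref{eq:martingale-problem-process} is therefore an $L^2$-limit of square-integrable martingales, hence a square-integrable martingale. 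Its law solves the martingale problem, and Theorem~\ref{thm:martingale-problem-wellposedness} gives uniqueness in law.
\end{enumerate}
This is precisely \cite[Theorems~3.x]{kremp_rough_2025}, so in the write-up we cite it rather than redoing the sewing estimates.

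\textbf{Item 2 (martingale $\Rightarrow$ rough weak with natural filtration).} We work on the canonical space under $\mathbb P^{s,x}$, completed and made right-continuous, so that $\mathcal F_t=\mathcal F^X_t$ by construction and $\tilde X=X$ has the correct law. Three processes must be built on this filtration.
\begin{enumerate}
\item \emph{The drift $G$.} Apply the martingale property to $u=u^{i}_\lambda$-type solutions of $(\partial_t+\frac12\Delta+b\cdot\nabla)\phi^i=b^i$. These are mollified coordinate functions, i.e.\ the Zvonkin map $\Phi$ of Section~\ref{sec::zvonkin}. This yields $\Phi(t,X_t)-\Phi(s,x)+\int_s^t\lambda u_\lambda(r,X_r)\,dr=M_t$, a continuous martingale adapted to $\mathcal F^X$ with quadratic variation $\int\nabla\Phi\nabla\Phi^T(r,X_r)\,dr$. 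The quadratic variation is identified by applying the martingale problem to products, using $\Phi^i\Phi^j$ together with the chain rule of Lemma~\ref{zvonkin_chain_rule}.
\item \emph{The Brownian motion $W$.} Since $\nabla\Phi$ is invertible, set $W_t:=\int_s^t(\nabla\Phi)^{-1}(r,X_r)\,dM_r$. By L\'evy's characterisation this is an $\mathcal F^X$-Brownian motion.
\item \emph{The iterated integral $\mathbb G^b$.} Define $G:=X-x-(W-W_s)$, and define $\mathbb G^b$ as the limit of $\mathbb G^{m,n}$. This limit exists because, by Item 1's logic run in reverse, $G^n=\int b_n(r,X_r)\,dr$ converges to $G$ in the stated norms. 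The convergence follows from the Krylov/heat-kernel estimates for the martingale solution, namely the Gaussian densities in \cite[Cor.~2.15]{perkowski_coming_2025}, together with the Zvonkin representation $G=\Phi^{-1}$-corrections.
\end{enumerate}
All objects are functionals of $X$, so they are adapted to $\mathcal F^X$.

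\textbf{Main obstacle.} The main obstacle is verifying the quantitative conditions $\|G\mid\mathcal F\|_{1-\alpha/2,\infty}$ and the convergence of $\mathbb G^{m,n}$ in $\|\cdot\mid\mathcal F\|_{3/2-\alpha,\infty}$ under the natural filtration. Our first approach would be to write
\[
\mathbb E\Bigl[\int_u^v b_n(r,X_r)\,dr\Bigm|\mathcal F_u\Bigr]=P^b_{u,\cdot}\text{-expression},
\]
and bound it by Markovianity (Theorem~\ref{thm:martingale-problem-wellposedness}) through solutions of the Kolmogorov equation with forcing $b_n-b_m$ on $[u,v]$, using the Schauder estimate of Proposition~\ref{equl}. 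If this proves too delicate, we would instead transport the full construction of \cite{kremp_rough_2025} verbatim and only check that it never enlarges the filtration beyond $\mathcal F^X$.
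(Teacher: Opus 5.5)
Your item~1 matches the paper. The paper's entire proof of this theorem is a citation: \cite[Theorem~5.7]{kremp_rough_2025} for item~1 and \cite[Theorem~5.10]{kremp_rough_2025} for item~2. The latter already produces a rough weak solution on the completed, right-continuous natural filtration of $\tilde X$. So your fallback for item~2 is the paper's proof, and no separate check that the filtration is not enlarged is needed.

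Your self-contained construction of item~2, however, has a genuine gap, located exactly at the step you call the main obstacle.

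\textbf{What works.} The soft part is fine. On the augmented canonical space, (b) holds by construction. Applying Definition~\ref{def:martingale-problem} to $q\circ\Phi$ with bounded $q$, via Lemma~\ref{zvonkin_chain_rule}, yields $M$, its bracket and an $(\mathcal F^X_t)$-Brownian motion $W$. Two small corrections here: the localisation is needed because $\Phi(T,\cdot)=\mathrm{id}$ is not an admissible terminal datum, and the compensator is $-\int_s^t\lambda u_\lambda(r,X_r)\,dr$, since $(\partial_t+\frac12\Delta+b\cdot\nabla)\Phi=\lambda u_\lambda$.

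\textbf{First-level conditions.} What fails is the verification of the quantitative conditions in Definition~\ref{def:rough-weak-solution}. Gaussian density or Krylov bounds control $\mathbb E\int_u^v|f(r,X_r)|\,dr$ through $L^p$-type norms of $f$. But $b_n-b_m$ is small only in $C_T\mathcal C^{-\alpha}$, so these bounds cannot give $G^n\to G$.

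Your alternative is to write $\mathbb E[G^n_v-G^n_u\mid\mathcal F_u]=-w^n(u,X_u)$, where $(\partial_t+\frac12\Delta+b\cdot\nabla)w^n=b_n$ on $[u,v]$ and $w^n(v,\cdot)=0$. This is the natural mechanism; together with $\int_u^v\nabla w^n\,dW$ it even produces the exponent $\frac12+\frac{1-\alpha}{2}=1-\frac{\alpha}{2}$. However, Proposition~\ref{equl} does not give the required bounds uniform in $(n,m)$. It treats only the forcing $-b$, whose resonant product is supplied by $B$. For the forcing $b_n-b_m$, the ill-defined term $b\odot\nabla J^T(b_n-b_m)$ must instead be controlled by the mixed products $J^T(\partial_{x_j}b^i_m)\odot b^j_n\to B$ of \AssumptionAMixed{}, which your item~2 never invokes.

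\textbf{The second-order object $\mathbb G^b$.} Here the argument is missing altogether: convergence of $G^n$ does not imply convergence of $\mathbb G^{m,n}$. Set $F_m:=J^T(\partial_{x_i}b^j_m)$. Then $F_m(r,X_r)$ has time regularity $(1-\alpha)/2$, and $(1-\alpha)/2+(1-\alpha/2)=3/2-\alpha<1$. So neither Young integration nor stochastic sewing makes the integral against $dG^n$ a continuous functional of the first level. The conditional bound of order $3/2-\alpha$ is a genuinely second-order estimate.

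To obtain it, you have to represent $\mathbb E[\mathbb G^{m,n}_{u,v}\mid\mathcal F_u]$ through a Kolmogorov equation on $[u,v]$ with forcing $(F_m-F_m(u,X_u))\,b^i_n$. The resonant part of this forcing is again exactly the mixed enhancement, and you must exploit the vanishing of the frozen factor at $X_u$.

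Without these ingredients the canonical construction is not shown to be a rough weak solution, and you are back to citing \cite{kremp_rough_2025}.
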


\begin{remark}
	\label{rmk:weak-sol}The first implication identifies the law of an arbitrary
	rough weak solution with the canonical martingale-solution law. The second
	implication constructs a particular rough weak solution on the completed,
	right-continuous natural filtration of its forward process. It does not
	assert that the filtration of an arbitrary rough weak solution may be
	replaced by its natural filtration. Since $X$ and $\tilde X$ have the same
	law, the Markov property and the martingale property of
	\eqref{eq:martingale-problem-process} carry over from $X$ under
	$\mathbb{P}^{s, x}$ to $\tilde X$ under $\mathbb{P}$.
\end{remark}

\begin{proof}
	The first statement follows from
	{\cite[Theorem~5.7]{kremp_rough_2025}} and the second statement from
	{\cite[Theorem~5.10]{kremp_rough_2025}}.
\end{proof}

We conclude this subsection with a useful It{\^o} formula for $u (X)$, where $u$
is the solution to the paracontrolled PDE \eqref{eq:Kolmogorov-backward} and
$X$ is a rough weak solution to \eqref{eq:singular-SDE}. Note that this falls
outside the scope of the classical It{\^o} formula, both because $X$ is no
longer a semimartingale and because $u$ does not, in general, have sufficient
regularity. Fortunately, $X$ still belongs to the class of so-called Dirichlet
processes.

\begin{definition}
	\label{def:dirichlet-process}
	Let $(\Omega,\mathcal F,(\mathcal F_t)_{t\in[s,T]},\mathbb P)$ be a
	filtered probability space. A continuous adapted $\mathbb R^m$-valued
	process $D$ is called a \emph{Dirichlet process}, respectively a
	\emph{weak Dirichlet process}, if it admits a decomposition
	\[
		D=D_s+M+A,
	\]
	where $M$ is a continuous local martingale with $M_s=0$ and $A$ is a
	continuous adapted process with $A_s=0$ and zero quadratic variation,
	respectively such that $\langle A,N\rangle=0$ for every continuous local
	martingale $N$. In either case, the decomposition is unique.
\end{definition}

\begin{remark}
	The rough weak solution $X$ is a Dirichlet process; see
	{\cite[Remark~3.5]{kremp_rough_2025}}.
\end{remark}

If the filtration of the rough weak solution is the natural filtration
generated by $X$, as in Theorem~\ref{thm:weak-martingale-equivalence}, we obtain
the following It{\^o} formula. Here and below, the natural filtration is understood to be
completed and made right-continuous.

\begin{proposition}
		\label{prop:ito-formula}
		{Under \AssumptionAMixed{}, let}
		$f \in C_T L^{\infty} (\mathbb{R}^d)$ and
		$u_T \in \mathcal{C}^{- \alpha + 2} (\mathbb{R}^d)$. Let
	$X$ be a rough weak solution to \eqref{eq:singular-SDE}, with associated
	Brownian motion $W$, on the completed, right-continuous natural filtration
	generated by $X$, and let $u$
	be the solution to the paracontrolled PDE
	\eqref{eq:Kolmogorov-backward}. Then the following It{\^o} formula holds:
		\begin{equation}
			\label{eq:ito-formula} u (t, X_t^{s, x}) = u (r, X_r^{s, x}) + \int_r^t
			\nabla u (\tau, X_{\tau}^{s, x}) \cdot dW_{\tau} + \int_r^t
		f (\tau, X_{\tau}^{s, x})  \hspace{0.17em} d \tau, \qquad s \le r \le t
		\le T.
	\end{equation}
\end{proposition}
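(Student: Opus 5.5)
The plan is to compare the martingale-problem decomposition of $u(t,X_t)$ with the Dirichlet decomposition of $X$, and to identify the martingale part as a stochastic integral against $W$. By Remark~\ref{rmk:weak-sol}, $X$ on its natural filtration inherits the martingale property of \eqref{eq:martingale-problem-process}. Hence
\[
M_t := u(t,X_t) - u(s,x) - \int_s^t f(\tau,X_\tau)\,d\tau
\]
is a square-integrable $(\mathcal F^X_t)$-martingale. By \cite[Proposition~2.8]{perkowski_coming_2025}, which applies because we work on the completed right-continuous natural filtration, there is a progressive $\zeta$ with $\mathbb E\int_s^T|\zeta_\tau|^2\,d\tau<\infty$ and $M_t=\int_s^t\zeta_\tau\cdot dW_\tau$. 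It therefore suffices to show $\zeta_\tau=\nabla u(\tau,X_\tau)$, $d\tau\otimes d\mathbb P$-a.e. Taking differences between $r$ and $t$ then yields \eqref{eq:ito-formula}.

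To identify $\zeta$, I would compute the covariation $\langle M,W^i\rangle$ in two ways. On the one hand, $\langle M,W^i\rangle_t=\int_s^t\zeta^i_\tau\,d\tau$. On the other hand, I would approximate. Let $b_n$ be the sequence from \AssumptionAMixed{}, let $u_n$ solve \eqref{eq:Kolmogorov-backward} with drift $b_n$, and mollify $u_T$ and $f$ if needed so that $u_n$ is $C^{1,2}$. By Theorem~\ref{solmap}, $u_n\to u$ in $C_T\mathcal C^{\gamma}$ with $\gamma>4/3$, so in particular $\nabla u_n\to\nabla u$ uniformly. Since $X=x+W+G$ with $G$ of zero quadratic variation (the Dirichlet property), the classical Itô formula for $C^{1,2}$ functions of Dirichlet processes (Föllmer / Russo–Vallois) gives
\[
u_n(t,X_t)=u_n(s,x)+\int_s^t\nabla u_n\cdot dW+\int_s^t\nabla u_n(\tau,X_\tau)\cdot dG_\tau+\int_s^t\bigl(\partial_t+\tfrac12\Delta\bigr)u_n(\tau,X_\tau)\,d\tau.
\]
The two last terms combine into $\int f\,d\tau$ plus the remainder
\[
R^n_t := \int_s^t\nabla u_n\cdot dG-\int_s^t b_n\cdot\nabla u_n(\tau,X_\tau)\,d\tau.
\]
The term $\int\nabla u_n\cdot dG$ is a Stieltjes-type (Young/rough) integral against a process of zero quadratic variation, so its covariation with $W$ vanishes. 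Consequently $\langle u_n(\cdot,X),W^i\rangle_t=\int_s^t\partial_iu_n(\tau,X_\tau)\,d\tau$.

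Passing to the limit requires that $u_n(\cdot,X)\to u(\cdot,X)$ uniformly in $(t,\omega)$, which follows from the uniform convergence $u_n\to u$. Covariations, however, are not continuous under uniform convergence. So instead of taking limits of brackets directly, I would use the decomposition
\[
u(\cdot,X)=u(s,x)+M+\int_s^\cdot f\,d\tau,
\]
together with the fact that $u(\cdot,X)-\int_s^\cdot\nabla u\cdot dW$ has zero covariation with $W$. That latter fact follows once
\[
u_n(\cdot,X)-\int_s^\cdot\nabla u_n\cdot dW \;\to\; u(\cdot,X)-\int_s^\cdot\nabla u\cdot dW
\]
in the ucp sense, the stochastic integrals converging in $L^2$ by uniform convergence of $\nabla u_n$, and once the limit is shown to be a weak Dirichlet remainder. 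The class of processes orthogonal to all continuous martingales (in the sense $\langle A,N\rangle=0$) is closed under ucp limits, since $\langle A,N\rangle$ can be defined as the ucp limit of discrete covariations. Alternatively, I would prove directly that $\mathbb E[(A^n_t-A^n_u)N_{u,t}]\to\mathbb E[(A_t-A_u)N_{u,t}]$ for bounded martingales $N$. Granting this closedness, uniqueness of the weak Dirichlet decomposition (Definition~\ref{def:dirichlet-process}) yields $M=\int_s^\cdot\nabla u(\tau,X_\tau)\cdot dW_\tau$, since $\int f\,d\tau$ has zero covariation with every martingale.

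The main obstacle is this orthogonality step. Uniqueness of weak Dirichlet decompositions needs the remainder to be orthogonal to every continuous martingale, not merely to $W$, and ucp limits of such orthogonal processes require an extra argument. On the natural filtration every martingale is of the form $\int\eta\cdot dW$ by \cite[Proposition~2.8]{perkowski_coming_2025}, so orthogonality to $W$ alone suffices. For the limit I would first try the quantitative route: the remainder $R^n$ admits the conditional Hölder bounds $\|\cdot\mid\mathcal F\|_{1-\alpha/2,\infty}$ from Definition~\ref{def:rough-weak-solution} and the convergence of $\mathbb G^{m,n}$. These bounds show that $R^n$, and hence its limit, is an $L^2$-limit with vanishing sums $\sum\mathbb E[\Delta R\,\Delta W]$ along partitions, giving $\langle R,W\rangle=0$ directly.
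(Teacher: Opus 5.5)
Your reduction is sound. With $M=\int\zeta\cdot dW$ from \cite[Proposition~2.8]{perkowski_coming_2025}, it suffices to show $\langle u(\cdot,X)-\int_s^\cdot\nabla u(\tau,X_\tau)\cdot dW_\tau,\,W^i\rangle=0$. For the smooth $u_n$, the identity $\langle u_n(\cdot,X),W^i\rangle=\int_s^\cdot\partial_iu_n(\tau,X_\tau)\,d\tau$ does hold. The gap is the passage $n\to\infty$.

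The justification you give, that processes orthogonal to all continuous martingales form a ucp-closed class, is false. Adapted piecewise-linear interpolations of $W$ (delayed by one mesh step) have finite variation, hence zero covariation with every continuous martingale, and they converge uniformly to $W$, whose covariation with $W$ is not zero. This is the same discontinuity of brackets under uniform convergence that you noted one line earlier; writing $\langle A,N\rangle$ as a ucp limit of discrete sums does not make it continuous in $A$. The fallback does not close the gap either. Vanishing of $\sum\mathbb E[\Delta R\,\Delta W^i]$ along partitions does not give $\langle R,W^i\rangle=0$: for the limiting martingale $D=\int(\zeta-\nabla u(\cdot,X))\cdot dW$ it only yields $\mathbb E\int_s^T(\zeta^i-\partial_iu(\tau,X_\tau))\,d\tau=0$. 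Moreover, no uniform-in-$n$ control of $R^n$ is actually derived. A secondary issue: mollifying $f\in C_TL^\infty$ in space does not converge in $C_TL^\infty$ unless $f$ is uniformly continuous, so Theorem~\ref{solmap} as stated does not even give $u_n\to u$.

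The missing idea is that no approximation is needed. $u$ is already $C^{0,1}$, since $u\in C_T\mathcal C^\gamma\cap C^{\gamma/2}_TL^\infty$ with $\gamma>1$. The paper applies \cite[Proposition~3.10]{gozzi_weak_2006}: for $u\in C^{0,1}$ and the weak Dirichlet process $X$ with finite quadratic variation, $u(\cdot,X)=u(r,X_r)+\int_r^\cdot\nabla u(\tau,X_\tau)\cdot dW_\tau+A$, with $A$ orthogonal to every continuous local martingale. By the martingale problem, $u(\cdot,X)-\int_s^\cdot f\,d\tau$ is a martingale. Uniqueness of the weak Dirichlet decomposition then forces $A=\int f\,d\tau$, so martingale representation is not needed at all.

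If you prefer to keep your representation argument, prove the covariation for $u$ directly; this is the content of that proposition. Split $\Delta_k u(\cdot,X)$ into two parts:
\begin{itemize}
\item The time part $u(t_{k+1},X_{t_k})-u(t_k,X_{t_k})$ is $\mathcal F_{t_k}$-measurable and bounded by $C|t_{k+1}-t_k|^{\gamma/2}$. Its sum against $\Delta_kW^i$ therefore vanishes in $L^2$ by the It\^o isometry.
\item The space part is $\nabla u(t_{k+1},X_{t_k})\cdot\Delta_kX+O(|\Delta_kX|^\gamma)$. Its sum against $\Delta_kW^i$ converges to $\int_s^\cdot\partial_iu(\tau,X_\tau)\,d\tau$, because $[X^j,W^i]=\delta_{ij}(\cdot-s)$ (as $[G]=0$) and $\gamma>1$.
\end{itemize}
This gives $\zeta=\nabla u(\cdot,X)$ immediately. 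Applied to $u-u_n$, the same estimate would also rescue your limit, which shows the approximation was superfluous.
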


\begin{proof}
	We apply {\cite[Proposition~3.10]{gozzi_weak_2006}} to obtain that $u (t,
		X_t^{s, x})$ is again a weak Dirichlet process with decomposition
		\begin{equation}
			\label{eq:dirichlet-decomposition} u (t, X_t^{s, x}) = u (r, X_r^{s, x}) +
			\int_r^t \nabla u (\tau, X_{\tau}^{s, x}) \cdot dW_{\tau} +
		A_{r, t},
	\end{equation}
	where $A$ is a zero-energy process, i.e. $\langle A, N \rangle = 0$ for any
	continuous $(\mathcal{F}_t^X)$-local martingale $N$. By property
	\eqref{eq:martingale-problem-process} of the martingale solution (see also
	Remark~\ref{rmk:weak-sol}), we know that
	\[ \left( u (t, X_t^{s, x}) - u (s, X_s^{s, x}) - \int_s^t f (\tau,
		X_{\tau}^{s, x}) \hspace{0.17em} d \tau \right)_{t \in [s, T]} \]
	is a martingale. By uniqueness of the decomposition of Dirichlet processes,
	it follows that
	\[ A_{r, t} = \int_r^t f (\tau, X_{\tau}^{s, x})  \hspace{0.17em} d \tau .
	\]
	Plugging this into \eqref{eq:dirichlet-decomposition} yields
	\eqref{eq:ito-formula}.
\end{proof}

\subsection{Weak well-posedness and a priori estimates}
\label{subsec:FBSDE-weak-wellposedness}

We first introduce the solution spaces for the backward equation. Given a
stochastic basis $\mathcal S=(\Omega,\mathcal F,\mathbb F,\mathbb P)$, where
$\mathbb F=(\mathcal F_t)_{t\in[s,T]}$, let $\mathbb L^\infty(\mathcal S)$
be the space of real-valued continuous adapted processes $Y$ such that
\[
  \|Y\|_{\mathbb L^\infty(\mathcal S)}
  \assign\left\|\sup_{t\in[s,T]}|Y_t|\right\|_{L^\infty(\mathbb P)}
  <\infty,
\]
and let $\mathbb S^2(\mathcal S)$ denote the space of real-valued continuous
adapted processes $Y$ such that
\[
  \|Y\|_{\mathbb S^2(\mathcal S)}
  \assign\left(\mathbb E\left[\sup_{t\in[s,T]}|Y_t|^2\right]\right)^{1/2}
  <\infty.
\]
Moreover, for $k\in\mathbb N$, let $\mathbb H^2(\mathcal S;\mathbb R^k)$ be
the space of predictable $\mathbb R^k$-valued processes $Z$ such that
\[
  \|Z\|_{\mathbb H^2(\mathcal S;\mathbb R^k)}
  \assign\left(\mathbb E\left[\int_s^T|Z_r|^2\,dr\right]\right)^{1/2}
  <\infty.
\]
We write $\mathbb H^2(\mathcal S)$ when the state space is clear from the
context.

We can now define a (rough) weak solution of the singular FBSDE.
\begin{definition}
  \label{def:FBSDE-rough-weak-solution}
  Fix $s\in[0,T]$ and $x\in\mathbb R^d$. We call $(X,Y,Z)$ a (rough) weak solution to the
  FBSDE \eqref{eq:singular-SDE}--\eqref{eq:quadratic-BSDE} with initial
  condition $(s,x)$ if there exists a
  stochastic basis $\mathcal{S}= (\Omega, \mathcal{F},
  (\mathcal{F}_t^X)_{t \in [s, T]}, \mathbb{P})$, where
  $(\mathcal F_t^X)_{t\in[s,T]}$ denotes the completed, right-continuous
  natural filtration of $X$, such that $X$ is a (rough) weak solution in the
  sense of Definition~\ref{def:rough-weak-solution}, starting from $X_s=x$.
  Moreover, $(Y, Z)$ belongs to
  $\mathbb L^\infty(\mathcal S)\times\mathbb H^2(\mathcal S;\mathbb R^d)$,
  and
  $(X, Y, Z)$ satisfies the
  BSDE \eqref{eq:quadratic-BSDE} in integral form.
\end{definition}

We next formulate the relevant notions of uniqueness for such weak solutions.
\begin{definition}
  \label{def:FBSDE-uniqueness}
  Fix $(s,x)\in[0,T]\times\mathbb R^d$ and set
  \[
    \mathcal E_{s,T}
    :=C([s,T];\mathbb R^d)\times C([s,T];\mathbb R)
      \times L^2([s,T];\mathbb R^d)
      \times C([s,T];\mathbb R^d),
  \]
  endowed with its product Borel sigma-field.
  \begin{enumerate}
    \item The singular FBSDE is \tmtextbf{unique in law} at $(s,x)$ if any
    two weak solutions $(\mathcal S,X,Y,Z)$ and
    $(\mathcal S',X',Y',Z')$ with associated Brownian motions $W$ and $W'$
    and initial condition $(s,x)$ satisfy
    \[
      \mathcal L_{\mathbb P}(X,Y,[Z],W-W_s)
      =\mathcal L_{\mathbb P'}(X',Y',[Z'],W'-W'_s)
      \quad\text{on }\mathcal E_{s,T},
    \]
    where $Z$ and $Z'$ are identified with their equivalence classes in
    $L^2([s,T];\mathbb R^d)$.

    \item \tmtextbf{Fixed-$X$ uniqueness} holds at $(s,x)$ if, for every fixed
    natural-filtration rough weak solution $(\mathcal S,X)$ of the forward
    equation starting from $(s,x)$, any two backward solutions $(Y,Z)$ and
    $(Y',Z')$ on $\mathcal S$ satisfy $Y=Y'$ indistinguishably and
    $Z=Z'$ $\mathrm dt\otimes\mathbb P$-almost surely.
  \end{enumerate}
  If uniqueness in law and fixed-$X$ uniqueness both hold, we say that the
  singular FBSDE has a {\tmstrong{unique solution}} at $(s,x)$.
\end{definition}

\begin{remark}
Related notions of weak solutions for FBSDEs appear in
{\cite{antonelli_weak_2003,ma_weak_2008}}. In those works, the weak
formulation is motivated by the coupled forward--backward dynamics, whereas
in our setting the forward equation is decoupled and the weak formulation is
needed because of the singular drift in the SDE. Antonelli and Ma do not
include $Z$ in their uniqueness-in-law requirement, whereas Ma, Zhang and
Zheng include its finite-dimensional distributions in their martingale-problem
formulation. Here we include $Z$ as an
$L^2([s,T];\mathbb R^d)$-valued random variable, together with the Brownian
increment $W-W_s$. 
\end{remark}

The preceding definition concerns uniqueness after the initial condition
$(s,x)$ has been fixed. We next introduce a notion that links different 
rough weak solution even for different initial conditions.
\begin{definition}
  \label{def:FBSDE-universal-decoupling-field}
  We say that the singular FBSDE admits a \tmtextbf{universal decoupling
  field} if there exists a deterministic Borel function
  \[
    u:[0,T]\times\mathbb R^d\longrightarrow\mathbb R
  \]
  such that, for every $(s,x)\in[0,T]\times\mathbb R^d$ and every (rough)
  weak solution
  $(\mathcal S^{s,x},X^{s,x},Y^{s,x},Z^{s,x})$ with initial condition
  $(s,x)$, the processes $Y^{s,x}$ and
  $u(\cdot,X^{s,x}_\cdot)$ are indistinguishable; that is,
  \[
    Y_t^{s,x}=u(t,X_t^{s,x}),\qquad t\in[s,T],
    \quad\mathbb P^{s,x}\text{-a.s.}
  \]
  Provided that a weak solution exists for every initial condition, such a
  field is necessarily unique: taking $t=s$ and using $X_s^{s,x}=x$ yields
  $u(s,x)=Y_s^{s,x}$ for every $(s,x)$.
\end{definition}

{Throughout this section, we work under \AssumptionB{}.
The regularity of the terminal condition will be specified separately in each
result. We often use the following Lipschitz condition as an intermediate
step, and therefore record it here.}

\phantomsection\label{assumption:B-Lip}
{\tmstrong{Assumption B-Lip: }} For some $C>0$ and all $t,x,y,y',z,z'$,
\[ |H(t,x,y,z)-H(t,x,y',z')|
   \leq C\left(|y-y'|+|z-z'|\right), \qquad
   |H(t,x,0,0)|\leq C. \]

We start with an a priori estimate for quadratic BSDEs, which will be important
later. The result and its proof follow
{\cite[Theorem~7.2.1]{zhang_backward_2017}}. We reproduce the argument to make
explicit that all constants are independent of the initial condition $(s,x)$
and of the chosen forward rough weak solution.

\begin{theorem}[A priori estimates]
  \label{thm:quadratic-BSDE-apriori-estimates}
  Let $\bar h\in L^\infty(\mathbb R^d)$, $s \in [0, T]$ and $x \in \R^d$.
  {Under Assumptions \assAMixed{} and \assB{},} write $C_0$ for the constant in the
  quadratic-growth bound \eqref{assump:quadratic growth}, and
  {let $(X^{s,x},Y^{s,x},Z^{s,x})$ be a rough weak solution
  to the singular FBSDE.} Then
  \[
    \|Y^{s,x}\|_{L^\infty(\Omega\times[s,T])}
    \leq K_Y
    \assign e^{2(1+C_0)T}
      \bigl(\|\bar h\|_{L^\infty}+2\bigr)
  \]
  and
  \[
    \sup_{\tau\in\mathcal T_{s,T}}
    \left\|
      \mathbb E^{\mathbb P^{s,x}}\left[
        \left.\int_\tau^T|Z_r^{s,x}|^2\,dr
        \right|\mathcal F_\tau^{s,x}
      \right]
    \right\|_{L^\infty(\mathbb P^{s,x})}
    \leq
    \frac{e^{2(1+C_0)(K_Y+\|\bar h\|_{L^\infty})}}{2(1+C_0)}
    +2T(1+C_0)e^{4(1+C_0)K_Y}(1+K_Y),
  \]
  where $\mathcal T_{s,T}$ denotes the set of
  $(\mathcal F_t^{s,x})$-stopping times with values in $[s,T]$.
  In particular, $Z^{s,x}\cdot W^{s,x}$ is a BMO martingale. Both bounds are
  independent of $(s,x)$ and of the chosen
  natural-filtration rough weak solution.
\end{theorem}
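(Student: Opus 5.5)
The proof follows the classical exponential-transform argument of \cite[Theorem~7.2.1]{zhang_backward_2017}. The forward process enters only through the stochastic basis $\mathcal S=(\Omega,\mathcal F,(\mathcal F_t^X),\mathbb P)$ and the Brownian motion $W$. All that is used about $X$ is that the BSDE holds in integral form with $Y$ bounded (by Definition~\ref{def:FBSDE-rough-weak-solution}) and $Z\in\mathbb H^2$. The generator bound \eqref{assump:quadratic growth} is uniform in $x$, so the constants can only depend on $C_0$, $T$ and $\|\bar h\|_{L^\infty}$. This gives the claimed independence of $(s,x)$ and of the chosen natural-filtration rough weak solution automatically.

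\textbf{Step 1 ($L^\infty$ bound for $Y$).} Write $C:=1+C_0$ and, for $\beta>0$ to be chosen, consider $\phi(t,y):=\exp\bigl(2Ce^{\beta t}|y|\bigr)$, or a smooth version $\sqrt{1+y^2}$ in place of $|y|$ to avoid the kink. Apply It\^o's formula to $\phi(t,Y_t)$ (since $Y$ is a genuine semimartingale driven by $W$ and $dt$, no singular calculus is needed). Bound the drift $H\,dt$ by $C(1+|Y|+|Z|^2)$. The choice of the constant $2C$ in the exponent makes the quadratic term $\tfrac12\phi''|Z|^2$ dominate $\phi'C|Z|^2$. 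Choosing $\beta\approx 2C$ absorbs the linear term $C|Y|$ and the constant $C$ (which is where the ``$+2$'' comes from). Hence $\phi(t,Y_t)$ is a local submartingale. Localise with stopping times, use boundedness of $Y$ to pass to the limit, and take conditional expectations to get $\phi(t,Y_t)\le \mathbb E[\phi(T,Y_T)\mid\mathcal F_t]$ up to the additive constant. Taking logarithms yields $|Y_t|\le e^{2CT}(\|\bar h\|_\infty+2)=K_Y$. Alternatively, a Gronwall-type comparison with the ODE $\dot y=-C(1+y)$ gives the same bound.

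\textbf{Step 2 (BMO bound for $Z$).} Use $\psi(y):=\frac{1}{4C^2}\bigl(e^{2C(y+K_Y)}-1-2C(y+K_Y)\bigr)$, or the simpler $e^{2Cy}$ version, which satisfies $\tfrac12\psi''-C|\psi'|\ge \tfrac12$ on $[-K_Y,K_Y]$. For a stopping time $\tau$, apply It\^o's formula to $\psi(Y_t)$ between $\tau$ and $T$ and take $\mathbb E[\cdot\mid\mathcal F_\tau]$. The stochastic integral is a true martingale after localisation with $Y$ bounded, via monotone convergence. This gives
\[
\tfrac12\,\mathbb E\Bigl[\int_\tau^T|Z_r|^2dr\Bigm|\mathcal F_\tau\Bigr]\le \psi(\bar h)+\mathbb E\Bigl[\int_\tau^T|\psi'(Y_r)|\,C(1+|Y_r|)\,dr\Bigm|\mathcal F_\tau\Bigr].
\]
The right-hand side is bounded by explicit exponentials of $K_Y$ and $\|\bar h\|_\infty$, which matches the stated constant. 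Taking the supremum over $\tau$ gives the BMO bound, and hence that $Z\cdot W$ is a BMO martingale.

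\textbf{Main obstacle.} The only delicate point is the localisation in Steps 1 and 2. $Z$ is a priori only in $\mathbb H^2$, so the stochastic integrals are martingales only after stopping at $\tau_n=\inf\{t:\int_s^t|Z|^2\ge n\}$. One needs $\mathbb P(\tau_n<T)\to 0$ and bounded convergence, using the $L^\infty$ bound on $Y$, to remove the localisation. I also have to verify that the explicit choices of $\phi,\psi$ reproduce exactly the displayed constants. If they do not, I will adjust $\beta$ and the exponent in $\psi$, since only finiteness and uniformity are needed downstream.
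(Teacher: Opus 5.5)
Your strategy is the paper's: the exponential transform of \cite[Theorem~7.2.1]{zhang_backward_2017}, with uniformity in $(s,x)$ and in the rough weak solution coming from the fact that only $C_0$, $T$ and $\|\bar h\|_{L^\infty}$ enter. Step~2 is fine. With $C=1+C_0$, your Kobylanski-type $\psi$ gives $\mathbb E[\int_\tau^T|Z_r|^2\,dr\mid\mathcal F_\tau]\le e^{2C(K_Y+\|\bar h\|_{L^\infty})}/(2C^2)+Te^{4CK_Y}(1+K_Y)$, which is below the displayed constant. The paper instead applies It\^o's formula to $e^{\lambda Y}$ with $\lambda=2C$ and obtains that constant exactly. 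Your ``main obstacle'' is not one. By definition of a weak solution $Y\in\mathbb L^\infty$, and $Z\in\mathbb H^2$, so every stochastic integral that appears has an integrand $\xi_rZ_r$ with $\xi$ bounded and is a true martingale without localisation.

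Step~1, however, does not work as written. Take $\phi=\exp(a(t)|y|)$ with $a(t)=2Ce^{\beta t}$; note that $|y|$ needs Tanaka's formula rather than It\^o's. The drift of $\phi(t,Y_t)$ is then only bounded below by $a\phi\bigl[(\beta-C_0)|Y|-C_0+(\tfrac12a-C_0)|Z|^2\bigr]$ plus a nonnegative local-time term. The constant $-aC_0\phi$ cannot be absorbed by $\partial_t\phi=a'(t)|y|\phi$, which vanishes at $y=0$, whatever $\beta$ is. For instance, with $H\equiv C_0$ and a suitable constant $\bar h<0$, $Y_t=\bar h+C_0(T-t)$ is deterministic and $e^{a(t)|Y_t|}$ is strictly decreasing just before $Y$ reaches $0$. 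So $\phi(t,Y_t)$ is not a submartingale.

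Your smooth variant $\exp(a(t)\rho(y))$ with $\rho=\sqrt{1+y^2}$ fails for another reason. The $|Z|^2$-coefficient is $\phi\bigl[\tfrac12a\rho''+\tfrac12a^2(\rho')^2-aC_0|\rho'|\bigr]$. At $t=0$, where $a=2C$, and at $|\rho'|=C_0/a$, this is at most $\tfrac12\phi\,(a-C_0^2)$ because $\rho''\le1$. It is therefore negative as soon as $C_0^2>2C$.

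The missing device is a shift by $1$ away from the origin. The paper fixes $t_0$ and stops at $\sigma=\inf\{t\ge t_0:|Y_t|\le1\}\wedge T$. It then uses $\varphi(t,y)=\lambda e^{\lambda t}(\psi(y)+1)$ with $\lambda=2C$ and $\psi=|y|$ on $\{|y|\ge1\}$. On $[t_0,\sigma]$ there is no kink, since $\psi''=0$ and $|\psi'|=1$ there. The term $\partial_t\varphi=\lambda^2e^{\lambda t}(1+|y|)$ dominates $\lambda e^{\lambda t}C_0(1+|y|)$, and $\tfrac12\lambda^2e^{2\lambda t}\ge\lambda e^{\lambda t}C_0$ controls the $|Z|^2$ term. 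The ``$+2$'' in $K_Y$ comes from $\psi(Y_\sigma)+1\le(1\vee\|\bar h\|_{L^\infty})+1$.

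Within your own framework, $\exp\bigl(a(t)(|y|+1)\bigr)$ with Tanaka and $\beta=2C$ also works, since the local time enters with a favourable sign. It gives $|Y_t|\le e^{2CT}(\|\bar h\|_{L^\infty}+1)\le K_Y$. Carrying the negative part of the drift near $Y=0$ as an explicit additive error can also be made to work, but that is exactly the computation your ``up to the additive constant'' leaves out.
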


\begin{proof}
  Fix $(s,x)$ and a natural-filtration rough weak solution. Then
  \[
    Y_t^{s,x}=\bar h(X_T^{s,x})
    +\int_t^T H(r,X_r^{s,x},Y_r^{s,x},Z_r^{s,x})\,dr
    -\int_t^TZ_r^{s,x}\,dW_r^{s,x},
  \]
  where
  \[
    |\bar h(X_T^{s,x})|\leq\|\bar h\|_{L^\infty},\qquad
    |H(t,X_t^{s,x},y,z)|\leq C_0(1+|y|+|z|^2)
  \]
  with the same deterministic constant $C_0$ for every $(s,x)$ and every
  natural-filtration rough weak solution. \\
  We first estimate $Y^{s,x}$. Fix $t_0\in[s,T]$ and set
  \[
    \sigma\assign\inf\{t\geq t_0:|Y_t^{s,x}|\leq1\}\wedge T.
  \]
  Then $\sigma=t_0$ on $\{|Y_{t_0}^{s,x}|\leq1\}$, while on
  $\{|Y_{t_0}^{s,x}|>1\}$ one has $|Y_t^{s,x}|>1$ for
  $t_0\leq t<\sigma$ and
  \[
    |Y_\sigma^{s,x}|\leq1\vee\|\bar h\|_{L^\infty}.
  \]
  Let $\psi\geq0$ be smooth, equal to $|y|$ for $|y|\geq1$, bounded by $1$
  on $[-1,1]$, and chosen so that $|y|\leq\psi(y)+1$. Set
  \[
    \lambda:=2(1+C_0),\qquad
    \varphi(t,y):=\lambda e^{\lambda t}(\psi(y)+1),
  \]
  and, on $[t_0,\sigma]$,
  \[
    \widetilde Y_t^{s,x}\assign e^{\varphi(t,Y_t^{s,x})},\qquad
    \widetilde Z_t^{s,x}\assign
      \partial_y\varphi(t,Y_t^{s,x})\widetilde Y_t^{s,x}Z_t^{s,x}.
  \]
  It\^o's formula and the quadratic-growth estimate give, whenever
  $|Y_t^{s,x}|>1$,
  \begin{align*}
    d\widetilde Y_t^{s,x}
    ={}&\widetilde Y_t^{s,x}\bigg[
      \partial_t\varphi
      -\partial_y\varphi
        H(t,X_t^{s,x},Y_t^{s,x},Z_t^{s,x})
      +\frac12\partial_{yy}\varphi|Z_t^{s,x}|^2
      +\frac12 |\partial_y\varphi|^2|Z_t^{s,x}|^2
    \bigg]dt+\widetilde Z_t^{s,x}\,dW_t^{s,x}\\
    \geq{}&\widetilde Z_t^{s,x}\,dW_t^{s,x}.
  \end{align*}
  Indeed, on $|y|\geq1$ the drift is bounded from below by
  \[
    \lambda^2e^{\lambda t}(1+|y|)
    -C_0\lambda e^{\lambda t}(1+|y|+|z|^2)
    +\frac12\lambda^2e^{2\lambda t}|z|^2\geq0.
  \]
  Since $Y^{s,x}$ is bounded and $Z^{s,x}\in\mathbb H^2$, the stochastic
  integral above is a true martingale. Thus $\widetilde Y^{s,x}$ is a
  submartingale on $[t_0,\sigma]$. On $\{|Y_{t_0}^{s,x}|>1\}$,
  \[
    e^{\lambda e^{\lambda t_0}|Y_{t_0}^{s,x}|}
    \leq\widetilde Y_{t_0}^{s,x}
    \leq\mathbb E_{t_0}^{\mathbb P^{s,x}}[\widetilde Y_\sigma^{s,x}]
    \leq
    e^{\lambda e^{\lambda T}(\|\bar h\|_{L^\infty}+2)}.
  \]
  Together with the trivial estimate on $\{|Y_{t_0}^{s,x}|\leq1\}$, this
  yields
  \begin{equation}
    |Y_{t_0}^{s,x}|\leq
    K_Y=e^{\lambda T}(\|\bar h\|_{L^\infty}+2),
    \qquad t_0\in[s,T].
    \label{eq:quadratic-bsde-Y-apriori}
  \end{equation}
  We next estimate $Z^{s,x}$. Applying It\^o's formula to
  $e^{\lambda Y_t^{s,x}}$ gives
  \[
    de^{\lambda Y_t^{s,x}}
    \geq\lambda e^{\lambda Y_t^{s,x}}
      \bigl[|Z_t^{s,x}|^2-\lambda(1+|Y_t^{s,x}|)\bigr]dt
      +\lambda e^{\lambda Y_t^{s,x}}Z_t^{s,x}\,dW_t^{s,x}.
  \]
  Integrating from an arbitrary $\tau\in\mathcal T_{s,T}$ to $T$ and taking
  the conditional expectation yields
  \[
    \lambda e^{-\lambda K_Y}
    \mathbb E_\tau^{\mathbb P^{s,x}}
      \left[\int_\tau^T|Z_r^{s,x}|^2\,dr\right]
    \leq e^{\lambda\|\bar h\|_{L^\infty}}
      +T\lambda^2e^{\lambda K_Y}(1+K_Y).
  \]
  Notice that the bound on $Y$ and $Z$ involve
  only $\lambda$, $C_0$, $T$ and $\|\bar h\|_{L^\infty}$, which are uniform in $(s,x)$ and in the chosen natural-filtration rough
  weak solution.
\end{proof}

We next complement the preceding upper estimate by imposing the lower growth
bound \eqref{eq:quadratic-BSDE-lower-growth} on $H$. This condition forces
$H$ to grow at least quadratically in $z$, while allowing it to decrease in
$y$, but at most linearly. Under this assumption we obtain a relatively sharp
lower bound for $Y^{s,x}$. The estimate is in the spirit of
{\cite[Theorem~2.1]{perkowski_coming_2025}}, where a stochastic-control
argument gives a lower bound that is insensitive to possibly very negative
boundary data. More precisely, the final estimate
\eqref{eq:quadratic-BSDE-lower-transition} below shows that, once the
probability of reaching a region on which the terminal condition is
nonnegative is bounded from below, our bound does not depend on how negative
the terminal condition may be outside that region.

\begin{proposition}[A terminal-independent lower estimate]
  \label{prop:quadratic-BSDE-lower-estimate}
  {{Under Assumptions \assAMixed{} and \assB{},} let
  $\bar h\in L^\infty(\mathbb R^d)$, $(s,x)\in[0,T]\times\mathbb R^d$, and
  let $(X^{s,x},Y^{s,x},Z^{s,x})$ be a rough weak solution of the singular
  FBSDE such that $Y^{s,x}$ is essentially bounded.} Assume in addition
  that, for some $c>0$ and $C\geq0$,
  \begin{equation}
    H(t,x,y,z)\geq c|z|^2-C(1+|y|).
    \label{eq:quadratic-BSDE-lower-growth}
  \end{equation}
  Then, for every $t\in[s,T]$,
  \[
    Y_t^{s,x}\geq
    1-e^{C(T-t)}
    +\frac{e^{C(T-t)}}{2c}
    \log\mathbb E^{\mathbb P^{s,x}}\left[\left.
      e^{2c(\bar h(X_T^{s,x})\wedge0)}
      \right|\mathcal F_t^{s,x}\right]
    \qquad \mathbb P^{s,x}\text{-a.s.}
  \]
  In particular, for the deterministic function $u(s,x)=Y_s^{s,x}$ from
  Theorem~\ref{thm:FBSDE-wellposedness-bounded}, if
  $A\subset\mathbb R^d$ is Borel and $\bar h\geq0$ on $A$, then
  \begin{equation}
    u(s,x)\geq
    1-e^{C(T-s)}
    +\frac{e^{C(T-s)}}{2c}
      \log\mathbb P^{s,x}(X_T^{s,x}\in A).
    \label{eq:quadratic-BSDE-lower-transition}
  \end{equation}
  {In particular, for $d=1$ and $A=[-1,1]$, there exists
  $K=K(T,b,B)>0$ such that, for every $s<T$,
  \[
    u(s,x)\geq
    1-e^{C(T-s)}
    -\frac{K e^{C(T-s)}}{2c}
      \left(1+\frac{\mathbf 1_{\{|x|>1\}}x^2}{T-s}\right).
  \]}
\end{proposition}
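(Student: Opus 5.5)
The plan is to compare $Y^{s,x}$ with an explicit exponential-transform subsolution built from the quadratic lower bound \eqref{eq:quadratic-BSDE-lower-growth}. Two reductions come first. Since $\bar h\geq \bar h\wedge 0$ and the lower bound only involves $\bar h\wedge0$, I will compare with the BSDE whose terminal value is $\bar h(X_T)\wedge 0$. Next I will linearize the $y$-dependence. Set $\tilde Y_t:=e^{C(t-?)}$-type rescaling; concretely I will work on $U_t:=Y_t-1$ and use $-C(1+|y|)\geq -C(1+|Y|)$. A cleaner route avoids comparison theorems entirely and applies It\^o directly to an exponential. This is legitimate on the fixed natural-filtration basis, since $Y$ is bounded and $Z\cdot W$ is BMO by Theorem~\ref{thm:quadratic-BSDE-apriori-estimates}.

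\textbf{Main computation.} Let $\theta(t):=e^{-C(T-t)}$ and $V_t:=\theta(t)(Y_t-1)+1$, aiming for $V_T=\bar h(X_T)$ and a drift that absorbs the $-C(1+|y|)$ term. Where $Y_t\le 1$ we have $-C(1+|Y_t|)\geq -C(2-Y_t)$, which is affine in $Y$, so a Gronwall-type rescaling by $e^{C(T-t)}$ cancels it. Where $Y_t>1$ the desired bound $Y_t\geq 1-e^{C(T-t)}+\dots$ is automatic, since the right-hand side is $\le 1$ because the log term is $\le0$. This makes a stopping argument natural. Fix $t$, let $\sigma:=\inf\{r\geq t: Y_r\geq1\}\wedge T$, and on $[t,\sigma]$ consider
\[
M_r:=\exp\bigl(2c\,e^{-C(T-r)}(Y_r-1)\bigr)\cdot e^{\text{correction}}.
\]
It\^o's formula gives drift
\[
M_r\Bigl[2c\theta'(Y-1)-2c\theta H+2c^2\theta^2|Z|^2\Bigr].
\]
Using $H\geq c|Z|^2-C(2-Y)$ and $\theta\le1$, so that $c^2\theta^2|Z|^2\le c^2\theta|Z|^2$, the $|Z|^2$ terms have the right sign. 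Hence $M$ is a submartingale up to a deterministic correction from the constant $2C$ term. I expect this correction to be exactly what produces the $1-e^{C(T-t)}$ shift.

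\textbf{Main obstacle: the bookkeeping.} The hard step is choosing the affine-in-$y$ normalisation so that the constant terms yield precisely $1-e^{C(T-t)}$ and the factor $e^{C(T-t)}/(2c)$. I would first try $\xi_r:=e^{-C(T-r)}(Y_r-1)+1-\dots$ and tune the constants until the drift is nonnegative on $\{Y<1\}$. Submartingality then gives
\[
e^{2c\xi_t}\le \mathbb E[e^{2c\xi_\sigma}\mid\mathcal F_t].
\]
At $\sigma<T$ we have $Y_\sigma=1$, and at $\sigma=T$ we have $\bar h\wedge 0$ once the terminal value is replaced by a value $\geq$ the truncated one. This leads to a bound of the form $\mathbb E[e^{2c(\bar h(X_T)\wedge0)}\mid\mathcal F_t]$, where the events $\{\sigma<T\}$ contribute $e^0=1\geq e^{2c(\bar h\wedge 0)}$ via the strong Markov/tower property, at worst with constants adjusted. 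Taking logarithms and undoing the normalisation gives the first claim.

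\textbf{Consequences.} For \eqref{eq:quadratic-BSDE-lower-transition}, take $t=s$, so that $\mathcal F_s$ is trivial. Then use $e^{2c(\bar h\wedge0)}\geq \mathbf 1_A(X_T)$ together with $u(s,x)=Y_s^{s,x}$ from Theorem~\ref{thm:FBSDE-wellposedness-bounded}. For the $d=1$ statement, I would invoke Gaussian-type lower bounds for the transition density of the martingale solution (Corollary~2.15-type estimates in \cite{perkowski_coming_2025}). These give
\[
\mathbb P^{s,x}(X_T\in[-1,1])\geq K^{-1}e^{-K(1+x^2/(T-s))}
\]
for $|x|>1$, and a uniform positive lower bound for $|x|\le1$. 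Taking the logarithm then yields the stated inequality.
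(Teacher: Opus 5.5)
\textbf{The gap is in the core supermartingale step.} You leave this step as ``bookkeeping'', and as set up it fails for two reasons.

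\emph{The direction is reversed.} Write $a(r)=2c\,e^{-C(T-r)}$ and $R_r=e^{a(r)(Y_r-1)}$; this is your $e^{2cV_r}$ up to the constant factor $e^{2c}$. The lower bound \eqref{eq:quadratic-BSDE-lower-growth} enters the drift of $R$ through $-a(r)H$, so it can only bound that drift from \emph{above}. Your own $|Z|^2$-coefficient $2c^2\theta(\theta-1)\le0$ already shows this. What you get is a supermartingale, and it is $R_t\geq\mathbb E[R_\sigma\mid\mathcal F_t]$ that yields a lower bound on $Y_t$. The submartingale inequality $e^{2c\xi_t}\le\mathbb E[e^{2c\xi_\sigma}\mid\mathcal F_t]$ you write gives an upper bound, so it cannot be chained with $e^{2c\xi_\sigma}\geq e^{2c(\bar h\wedge0)}$.

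\emph{The stopping level $1$ does not give the stated constant.} Your inequality $-C(1+|y|)\geq-C(2-y)$ is false for $y\in(1/2,1]$. On $\{y\le1\}$ the correct affine bound is $-C(1+|y|)\geq C(y-1)-2C$, which leaves an extra drift $2Ca(r)R_r$. Compensating it by $e^{-\int 2Ca}$ only gives $Y_t\geq 3-3e^{C(T-t)}+\frac{e^{C(T-t)}}{2c}\log\mathbb E[\,\cdot\mid\mathcal F_t]$. Some correction is unavoidable at any positive level: on $\{Y>0\}$, with $Z=0$ and $H$ equal to its lower bound, the drift is exactly $2Ca(r)Y_rR_r>0$. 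So the shift $1-e^{C(T-t)}$ does not come from a drift correction.

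\textbf{The missing idea is to stop at level $0$.} Set $\tau:=\inf\{r\geq t:Y_r\geq0\}\wedge T$. Before $\tau$ one has exactly $-C(1+|Y_r|)=C(Y_r-1)$. This cancels against $a'(r)(Y_r-1)=Ca(r)(Y_r-1)$, so the drift of $R$ is at most $\tfrac12a(a-2c)|Z|^2R\le0$ with no remainder. The stopped process is bounded, hence a genuine supermartingale. At $\tau$ there are two cases:
\begin{itemize}
\item if $Y_\tau\geq0$, then $R_\tau\geq e^{-a(\tau)}\geq e^{-2c}\geq e^{2c(\xi\wedge0-1)}$;
\item otherwise $\tau=T$ and $R_T=e^{2c(\xi-1)}=e^{2c(\xi\wedge0-1)}$.
\end{itemize}
Taking logarithms and dividing by $a(t)$, the shift $1-e^{C(T-t)}$ comes precisely from the $-1$ in the exponent. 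Your remark that the claim is automatic above the threshold already holds at level $0$, since the right-hand side is at most $1-e^{C(T-t)}\le0$.

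With this correction, the rest of your outline coincides with the paper's proof: taking $t=s$ with trivial $\mathcal F_s$, using $e^{2c(\bar h\wedge0)}\geq\mathbf 1_A(X_T)$, and applying the Gaussian lower bound from \cite{perkowski_coming_2025}. No BMO or Girsanov argument is needed, only boundedness of the stopped exponential.
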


\begin{proof}
  Fix $(s,x)$ and a natural-filtration rough weak solution, write
  $\xi=\bar h(X_T^{s,x})$, and omit the superscripts $(s,x)$. Fix
  $t_0\in[s,T]$ and define
  \[
    \tau\assign\inf\{t\in[t_0,T]:Y_t\geq0\}\wedge T.
  \]
  On $\{Y_{t_0}<0\}$, continuity gives $Y_t<0$ for
  $t\in[t_0,\tau)$, and hence
  \[
    H(t,X_t,Y_t,Z_t)\geq c|Z_t|^2+C(Y_t-1),
    \qquad t\in[t_0,\tau).
  \]
  Set
  \[
    a(t)\assign2ce^{-C(T-t)},\qquad
    R_t\assign e^{a(t)(Y_t-1)}.
  \]
  Since $a'(t)=Ca(t)$, It\^o's formula on $[t_0,\tau]$ gives
  \[
    \frac{dR_t}{R_t}
    =\left[
      Ca(t)(Y_t-1)-a(t)H(t,X_t,Y_t,Z_t)
      +\frac12a(t)^2|Z_t|^2
    \right]dt+a(t)Z_t\,dW_t.
  \]
  On $\{Y_{t_0}<0\}$ and before $\tau$, the drift is bounded above by
  \[
    \frac12a(t)(a(t)-2c)|Z_t|^2\leq0,
  \]
  because $a(t)\leq2c$. On $\{Y_{t_0}\geq0\}$ one has
  $\tau=t_0$, so the stopped process is constant. Since $Y$ is bounded,
  $R_{t\wedge\tau}$ is bounded as well. The stochastic integral is therefore
  a true martingale after the usual localization argument, and the stopped
  process is a supermartingale. Consequently,
  \[
    e^{a(t_0)(Y_{t_0}-1)}
    \geq\mathbb E_{t_0}\left[e^{a(\tau)(Y_\tau-1)}\right].
  \]

  On $\{Y_\tau\geq0\}$, which includes $\{\tau<T\}$, one has
  \[
    e^{a(\tau)(Y_\tau-1)}\geq e^{-a(\tau)}
    \geq e^{-2c}\geq e^{2c(\xi\wedge0-1)}.
  \]
  On $\{Y_\tau<0\}$ one necessarily has $\tau=T$ and
  $\xi=Y_T<0$, so
  \[
    e^{a(\tau)(Y_\tau-1)}
    =e^{2c(\xi-1)}=e^{2c(\xi\wedge0-1)}.
  \]
  Hence
  \[
    e^{a(t_0)(Y_{t_0}-1)}
    \geq\mathbb E_{t_0}\left[e^{2c(\xi\wedge0-1)}\right].
  \]
  Taking logarithms and recalling that
  $a(t_0)=2ce^{-C(T-t_0)}$ proves the first claim.

  Finally, if $\bar h\geq0$ on $A$, then
  \[
    \mathbb E^{\mathbb P^{s,x}}\left[
      e^{2c(\bar h(X_T^{s,x})\wedge0)}\right]
    \geq\mathbb P^{s,x}(X_T^{s,x}\in A).
  \]
  Since $\mathcal F_s^{s,x}$ is trivial up to null sets and
  $u(s,x)=Y_s^{s,x}$, this yields
  \eqref{eq:quadratic-BSDE-lower-transition}. The final one-dimensional
  bound follows by applying the Gaussian lower estimate from
  {\cite[Corollary~2.15]{perkowski_coming_2025}}.
\end{proof}

We next establish well-posedness of the singular FBSDE under \AssumptionB{},
assuming only in addition that the terminal condition is bounded, while all
subsequent results require it to be at least bounded and continuous. It may be possible to go beyond bounded terminal
conditions by adapting the classical theory of quadratic BSDEs with unbounded
terminal data; see
{\cite{briand_hu_convex_2008,delbaen_hu_richou_2011}}. In these works, the
explicit quantitative local Lipschitz condition in $z$ is replaced by
convexity or concavity and continuity in $z$. Extending the singular theory in
this direction could also be interesting, but we do not pursue it here.

\begin{theorem}[Well-posedness for bounded terminal condition]
  \label{thm:FBSDE-wellposedness-bounded}
  \label{thm:FBSDE-wellposedness}
  Assume that $\bar h\in L^\infty(\mathbb{R}^d)$ and that Assumptions
  {\assAMixed{}} and \assB{} hold. Then, for every $s\in[0,T]$ and $x\in\mathbb{R}^d$ and
  every natural-filtration rough weak solution
  $(\mathcal S,X^{s,x},G^{s,x},W^{s,x})$ of the forward equation, the
  corresponding BSDE admits a solution $(Y^{s,x},Z^{s,x})$ on the same
  stochastic basis. The singular FBSDE has a unique solution at $(s,x)$ in
  the sense of
  Definition~\ref{def:FBSDE-uniqueness}. In particular, the joint law of
  $(X^{s,x},Y^{s,x},[Z^{s,x}],W^{s,x}-W_s^{s,x})$ is independent of the
  chosen natural-filtration rough weak solution. Moreover, $Y_s^{s,x}$ is
  deterministic, and
  \[
    u(s,x)\assign Y_s^{s,x}
  \]
  defines a bounded deterministic function on
  $[0,T]\times\mathbb R^d$, independent of the chosen natural-filtration
  rough weak solution.
\end{theorem}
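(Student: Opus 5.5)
We prove Theorem~\ref{thm:FBSDE-wellposedness-bounded} in three stages: existence on a fixed basis, fixed-$X$ uniqueness, and uniqueness in law together with determinism of $Y_s^{s,x}$.

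\textbf{Existence on a fixed basis.} Fix $(s,x)$ and a natural-filtration rough weak solution. We follow the classical quadratic construction of \cite{kobylanski_backward_2000} (see also \cite[Chapter~7]{zhang_backward_2017}).
\begin{enumerate}
\item Approximate $H$ by generators $H_n$ satisfying \AssumptionBLip{}, for instance by truncating in $z$ or by inf-convolution. Take them monotone in $n$ and with quadratic growth uniform in $n$.
\item Solve each Lipschitz BSDE by the usual Picard contraction in $\mathbb S^2\times\mathbb H^2$. The only structural input the contraction needs is that every square-integrable $(\mathcal F^X_t)$-martingale is a stochastic integral against $W^{s,x}$. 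This is supplied by \cite[Proposition~2.8]{perkowski_coming_2025}, replacing the Brownian martingale representation theorem.
\item Theorem~\ref{thm:quadratic-BSDE-apriori-estimates} gives bounds on $Y^n$ and on the BMO norm of $Z^n\cdot W$, uniformly in $n$. The comparison theorem for Lipschitz BSDEs gives monotonicity of $Y^n$ in $n$.
\item Pass to the limit by Kobylanski's monotone stability argument, which needs only bounded $Y$ and these uniform estimates. This yields a solution $(Y,Z)\in\mathbb L^\infty\times\mathbb H^2$ on the same basis.
\end{enumerate}

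\textbf{Fixed-$X$ uniqueness.} Take two bounded solutions $(Y,Z)$ and $(Y',Z')$ on the same basis. By \AssumptionB{},
\[
H(Y,Z)-H(Y',Z')=\beta_t(Y-Y')+\gamma_t\cdot(Z-Z'),\qquad |\beta_t|\le C,\quad |\gamma_t|\le C\bigl(1+|Y|+|Y'|+|Z|+|Z'|\bigr).
\]
By Theorem~\ref{thm:quadratic-BSDE-apriori-estimates}, $Z\cdot W$ and $Z'\cdot W$ are BMO martingales, so $\gamma\cdot W$ is BMO as well. Hence $\mathcal E(\gamma\cdot W)$ is a uniformly integrable martingale (Kazamaki), and it defines an equivalent measure $\mathbb Q$. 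Under $\mathbb Q$, the process $W-\int\gamma\,dt$ is a Brownian motion, and $e^{\int\beta}(Y-Y')$ is a bounded $\mathbb Q$-martingale with zero terminal value. It therefore vanishes, which gives $Y=Y'$, and then $Z=Z'$ follows from uniqueness of the martingale representation.

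\textbf{Uniqueness in law and determinism.} Any natural-filtration rough weak solution has $X$-law $\mathbb P^{s,x}$ by Theorem~\ref{thm:weak-martingale-equivalence}. We must show that $(Y,[Z],W-W_s)$ is a fixed measurable functional of the path of $X$, the same functional on every basis.
\begin{enumerate}
\item For $W$: by the Dirichlet decomposition (Proposition~\ref{prop:ito-formula} applied to $u_T(x)=x_i$ with $f=0$), $W-W_s$ is the martingale part of a measurable functional of $X$. It can be written as
\[
W_t-W_s=X_t-x-\lim_n\int_s^t b_n(r,X_r)\,dr,
\]
using condition~1 of Definition~\ref{def:rough-weak-solution} along a subsequence. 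This is a measurable map of the path $X$ that does not depend on the basis.
\item For $(Y,Z)$: each Picard iterate is a measurable functional of $X$ obtained from conditional expectations and stochastic integrals of $X$-measurable integrands against the $X$-measurable $W$. Hence its joint law is determined by $\mathcal L(X)=\mathbb P^{s,x}$. The monotone limit and the $\mathbb H^2$-limits preserve this property.
\end{enumerate}
It follows that $\mathcal L(X,Y,[Z],W-W_s)$ is the same on every basis. Since $\mathcal F^X_s$ is trivial, $Y_s$ is a.s.\ constant, and its value is basis-independent. Its bound comes from Theorem~\ref{thm:quadratic-BSDE-apriori-estimates}.

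\textbf{Main obstacle.} The delicate point is the last step: showing that the iterates, and especially stochastic integrals, are versions of one universal functional of $X$. I would handle this by approximating stochastic integrals by Riemann sums in probability, which are manifestly law-determined given the functional representation of $W$. I would then pass to limits in law via the convergence in probability.
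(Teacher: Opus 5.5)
\textbf{Overall.} Your proposal is correct in outline. Existence and fixed-$X$ uniqueness follow the paper's strategy. The paper gets fixed-$X$ uniqueness by citing the quadratic comparison theorem, whose proof is exactly your linearization--BMO--Girsanov argument. For uniqueness in law, however, you take a genuinely different route.

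\textbf{The paper's route for uniqueness in law.} The paper never uses how the solution was built. For each of two solutions it factorizes, by Doob--Dynkin, $W^i-W^i_s=\beta_i(X^i)$, $Y^i=\Phi_i(X^i)$ and $Z^i=\Psi_i(t,X^i)$, with maps that may depend on the basis. It identifies $\beta_1(\mathbf X)=\beta_2(\mathbf X)$ on the canonical space through uniqueness of the Dirichlet decomposition. It then checks once, via simple integrands and the It\^o isometry, that both transported pairs solve the same BSDE there, and concludes by fixed-$X$ uniqueness on the canonical basis.

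\textbf{Your route.} You show that the solution produced by one specific construction is a basis-independent functional of $X$, and then use fixed-$X$ uniqueness to identify any solution with it. Your formula $W_t-W_s=X_t-x-\lim_n\int_s^t b_n(r,X_r)\,dr$ comes from condition~1 of Definition~\ref{def:rough-weak-solution}, with the subsequence chosen from the law of $X$ alone. This is more explicit than the paper's Dirichlet-decomposition argument. Your parenthetical appeal to Proposition~\ref{prop:ito-formula} with $u_T(x)=x_i$ is not available, since $x_i\notin\mathcal C^{2-\alpha}$, but it is not needed.

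\textbf{What your route costs.} Law-determinedness must be propagated through every Picard step and every monotone limit. In particular, $Z^{n+1}$ comes from martingale representation, not as the integral of a known integrand. You must therefore exhibit it as law-determined, for example:
\begin{itemize}
\item as the $dt$-density of $\langle M,W\rangle$; or
\item by representing the transported martingale on the canonical space and pulling the integral back with your Riemann-sum argument.
\end{itemize}
The second option is exactly the transfer the paper performs once. The cleanest organization is to propagate only basis-independence of the joint law of $(X,Y^n,[Z^n],W-W_s)$. You then pass to the limit in law using convergence in probability on each basis, which avoids universal subsequences.

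\textbf{Comparison.} The paper's argument is shorter and independent of the construction. Yours additionally yields an explicit universal measurable solution functional of the forward path. For the determinism of $Y_s$, note that triviality of $\mathcal F^X_s$ needs the Blumenthal zero--one law, which the paper derives from the strong Markov property.

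\textbf{A correction in your existence step.} A single sequence of generators satisfying Assumption~B-Lip that is monotone in $n$ cannot in general converge to $H$. Take $H(t,x,y,z)=\mathrm{sgn}(x_1)|z|^2$, which satisfies Assumption~B:
\begin{itemize}
\item increasing approximants would have to lie below $-|z|^2$ where $x_1<0$;
\item decreasing approximants would have to lie above $|z|^2$ where $x_1>0$.
\end{itemize}
Both are incompatible with at most linear growth in $z$. The paper therefore follows Zhang's Theorem~7.3.3 and uses a nested scheme: $H^n=H\wedge n$, then $H^{n,m}=H^n\vee(-m)$, then the inf-convolution $H^{n,m,k}$, with three successive monotone limits. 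It applies the a priori bound at the level of $H^{n,m}$, which satisfies the quadratic growth bound of Assumption~B with the same constant as $H$. Your step needs to be replaced by this scheme; the remaining steps go through unchanged.
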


\begin{proof}
  Fix $(s,x)\in[0,T]\times\mathbb R^d$ and a natural-filtration rough weak
  solution,
  \[
    (\Omega^{s,x},\mathcal F^{s,x},
    (\mathcal F_t^{X^{s,x}})_{t\in[s,T]},\mathbb P^{s,x},X^{s,x},
    G^{s,x},W^{s,x}).
  \]
  Denote the underlying stochastic basis by $\mathcal S^{s,x}$.
  The martingale representation theorem
  {\cite[Proposition~2.8]{perkowski_coming_2025}} holds on this filtration.\footnote{The cited result is stated and proved on the one-dimensional torus, the same proof however holds on $\mathbb R^d$.}
  We show that on this stochastic basis there exists a unique pair
  $(Y^{s,x},Z^{s,x})\in\mathbb L^\infty\times\mathbb H^2$ solving the BSDE
  \eqref{eq:quadratic-BSDE}.

  We first consider the Lipschitz case. Thus, in addition to \AssumptionB{},
  assume for the moment that \AssumptionBLip{} holds. Set
  $(Y^{s,x,0},Z^{s,x,0})\equiv(0,0)$ and define the Picard iteration by
  \[
    Y_t^{s,x,n+1}
    =\mathbb E^{\mathbb P^{s,x}}\left[\left.
      \bar h(X_T^{s,x})+\int_t^T
      H(r,X_r^{s,x},Y_r^{s,x,n},Z_r^{s,x,n})\,dr
      \,\right|\mathcal F_t^{X^{s,x}}\right].
  \]
  As for $Z^{s,x,n+1}$, notice that
  \[
    Y_t^{s,x,n+1}+\int_s^t
    H(r,X_r^{s,x},Y_r^{s,x,n},Z_r^{s,x,n})\,dr
  \]
  is an $(\mathcal F_t^{X^{s,x}})_{t\in[s,T]}$-martingale. We define
  $Z^{s,x,n+1}$ to be the unique predictable process satisfying the martingale
  representation
  \[
    Y_t^{s,x,n+1}+\int_s^t
    H(r,X_r^{s,x},Y_r^{s,x,n},Z_r^{s,x,n})\,dr
    =Y_s^{s,x,n+1}+\int_s^t Z_r^{s,x,n+1}\,dW_r^{s,x},
    \qquad s\leq t\leq T,
  \]
  which exists by
  {\cite[Proposition~2.8]{perkowski_coming_2025}}. Following the exact argument
  of {\cite[Theorem~2.1 and Corollary~2.1]{el_karoui_backward_1997}}, the map
  which sends $(Y^{s,x,n},Z^{s,x,n})$ to
  $(Y^{s,x,n+1},Z^{s,x,n+1})$ is a contraction on
  $\mathbb H^2(\mathcal S^{s,x};\mathbb R)\times
  \mathbb H^2(\mathcal S^{s,x};\mathbb R^d)$, equipped with a suitable
  exponentially weighted norm. Hence the sequence converges in
  $\mathbb H^2\times\mathbb H^2$ to the solution of the Lipschitz BSDE.
  In particular, after extracting a subsequence, the convergence holds
  $\mathrm dt\otimes\mathbb P^{s,x}$-a.s. \\
  For the general quadratic case we follow
  {\cite[Theorem~7.3.3]{zhang_backward_2017}}. For $n,m,k\in\mathbb N$, set
  \[
    H^n\assign H\wedge n,\qquad
    H^{n,m}\assign H^n\vee(-m),\qquad
    H^{n,m,k}(t,x,y,z)
    \assign \inf_{z'\in\mathbb R^d}
    \left[H^{n,m}(t,x,y,z')+k|z-z'|\right].
  \]
  Then $H^{n,m,k}$ is Lipschitz in $(y,z)$, and it holds the following
  monotone convergence:
  \[
    H^{n,m,k}\nearrow H^{n,m},\qquad
    H^{n,m}\searrow H^n,\qquad
    H^n\nearrow H.
  \]
  Applying the Lipschitz case to $H^{n,m,k}$ gives a unique solution
  $(Y^{s,x,n,m,k},Z^{s,x,n,m,k})$. Following the existence argument in
  {\cite[Theorem~7.3.3]{zhang_backward_2017}}, we may therefore pass
  successively to the limits $k\to\infty$, $m\to\infty$ and $n\to\infty$.
  At each stage the corresponding pairs converge in
  $\mathbb S^2\times\mathbb H^2$, and the final limit will be denoted by
  $(Y^{s,x},Z^{s,x})$. As in the cited proof, one can verify that the limiting
  pair indeed satisfies the integral equation of the BSDE driven by $H$. To see that $Y^{s,x}$ is indeed bounded, observe first that, for fixed
  $n,m$, the bound $-m\leq H^{n,m,k}\leq n$ gives
  \[
    \|Y^{s,x,n,m,k}\|_{\mathbb L^\infty}
    \leq \|\bar h\|_{L^\infty}+(n\vee m)T,
  \]
  and the same bound holds for the limit $Y^{s,x,n,m}$ as $k\to\infty$.
  Moreover, $H^{n,m}$ satisfies the quadratic-growth bound
  \eqref{assump:quadratic growth} with the same constant as $H$. Theorem
  \ref{thm:quadratic-BSDE-apriori-estimates}, applied to
  $(Y^{s,x,n,m},Z^{s,x,n,m})$, therefore gives an a priori bound independent
  of $n,m$, $(s,x)$ and the chosen natural-filtration rough weak solution.
  Passing this bound through the remaining limits proves that
  $(Y^{s,x},Z^{s,x})\in\mathbb L^\infty\times\mathbb H^2$. Fixed-$X$
  uniqueness now follows from the comparison theorem
  {\cite[Theorem~7.3.1]{zhang_backward_2017}}. Existence of at least one
  natural-filtration rough weak solution follows from
  Theorem~\ref{thm:weak-martingale-equivalence}.

  It remains to prove uniqueness in law across different natural-filtration
  rough weak solutions. Let
  $(\mathcal S^i,X^i,G^i,W^i,Y^i,Z^i)$, $i=1,2$, be two solutions with the same
  initial condition $(s,x)$. By
  Theorem~\ref{thm:weak-martingale-equivalence}, both forward processes have
  the same law as the unique martingale-solution law, i.e.
  \[
    \mathcal L_{\mathbb P^1}(X^1)
    =\mathcal L_{\mathbb P^2}(X^2)\assign\mu.
  \]
  Let $\mathbf X$ be the coordinate process on $C([s,T];\mathbb R^d)$, equipped
  with the measure $\mu$ and the usual augmentation of its canonical
  filtration. Put $B^i=W^i-W_s^i$. Since $B^i$ is continuous and
  adapted to the completed natural filtration of $X^i$, it is a
  $C([s,T];\mathbb R^d)$-valued random variable measurable with respect to the
  completed sigma-field generated by $X^i$. The Doob--Dynkin factorization lemma
  {\cite[Lemma~1.14]{kallenberg_foundations_2021}} gives a Borel map
  $\beta_i:C([s,T];\mathbb R^d)\to C([s,T];\mathbb R^d)$ such that
  \[
    B^i=\beta_i(X^i),\qquad \mathbb P^i\text{-a.s.}
  \]
  {For every rational $q\in[s,T]$, the random variable $B_q^i$
  is $\mathcal F_q^{X^i}$-measurable, so it admits an
  $\mathcal F_q^{\mathbf X}$-measurable factorization through $X^i$.
  Hence $\beta_i(\mathbf X)_q$ is
  $\mathcal F_q^{\mathbf X}$-measurable, $\mu$-a.s. Taking a common null set
  over rational $q$, continuity of $\beta_i(\mathbf X)$ and right-continuity
  of the canonical filtration show that $\beta_i(\mathbf X)$ is adapted.}
  On the canonical space, $\beta_i(\mathbf X)$ is a continuous local martingale and
  $\mathbf X-x-\beta_i(\mathbf X)$ has zero quadratic variation. The
  uniqueness of the Dirichlet decomposition of $\mathbf X$ on the canonical
  filtered space yields
  \[
    \beta_1(\mathbf X)=\beta_2(\mathbf X)=:\mathbf B,
    \qquad\mu\text{-a.s.}
  \]
  where $\mathbf B$ is a Brownian motion with respect to the usual augmented
  canonical filtration. \\
  Applying Doob--Dynkin again to $Y^i$ gives a Borel map
  $\Phi_i:C([s,T];\mathbb R^d)\to C([s,T];\mathbb R)$ such that
  \[
    Y^i=\Phi_i(X^i),\qquad \mathbb P^i\text{-a.s.}
  \]
  {The same rational-time argument shows that
  $\Phi_i(\mathbf X)$ is adapted to the usual augmented canonical
  filtration.}
  For $Z^i$ it is slightly more complicated, since we want to factorize $Z^i$ 
  while preserving predictability. Let
  $(\mathcal F_t^{\mathbf X,0})_{t\in[s,T]}$ be the
  raw canonical filtration on $C([s,T];\mathbb R^d)$ generated by
  $\mathbf X$, and let $\mathcal P^{\mathbf X,0}$ be its predictable
  sigma-field. Likewise,
  let $\mathcal P^{X^i,0}$ be the predictable sigma-field associated with the
  raw natural filtration of $X^i$. The usual augmentation does not change
  predictable processes modulo $\mathrm dt\otimes\mathbb P^i$, so we may
  replace $Z^i$ by a $\mathcal P^{X^i,0}$-measurable representative. We define
  \[
    \Theta_i:[s,T]\times\Omega^i
    \longrightarrow[s,T]\times C([s,T];\mathbb R^d),
    \qquad \Theta_i(t,\omega)=(t,X^i(\omega)).
  \]
  Since $\mathcal F_t^{X^i,0}=(X^i)^{-1}(\mathcal F_t^{\mathbf X,0})$,
  comparing the generators of the two predictable sigma-fields gives
  \[
    \mathcal P^{X^i,0}=\Theta_i^{-1}(\mathcal P^{\mathbf X,0}).
  \]
  The Doob--Dynkin factorization lemma, now applied on the space--time
  space, yields a $\mathcal P^{\mathbf X,0}$-measurable map
  $\Psi_i:[s,T]\times C([s,T];\mathbb R^d)\to\mathbb R^d$ such that
  \[
    Z_t^i=\Psi_i(t,X^i), \qquad
    \mathrm dt\otimes\mathbb P^i\text{-a.e.}
  \]
  In particular, $\Psi_i(t,\mathbf X)$ is predictable on the canonical
  space, and equality of the laws of $X^i$ and $\mathbf X$ gives
  \[
    \mathbb E^\mu\left[\int_s^T
      |\Psi_i(t,\mathbf X)|^2\,dt\right]
    =\mathbb E^{\mathbb P^i}\left[\int_s^T|Z_t^i|^2\,dt\right]<\infty.
  \]
  Define
  \[
    \mathbf Y^i=\Phi_i(\mathbf X),\qquad
    \mathbf Z_t^i=\Psi_i(t,\mathbf X).
  \]
  Both pairs solve on the same canonical stochastic basis the BSDE with
  forward process $\mathbf X$ and Brownian motion $\mathbf B$. Indeed, since
  $X^i$ and $\mathbf X$ have the same law and the same maps $\Phi_i$ and
  $\Psi_i$ represent $(Y^i,Z^i)$ and $(\mathbf Y^i,\mathbf Z^i)$, respectively,
  the terminal condition and the Lebesgue time integral in the BSDE transfer
  directly. For the stochastic integral, approximate $\Psi_i$ in
  $L^2(\mathrm dt\otimes\mu)$ by canonical predictable simple processes.
  For a simple process the integral is a finite sum and transfers directly
  under the identities $B^i=\beta_i(X^i)$ and
  $\mathbf B=\beta_i(\mathbf X)$. The It{\^o} isometry then transfers the
  integral for $\mathbf Z^i$ by passage to the limit.

  Fixed-$X$ uniqueness, established above, now gives
  \[
    \mathbf Y^1=\mathbf Y^2\quad\text{indistinguishably},\qquad
    \mathbf Z^1=\mathbf Z^2\quad
    \mathrm dt\otimes\mu\text{-a.s.}
  \]
  Consequently,
  \[
    \mathcal L_{\mathbb P^1}(X^1,Y^1,[Z^1],W^1-W_s^1)
    =\mathcal L_{\mathbb P^2}(X^2,Y^2,[Z^2],W^2-W_s^2)
  \]
  on
  $C([s,T];\mathbb R^d)\times C([s,T];\mathbb R)
  \times L^2([s,T];\mathbb R^d)\times C([s,T];\mathbb R^d)$, which is the
  asserted uniqueness in law.
  Finally, by Theorems~\ref{thm:martingale-problem-wellposedness} and
  \ref{thm:weak-martingale-equivalence}, $X^{s,x}$ is strong Markov. Since we
  work with its completed, right-continuous natural filtration and
  $X_s^{s,x}=x$ is deterministic, the Blumenthal zero--one law applies
  directly and shows that $\mathcal F_s^{X^{s,x}}$ is trivial up to
  $\mathbb P^{s,x}$-null sets. Since $Y_s^{s,x}$ is measurable with respect to
  this sigma-field,
  it is almost surely constant. The equality of laws just proved shows that
  this constant does not depend on the chosen natural-filtration rough weak
  solution, and the a priori
  estimate in Theorem~\ref{thm:quadratic-BSDE-apriori-estimates} makes the
  resulting function $u$ bounded.
\end{proof}

\begin{remark}
  The preceding theorem defines the deterministic value field
  $u(s,x)=Y_s^{s,x}$, but does not assert the existence of common decoupling fields $u$
  and $v$ such that
  \[
    Y_t^{s,x}=u(t,X_t^{s,x}),\qquad
    Z_t^{s,x}=v(t,X_t^{s,x})
  \]
  for all initial conditions. The essential first step towards such a result
  is to realize the family $(\mathbb P^{s,x})_{s,x}$ on a common
  time-augmented canonical space as a measurable strong Markov family and to
  construct the Brownian martingale part $W-W_s$ (equivalently, the rough
  drift part $G$) as a common continuous additive functional of this family.
  If this can be done, then the Picard martingales in the Lipschitz case become
  common additive martingale functionals, and one can proceed as in
  {\cite[Theorem~4.1]{el_karoui_backward_1997}}, applying
  {\cite[Theorem~6.27]{cinlar_semimartingales_1980}} to obtain common fields
  $u$ and $v$. In the quadratic case, the same monotone approximation should
  give a common $u$, while obtaining a common $v$ in the limit may require
  additional care. We do not pursue this extension here.
\end{remark}

\subsection{A singular non-linear Feynman--Kac result}
\label{subsec:FBSDE-viscosity-identification}

We now establish the connection between the singular HJB equation
\eqref{eq:section4-singular-HJB} and the singular FBSDE
\eqref{eq:singular-SDE}--\eqref{eq:quadratic-BSDE}. We start with the easier
direction: Proposition~\ref{prop:converse-FK} shows that a paracontrolled
solution of the singular HJB, evaluated along a rough weak solution of the
forward equation, gives a weak solution of the singular FBSDE. We refer the reader to
Appendix~\ref{app:paracontrolled-wellposedness} for conditions ensuring the
existence and uniqueness of such a paracontrolled solution.

Conversely, Theorem~\ref{thm:nonlinear-FK} shows that, when $H$ is jointly
continuous, a bounded continuous universal decoupling field is a viscosity
solution of the singular HJB. These properties can in many cases be verified
directly.
In particular, Theorem~\ref{thm:FBSDE-quadratic-continuous-terminal} treats bounded continuous
terminal data, which lie beyond the regular-terminal paracontrolled result of
Theorem~\ref{thm:quadratic-paracontrolled-global}, and identifies $u$ as the
unique viscosity solution of the singular HJB.

\begin{proposition}
  \label{prop:converse-FK}
  {Under Assumptions \assAMixed{}, \assB{}, and \assPContinuous{},} let
  $h$ be a bounded paracontrolled solution to
  \eqref{eq:section4-singular-HJB}, and let $X^{s,x}$ be a
  natural-filtration rough weak solution to \eqref{eq:singular-SDE}. Then
  the triplet
  \[ (X_t^{s,x},h(t,X_t^{s,x}),\nabla h(t,X_t^{s,x})) \]
  is a rough weak solution to the singular FBSDE
  \eqref{eq:singular-SDE}--\eqref{eq:quadratic-BSDE}, and the solution is
  unique in the sense of Definition \ref{def:FBSDE-uniqueness}.
\end{proposition}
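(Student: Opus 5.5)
The plan is to apply the It\^o formula of Proposition~\ref{prop:ito-formula} to the paracontrolled solution $h$, viewed as the solution of a \emph{linear} Kolmogorov equation with a frozen right-hand side. Set
\[
  f(t,x):=-H\bigl(t,x,h(t,x),\nabla h(t,x)\bigr).
\]
A paracontrolled solution satisfies $h\in C_T\mathcal C^{\gamma}\cap C_T^{\gamma/2}L^\infty$ with $\gamma>4/3$, so $h$ and $\nabla h$ are bounded and uniformly continuous in space. We first show $f\in C_TL^\infty$. Boundedness of $f$ follows from the quadratic growth bound \eqref{assump:quadratic growth} in \AssumptionB{}. Time continuity in $L^\infty$ is split into two pieces:
\begin{itemize}
  \item the variation of $H$ at fixed arguments, which is controlled by \AssumptionPContinuous{} on the ball $|y|+|p|\le R:=\|h\|_\infty+\|\nabla h\|_\infty$;
  \item the variation of the arguments $(h,\nabla h)(t,\cdot)$, which is controlled by the local Lipschitz estimate in $(y,z)$ from \AssumptionB{}, together with the continuity of $t\mapsto (h,\nabla h)(t,\cdot)$ in $L^\infty$.
\end{itemize}

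With this choice of $f$, $h$ is exactly the paracontrolled solution of \eqref{eq:Kolmogorov-backward} with terminal datum $\bar h$. Here I take $\bar h\in\mathcal C^{2-\alpha}$, or at least $\mathcal C^\gamma$, as implicit in the notion of paracontrolled solution. If only $\mathcal C^\gamma$ is available, I would first note that Proposition~\ref{prop:ito-formula} extends to such terminal data, either by approximation using the continuity of the solution map or by restarting on $[T-\epsilon,T]$. Proposition~\ref{prop:ito-formula} then gives, on the natural filtration of $X^{s,x}$,
\[
  h(t,X_t)=\bar h(X_T)+\int_t^T H\bigl(r,X_r,h(r,X_r),\nabla h(r,X_r)\bigr)\,dr-\int_t^T\nabla h(r,X_r)\,dW_r .
\]
This is precisely \eqref{eq:quadratic-BSDE} with $Y_t:=h(t,X_t)$ and $Z_t:=\nabla h(t,X_t)$. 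The remaining membership requirements are immediate:
\begin{itemize}
  \item $Y$ is continuous and adapted, and it belongs to $\mathbb L^\infty$ because $h$ is bounded;
  \item $Z$ is continuous and adapted, hence predictable, and bounded, hence in $\mathbb H^2$.
\end{itemize}
Therefore $(X,Y,Z)$ is a rough weak solution in the sense of Definition~\ref{def:FBSDE-rough-weak-solution}.

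For uniqueness, note that the terminal value satisfies $\bar h(X_T)\in L^\infty$ since $h$, and hence $\bar h=h(T,\cdot)$, is bounded. Theorem~\ref{thm:FBSDE-wellposedness-bounded} then gives both fixed-$X$ uniqueness and uniqueness in law at every $(s,x)$. Consequently, any solution coincides with the triplet constructed above.

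I expect the main obstacle to be the legitimacy of invoking Proposition~\ref{prop:ito-formula} with the nonlinear right-hand side. This needs $f\in C_TL^\infty$ together with the identification of the nonlinear paracontrolled solution with the linear one carrying source $f$, which rests on uniqueness of the linear paracontrolled problem (Theorem~\ref{solmap}). The $L^\infty$-time-continuity is also where \AssumptionPContinuous{} is genuinely needed, since \AssumptionB{} alone only gives Borel measurability in $t$.
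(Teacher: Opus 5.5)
Your proposal is correct and follows the paper's proof essentially step for step. You set $f=-H(\cdot,\cdot,h,\nabla h)$ and check $f\in C_TL^\infty$ from the boundedness and time continuity of $h,\nabla h$ together with Assumptions \assB{} and \assPContinuous{}. You then read off the BSDE for $Y=h(\cdot,X)$, $Z=\nabla h(\cdot,X)$ via Proposition~\ref{prop:ito-formula}, and conclude uniqueness from Theorem~\ref{thm:FBSDE-wellposedness-bounded}. Your extra remark on terminal regularity ($\mathcal C^\gamma$ versus the $\mathcal C^{2-\alpha}$ required in Proposition~\ref{prop:ito-formula}) addresses a point the paper passes over silently. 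The approximation route via continuity of the solution map is the sound way to close it. Restarting on $[T-\epsilon,T]$ would not by itself produce $\mathcal C^{2-\alpha}$ data.
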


\begin{proof}
  Set
  \[
    f(t,x):=-H\bigl(t,x,h(t,x),\nabla h(t,x)\bigr).
  \]
  The boundedness and time continuity of $h$ and $\nabla h$, together with
  Assumptions \assB{} and \assPContinuous{}, imply that
  $f\in C_TL^\infty(\mathbb R^d)$. Since $h$ solves
  \[
    \left(\partial_t+\frac12\Delta+b\cdot\nabla\right)h=f,
    \qquad h(T,\cdot)=\bar h,
  \]
  Proposition~\ref{prop:ito-formula} yields
  \[
    h(t,X_t^{s,x})=\bar h(X_T^{s,x})
    +\int_t^T H\bigl(r,X_r^{s,x},h(r,X_r^{s,x}),
      \nabla h(r,X_r^{s,x})\bigr)\,dr
    -\int_t^T\nabla h(r,X_r^{s,x})\cdot dW_r.
  \]
  Thus $(X_t^{s,x},h(t,X_t^{s,x}),\nabla h(t,X_t^{s,x}))$ is a rough weak
  solution to the singular FBSDE
  \eqref{eq:singular-SDE}--\eqref{eq:quadratic-BSDE}. By Theorem
  \ref{thm:FBSDE-wellposedness} it is the unique solution.
\end{proof}

In addition to the PDE proof in
Proposition~\ref{prop:global-lipschitz-paracontrolled} of
Appendix~\ref{app:paracontrolled-wellposedness}, the same paracontrolled
well-posedness result, together with its FBSDE representation, can be obtained
through a BSDE-based argument. This proof is lengthier than its PDE
counterpart, but we believe it is interesting in its own right, as it provides
a better understanding of the fixed-point iteration in the present weak
framework when the terminal condition has additional regularity. For the
precise definition of the space
$\mathscr D_{\bar T,T,b}^{a,\theta,\rho}$, we refer the reader to
Appendix~\ref{app:paracontrolled-wellposedness}.

\begin{proposition}
  \label{prop:FBSDE-lipschitz-regularity}
  Let $4/3<a<\theta<\gamma<2-\alpha$,
  $\rho\in((\theta-1)/2,(\gamma-1)/2)$ and
  $\bar h\in\mathcal C^\gamma$.
  {{Assume that Assumptions \assAMixed{}, \assBLip{}, and
  \assPContinuous{} hold.}}
  Then there exist deterministic
  functions $u$ and $v$, with
  $(u,v)\in\mathscr D_{T,T,b}^{a,\theta,\rho}$, such that, for every
  $(s,x)\in[0,T]\times\mathbb R^d$ and every weak solution
  $(X^{s,x},Y^{s,x},Z^{s,x})$ to the singular FBSDE,
  \[
    Y_t^{s,x}=u(t,X_t^{s,x}),\qquad s\leq t\leq T,
    \quad\mathbb P^{s,x}\text{-a.s.},
  \]
  and
  \[
    Z_t^{s,x}=v(t,X_t^{s,x}),\qquad
    \mathrm dt\otimes\mathbb P^{s,x}\text{-a.s.}
  \]
  In particular, $Z^{s,x}$ admits the continuous version
  $v(\cdot,X^{s,x}_\cdot)$ and is unique in law as a random variable with
  values in $C([s,T];\mathbb R^d)$.
\end{proposition}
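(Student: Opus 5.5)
The plan is a BSDE-based Picard iteration in which each step is solved by a \emph{linear} paracontrolled PDE, carried out in the space $\mathscr D_{\bar T,T,b}^{a,\theta,\rho}$ of the Appendix. The fields $u$ and $v\approx\nabla u$ are then identified with $Y$ and $Z$ through the It\^o formula of Proposition~\ref{prop:ito-formula}.

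\textbf{Step 1 (one Picard step).} Start from $(u^0,v^0)=(0,0)$. Given $(u^n,v^n)\in\mathscr D_{\bar T,T,b}^{a,\theta,\rho}$, set
\[
f^n(t,x):=-H\bigl(t,x,u^n(t,x),v^n(t,x)\bigr).
\]
By \AssumptionBLip{} and \AssumptionPContinuous{}, and because $u^n,v^n$ are bounded and time continuous, we have $f^n\in C_TL^\infty$. Let $u^{n+1}$ be the paracontrolled solution of
\[
\left(\partial_t+\tfrac12\Delta+b\cdot\nabla\right)u^{n+1}=f^n,\qquad u^{n+1}(T)=\bar h\in\mathcal C^\gamma,
\]
and set $v^{n+1}:=\nabla u^{n+1}$. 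The linear Schauder-type estimates of the Appendix (Theorem~\ref{solmap}) give $(u^{n+1},v^{n+1})\in\mathscr D^{a,\theta,\rho}$.

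\textbf{Step 2 (probabilistic identification of each iterate).} Fix $(s,x)$ and any natural-filtration rough weak solution. Proposition~\ref{prop:ito-formula} gives
\[
u^{n+1}(t,X_t)=\bar h(X_T)+\int_t^TH\bigl(r,X_r,u^n,v^n\bigr)(r,X_r)\,dr-\int_t^T v^{n+1}(r,X_r)\,dW_r .
\]
Hence $\bigl(u^{n}(\cdot,X),v^{n}(\cdot,X)\bigr)$ is exactly the $n$-th Picard iterate $(Y^{s,x,n},Z^{s,x,n})$ from the proof of Theorem~\ref{thm:FBSDE-wellposedness-bounded}, by uniqueness of the martingale representation. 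That proof shows these iterates converge in $\mathbb H^2\times\mathbb H^2$ to the unique solution $(Y^{s,x},Z^{s,x})$.

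\textbf{Step 3 (convergence of the deterministic fields).} Show that $(u^n,v^n)$ converges in $\mathscr D^{a,\theta,\rho}$, or at least in $C_TL^\infty\times C_TL^\infty$, to some $(u,v)$. Under \AssumptionBLip{},
\[
\|f^n-f^{n-1}\|_{L^\infty}\le C\bigl(\|u^n-u^{n-1}\|_\infty+\|v^n-v^{n-1}\|_\infty\bigr).
\]
The linear estimate then bounds $\|u^{n+1}-u^n\|_{C_t\mathcal C^\theta}$ on $[T-\delta,T]$ by $C\delta^{\epsilon}\|f^n-f^{n-1}\|$, with a gain $\delta^\epsilon$ coming from $\theta<\gamma<2-\alpha$. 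This makes the map a contraction on short intervals. Because $H$ is globally Lipschitz, the contraction constant does not depend on the size of $(u,v)$, so we can patch intervals of fixed length backward from $T$ to cover $[0,T]$. Alternatively, an exponentially time-weighted norm gives the contraction on all of $[0,T]$ at once.

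\textbf{Step 4 (passing to the limit).} Uniform convergence lets us pass to the limit in Step 2: $u^n(t,X_t)\to u(t,X_t)$ uniformly and $v^n(\cdot,X)\to v(\cdot,X)$ in $\mathbb H^2$. Comparing with the limits $(Y^{s,x},Z^{s,x})$ from Step 2 gives
\[
Y=u(\cdot,X)\quad\text{indistinguishably (both sides are continuous)},\qquad Z=v(\cdot,X)\quad \mathrm dt\otimes\mathbb P\text{-a.e.}
\]
Fixed-$X$ uniqueness from Theorem~\ref{thm:FBSDE-wellposedness-bounded} extends this to every weak solution. Since $v$ is continuous and $X$ has continuous paths, $v(\cdot,X)$ is a continuous version of $Z$. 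Its law is the push-forward of the unique law $\mathbb P^{s,x}$ of $X$ under a fixed map, which gives uniqueness in law in $C([s,T];\mathbb R^d)$.

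\textbf{Main obstacle.} The hard part is Step 3: obtaining a contraction, or uniform bounds, in the paracontrolled norm that hold on the whole interval $[0,T]$ and not just on a small one. The delicate point is controlling the paracontrolled remainder of $u^{n+1}$ when the source term $f^n$ is merely bounded rather than H\"older. I would try to rely on the fact that the Appendix estimates for linear equations with $C_TL^\infty$ forcing are linear in $f$ with a small-time factor. If that fails, I would first obtain convergence only in $C_TL^\infty\times C_TL^\infty$, with uniform $\mathscr D$-bounds, and then recover membership of the limit $(u,v)$ in $\mathscr D$ by interpolation.
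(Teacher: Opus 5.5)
Your proposal is correct. Steps 1--2 coincide with the paper's proof: the iterates $u^{n+1}$ solve the linear paracontrolled equation with forcing $-H(\cdot,\cdot,u^n,\nabla u^n)$, and they are identified with the BSDE Picard iterates through Proposition~\ref{prop:ito-formula} and uniqueness of the martingale representation.

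The two routes differ in how the deterministic fields are made to converge.

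\emph{Your route.} You run the contraction mechanism of Proposition~\ref{prop:global-lipschitz-paracontrolled} along the Picard sequence. Since $u^{n+1}-u^n$ solves the linear equation with zero terminal datum and forcing $f^n-f^{n-1}$, estimate \eqref{eq:linear-paracontrolled-short-time} and the global Lipschitz constant of $H$ make $(u^n,\nabla u^n)$ Cauchy in $\mathscr D$ on intervals of fixed length. The backward patching is harmless: for $t<T$ the paracontrolled structure puts $u^n(t)$ in $\mathcal C^\gamma$ with norm controlled linearly by the $\mathscr D$-norm, because the remainder only degenerates at $t=T$. This gives convergence of the full sequence, so the identification $Y=u(\cdot,X)$, $Z=v(\cdot,X)$ needs no subsequence. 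To land in the single space $\mathscr D_{T,T,b}^{a,\theta,\rho}$ rather than a glued object, observe that the limit is a fixed point of the linear solution map on all of $[0,T]$.

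\emph{The paper's route.} It never uses a difference estimate in $\mathscr D$. It proceeds as follows.
\begin{enumerate}
\item It takes only the a priori bound (3.12) of Cannizzaro et al.\ and interpolates to slightly smaller exponents, obtaining uniform H\"older bounds.
\item It extracts a locally uniformly convergent subsequence by Arzel\`a--Ascoli and identifies the limit with $(Y,Z)$ along it.
\item Only at the end does it recover $(u,\nabla u)\in\mathscr D_{T,T,b}^{a,\theta,\rho}$: it solves the linear equation with forcing $-H(t,x,u,v)$ and uses BSDE uniqueness to show that this solution equals $u$.
\end{enumerate}
Your fallback plan is close to this, with that identification step playing the role you assign to interpolation.

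\emph{Comparison.} Your route is shorter and gives stronger convergence, but it is essentially the PDE fixed-point proof of Proposition~\ref{prop:global-lipschitz-paracontrolled}, with the BSDE entering only in the identification. The paper replaces the contraction by compactness plus BSDE uniqueness, which is the BSDE-based mechanism the authors wanted to exhibit.
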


\begin{proof}
  Fix $(s,x)\in[0,T]\times\mathbb R^d$ and a natural-filtration rough weak
  solution,
  \[
    (\Omega^{s,x},\mathcal F^{s,x},
    (\mathcal F_t^{X^{s,x}})_{t\in[s,T]},\mathbb P^{s,x},X^{s,x},
    G^{s,x},W^{s,x}).
  \]
  For brevity, write $\mathcal F_t^{s,x}=\mathcal F_t^{X^{s,x}}$.
  Set $(Y^{s,x,0},Z^{s,x,0})\equiv(0,0)$ and define
  $(Y^{s,x,n+1},Z^{s,x,n+1})$ iteratively by
  \[
    Y_t^{s,x,n+1}
    =
    \mathbb E^{\mathbb P^{s,x}}\left[
      \bar h(X_T^{s,x})
      +\int_t^T H(r,X_r^{s,x},Y_r^{s,x,n},Z_r^{s,x,n})\,dr
      \,\middle|\,\mathcal F_t^{s,x}
    \right],
  \]
  with $Z^{s,x,n+1}$ obtained from the martingale representation theorem on
  $(\mathcal F_t^{X^{s,x}})_{t\in[s,T]}$
  ({\cite[Proposition~2.8]{perkowski_coming_2025}}).

  We construct the corresponding deterministic PDE iterates simultaneously.
  Set $u^0\equiv0$ and $v^0\equiv0$. Given $u^n$ and
  $v^n=\partial_xu^n$, let $u^{n+1}$ be the paracontrolled solution to
  \[
    \partial_tu^{n+1}+\frac12\Delta u^{n+1}+b\cdot\nabla u^{n+1}=-H^n,
    \qquad u^{n+1}(T,\cdot)=\bar h,
  \]
  where
  \[
    H^n(t,x)=H(t,x,u^n(t,x),v^n(t,x)).
  \]
  Proposition~\ref{prop:ito-formula}, applied to the present
  natural-filtration rough weak solution, gives
  \[
    u^{n+1}(t,X_t^{s,x})
    =\bar h(X_T^{s,x})
    +\int_t^T H^n(r,X_r^{s,x})\,dr
    -\int_t^T \partial_xu^{n+1}(r,X_r^{s,x})\,dW_r^{s,x}.
  \]
  Comparison with the Picard equation and uniqueness of the linear BSDE show
  inductively that, for every $n\in\mathbb N$,
  \[
    Y_t^{s,x,n}=u^n(t,X_t^{s,x}),\qquad
    Z_t^{s,x,n}=v^n(t,X_t^{s,x}),
  \]
  where the first identity holds for every $t$, $\mathbb P^{s,x}$-a.s., and
  the second holds $\mathrm dt\otimes\mathbb P^{s,x}$-a.s. Notice that the
  functions $(u^n,v^n)$ depend only on the deterministic PDE data and are
  therefore independent of $(s,x)$ and of the chosen natural-filtration rough
  weak solution.

  Following the fixed-point and convergence argument in
  {\cite[Theorem~2.1 and Corollary~2.1]{el_karoui_backward_1997}}, the sequence
  $(Y^{s,x,n},Z^{s,x,n})_{n\in\mathbb N}$ converges in
  $\mathbb S^2\times\mathbb H^2$ to the unique solution
  $(Y^{s,x},Z^{s,x})$ of the BSDE. It remains to identify the deterministic
  functions and their regularity.

  Estimate (3.12) in Proposition 3.9 of
  {\cite{cannizzaro_multidimensional_2018}}, applied on
  $[T-\bar T,T]$, gives, in our notation,
  \[
    \|(u^n,\partial_xu^n)\|_{\mathscr D_{\bar T,T,b}^{a,\theta,\rho}}
    \leq
    C\|H^{n-1}\|_{C_TL^\infty}
    \left(1+\|b\|_{C_T\mathcal C^{-\alpha}}\right)
    +\|\bar h\|_{\mathcal C^\gamma}
    +(1+\|b\|_{\mathcal X^\alpha})^2
    \left(1+\bar T^\kappa
    \|(u^n,\partial_xu^n)\|_{\mathscr D_{\bar T,T,b}^{a,\theta,\rho}}\right),
  \]
  where the $\mathscr D_{\bar T,T,b}^{a,\theta,\rho}$-norm is defined through the
  paracontrolled decomposition
  $u^n=\partial_xu^n\para J^T(b)+(u^n)^\sharp$. The corresponding time
  regularity
  $\|u^n\|_{C^{\frac{\theta}{2}}_{[T-\bar T,T]}L^\infty}$ follows by the same
  proof, using point 2 of Corollary 2.5 in
  {\cite{cannizzaro_multidimensional_2018}}. Choosing $\bar T$ small enough
  and then using the interpolation argument from
  {\cite[Lemma~2.11]{KPZreloaded}}, we obtain, for any
  $4/3<a'<a$, $\theta'<\theta$ and $\rho'<\rho$,
  \begin{equation}
    \begin{split}
    &\|u^n\|_{C_{[T-\bar T,T]}\mathcal C^{\theta'}}
      +\|u^n\|_{C^{\frac{\theta'}{2}}_{[T-\bar T,T]}L^\infty}
      +\|\partial_xu^n\|_{C^{\rho'}_{[T-\bar T,T]}L^\infty}
      +\|\partial_xu^n\|_{C_{[T-\bar T,T]}\mathcal C^{a'-1}} \\
    &\qquad\leq
    C\bar T^{\kappa'}\|H^{n-1}\|_{C_TL^\infty}
    \left(1+\|b\|_{C_T\mathcal C^{-\alpha}}\right)
    +\|\bar h\|_{\mathcal C^\gamma}
    +(1+\|b\|_{\mathcal X^\alpha})^2 .
    \end{split}
    \label{eq:un-bound}
  \end{equation}
  Since $H$ is Lipschitz,
  \[
    \|H^{n-1}\|_{C_TL^\infty}
    \leq
    \|H^0\|_{C_TL^\infty}
    +2C_H\|u^{n-1}\|_{C_{[T-\bar T,T]}\mathcal C^{\theta'}}.
  \]
  Thus, for suitable constants $A,B>0$,
  \[
    \begin{split}
    &\|u^n\|_{C_{[T-\bar T,T]}\mathcal C^{\theta'}}
      +\|u^n\|_{C^{\frac{\theta'}{2}}_{[T-\bar T,T]}L^\infty}
      +\|\partial_xu^n\|_{C^{\rho'}_{[T-\bar T,T]}L^\infty}
      +\|\partial_xu^n\|_{C_{[T-\bar T,T]}\mathcal C^{a'-1}} \\
    &\qquad\leq
      A+\bar T^{\kappa'}B
      \|u^{n-1}\|_{C_{[T-\bar T,T]}\mathcal C^{\theta'}} .
    \end{split}
  \]
  Taking $\bar T$ so small that $\bar T^{\kappa'}B<1$ and iterating gives a
  uniform bound for $(u^n,\partial_xu^n)$ on $[T-\bar T,T]$. Since $B$ is
  independent of the terminal value, the same argument extends the bound to
  the whole interval $[0,T]$.

  By the generalized Arzel{\`a}--Ascoli theorem
  {\cite[p.~236]{kelley_general_1975}}, after passing to a common subsequence,
  $u^n$ and $v^n=\partial_xu^n$ converge locally uniformly to deterministic
  functions $u$ and $v$. The uniform H{\"o}lder bounds pass to the limit, and
  therefore
  \[
    u\in C_T\mathcal C^\theta\cap C_T^{\frac{\theta}{2}}L^\infty,
    \qquad
    v\in C_T\mathcal C^{a-1}\cap C_T^\rho L^\infty .
  \]
  Since both $u^n\to u$ and $\partial_xu^n\to v$ locally uniformly, the
  standard stability of derivatives under locally uniform convergence gives
  $v=\partial_xu$.

  Since $X^{s,x}$ has continuous paths, its range on $[s,T]$ is compact
  $\mathbb P^{s,x}$-a.s. The local uniform convergence along the common
  subsequence therefore gives
  \[
    \sup_{t\in[s,T]}|u^n(t,X_t^{s,x})-u(t,X_t^{s,x})|\longrightarrow0,
    \qquad
    \sup_{t\in[s,T]}|v^n(t,X_t^{s,x})-v(t,X_t^{s,x})|\longrightarrow0,
  \]
  $\mathbb P^{s,x}$-a.s. along that subsequence. On the other hand, convergence
  in $\mathbb S^2\times\mathbb H^2$ implies convergence in probability of the
  $Y$-processes and convergence in
  $\mathrm dt\otimes\mathbb P^{s,x}$-measure of the $Z$-processes. Uniqueness
  of limits in measure therefore yields
  \[
    Y_t^{s,x}=u(t,X_t^{s,x}),\qquad
    Z_t^{s,x}=v(t,X_t^{s,x}),
  \]
  where the first identity holds for every $t$, $\mathbb P^{s,x}$-a.s., and
  the second holds $\mathrm dt\otimes\mathbb P^{s,x}$-a.s. Since $v$ and
  $X^{s,x}$ are continuous, we choose $v(t,X_t^{s,x})$ as the continuous
  version of $Z^{s,x}$. Since the deterministic subsequence and its limits
  were obtained entirely from the PDE iterates, the same functions $u$ and
  $v$ work for every $(s,x)$ and every natural-filtration rough weak
  solution.

  Finally, let $\bar u$ be the paracontrolled solution to
  \[
    \partial_t\bar u+\frac12\Delta\bar u+b\cdot\nabla\bar u
    =-H(t,x,u,v),
    \qquad
    \bar u(T,\cdot)=\bar h .
  \]
  Apply Proposition~\ref{prop:ito-formula} to $\bar u(t,X_t^{s,x})$. The
  representations already proved show that the resulting pair solves the same
  BSDE as $(Y^{s,x},Z^{s,x})$. Uniqueness of the BSDE therefore gives
  \[
    \bar u(t,X_t^{s,x})=u(t,X_t^{s,x}),
    \qquad
    \partial_x\bar u(t,X_t^{s,x})=v(t,X_t^{s,x}).
  \]
  Setting $t=s$ and using $X_s^{s,x}=x$ yields
  $u(s,x)=\bar u(s,x)$. Since $(s,x)$ was arbitrary, it holds $u=\bar u$. Since
  $v=\partial_xu$, this proves that
  $(u,\nabla u)\in\mathscr D_{T,T,b}^{a,\theta,\rho}$ is the paracontrolled
  solution of the singular HJB equation. Finally, uniqueness in law of
  $X^{s,x}$ and the
  deterministic continuous representation
  $Z^{s,x}=v(\cdot,X^{s,x}_\cdot)$ imply uniqueness in law of the continuous
  version of $Z^{s,x}$.
\end{proof}

\begin{theorem}[Nonlinear Feynman--Kac formula]
  \label{thm:nonlinear-FK}
  {Assume that Assumptions \assAMixed{} and \assB{} hold} and that $H$ is jointly
  continuous. Let $\bar h\in L^\infty(\mathbb R^d)$. Suppose that the singular
  FBSDE admits a bounded continuous universal decoupling field $u$ in the sense
  of Definition~\ref{def:FBSDE-universal-decoupling-field}. Then $u$ is a
  viscosity solution of
  \[
    \left(\partial_t+\frac12\Delta+b\cdot\nabla\right)u
    =-H(t,x,u,\nabla u),
    \qquad u(T,\cdot)=\bar h.
  \]
\end{theorem}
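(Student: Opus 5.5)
Our plan mirrors the classical Pardoux--Peng argument, with the test functions of Definition~\ref{viscositydef} in place of $C^{1,2}$ functions and the It\^o formula of Proposition~\ref{prop:ito-formula} in place of the classical one. Continuity and boundedness of $u$ are assumed. The terminal condition follows from the definition of a universal decoupling field: taking $s=T$ gives $u(T,x)=Y_T^{T,x}=\bar h(x)$. We prove the subsolution property; the supersolution property is symmetric. Let $\varphi\in\mathcal T^b$ with right-hand side $g\in C_T\mathcal C_b$, and suppose $u-\varphi$ has a strict local maximum at $(t_0,x_0)\in[0,T)\times\mathbb R^d$, normalized so that $u(t_0,x_0)=\varphi(t_0,x_0)$. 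Arguing by contradiction, assume
\[
g(t_0,x_0)+H(t_0,x_0,u(t_0,x_0),\nabla\varphi(t_0,x_0))<0 .
\]
By joint continuity of $H$ and continuity of $g,\varphi,\nabla\varphi,u$, this strict inequality persists, with some margin $\eta>0$, on a neighbourhood $[t_0,t_0+\delta]\times \bar B(x_0,\delta)$ on which also $u\le\varphi$.

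We fix a natural-filtration rough weak solution $X=X^{t_0,x_0}$, which exists by Theorem~\ref{thm:weak-martingale-equivalence}. Let $(Y,Z)$ be the BSDE solution from Theorem~\ref{thm:FBSDE-wellposedness-bounded}, so that $Y_t=u(t,X_t)$. Define the stopping time $\tau:=\inf\{t\ge t_0:|X_t-x_0|\ge\delta\}\wedge(t_0+\delta)$; then $\tau>t_0$ a.s. by path continuity. Since $g\in C_T L^\infty$, Proposition~\ref{prop:ito-formula} applies to $\varphi$ on the natural filtration. Setting $\bar Y_t:=\varphi(t\wedge\tau,X_{t\wedge\tau})$ and $\bar Z_t:=\nabla\varphi(t,X_t)\mathbf 1_{t\le\tau}$, we obtain the linear-type BSDE on $[t_0,\tau]$
\[
\bar Y_t=\varphi(\tau,X_\tau)-\int_t^\tau g(r,X_r)\,dr-\int_t^\tau \bar Z_r\,dW_r .
\]
Meanwhile, $(Y_{t\wedge\tau},Z\mathbf 1_{\cdot\le\tau})$ solves the BSDE with generator $H(r,X_r,\cdot,\cdot)\mathbf 1_{r\le\tau}$ and terminal value $u(\tau,X_\tau)\le\varphi(\tau,X_\tau)$. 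On $[t_0,\tau]$ we have $-g(r,X_r)> H(r,X_r,\bar Y_r,\bar Z_r)+\eta$, the inequality holding at the point $(r,X_r)$ with $\bar Y_r=\varphi(r,X_r)$.

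The comparison step runs as follows. Put $\delta Y:=\bar Y-Y$ and $\delta Z:=\bar Z-Z$, and linearize $H(\bar Y,\bar Z)-H(Y,Z)=\beta_r\,\delta Y_r+\gamma_r\cdot\delta Z_r$. Assumption~\assB{} gives $|\beta|\le C$ and $|\gamma_r|\le C(1+|Y_r|+|\bar Y_r|+|Z_r|+|\bar Z_r|)$. Since $Z\cdot W$ is BMO by Theorem~\ref{thm:quadratic-BSDE-apriori-estimates} and $\bar Z$ is bounded, $\gamma\cdot W$ is BMO. Hence the stochastic exponential $\mathcal E(\gamma\cdot W)$ is a true martingale, defining an equivalent measure $\mathbb Q$. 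Under $\mathbb Q$,
\[
\delta Y_{t_0}=\mathbb E^{\mathbb Q}\Big[e^{\int_{t_0}^\tau\beta}\,\delta Y_\tau+\int_{t_0}^\tau e^{\int_{t_0}^r\beta}\big(-g-H(\bar Y,\bar Z)\big)(r)\,dr\Big]\ \ge\ \eta\,e^{-C\delta}\,\mathbb E^{\mathbb Q}[\tau-t_0]>0,
\]
using $\delta Y_\tau\ge0$ and $\tau>t_0$ a.s. under $\mathbb Q\sim\mathbb P$. But $\delta Y_{t_0}=\varphi(t_0,x_0)-u(t_0,x_0)=0$, a contradiction. This proves the subsolution property.

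We expect the main obstacle to be justifying the Girsanov and BMO step in this singular weak setting. Specifically, we must check that the martingale representation on the natural filtration (used for $Z$) and the BMO theory remain valid when merely stopping at $\tau$. We must also check that $\mathbb Q\sim\mathbb P$ preserves $\tau>t_0$; this is immediate from equivalence. If the linearization is delicate, we will fall back on the quadratic comparison theorem of \cite[Theorem~7.3.1]{zhang_backward_2017}, applied on $[t_0,\tau]$ with the strict gap $\eta$. A minor additional point is that Proposition~\ref{prop:ito-formula} requires terminal data in $\mathcal C^{2-\alpha}$, whereas test functions have $\varphi^T\in\mathcal C^{\gamma}$. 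We will handle this either by noting that the proof of Proposition~\ref{prop:ito-formula} only uses the martingale property (valid for $\mathcal C^\gamma$ data by Definition~\ref{controlledSDE} with $v=0$), or by restarting $\varphi$ at an intermediate time where it is $\mathcal C^{2-\alpha-}$.
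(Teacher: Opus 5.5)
Your proposal is correct and follows the paper's proof essentially step for step. Both use a contradiction on a small cylinder where $\varphi\geq u$ and $-g-H(\cdot,\varphi,\nabla\varphi)\geq\eta$, the exit time $\tau$, the It\^o formula of Proposition~\ref{prop:ito-formula} compared with the stopped BSDE, a linearization with $|\gamma_r|\leq C(1+|Z_r|)$, BMO and Girsanov via Theorem~\ref{thm:quadratic-BSDE-apriori-estimates}, and the linear-BSDE representation giving $\delta Y_{t_0}\geq\eta e^{-C\delta}\mathbb E^{\mathbb Q}[\tau-t_0]>0$.

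The paper passes over the regularity mismatch you flag, between $\varphi^T\in\mathcal C^\gamma$ and the $\mathcal C^{2-\alpha}$ hypothesis of Proposition~\ref{prop:ito-formula}. Your first remedy is the right one. The clean justification is to approximate $\varphi^T$ by smooth data and pass the martingale property of Definition~\ref{def:martingale-problem} to the limit, using the continuity of the solution map in Theorem~\ref{solmap}.
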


\begin{proof}
  We first prove the subsolution inequality. Let $\phi\in\mathcal T^b$ solve
  \[
    \left(\partial_t+\frac12\Delta+b\cdot\nabla\right)\phi=g,
  \]
  and suppose that $u-\phi$ has a local maximum equal to zero at
  $(t_0,x_0)\in[0,T)\times\mathbb R^d$. We must prove
  \[
    g(t_0,x_0)\geq
    -H\bigl(t_0,x_0,u(t_0,x_0),\nabla\phi(t_0,x_0)\bigr).
  \]
  Assume otherwise. Since $u(t_0,x_0)=\phi(t_0,x_0)$, there exists $\eta>0$
  such that
  \[
    -g(t_0,x_0)
    -H\bigl(t_0,x_0,\phi(t_0,x_0),\nabla\phi(t_0,x_0)\bigr)>2\eta.
  \]
  By the joint continuity of $H$ and the continuity of $g$, $\phi$ and
  $\nabla\phi$, after shrinking $\delta>0$ if necessary, the cylinder
  \[
    Q=[t_0,t_0+\delta]\times\overline B_\delta(x_0)
    \subset[0,T]\times\mathbb R^d
  \]
  satisfies
  \[
    \phi(t,x)\geq u(t,x)
  \]
  and
  \begin{equation}
    -g(t,x)-H\bigl(t,x,\phi(t,x),\nabla\phi(t,x)\bigr)\geq\eta
    \qquad\text{for all }(t,x)\in Q.
    \label{eq:nonlinear-FK-contact-gap}
  \end{equation}

  Let $(X,Y,Z)$ be a natural-filtration rough weak solution starting from
  $(t_0,x_0)$, with associated Brownian motion $W$ and probability measure
  $\mathbb P^{t_0,x_0}$. Define
  \[
    \tau:=\inf\{t\geq t_0:|X_t-x_0|\geq\delta\}\wedge(t_0+\delta).
  \]
  Since $u$ is a universal decoupling field,
  \[
    Y_{t\wedge\tau}=u(t\wedge\tau,X_{t\wedge\tau}),
    \qquad t_0\leq t\leq t_0+\delta.
  \]
  On this interval, set
  \[
    \Delta Y_t:=\phi(t\wedge\tau,X_{t\wedge\tau})-Y_{t\wedge\tau},
    \qquad
    \Delta Z_t:=\mathbf{1}_{\{t\leq\tau\}}
      \bigl(\nabla\phi(t,X_t)-Z_t\bigr).
  \]
  Due to the local touching property of $\phi$ and $u$ being the coupling field, we have
  \begin{equation}
    \Delta Y_t\geq0,\qquad \Delta Y_{t_0}=0.
    \label{eq:nonlinear-FK-touching-process}
  \end{equation}
  We introduce
  \[
    R_r:=-g(r,X_r)
      -H\bigl(r,X_r,\phi(r,X_r),\nabla\phi(r,X_r)\bigr),
  \]
  which by \eqref{eq:nonlinear-FK-contact-gap}, it holds $R_r\geq\eta$ for $r\leq\tau$.
  Proposition~\ref{prop:ito-formula} and the stopped BSDE yield, for
  $t\in[t_0,t_0+\delta]$,
  \begin{align}
    \Delta Y_t
    &=\Delta Y_{t_0+\delta}+\int_t^{t_0+\delta}\mathbf{1}_{\{r\leq\tau\}}
      \bigl[-g(r,X_r)-H(r,X_r,Y_r,Z_r)\bigr]\,dr
      -\int_t^{t_0+\delta}\Delta Z_r\,dW_r \label{eq:nonlinear-FK-difference-BSDE}\\
    &=\Delta Y_{t_0+\delta}+\int_t^{t_0+\delta}\mathbf{1}_{\{r\leq\tau\}}
      \Bigl[R_r+H\bigl(r,X_r,\phi(r,X_r),\nabla\phi(r,X_r)\bigr)
      -H(r,X_r,Y_r,Z_r)\Bigr]\,dr
      -\int_t^{t_0+\delta}\Delta Z_r\,dW_r.\notag
  \end{align}
  For $r\leq\tau$, we can linearize the difference in the second line as
  \begin{align}
    H\bigl(r,X_r,\phi(r,X_r),\nabla\phi(r,X_r)\bigr)
      -H(r,X_r,Y_r,Z_r)=a_r\Delta Y_r+c_r\cdot\Delta Z_r,
    \label{eq:nonlinear-FK-linearization}
  \end{align}
  where due to the (local) Lipschitz continuity of $H$ in $(y,z)$, and the boundedness of $Y$, $\phi(r,X_r)$ and
  $\nabla\phi(r,X_r)$ on the stopped interval, we have the following bounds on $a_r$ and $c_r$:
  \[
    |a_r|\leq C,
    \qquad
    |c_r|\leq C(1+|Z_r|).
  \]
  Consequently, \eqref{eq:nonlinear-FK-difference-BSDE} becomes
  \begin{align}
    \Delta Y_t
    =\Delta Y_{t_0+\delta}
      +\int_t^{t_0+\delta}\mathbf{1}_{\{r\leq\tau\}}
        \bigl(R_r+a_r\Delta Y_r+c_r\cdot\Delta Z_r\bigr)\,dr-\int_t^{t_0+\delta}\Delta Z_r\,dW_r.
    \label{eq:nonlinear-FK-linear-BSDE}
  \end{align}
  Theorem~\ref{thm:quadratic-BSDE-apriori-estimates} gives that $Z\cdot W$ is
  a BMO martingale. The bound on $c$ therefore implies that $\int_{t_0}^{\cdot}\mathbf{1}_{\{r\leq\tau\}}c_r\,dW_r$
  is also BMO. Hence its stochastic exponential is a uniformly integrable
  martingale and defines an equivalent probability measure $\mathbb Q$ on
  $\mathcal F_{t_0+\delta}^{t_0,x_0}$ by
  \[
    \frac{d\mathbb Q}{d\mathbb P^{t_0,x_0}}
    :=\mathcal E\left(
      \int_{t_0}^{\cdot}\mathbf{1}_{\{r\leq\tau\}}c_r\,dW_r
    \right)_{t_0+\delta}.
  \]
  By Girsanov's theorem, $W_t^{\mathbb Q}:=W_t-
    \int_{t_0}^t\mathbf{1}_{\{r\leq\tau\}}c_r\,dr$
  becomes a Brownian motion under $\mathbb Q$, and the term involving
  $c_r\cdot\Delta Z_r$ disappears from
  \eqref{eq:nonlinear-FK-linear-BSDE}. With
  \[
    \Gamma_r:=\exp\left(
      \int_{t_0}^r\mathbf{1}_{\{q\leq\tau\}}a_q\,dq
    \right),
  \]
  the representation formula for the resulting linear BSDE gives
  \[
    \Delta Y_{t_0}
    =\mathbb E^{\mathbb Q}\left[
      \left.
      \Gamma_{t_0+\delta}\Delta Y_{t_0+\delta}
      +\int_{t_0}^{t_0+\delta}
        \Gamma_r\mathbf{1}_{\{r\leq\tau\}}R_r\,dr
      \right|\mathcal F_{t_0}^{t_0,x_0}
    \right].
  \]
  Since $\Delta Y_{t_0+\delta}\geq0$, $R_r\geq\eta$ before $\tau$, and
  $\Gamma_r\geq e^{-C\delta}$, it follows that
  \[
    \Delta Y_{t_0}\geq
    \eta e^{-C\delta}\,
    \mathbb E^{\mathbb Q}\left[
      \left.\tau-t_0\right|\mathcal F_{t_0}^{t_0,x_0}
    \right].
  \]
  The continuity of $X$ and the identity $X_{t_0}=x_0$ imply
  $\tau>t_0$, $\mathbb P^{t_0,x_0}$-almost surely, and hence also
  $\mathbb Q$-almost surely. Thus $\Delta Y_{t_0}>0$, contradicting
  \eqref{eq:nonlinear-FK-touching-process}. This proves the subsolution
  inequality. \\
  The supersolution inequality follows in the same way. \\
  Finally, taking the initial condition $(T,x)$ in the universal decoupling
  identity gives
  \[
    u(T,x)=Y_T^{T,x}=\bar h(x).
  \]
  Therefore $u$ is a viscosity solution of
  \eqref{eq:section4-singular-HJB}.
\end{proof}

By combining Theorem~\ref{thm:nonlinear-FK} and
Proposition~\ref{prop:converse-FK}, we immediately obtain the following
result, which can be seen as the counterpart of
Theorem~\ref{paravicosity} under Assumptions \assB{} and \assPContinuous{},
together with joint continuity of $H$, instead of \AssumptionC{}.

\begin{corollary}
  \label{cor:paracontrolled-viscosity-B}
  {Under Assumptions \assAMixed{}, \assB{}, and \assPContinuous{},} assume
  additionally that $H$ is jointly continuous. Then every paracontrolled solution of
  \eqref{eq:section4-singular-HJB} is a viscosity solution.
\end{corollary}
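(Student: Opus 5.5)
Given a paracontrolled solution $h$ of \eqref{eq:section4-singular-HJB}, the plan is to show that $h$ is a bounded continuous universal decoupling field for the singular FBSDE, and then apply Theorem~\ref{thm:nonlinear-FK}. The required continuity is immediate, since a paracontrolled solution lies in $C_T\mathcal C^{\gamma}\cap C_T^{\gamma/2}L^\infty$ with $\gamma>4/3$. For the same reason $h$ and $\nabla h$ are bounded and jointly continuous, and $\bar h=h(T,\cdot)$ is bounded. The hypotheses of Theorem~\ref{thm:nonlinear-FK} (Assumptions \assAMixed{} and \assB{}, joint continuity of $H$, and $\bar h\in L^\infty$) are then all met. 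If the definition of paracontrolled solution in use only gives local-in-time regularity, I would argue on the interval where $h$ is defined, which is the same interval on which the viscosity property is claimed.

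The core step is the universal decoupling identity. Fix $(s,x)$ and an arbitrary rough weak solution $(\mathcal S^{s,x},X^{s,x},Y^{s,x},Z^{s,x})$ of the FBSDE in the sense of Definition~\ref{def:FBSDE-rough-weak-solution}. By construction its forward component is a natural-filtration rough weak solution. Proposition~\ref{prop:converse-FK}, whose hypotheses are Assumptions \assAMixed{}, \assB{}, \assPContinuous{} and boundedness of $h$, then shows that $(X^{s,x},h(\cdot,X^{s,x}_\cdot),\nabla h(\cdot,X^{s,x}_\cdot))$ solves the BSDE on the same stochastic basis $\mathcal S^{s,x}$. In that proof, $f=-H(\cdot,\cdot,h,\nabla h)\in C_TL^\infty$ comes from \assPContinuous{}, and the Itô formula of Proposition~\ref{prop:ito-formula} applies. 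The pair $(h(\cdot,X),\nabla h(\cdot,X))$ belongs to $\mathbb L^\infty\times\mathbb H^2$ because both functions are bounded.

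Fixed-$X$ uniqueness from Theorem~\ref{thm:FBSDE-wellposedness-bounded} now gives $Y^{s,x}_t=h(t,X^{s,x}_t)$ for all $t$, indistinguishably. Since $(s,x)$ and the weak solution were arbitrary, $h$ is a universal decoupling field in the sense of Definition~\ref{def:FBSDE-universal-decoupling-field}. Theorem~\ref{thm:nonlinear-FK} then yields that $h$ is a viscosity solution.

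The main point needing care is whether the Itô formula really applies: $h$ has to solve a linear paracontrolled Kolmogorov equation with forcing $f\in C_TL^\infty$ and terminal datum in $\mathcal C^{2-\alpha}$, as Proposition~\ref{prop:ito-formula} requires. If $\bar h$ is only in $\mathcal C^{\gamma}$ with $\gamma<2-\alpha$, I would first observe that the proof of Proposition~\ref{prop:ito-formula} only uses the martingale property \eqref{eq:martingale-problem-process}. That property holds for test-function regularity by Definition~\ref{testfunctdef}, so the argument goes through in this setting as well.
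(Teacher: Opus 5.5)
Your proposal is correct and is essentially the paper's argument: Proposition~\ref{prop:converse-FK} together with fixed-$X$ uniqueness from Theorem~\ref{thm:FBSDE-wellposedness-bounded} makes $h$ a bounded continuous universal decoupling field, and Theorem~\ref{thm:nonlinear-FK} then gives the viscosity property. One small correction concerns the martingale property for terminal data in $\mathcal C^\gamma$ rather than $\mathcal C^{2-\alpha}$: it is not provided by Definition~\ref{testfunctdef}, which asserts no such property, but follows from Definition~\ref{controlledSDE} with $v=0$, or by approximating $\bar h$ and using the continuity of the solution map in Theorem~\ref{solmap} (a point the paper itself passes over silently).
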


Stability results for quadratic BSDEs allow us to treat singular HJB equations
with terminal conditions that are not sufficiently regular for the
paracontrolled theory of
Theorem~\ref{thm:quadratic-paracontrolled-global}. Although this regime is not
covered by the paracontrolled solution theory, the equation can still be
understood as admitting a unique viscosity solution through its FBSDE
representation. This leads to the following result.

\begin{theorem}[Quadratic case with continuous bounded terminal condition]
  \label{thm:FBSDE-quadratic-continuous-terminal}
  {Assume that Assumptions \assAMixed{}, \assB{}, \assP{}, and
  \assPContinuous{} hold.} Let
  $\bar h\in C_b(\mathbb R^d)$. Then there exists a deterministic function
  \[
    u\in C_b([0,T]\times\mathbb R^d),
  \]
  independent of $(s,x)$ and of the chosen natural-filtration rough weak
  solution, such that, for every $(s,x)$ and every corresponding solution
  $(Y^{s,x},Z^{s,x})$,
  \[
    Y_t^{s,x}=u(t,X_t^{s,x}),\qquad s\leq t\leq T,
    \quad\mathbb P^{s,x}\text{-a.s.}
  \]
  In particular, $u(T,\cdot)=\bar h$. Moreover, if
  $\bar h\in\operatorname{BUC}(\mathbb R^d)$, then
  $u\in\operatorname{BUC}([0,T]\times\mathbb R^d)$.
  In either case, $u$ is the unique bounded viscosity solution to the singular HJB
  equation \eqref{eq:section4-singular-HJB}.
\end{theorem}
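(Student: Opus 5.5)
We approximate $\bar h$ monotonically by smooth data, identify the approximating value fields with paracontrolled solutions, and pass to the limit with quadratic BSDE comparison and stability. Choose smooth bounded $\bar h_n^\pm\in\mathcal C^\gamma$ with $\bar h_n^-\nearrow\bar h$, $\bar h_n^+\searrow\bar h$ locally uniformly, and $\sup_n\|\bar h_n^\pm\|_{L^\infty}\le\|\bar h\|_{L^\infty}+1$. For instance, take inf/sup-convolutions followed by mollification with $\pm 1/n$ shifts; for BUC data the convergence is uniform, exactly as in Proposition~\ref{prop:control-bounded-terminal}.

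\textbf{Step 1: the regular case.} For each such terminal condition, Theorem~\ref{thm:quadratic-paracontrolled-global} (which is where \AssumptionP{} and \AssumptionPContinuous{} enter, via the Lipschitz bounds of Corollary~\ref{cor:singular-viscosity-lipschitz-estimate}) gives a global bounded paracontrolled solution $u_n^\pm$. By Proposition~\ref{prop:converse-FK}, for every $(s,x)$ and every natural-filtration rough weak solution, $Y^{s,x,n,\pm}_t=u_n^\pm(t,X^{s,x}_t)$. Thus $u_n^\pm$ is a continuous universal decoupling field.

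\textbf{Step 2: monotonicity and a continuous limit.} Fix $(s,x)$ and a forward solution. The comparison theorem for quadratic BSDEs (\cite[Theorem~7.3.1]{zhang_backward_2017}, valid on this filtration) gives
\[
Y^{s,x,n,-}\le Y^{s,x,n+1,-}\le Y^{s,x}\le Y^{s,x,n+1,+}\le Y^{s,x,n,+}.
\]
Evaluating at $t=s$ shows that $u_n^-$ is nondecreasing, $u_n^+$ is nonincreasing, and $u_n^-\le u\le u_n^+$, where $u(s,x)=Y^{s,x}_s$ is the field from Theorem~\ref{thm:FBSDE-wellposedness-bounded}. By Theorem~\ref{thm:quadratic-BSDE-apriori-estimates} all of these are bounded uniformly in $n$. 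The monotone stability theorem for quadratic BSDEs (Kobylanski; \cite[Chapter~7]{zhang_backward_2017}) then gives $Y^{s,x,n,\pm}\to Y^{s,x}$ in $\mathbb S^2$, since $\bar h_n^\pm(X_T)\to\bar h(X_T)$ boundedly and a.s. Hence $u_n^\pm\to u$ pointwise, and $Y^{s,x}_t=\lim_n u_n^\pm(t,X_t^{s,x})=u(t,X_t)$ for all $t$, a.s. (take rational $t$, then use continuity of $Y$ together with the sandwich). So $u$ is a universal decoupling field, independent of the basis, and $u(T,\cdot)=\bar h$.

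\textbf{Step 3: continuity (the main obstacle).} Pointwise monotone limits are only semicontinuous: $u=\sup_n u_n^-$ is LSC and $u=\inf_n u_n^+$ is USC, so $u$ is continuous. For BUC data, compactness in the sandwich is uniform. We use the linearization-plus-BMO-Girsanov argument from the proof of Theorem~\ref{thm:nonlinear-FK}. The difference $\delta Y=Y^{n,+}-Y^{n,-}$ solves a linear BSDE with bounded $a_r$ and BMO-bounded $c_r$, which gives $\|u_n^+-u_n^-\|_\infty\le e^{CT}\|\bar h_n^+-\bar h_n^-\|_\infty\to0$. In fact the bound $\delta Y_s\le e^{CT}\mathbb E^{\mathbb Q}[\delta\xi]$ with $\delta\xi\ge0$ suffices. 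So $u$ is a uniform limit of continuous functions, hence in $\mathrm{BUC}$, because each $u_n^\pm$ is uniformly continuous by the paracontrolled H\"older bounds. For merely continuous bounded $\bar h$, the semicontinuity argument already gives $u\in C_b$; the delicate part is the terminal behaviour, which is settled by $u(T,\cdot)=\bar h$ combined with the two-sided sandwich and the continuity of $u_n^\pm$ up to $t=T$.

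\textbf{Step 4: viscosity property and uniqueness.} Since $u$ is a bounded continuous universal decoupling field and $H$ is jointly continuous under \AssumptionP{}, Theorem~\ref{thm:nonlinear-FK} shows that $u$ is a viscosity solution. For uniqueness, let $w$ be any bounded viscosity solution with $w(T,\cdot)=\bar h$. Proposition~\ref{smoothcomparison} applies to each paracontrolled $u_n^\pm$, since \AssumptionP{} gives its local Lipschitz hypothesis, and yields $u_n^-\le w\le u_n^+$. Letting $n\to\infty$ gives $w=u$.
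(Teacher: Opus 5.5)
Your proposal is correct and follows essentially the same route as the paper: monotone smooth approximations are identified with paracontrolled decoupling fields via Theorem~\ref{thm:quadratic-paracontrolled-global} and Proposition~\ref{prop:converse-FK}, then comparison and quadratic BSDE stability give monotone pointwise convergence, semicontinuity gives continuity, Theorem~\ref{thm:nonlinear-FK} gives the viscosity property, and the sandwich from Proposition~\ref{smoothcomparison} gives uniqueness. The only deviation is in the BUC step: you bound $\|u_n^+-u_n^-\|_\infty\le e^{CT}\|\bar h_n^+-\bar h_n^-\|_\infty$ by linearizing and using the BMO--Girsanov representation (as in the proof of Theorem~\ref{thm:nonlinear-FK}), whereas the paper compares $Y^{s,x,n,\pm}$ directly with the shifted processes $Y^{s,x}\pm\delta_n^\pm e^{L_y(T-\cdot)}$ through the quadratic comparison theorem, and both yield the same uniform estimate.
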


\begin{proof}
  As in the proof of Proposition~\ref{prop:control-bounded-terminal}, choose bounded smooth functions
  $(\bar h_n^-)_{n\in\mathbb N}$ and
  $(\bar h_n^+)_{n\in\mathbb N}$ such that
  \[
    \bar h_1^-\leq\cdots\leq\bar h_n^-\leq\bar h
    \leq\bar h_n^+\leq\cdots\leq\bar h_1^+,
  \]
  and
  \[
    \bar h_n^-(x)\uparrow\bar h(x),\qquad
    \bar h_n^+(x)\downarrow\bar h(x),
    \qquad x\in\mathbb R^d.
  \]
  The approximations may be chosen to converge locally uniformly; no global
  uniform approximation of the bounded continuous function $\bar h$ is
  needed.

  Let $(Y^{s,x,n,\pm},Z^{s,x,n,\pm})$ denote the BSDE solutions with terminal
  conditions $\bar h_n^\pm(X_T^{s,x})$, and set
  \[
    u_n^\pm(t,x):=Y_t^{t,x,n,\pm}.
  \]
  Theorem~\ref{thm:quadratic-paracontrolled-global}, followed by
  Proposition~\ref{prop:converse-FK}, shows that each $u_n^\pm$ is a
  continuous deterministic function, independent of the chosen
  natural-filtration rough weak solution. The comparison theorem for
  quadratic BSDEs gives
  \[
    u_n^-\leq u_{n+1}^-\leq u\leq u_{n+1}^+\leq u_n^+.
  \]

  Fix $(t,x)$ and a natural-filtration rough weak solution starting from
  $(t,x)$. Since
  \[
    \bar h_n^\pm(X_T^{t,x})\longrightarrow\bar h(X_T^{t,x})
    \quad\mathbb P^{t,x}\text{-a.s.},
  \]
  and these terminal variables are uniformly bounded, an application of
  {\cite[Theorem~7.3.4]{zhang_backward_2017}} with the generator fixed shows
  that
  \[
    (Y^{t,x,n,\pm},Z^{t,x,n,\pm})\longrightarrow
    (Y^{t,x},Z^{t,x})
  \]
  in $\mathbb S^p\times\mathbb H^p$ for every $p\geq1$. In particular,
  \[
    u_n^-(t,x)\uparrow u(t,x),\qquad
    u_n^+(t,x)\downarrow u(t,x).
  \]
  Thus $u$, being an increasing pointwise limit of continuous functions, is
  lower semicontinuous. At the same time, as a decreasing pointwise limit of
  continuous functions, it is upper semicontinuous. Hence $u$ is continuous;
  its boundedness follows from
  Theorem~\ref{thm:FBSDE-wellposedness-bounded}.

  If $\bar h\in\operatorname{BUC}(\mathbb R^d)$, the approximations above
  can be chosen such that
  $\|\bar h_n^\pm-\bar h\|_\infty\to0$. Let $L_y$ be the Lipschitz constant
  of $H$ in $y$. Fix $(s,x)$, one of the signs $\pm$, and set
  \[
    \delta_n^\pm:=\|\bar h_n^\pm-\bar h\|_\infty,\qquad
    q_r:=\delta_n^\pm e^{L_y(T-r)},\qquad r\in[s,T].
  \]
  Write
  $\xi_n^\pm:=\bar h_n^\pm(X_T^{s,x})$ and
  $\xi:=\bar h(X_T^{s,x})$. Then
  $\xi_n^\pm\leq\xi+\delta_n^\pm$. Moreover,
  $(Y^{s,x}+q,Z^{s,x})$ solves the BSDE with terminal condition
  $\xi+\delta_n^\pm$ and generator
  \[
    \widetilde H_r(y,z)
    :=H(r,X_r^{s,x},y-q_r,z)+L_yq_r.
  \]
  The Lipschitz property in $y$ gives
  $\widetilde H_r(y,z)\geq H(r,X_r^{s,x},y,z)$. Hence the quadratic
  comparison theorem
  {\cite[Theorem~7.3.1]{zhang_backward_2017}} yields
  \[
    Y_r^{s,x,n,\pm}\leq Y_r^{s,x}+q_r.
  \]
  Similarly, $\xi-\delta_n^\pm\leq\xi_n^\pm$, and
  $(Y^{s,x}-q,Z^{s,x})$ solves the BSDE with terminal condition
  $\xi-\delta_n^\pm$ and generator
  \[
    \widehat H_r(y,z)
    :=H(r,X_r^{s,x},y+q_r,z)-L_yq_r
    \leq H(r,X_r^{s,x},y,z).
  \]
  Another application of comparison gives
  \[
    Y_r^{s,x}-q_r\leq Y_r^{s,x,n,\pm}.
  \]
  Taking $r=s$ and then the supremum over $(s,x)$ therefore gives
  \[
    \|u_n^\pm-u\|_\infty
    \leq e^{L_yT}\|\bar h_n^\pm-\bar h\|_\infty
    \longrightarrow0.
  \]
  Each $u_n^\pm$ is uniformly continuous by
  Theorem~\ref{thm:quadratic-paracontrolled-global}. Hence their
  uniform limit $u$ is uniformly continuous.

  Finally, fix $(s,x)$ and a corresponding forward rough weak solution.
  Comparison and the Markovian representations of the approximating BSDEs
  yield, for every $t\in[s,T]$,
  \[
    u_n^-(t,X_t^{s,x})\leq Y_t^{s,x}\leq
    u_n^+(t,X_t^{s,x}),\qquad\mathbb P^{s,x}\text{-a.s.}
  \]
  Letting $n\to\infty$ gives
  $Y_t^{s,x}=u(t,X_t^{s,x})$ for every fixed $t$. Taking a common null set
  over rational times and using the continuity of $Y^{s,x}$, $u$ and
  $X^{s,x}$ proves the identity simultaneously for all $t\in[s,T]$.
  Taking $s=T$ gives $u(T,\cdot)=\bar h$.

  Theorem~\ref{thm:nonlinear-FK} now shows that $u$ is a viscosity solution
  to \eqref{eq:section4-singular-HJB}. Uniqueness follows by the same argument
  as in Proposition~\ref{prop:control-bounded-terminal}. Indeed, if $\widetilde u$ is any other bounded viscosity
  solution with terminal condition $\bar h$,
  Proposition~\ref{smoothcomparison}, applied to the paracontrolled
  solutions $u_n^-$ and $u_n^+$, gives
  \[
    u_n^-\leq\widetilde u\leq u_n^+.
  \]
  Letting $n\to\infty$ and using the pointwise monotone convergence of
  $u_n^\pm$ to $u$ yields $\widetilde u=u$.
\end{proof}

\appendix
\section{Appendix}

\subsection{The linear equation}

Let us give some heuristics on paracontrolled solutions; a reader interested in a more technical proof can consult the next Section. For an enhanced drift $b$, we want to make sense of
\begin{equation}
	\left( \partial_t + \frac{1}{2} \partial_{x x} + b \partial_x \right)
	\varphi = g, \quad \varphi (T, \cdummy) = \overline{\varphi} (\cdummy)
	\label{appendixpde},
\end{equation}
where for simplicity we let $d = 1$ (the higher dimensional case being
similar). Using the same notation as in Definition \ref{enhanceddrift} for the
Duhamel operator, the solution $\varphi$ formally satisfies
\begin{equation}
	\varphi = e^{(T - t) \frac{\Delta}{2}} \overline{\varphi} + J^T (b
	\cdummy \partial_x \varphi - g) = e^{(T - t) \frac{\Delta}{2}}
	\overline{\varphi} + J^T (b \para \partial_x \varphi + b \odot
	\partial_x \varphi + b \arap \partial_x \varphi - g) . \label{fidecomp}
\end{equation}
By power counting, the expected regularity (in space) of $\varphi$ should be
$\mathcal{C}^{2 - \alpha}$, so the resonant term $b \odot \partial_x \varphi$
is in principle not well defined, since $- \alpha + (1 - \alpha) = 1 - 2
	\alpha \leqslant 0$ and we cannot apply Proposition \ref{paraestimates}.

However, if sense could be made of it, the worst term on the right-hand side
of \eqref{fidecomp} would be the one coming from $b \arap \partial_x \varphi
	\in \mathcal{C}^{- \alpha}$, i.e.
\[ \varphi = J^T (b \arap \partial_x \varphi) + \varphi_1, \quad
	\tmop{with} \varphi_1 \in \mathcal{C} ^{3 - 2 \alpha} . \]
Using that $J^T$ ``commutes'' with the paraproduct (up to terms of
better regularity, see Lemma 3.4 in {\cite{cannizzaro_multidimensional_2018}}), we can write
\begin{equation}
	\begin{array}{lll}
		\varphi & = & J^T (b) \arap \partial_x \varphi + \{
		(J^T (b \arap \partial_x \varphi) - J^T (b) \arap
		\partial_x \varphi) + \varphi_1 \}                                         \\
		        & = & J^T (b) \arap \partial_x \varphi + \varphi_2, \quad
		\tmop{with} \varphi_2 \in \mathcal{C}^{3 - 2 \alpha} .
	\end{array} \label{paraanzatz}
\end{equation}
If $\varphi$ is assumed to have this special structure (referred to usually as
\tmtextit{paracontrolled ansatz}), we can define the ill posed term $b \odot
	\partial_x \varphi$ as follows. First, notice that
\[ \begin{array}{lll}
		b \odot \partial_x \varphi & = & b \odot \partial_x (\partial_x \varphi
		\para J^T (b) + \varphi_2)                                                               \\
		                           & = & b \odot (\partial_{x x} \varphi \para J^T (b) +
		\partial_x \varphi \para \partial_x J^T (b) + \partial_x
		\varphi_2)                                                                                         \\
		                           & = & b \odot (\partial_x \varphi \para \partial_x J^T (b)) +
		b \odot (\partial_{x x} \varphi \para J^T (b) + \partial_x
		\varphi_2),
	\end{array} \]
where in the last sum, the second term is well defined (and has regularity $2
	- 3 \alpha > 0$, since $\alpha \in (1 / 2, 2 / 3)$).

For the first term, i.e. $b \odot (\partial_x \varphi \para J^T
	(\partial_x b))$ ($\partial_x$ and $J^T$ commute as Fourier
multipliers for each fixed time), we require the following commutator
estimate, a proof of which can be consulted in Lemma 2.4 of
	{\cite{paradistrib}}.

\begin{proposition}
	\tmtextbf{(Commutator estimate)}\label{commutator} Let $\lambda, \beta,
		\gamma \in \mathbb{R}$ be such that $\lambda \in (0, 1)$, $\lambda + \beta +
		\gamma > 0$ and $\beta + \gamma < 0$. Then, for smooth $f, g$and $h$, the
	operator
	\[ C (f, g, h) = (f \para g) \odot h - f (g \odot h) \]
	satisfies
	\[ \| C (f, g, h) \|_{\mathcal{C}^{\lambda + \beta + \gamma}} \leqslant \| f
		\|_{\mathcal{C}^{\lambda}} \| g \|_{\mathcal{C}^{\beta}} \| h
		\|_{\mathcal{C}^{\gamma}},  \]
	and therefore admits a unique extension to a bounded trilinear operator on
	$\mathcal{C}^{\lambda} \times \mathcal{C}^{\beta} \times
		\mathcal{C}^{\gamma}$.
\end{proposition}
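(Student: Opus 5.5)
The plan is to prove the estimate first for smooth $f,g,h$ by a direct Littlewood--Paley computation; the extension then follows by density. Write
\[
C(f,g,h)=\sum_{j,k}\Bigl(\sum_{i}S_{i-1}f\,\Delta_i g\Bigr)\odot h - f\,(g\odot h).
\]
The natural move is to pull $f$ inside each block. Split
\[
(f\para g)\odot h=\sum_{|k-l|\le1}\Delta_k(f\para g)\,\Delta_l h .
\]
Using the spectral localisation of $S_{i-1}f\,\Delta_i g$ in an annulus of size $2^i$, only indices $|i-k|\le N$ contribute to $\Delta_k(f\para g)$. This gives
\[
C(f,g,h)=\sum_{|k-l|\le1}\sum_{|i-k|\le N}\Bigl(\Delta_k\bigl(S_{i-1}f\,\Delta_i g\bigr)-S_{i-1}f\,\Delta_k\Delta_i g\Bigr)\Delta_l h
\;+\;\sum_{|k-l|\le1}\sum_{i}\bigl(S_{i-1}f-f\bigr)\Delta_k\Delta_i g\,\Delta_l h .
\]
The first sum is a genuine commutator $[\Delta_k,S_{i-1}f]\Delta_i g$. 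The second is the error from replacing $S_{i-1}f$ by $f$.

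For the commutator term I would use the standard kernel bound. Write $\Delta_k u=K_k*u$ with $\int|y||K_k(y)|dy\lesssim 2^{-k}$. Because $\lambda\in(0,1)$, a $\lambda$-H\"older estimate on $S_{i-1}f$ gives
\[
\|[\Delta_k,S_{i-1}f]\Delta_i g\|_\infty\lesssim 2^{-k\lambda}\|f\|_{\lambda}\,\|\Delta_i g\|_\infty\lesssim 2^{-k(\lambda+\beta)}\|f\|_\lambda\|g\|_\beta .
\]
Multiplying by $\|\Delta_l h\|_\infty\lesssim 2^{-l\gamma}\|h\|_\gamma$, each summand at scale $k$ has size $2^{-k(\lambda+\beta+\gamma)}$ and Fourier support in a ball of radius $\sim2^k$. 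Since $\lambda+\beta+\gamma>0$, the standard lemma on sums of ball-supported pieces (Bahouri--Chemin--Danchin, Lemma 2.69-type) puts this sum in $\mathcal C^{\lambda+\beta+\gamma}$.

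For the second term I would use $\|S_{i-1}f-f\|_\infty\lesssim 2^{-i\lambda}\|f\|_\lambda$, which needs $\lambda>0$. Since $i\sim k\sim l$, the summand at scale $k$ is again $O(2^{-k(\lambda+\beta+\gamma)})$. The catch is that the factor $(S_{i-1}f-f)$ is not spectrally localised, so the ball-support lemma does not apply directly. To handle this, I would rewrite $f-S_{i-1}f=\sum_{m\ge i-1}\Delta_m f$. Terms with $m\lesssim k$ are localised in a ball of radius $\sim 2^k$. Terms with $m\gg k$ are localised in an annulus of size $2^m$ and have size $2^{-m\lambda}2^{-k(\beta+\gamma)}$. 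Regrouping these by the scale $m$ is where the hypothesis $\beta+\gamma<0$ enters: summing over $k\le m$ gives $\sum_{k\le m}2^{-k(\beta+\gamma)}\lesssim 2^{-m(\beta+\gamma)}$, which yields an annulus-localised series with coefficients $2^{-m(\lambda+\beta+\gamma)}$, again in $\mathcal C^{\lambda+\beta+\gamma}$.

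I expect this second term to be the main obstacle. Keeping track of the Fourier supports of $(f-S_{i-1}f)\,\Delta_k g\,\Delta_l h$ and regrouping the high-$m$ pieces is the step where both sign conditions on $\beta+\gamma$ and $\lambda+\beta+\gamma$ are used. Once the bound holds for smooth functions, trilinearity and the continuity of the bound give a unique bounded extension to the closure of smooth functions in the respective spaces. To reach all of $\mathcal C^\lambda\times\mathcal C^\beta\times\mathcal C^\gamma$, I would either lower the exponents by $\varepsilon$ and use density there, or observe that the same block formula converges absolutely for general distributions and agrees with the extension.
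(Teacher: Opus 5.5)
Your proposal is correct and is essentially the standard proof of Lemma~2.4 in \cite{paradistrib}, which the paper cites rather than reproves. It uses the same split into the genuine commutator $[\Delta_k,S_{i-1}f]\Delta_i g$ and the remainder built from $f-S_{i-1}f=\sum_{m\ge i-1}\Delta_m f$, and each of the conditions $\lambda\in(0,1)$, $\beta+\gamma<0$ and $\lambda+\beta+\gamma>0$ enters exactly where you place it.

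Two small remarks, neither of which is a gap. Separating $m\lesssim k$ from $m\gg k$ is unnecessary, since every piece with $m\ge i-1$ is supported in a ball of radius $\sim 2^m$, so you can regroup by $m$ directly. Your caveat that smooth functions are not dense in $\mathcal{C}^\lambda$, resolved by lowering the exponents or by absolute convergence of the block formula, is the right way to make the extension claim precise.
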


The hypothesis of Proposition \ref{commutator} are satisfied if we choose
\[ \left\{ \begin{array}{l}
		f = \partial_x \varphi, g = J^T (\partial_x b) \infixand h =
		b, \\
		\lambda = 1 - \alpha, \beta = 1 - \alpha \infixand \gamma = - \alpha .
	\end{array} \right. \]
Remembering Definition \ref{enhanceddrift}, we can understand the second term
in the commutator as
\[ '' \partial_x \varphi (J^T (\partial_x b) \odot b)'' \equiv
	\partial_x \varphi \cdummy B \in \mathcal{C}^{2 - 3 \alpha}, \]
and then define
\[ \begin{array}{lll}
		b \odot (\partial_x \varphi \para J^T (\partial_x b)) & \assign
		                                                                & C (\partial_x \varphi, J^T (\partial_x b), b) + \partial_x
		\varphi \cdummy B.
	\end{array} \]
Putting everything together,
\begin{equation}
	\begin{array}{lllll}
		\varphi = \partial_x \varphi \para J^T (b) + \varphi_2 &
		\Longrightarrow                                                  & b \cdummy \partial_x \varphi & : = & b \para \partial_x
		\varphi + b \arap \partial_x \varphi + b \odot \partial_x \varphi_2                                                                                                                      \\
		                                                                 &                              &     &                    & {{}+ b\odot\bigl(\partial_{xx}\varphi\para J^T(b)\bigr)}
		+ C (\partial_x \varphi, J^T (\partial_x b), b) +
		\partial_x \varphi \cdummy B,
	\end{array} \label{extproduct}
\end{equation}
We summarise the discussion in the following

\begin{proposition}
	\label{productprop}Consider the Banach space
	\[ \mathcal{D}_T^{\alpha} \assign \left\{ \varphi \in C_T \mathcal{C}^{2 -
			\alpha} : \exists \varphi_2 \in C_T \mathcal{C}^{3 - 2 \alpha} \quad
		\tmop{with} \quad \varphi = \partial_x \varphi \para J^T (b) +
		\varphi_2 \right\}, \]
	with the norm
	\[ \| \varphi \|_{\mathcal{D}_T^{\alpha}} \assign \| \varphi \|_{C_T
		\mathcal{C}^{2 - \alpha}} + \| \varphi_2 \|_{C_T \mathcal{C}^{3 - 2
			\alpha}} .  \]
	Then, there exists a linear map
	\[ b \cdummy  \partial_x : \mathcal{D}_T^{\alpha} \longrightarrow C_T
		\mathcal{C}^{- \alpha}, \]
	which coincides with the usual product when $b$ is smooth, and such that
	\[ \| b \cdummy \partial_x \varphi - \partial_x \varphi \para b \|_{C_T
		\mathcal{C}^{1 - 2 \alpha}} \leqslant (1 + \| (b, B)
		\|^2_{\mathcal{X}^{\alpha}}) \| \varphi \|_{\mathcal{D}^{\alpha}_T} . \]
	In particular,
	\[ \| b \cdummy \partial_x \varphi \|_{C_T \mathcal{C}^{- \alpha}} \leqslant
		(1 + \| (b, B) \|^2_{\mathcal{X}^{\alpha}}) \| \varphi
		\|_{\mathcal{D}^{\alpha}_T} . \]
	\begin{proof}
		Follows from \eqref{extproduct} (remember that the resonant product is the
		only ill-defined term), Propositions \ref{paraestimates} and
		\ref{commutator}, and Schauder estimates ({\cite{KPZreloaded}}, Lemma
		2.9).
	\end{proof}
\end{proposition}

At this point, one would like to set up a Picard iteration for the map $\Phi :
	\mathcal{D}^{\alpha}_T \rightarrow \mathcal{D}_T^{\alpha}$ given by
\[ \Phi (\varphi) = e^{(T - t) \frac{\Delta}{2}} \overline{\varphi} +
	J^T (b \cdummy \partial_x \varphi - g) . \]
If we had been dealing with the elliptic equation (replacing every instance of
$J^T$ by $(\Delta / 2)^{- 1}$) we could conclude in this way, but in
the parabolic case one last technical step is necessary; a close inspection
of ({\cite{cannizzaro_multidimensional_2018}}, Lemma 3.4) reveals that \eqref{paraanzatz} only holds as
long as $\partial_x \varphi$ has some space-time H{\"o}lder regularity.

To get around this problem, one can either work with ``parabolic''
paracontrolled spaces (as done in {\cite{cannizzaro_multidimensional_2018}}), or modify the paraproduct
so that it is better behaved with respect to the heat operator (as in
	{\cite{KPZreloaded}}). Once this detail is taken care of, the main idea
remains the same, and one can prove

\begin{theorem}
	$\left( \cite{cannizzaro_multidimensional_2018}, \tmop{Proposition} 3.9 \right)$ For any $4 / 3 <
		\gamma < \theta < 2 - \alpha$, there exists a Banach space $(\mathcal{D}_T,
		\| \cdummy \|_{\mathcal{D}_T})$ such that the following hold:
	\begin{enumerate}
		\item $\mathcal{D}_T \subset C_T \mathcal{C}^{\gamma} \cap
			      C^{\frac{\gamma}{2}} L^{\infty}$.

		\item There is linear operator $b \cdummy \partial_x : \mathcal{D}_T
			      \longrightarrow C_T \mathcal{C}^{- \alpha}$ satisfying
		      \[ \max \{ \| b \cdummy \partial_x \varphi \|_{C_T \mathcal{C}^{-
				      \alpha}}, \| b \cdummy \partial_x \varphi - \partial_x \varphi \para b
			      \|_{C_T \mathcal{C}^{2 \gamma - 3}} \} \leqslant (1 + \| (b, B)
			      \|^2_{\mathcal{X}^{\alpha}}) \| \varphi \|_{\mathcal{D}_T} . \]
		\item If $\overline{\varphi} \in \mathcal{C}^{\theta}$ and $g \in C_T
			      L^{\infty}$, the map $\Phi : \mathcal{D}_T \longrightarrow \mathcal{D}_T $
		      given by
		      \[ \Phi (\varphi) = e^{(T - t) \frac{\Delta}{2}} \overline{\varphi} +
			      J^T (b \cdummy \partial_x \varphi - g), \]
		      has a unique fixed point, which coincides with the classical solution of
		      \eqref{appendixpde} when $b$, $g$ and $\overline{\varphi}$ are smooth.
	\end{enumerate}
\end{theorem}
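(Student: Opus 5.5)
The plan is to work in a parabolic paracontrolled space, following \cite{cannizzaro_multidimensional_2018} and the modified-paraproduct approach of \cite{KPZreloaded}.

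\textbf{The space $\mathcal D_T$.} Let $\prec\!\!\prec$ denote the paraproduct in which the low-frequency factor is additionally smoothed in time at the parabolic scale $2^{-2j}$. We define $\mathcal D_T$ as the set of pairs $(\varphi,\varphi')$ such that
\[
\varphi'\in C_T\mathcal C^{\theta-1}\cap C_T^{(\theta-1)/2}L^\infty,\qquad
\varphi^\sharp:=\varphi-\varphi'\prec\!\!\prec J^T(b)\in C_T\mathcal C^{2\theta-1+\epsilon'}\cap C_T^{\theta/2}L^\infty,
\]
for a small $\epsilon'>0$. The norm is the sum of these seminorms and $\|\varphi\|_{C_T\mathcal C^\theta\cap C^{\theta/2}L^\infty}$. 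Item 1 then follows because $\theta>\gamma$, together with the embedding of the parabolic H\"older spaces.

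\textbf{The product.} Given $\varphi\in\mathcal D_T$ (with $\varphi'=\nabla\varphi$ for fixed points), we define $b\cdot\partial_x\varphi$ exactly as in \eqref{extproduct}. The paraproducts $b\para\partial_x\varphi$ and $b\arap\partial_x\varphi$ are controlled by Proposition~\ref{paraestimates}. We write $\partial_x\varphi=\varphi'\prec\!\!\prec\partial_xJ^T(b)+(\text{terms of regularity }2\theta-2)$. The resonant product with $b$ then splits into three pieces:
\begin{itemize}
\item the commutator $C(\varphi',\partial_xJ^T(b),b)$, bounded by Proposition~\ref{commutator} with $\lambda=\theta-1$, $\beta=1-\alpha$, $\gamma=-\alpha$ (the condition $\theta-2\alpha>0$ holds since $\theta>4/3>2\alpha$);
\item the term $\varphi'\cdot B$, which lies in $\mathcal C^{1-2\alpha}$;
\item a well-defined remainder $b\odot(\text{regularity }2\theta-2)$, which makes sense since $2\theta-2-\alpha>0$.
\end{itemize}
We must also control the difference between $\prec\!\!\prec$ and $\para$, which costs time regularity of $\varphi'$. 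All of this gives item 2, with the bound $(1+\|(b,B)\|_{\mathcal X^\alpha}^2)\|\varphi\|_{\mathcal D_T}$ and the remainder in $\mathcal C^{2\gamma-3}$. By continuity of all bilinear and trilinear operators, the map is continuous in $(b,B)$ and coincides with the classical product for smooth $b$.

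\textbf{Invariance and contraction.} Set $\Phi(\varphi)=P_{T-t}\overline\varphi+J^T(b\cdot\partial_x\varphi-g)$. We take the derivative $\Phi(\varphi)':=\partial_x\varphi$ and must show that
\[
\Phi(\varphi)-\partial_x\varphi\prec\!\!\prec J^T(b)
\]
has the required $\sharp$-regularity. Writing $b\cdot\partial_x\varphi=\partial_x\varphi\para b+\text{(better)}$, this reduces to the commutator estimate
\[
J^T(f\prec\!\!\prec b)-f\prec\!\!\prec J^T(b)\in C_T\mathcal C^{2\theta-1+\epsilon'},
\]
with a bound in terms of $\|f\|_{C_T\mathcal C^{\theta-1}}+\|f\|_{C^{(\theta-1)/2}_TL^\infty}$. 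We expect this estimate, and its time-regularity counterpart, to be the main obstacle. It is exactly where the modified paraproduct is needed, and I would prove it block by block, combining heat kernel bounds on each Littlewood--Paley annulus with the H\"older continuity of $f$ in time. The remaining pieces are handled by Schauder estimates for $J^T$ (\cite{KPZreloaded}, Lemma~2.9), which gain almost $2$ derivatives. The terminal term requires $\overline\varphi\in\mathcal C^\theta$, the only place where $\theta$ rather than $\gamma$ enters. Using the slack in exponents, each of these bounds carries a factor $\bar T^{\kappa}$ on an interval $[T-\bar T,T]$, so $\Phi$ is a contraction there for $\bar T$ small depending only on $\|(b,B)\|_{\mathcal X^\alpha}$.

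\textbf{Conclusion.} Because the equation is linear and $\bar T$ does not depend on the data, we iterate on $[T-2\bar T,T-\bar T]$ and so on, using as new terminal datum $\varphi(T-\bar T)\in\mathcal C^\theta$; the paracontrolled structure is preserved across the gluing time. This yields a unique fixed point on $[0,T]$, proving item 3. Finally, when $b$, $g$ and $\overline\varphi$ are smooth, the classical solution lies in $\mathcal D_T$ and satisfies the mild equation with the usual product. By uniqueness of the fixed point, it coincides with the paracontrolled solution.
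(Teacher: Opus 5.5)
Your overall route (time-smoothed paraproduct $\prec\!\!\prec$ as in \cite{KPZreloaded}, resonant term via Proposition~\ref{commutator} and $B$, commutator of $J^T$ with $\prec\!\!\prec$, short-time contraction and iteration) is one of the two routes the paper sketches before citing \cite{cannizzaro_multidimensional_2018}. However, $\Phi$ does not map the space $\mathcal D_T$ you define into itself when $\overline\varphi$ is only in $\mathcal C^\theta$.

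\textbf{Where invariance fails.} Since $J^T(b)(T)=0$, every element has $\varphi^\sharp(T)=\varphi(T)$. Moreover, $\Phi(\varphi)^\sharp$ contains the free term $e^{(T-t)\frac{\Delta}{2}}\overline\varphi$. Its $\mathcal C^{2\theta-1+\epsilon'}$-norm is only $\lesssim(T-t)^{-(\theta-1+\epsilon')/2}\|\overline\varphi\|_{\mathcal C^\theta}$, and in general it blows up, because $2\theta-1>5/3>2-\alpha>\theta$. So requiring $\varphi^\sharp\in C_T\mathcal C^{2\theta-1+\epsilon'}$ silently assumes $\overline\varphi\in\mathcal C^{2\theta-1+\epsilon'}$, which is more than even $\mathcal C^{2-\alpha}$. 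Your remark that the terminal term is the only place where $\theta$ enters overlooks exactly this contribution to the remainder.

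You cannot fix this by lowering the remainder regularity. The resonant product $b\odot\partial_x\varphi^\sharp$ needs $\varphi^\sharp\in\mathcal C^{1+\alpha+}$, whereas $\theta<2-\alpha<1+\alpha$. In fact, for such data $b\cdot\partial_x\varphi(t)$ in general degenerates as $t\to T$, because $(\theta-1)-\alpha<0$. So some weight at the terminal time is unavoidable.

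\textbf{The missing ingredient.} Measure the remainder by $\sup_t(T-t)^\eta\|\varphi^\sharp(t)\|_{\mathcal C^{2\theta-1+\epsilon'}}$ with $(\theta-1+\epsilon')/2\leq\eta<1$, and keep $\varphi$ and $\varphi'$ unweighted. This is what \cite{cannizzaro_multidimensional_2018} does, and what the paper's variant $\mathscr D^{a,\theta,\rho}_{\bar T,T,b}$ in the Appendix does with the weight $(T-t)^{(a-1)/2}$ on $u^\sharp\in\mathcal C^{2a-1}$.

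You then need weighted Schauder estimates. The weight enters only through $b\odot\partial_x\varphi^\sharp\in\mathcal C^{2\theta-2+\epsilon'-\alpha}$. For $\eta=(\theta-1+\epsilon')/2$:
\begin{itemize}
\item for the remainder, $\int_t^T(s-t)^{-(1+\alpha)/2}(T-s)^{-\eta}\,ds\lesssim(T-t)^{(1-\alpha)/2-\eta}$;
\item for the unweighted $C_T\mathcal C^\theta$ bound, $\int_t^T(s-t)^{-(2-\theta+\alpha-\epsilon')/2}(T-s)^{-\eta}\,ds\lesssim(T-t)^{(1-\alpha)/2}$.
\end{itemize}
So invariance and a small factor survive. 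Correspondingly, the product bound of item 2 can only hold with this weight near $t=T$, which is harmless inside $J^T$.

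\textbf{Recheck the contraction.} With the weight in place, the derivative component $\partial_x(\varphi_1-\varphi_2)$ of $\Phi(\varphi_1)-\Phi(\varphi_2)$ carries no factor $\bar T^\kappa$ by itself. Its smallness must come from interpolating $\varphi^\sharp$ between $C^{\theta/2}_TL^\infty$ and the weighted norm, and at level $\theta$ that gain is exactly zero. This is what the room $\gamma<\theta$ in the statement is for: build the space at an intermediate level below $\theta$, as the Appendix does with $a<\theta<\gamma$. Alternatively, contract $\Phi^2$ instead of $\Phi$.
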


It is also possible to see that the solution map is continuous with respect to
the input data (i.e. $b$, $g$ and $\overline{\varphi}$).

\begin{theorem}
	$\left( \cite{cannizzaro_multidimensional_2018}, \tmop{Theorem} 3.10 \right)$\label{solmap} Let $4 / 3
		< \gamma < \theta < 2 - \alpha$ and consider the map which assigns to each
	$\left( \overline{u}, f, \eta \right) \in \mathcal{C}^2(\mathbb R^d)
		\times C_T\mathcal{C}^2(\mathbb R^d)
		\times C_T\mathcal{C}^{\infty}(\mathbb R^d;\mathbb R^d)$ the
	(classical) solution $S_c
		\left( \overline{u}, f, \eta \right) = u \in C ^{1, 2} ([0, T] \times
		\mathbb{R}^d)$ of
	\[ \left( \partial_t + \frac{\Delta}{2} + \eta\cdot\nabla \right) u
		= f, \quad u (T, \cdummy) = \overline{u} . \]
	Then, there exists a locally Lipschitz continuous map $S_r :
		\mathcal{C}^{\theta}(\mathbb R^d) \times C_T L^{\infty}(\mathbb R^d)
		\times \mathcal{X}^{\alpha} \rightarrow C_T
		\mathcal{C}^{\gamma}(\mathbb R^d)$ such that, for every $\left(
		\overline{u}, f, \eta \right) \in \mathcal{C}^2(\mathbb R^d)
		\times C_T\mathcal{C}^2(\mathbb R^d)
		\times C_T\mathcal{C}^{\infty}(\mathbb R^d;\mathbb R^d)$,
	\[ S_c \left( \overline{u}, f, \eta \right) = S_r \left( \overline{u}, f,
		\left(\eta,\left(J^T(\partial_{x_j}\eta^i)\odot\eta^j\right)_{i,j=1}^d\right)
		\right) . \]
\end{theorem}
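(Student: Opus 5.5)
The plan is to construct $S_r$ as the fixed-point map of the paracontrolled Duhamel formulation. I would then show that this fixed point depends locally Lipschitz continuously on $(\overline{u},f,(\eta,B))$. Finally, I would check that for smooth data with the canonical enhancement it reproduces the classical solution.

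\textbf{Step 1: fixed point on a short interval.} Given $(\overline u,f)\in\mathcal C^\theta\times C_TL^\infty$ and $(b,B)\in\mathcal X^\alpha$, I work with pairs $(u,u')$, where $u'$ plays the role of $\nabla u$. Such a pair is admissible if $u=u'\para J^T(b)+u^\sharp$ (with the parabolic modification of the paraproduct, as in \cite{cannizzaro_multidimensional_2018} or \cite{KPZreloaded}), with $u'$ space–time Hölder and $u^\sharp$ of regularity $\theta+1-\alpha$ in the parabolic scale. On this space the product $b\cdot\nabla u$ is defined by \eqref{extproduct}. Combining Propositions \ref{paraestimates}, \ref{commutator} and \ref{productprop}, this product satisfies a bound that is bilinear in $\bigl((b,B),(u,u')\bigr)$. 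Schauder estimates for $J^T$, together with the commutator between $J^T$ and the paraproduct, show that the map
\[
\Phi(u)=e^{(T-t)\frac{\Delta}{2}}\overline u+J^T(b\cdot\nabla u-f)
\]
sends the paracontrolled space into itself. On $[T-\bar T,T]$ it gains a factor $\bar T^{\kappa}$ for some $\kappa>0$. Choosing $\bar T$ small in terms of $\|(b,B)\|_{\mathcal X^\alpha}$ makes $\Phi$ a contraction, so it has a unique fixed point there.

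\textbf{Step 2: global solution and a priori bound.} Because the equation is linear, the smallness condition on $\bar T$ depends only on $\|(b,B)\|_{\mathcal X^\alpha}$ and not on the data. I can therefore restart from $u(T-\bar T)\in\mathcal C^\theta$ and patch the local solutions finitely many times to cover $[0,T]$. This gives a unique global solution $S_r(\overline u,f,(b,B))$ with
\[
\|u\|_{C_T\mathcal C^\gamma}\le C\bigl(\|(b,B)\|_{\mathcal X^\alpha}\bigr)\bigl(\|\overline u\|_{\mathcal C^\theta}+\|f\|_{C_TL^\infty}\bigr).
\]

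\textbf{Step 3: local Lipschitz dependence on the data (main obstacle).} The difficulty is that the paracontrolled space itself depends on $b$. Two solutions $u$ and $\tilde u$, built from $(b,B)$ and $(\tilde b,\tilde B)$, live in different spaces. I would therefore measure their distance through the components:
\[
\|u'-\tilde u'\|+\|u^\sharp-\tilde u^\sharp\|,
\]
where each remainder is taken with respect to its own $J^T(b)$ or $J^T(\tilde b)$. The estimate then proceeds as follows.
\begin{enumerate}
\item Expand $b\cdot\nabla u-\tilde b\cdot\nabla\tilde u$ term by term in \eqref{extproduct}.
\item Each term splits into a piece bounded by the data difference $\|(b-\tilde b,B-\tilde B)\|_{\mathcal X^\alpha}$ times the norm of $u$, plus a piece bounded by $\|(\tilde b,\tilde B)\|_{\mathcal X^\alpha}$ times the distance between $u$ and $\tilde u$. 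This uses multilinearity of the paraproduct, resonant product and commutator estimates.
\item On the short interval $[T-\bar T,T]$, the same $\bar T^{\kappa}$ gain lets me absorb the second piece into the left-hand side. The result is a local bound
\[
\|u-\tilde u\|\lesssim \|\overline u-\overline{\tilde u}\|_{\mathcal C^\theta}+\|f-\tilde f\|_{C_TL^\infty}+\|(b,B)-(\tilde b,\tilde B)\|_{\mathcal X^\alpha},
\]
with a constant that is polynomial in the norms of the data on bounded sets.
\item Iterating over the finitely many subintervals propagates this bound to $[0,T]$. The embedding of the paracontrolled space into $C_T\mathcal C^\gamma$ then yields local Lipschitz continuity of $S_r$ into $C_T\mathcal C^\gamma$.
\end{enumerate}

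\textbf{Step 4: consistency with classical solutions.} Take $\eta$ smooth with canonical enhancement $\bigl(J^T(\partial_{x_j}\eta^i)\odot\eta^j\bigr)_{i,j}$. In that case every term of \eqref{extproduct} is a genuine product, and the commutator identity holds exactly, so the renormalised product $\eta\cdot\nabla u$ is the usual one. The classical solution $u\in C^{1,2}$ is admissible with $u'=\nabla u$: its remainder $u-\nabla u\para J^T(\eta)$ is as regular as needed, because $\eta$ is smooth. It is also a fixed point of $\Phi$. By uniqueness of the fixed point from Steps 1–2, $S_c=S_r$ on smooth data.
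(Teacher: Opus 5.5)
\textbf{Verdict: the strategy is right, but Step~1 as written has a genuine (if standard) gap at the terminal time.} Your architecture is the argument of Cannizzaro--Chouk that the paper cites without proof and sketches in its appendix: a fixed point for $\Phi$ in a paracontrolled space built on the time-modified paraproduct with $J^T(b)$, short-time contraction, patching, componentwise difference estimates for $(u',u^\sharp)$, and identification on smooth data. The problem is that the space you fix in Step~1 cannot contain the solution, so $\Phi$ is not a self-map there. Since $J^T(b)(T)=0$, the paraproduct term vanishes at $t=T$, and hence $u^\sharp(T)=u(T)=\overline u$, which is only in $\mathcal{C}^\theta$. Because $\theta<2-\alpha<1+\alpha$ (using $\alpha>1/2$), this falls short of the regularity $\theta+1-\alpha$ you require. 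It is also below the threshold $1+\alpha$ needed to define $b\odot\nabla u^\sharp$ at all. So no unweighted remainder space works for general $\overline u\in\mathcal{C}^\theta$. Concretely, the free evolution $e^{(T-t)\Delta/2}\overline u$ reaches $\mathcal{C}^{\theta+1-\alpha}$ only at the cost of a factor $(T-t)^{-(1-\alpha)/2}$.

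\textbf{The fix: a blow-up weight at the terminal time.} Measure the remainder by $\sup_t (T-t)^{\eta}\|u^\sharp(t)\|_{\mathcal{C}^{\theta+1-\alpha}}$ with $(1-\alpha)/2\le\eta<1$. This is the role of the norm $\|u^\sharp\|_{C_{a-1,I_{\bar T}}\mathcal C^{2a-1}}$ in the paper's definition of $\mathscr D^{a,\theta,\rho}_{\bar T,T,b}$; equivalently, you can split off $e^{(T-t)\Delta/2}\overline u$ and estimate it separately. With this weight the rest of your argument goes through:
\begin{itemize}
\item The resonant term, and hence $b\cdot\nabla u$, blows up only like $(T-t)^{-\eta}$, which is integrable. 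The Schauder estimate for $J^T$ therefore still yields the factor $\bar T^{\kappa}$.
\item The weighted norm of $e^{(T-t)\Delta/2}\overline u$ is bounded by $\|\overline u\|_{\mathcal{C}^\theta}$. So local Lipschitz dependence on terminal data in $\mathcal{C}^\theta$ survives.
\item At interior times $u$ is again in $\mathcal{C}^\theta$, with a norm depending on $\bar T^{-\eta}$. So your restart in Step~2 and the propagation in Step~3 work unchanged.
\item Step~4 is unaffected: the weighted space only enlarges the unweighted one, so the classical solution still lies in it and uniqueness of the fixed point still applies.
\end{itemize}

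\textbf{Minor point.} The product bound is linear in $(u,u')$ but quadratic in $b$, not bilinear. The quadratic dependence enters through the commutator term and through $b\odot(\nabla u'\para J^T(b))$. This is harmless for local Lipschitz continuity.
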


\subsection{Paracontrolled well-posedness for the singular HJB equation}
\label{app:paracontrolled-wellposedness}

We collect here the paracontrolled well-posedness results for the singular HJB
equation with regular terminal condition under different growth assumptions
on the generator. We use a slight variant of the paracontrolled space
introduced in {\cite{cannizzaro_multidimensional_2018}}.

Fix $\bar T\in(0,T]$, set $I_{\bar T}:=[T-\bar T,T]$, and let
$4/3<a<\theta<\gamma<2-\alpha$ and
$\rho\in((\theta-1)/2,(\gamma-1)/2)$. We denote by
$\mathscr D_{\bar T,T,b}^{a,\theta,\rho}$ the space of pairs $(u,u')$ such that
\[
  u\in C_{I_{\bar T}}\mathcal C^\theta,
  \qquad
  \nabla u\in C^\rho_{I_{\bar T}}L^\infty,
  \qquad
  u'\in C_{I_{\bar T}}\mathcal C^{a-1},
\]
and
\[
  u^\sharp:=u-u'\para J^T(b)
  \in C_{a-1,I_{\bar T}}\mathcal C^{2a-1},
\]
where
\[
  \|u^\sharp\|_{C_{a-1,I_{\bar T}}\mathcal C^{2a-1}}
  :=\sup_{t\in I_{\bar T}}(T-t)^{(a-1)/2}
    \|u^\sharp(t)\|_{\mathcal C^{2a-1}}.
\]
We equip this space with the norm
\begin{align*}
  \|(u,u')\|_{\mathscr D_{\bar T,T,b}^{a,\theta,\rho}}:={}&
  \|u\|_{C_{I_{\bar T}}\mathcal C^\theta}
  +\|\nabla u\|_{C^\rho_{I_{\bar T}}L^\infty}
  +\|u'\|_{C_{I_{\bar T}}\mathcal C^{a-1}}
  +\|u^\sharp\|_{C_{a-1,I_{\bar T}}\mathcal C^{2a-1}}.
\end{align*}
This differs slightly from the definition in
{\cite{cannizzaro_multidimensional_2018}} only in that we do not subtract the
contribution of the forcing term in the definition of $u^\sharp$, since the
forcing terms considered below are not singular.

\begin{proposition}[Globally Lipschitz generators]
  \label{prop:global-lipschitz-paracontrolled}
  Let $4/3<a<\theta<\gamma<2-\alpha$,
  $\rho\in((\theta-1)/2,(\gamma-1)/2)$ and
  $\bar h\in\mathcal C^\gamma$.
  {Suppose Assumptions \assBLip{} and \assPContinuous{} hold.}
  Then the singular HJB equation
  \eqref{eq:section4-singular-HJB} has a unique global paracontrolled solution
  $(u,u')\in\mathscr D_{T,T,b}^{a,\theta,\rho}$.
\end{proposition}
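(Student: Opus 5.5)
The plan is a Picard iteration in $\mathscr D_{\bar T,T,b}^{a,\theta,\rho}$ on a short interval $I_{\bar T}$, followed by concatenation over successive intervals. Given $(w,w')\in\mathscr D_{\bar T,T,b}^{a,\theta,\rho}$, set
\[
f_w(t,x):=-H\bigl(t,x,w(t,x),\nabla w(t,x)\bigr),
\]
and let $\Gamma(w)$ be the paracontrolled solution of the linear equation
\[
\left(\partial_t+\tfrac12\Delta+b\cdot\nabla\right)\Gamma(w)=f_w,\qquad \Gamma(w)(T)=\bar h,
\]
from the linear theory of the previous subsection (Cannizzaro et al., Proposition~3.9).

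The first step is to check that this forcing is admissible. By \AssumptionBLip{} we have $|f_w|\le C(1+|w|+|\nabla w|)$. Since $w$ and $\nabla w$ are bounded and time-continuous, \AssumptionPContinuous{} gives $f_w\in C_{I_{\bar T}}L^\infty$ (uniform continuity in $t$ locally in $(y,p)$). Moreover
\[
\|f_w-f_{\tilde w}\|_{C_{I_{\bar T}}L^\infty}\le C\bigl(\|w-\tilde w\|_{\infty}+\|\nabla w-\nabla\tilde w\|_\infty\bigr)\le C\|w-\tilde w\|_{\mathscr D}.
\]

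The second step is the linear a priori estimate (3.12) quoted in the proof of Proposition~\ref{prop:FBSDE-lipschitz-regularity}. Because $a<\theta<\gamma$ and $\rho<(\gamma-1)/2$, there is room to gain a factor $\bar T^{\kappa}$ with $\kappa>0$ in front of the forcing. This follows from Schauder estimates in the slightly worse norms combined with interpolation, as in Lemma~2.11 of \cite{KPZreloaded}. Applied to the difference $\Gamma(w)-\Gamma(\tilde w)$, which solves the linear equation with zero terminal data and forcing $f_w-f_{\tilde w}$, it gives
\[
\|\Gamma(w)-\Gamma(\tilde w)\|_{\mathscr D}\le C\bar T^\kappa\|w-\tilde w\|_{\mathscr D}.
\]
Here $C$ depends on $\|(b,B)\|_{\mathcal X^\alpha}$ and the Lipschitz constant, but not on $\bar h$. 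Similarly, $\Gamma$ maps a ball of radius $2(\|\bar h\|_{\mathcal C^\gamma}+c_b)$ into itself once $\bar T$ is small. Banach's fixed point theorem then gives a unique local solution on $I_{\bar T}$.

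The third step globalizes the solution. The key point is that $\bar T$ depends only on the Lipschitz constant and on $b$, not on the size of the terminal datum, since the nonlinearity is globally Lipschitz. One restarts on $[T-2\bar T,T-\bar T]$ with terminal datum $u(T-\bar T)$, which lies in $\mathcal C^{\theta}$. To avoid losing regularity at each restart, I would work with $\theta$ replaced by slightly smaller exponents at each restart. Alternatively, I would note that $u(T-\bar T)$ actually belongs to $\mathcal C^{\gamma'}$ for any $\gamma'<2-\alpha$, because the paracontrolled solution with bounded forcing has regularity $(2-\alpha)-$ away from the terminal time. Finitely many steps cover $[0,T]$. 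The pieces are glued using uniqueness of the linear paracontrolled solution on each subinterval, and this yields an element of $\mathscr D_{T,T,b}^{a,\theta,\rho}$. Uniqueness on $[0,T]$ follows from the same contraction estimate applied interval by interval.

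The main obstacle is this restart. The weighted norm of $u^\sharp$ has the blow-up weight $(T-t)^{(a-1)/2}$, which is tied to the final time $T$. A restart therefore needs either a version of the space with the terminal time shifted to $T-k\bar T$, or a check that on $[0,T-\bar T]$ the weight is harmless. I would handle this by proving the linear estimate for terminal time $T'$ in place of $T$, using the shifted operator $J^{T'}$. I would then show that the decomposition with $J^{T}(b)$ differs from the one with $J^{T'}(b)$ only by the term $u'\para(J^T(b)-J^{T'}(b))$, and that $J^T(b)-J^{T'}(b)$ is smooth away from $T'$.
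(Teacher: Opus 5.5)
Your local step is the paper's argument. You run a Banach fixed point for $w\mapsto\mathcal S_{\bar h}\bigl(-H(\cdot,\cdot,w,\nabla w)\bigr)$ on $I_{\bar T}$, with contraction constant $C\bar T^\kappa(1+\|b\|_{\mathcal X^\alpha})^2L_H$ independent of $\bar h$. The ball is superfluous, since the contraction is global, but it does no harm. You then iterate with the same $\bar T$. The paper does not carry out the restart itself; it refers to the proof of \cite[Theorem~3.8]{kremp_multidimensional_2022}. The restart mechanism you propose, however, does not work as stated, for two reasons.

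\emph{The missing enhancement.} Rerunning the linear theory on $[T'-\bar T,T']$ with the ansatz $u=u'\para J^{T'}(b)+u^\sharp$ requires the enhancement $J^{T'}(\partial_{x_j}b^i)\odot b^j$. Assumption~A only supplies $B^{i,j}=J^{T}(\partial_{x_j}b^i)\odot b^j$. By your identity $J^T(f)(t)-J^{T'}(f)(t)=e^{(T'-t)\frac{\Delta}{2}}J^T(f)(T')$, the two differ by $\bigl(e^{(T'-t)\frac{\Delta}{2}}J^T(\partial_{x_j}b^i)(T')\bigr)\odot b^j(t)$. This is a resonant product of (the heat flow of) a $\mathcal C^{1-\alpha}$ distribution with $b\in\mathcal C^{-\alpha}$, and $1-2\alpha<0$. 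The assumptions control it only up to a blow-up of order $(T'-t)^{-(2\alpha-1+\varepsilon)/2}$, so Proposition~3.9 of \cite{cannizzaro_multidimensional_2018} cannot simply be quoted.

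\emph{The gluing step.} The observation that $J^T(b)-J^{T'}(b)$ is smooth away from $T'$ is not enough. The space $\mathscr D_{T,T,b}^{a,\theta,\rho}$ weights the remainder only at $T$, so $u-u'\para J^T(b)$ must stay bounded in $\mathcal C^{2a-1}$ as $t\uparrow T'$. Each piece of your decomposition blows up there. At $t=T'$, $u-u'\para J^{T'}(b)$ equals $u(T')$, which is in general not in $\mathcal C^{2a-1}$. The term $u'\para e^{(T'-t)\frac{\Delta}{2}}J^T(b)(T')$ is in general of size $(T'-t)^{-(2a-3+\alpha)/2}$ in $\mathcal C^{2a-1}$, because $J^T(b)(T')$ is only in $\mathcal C^{2-\alpha}$ and $2a-1>2-\alpha$.

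\emph{The fix.} Both problems disappear if you do two things.
\begin{enumerate}
\item Keep the ansatz $u=u'\para J^T(b)+u^\sharp$ on every subinterval. Only the Duhamel operator becomes $J^{T'}$, so only $B$ enters the resonant product.
\item Restart from the paracontrolled datum $u(T')=u'(T')\para J^T(b)(T')+u^\sharp(T')$ produced by the previous step, not from $u(T')$ as a mere $\mathcal C^{\gamma'}$ function.
\end{enumerate}
The dangerous terms near $T'$ then combine into three pieces: $e^{(T'-t)\frac{\Delta}{2}}u^\sharp(T')$, a heat-semigroup/paraproduct commutator, and $\bigl(u'(T')-u'(t)\bigr)\para e^{(T'-t)\frac{\Delta}{2}}J^T(b)(T')$. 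All three are bounded in $\mathcal C^{2a-1}$, because $a<2-\alpha$ and $\rho>(\theta-1)/2>(2a-3+\alpha)/2$.
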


\begin{proof}
  For $F\in C_{I_{\bar T}}L^\infty$, denote by
  $\mathcal S_{\bar h}(F)$ the unique solution of the linear paracontrolled
  equation
  \begin{equation}
    \left(\partial_t+\frac12\Delta+b\cdot\nabla\right)\widetilde u=F,
    \qquad
    \widetilde u(T,\cdot)=\bar h.
    \label{eq:linear-paracontrolled-equation}
  \end{equation}
  The well-posedness of this equation is given by
  {\cite[Theorem~3.10]{cannizzaro_multidimensional_2018}} in the
  paracontrolled space defined above.

  Proposition~3.9 of
  {\cite{cannizzaro_multidimensional_2018}}, applied to the solution
  $(\widetilde u,\widetilde u')
  =(\mathcal S_{\bar h}(F),\nabla\mathcal S_{\bar h}(F))$ of
  \eqref{eq:linear-paracontrolled-equation}, gives some $\kappa>0$,
  depending only on $a,\theta,\rho$ and $\gamma$, such that, for every
  $F,\widetilde F\in C_{I_{\bar T}}L^\infty$, it holds
  \begin{align}
    \|(\mathcal S_{\bar h}(F)-\mathcal S_{\bar h}(\widetilde F),\nabla\mathcal S_{\bar h}(F)-\nabla\mathcal S_{\bar h}(\widetilde F))\|_{\mathscr D_{\bar T,T,b}^{a,\theta,\rho}}&\leq C\bar T^\kappa(1+\|b\|_{\mathcal X^\alpha})^2\|F-\widetilde F\|_{C_{I_{\bar T}}L^\infty}.
    \label{eq:linear-paracontrolled-short-time}
  \end{align}
  \begin{align*}
    \|(\mathcal S_{\bar h}(F),\nabla\mathcal S_{\bar h}(F))\|
      _{\mathscr D_{\bar T,T,b}^{a,\theta,\rho}}
    \leq{}& C\|F\|_{C_{I_{\bar T}}L^\infty}
      \left(1+\|b\|_{C_T\mathcal C^{-\alpha}}\right)
      +\|\bar h\|_{\mathcal C^\gamma}\\
    &+C\bar T^\kappa(1+\|b\|_{\mathcal X^\alpha})^2
      \left(1+
      \|(\mathcal S_{\bar h}(F),\nabla\mathcal S_{\bar h}(F))\|
        _{\mathscr D_{\bar T,T,b}^{a,\theta,\rho}}\right).
  \end{align*}

  Choose $\bar T$ small enough so that
  $C\bar T^\kappa(1+\|b\|_{\mathcal X^\alpha})^2\leq1/2$. The last term in
  the second estimate can then be absorbed into the left-hand side, which
  gives\footnote{Our definition of $u^\sharp$ differs from that in
  \cite{cannizzaro_multidimensional_2018} only by the term $J^T(F)$. By the
  Schauder estimate in
  \cite[Corollary~2.5]{cannizzaro_multidimensional_2018},
  $\|J^T(F)\|_{C_{a-1,I_{\bar T}}\mathcal C^{2a-1}}
  \lesssim \bar T^{(2-a)/2}\|F\|_{C_{I_{\bar T}}L^\infty}$, so this
  difference is harmless for bounded forcing terms.}
  \begin{align*}
    \|(\mathcal S_{\bar h}(F),\nabla\mathcal S_{\bar h}(F))\|
      _{\mathscr D_{\bar T,T,b}^{a,\theta,\rho}}
    \leq2C\|F\|_{C_{I_{\bar T}}L^\infty}
      \left(1+\|b\|_{C_T\mathcal C^{-\alpha}}\right)
      +2\|\bar h\|_{\mathcal C^\gamma}
    +2C\bar T^\kappa(1+\|b\|_{\mathcal X^\alpha})^2.
  \end{align*}

  We carry out the fixed point on the closed affine subspace
  \[
    \left\{(u,u')\in\mathscr D_{\bar T,T,b}^{a,\theta,\rho}:
      u'=\nabla u,\ (u(T),u'(T))=(\bar h,\nabla\bar h)\right\}.
  \]
  For $(u,u')$ in this space, define
  \[
    \Phi(u,u'):=\left(
      \mathcal S_{\bar h}(-H(\cdot,\cdot,u,u')),
      \nabla\mathcal S_{\bar h}(-H(\cdot,\cdot,u,u'))
    \right).
  \]
  The continuity assumption on $H$ and \AssumptionBLip{} imply that
  $H(\cdot,\cdot,u,u')\in C_{I_{\bar T}}L^\infty$. Hence the linear theory
  shows that $\Phi$ maps this affine space into itself. If $L_H$ denotes the
  global Lipschitz constant of $H$, then
  \eqref{eq:linear-paracontrolled-short-time} gives
  \[
    \|\Phi(u,u')-\Phi(v,v')\|
      _{\mathscr D_{\bar T,T,b}^{a,\theta,\rho}}
    \leq C\bar T^\kappa(1+\|b\|_{\mathcal X^\alpha})^2L_H
      \|(u,u')-(v,v')\|_{\mathscr D_{\bar T,T,b}^{a,\theta,\rho}}.
  \]
  Shrinking $\bar T$ further, if necessary, so that
  $C\bar T^\kappa(1+\|b\|_{\mathcal X^\alpha})^2L_H<1$, the map $\Phi$ is a
  strict contraction on this space.
  Banach's fixed-point theorem gives a unique paracontrolled solution on
  $[T-\bar T,T]$.

  Notice that the choice of the interval length $\bar T$ does not depend on
  the terminal condition. As explained in the proof of \cite[Theorem 3.8]{kremp_multidimensional_2022}, we may iterate the above argument with the
  same $\bar T$ on $[T-2\bar T,T-\bar T]$, and then iteratively on consecutive
  intervals until reaching time $0$. This gives the unique global
  paracontrolled solution.
\end{proof}

\begin{proposition}[Locally Lipschitz generators]
  \label{prop:local-lipschitz-paracontrolled}
  Let $4/3<a<\theta<\gamma<2-\alpha$,
  $\rho\in((\theta-1)/2,(\gamma-1)/2)$ and
  $\bar h\in\mathcal C^\gamma$. Suppose that
  $H$ is locally Lipschitz in $(y,z)$, uniformly in $(t,x)$, and bounded on
  bounded subsets of $(y,z)$, uniformly in $(t,x)$.
  {Assume moreover that \AssumptionPContinuous{} holds.}
  Then there exists $\bar T>0$ such that the singular HJB
  equation \eqref{eq:section4-singular-HJB} admits a unique paracontrolled
  solution $u$ on $[T-\bar T,T]$, with
  $(u,\nabla u)\in\mathscr D_{\bar T,T,b}^{a,\theta,\rho}$.
  If, in addition, there exists $M<\infty$ such that every paracontrolled
  solution satisfies, on its interval of existence,
  \begin{equation}
    \sup_t\left(\|u(t)\|_{L^\infty}
      +\|\nabla u(t)\|_{L^\infty}\right)\leq M,
    \label{eq:local-lipschitz-apriori-bound}
  \end{equation}
  then the solution extends uniquely to a global paracontrolled solution on
  $[0,T]$.
\end{proposition}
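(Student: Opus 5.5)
The local statement will come from a localization of $H$ combined with Proposition~\ref{prop:global-lipschitz-paracontrolled}. The global statement will then follow by a continuation argument, using \eqref{eq:local-lipschitz-apriori-bound} to keep the truncation level fixed.

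\textbf{Local existence.} Fix $R>\|\bar h\|_{L^\infty}+\|\nabla\bar h\|_{L^\infty}+1$. Let $\chi_R:\mathbb R\times\mathbb R^d\to\mathbb R\times\mathbb R^d$ be a Lipschitz retraction onto the ball of radius $2R$, for instance the radial projection. Set
\[
H_R(t,x,y,z):=H\bigl(t,x,\chi_R(y,z)\bigr).
\]
By the local Lipschitz and local boundedness hypotheses, $H_R$ satisfies \AssumptionBLip{}. It also inherits \AssumptionPContinuous{}, since $\chi_R$ takes values in a bounded set. Proposition~\ref{prop:global-lipschitz-paracontrolled} then gives a global paracontrolled solution $u_R$ with $(u_R,\nabla u_R)\in\mathscr D^{a,\theta,\rho}_{T,T,b}$.

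It remains to check that $u_R$ stays inside the region where $\chi_R$ is the identity for a short time. For this I use the a priori bound from the proof of Proposition~\ref{prop:global-lipschitz-paracontrolled}, namely
\[
\|\nabla u_R\|_{C^\rho_{I_{\bar T}}L^\infty}\lesssim \|H_R\|_\infty+\|\bar h\|_{\mathcal C^\gamma}+\text{const},
\]
where $\|H_R\|_\infty$ depends only on $R$. This controls the time H\"older seminorm of $u_R$ and $\nabla u_R$, which gives
\[
\sup_{t\in I_{\bar T}}\Bigl(\|u_R(t)-\bar h\|_\infty+\|\nabla u_R(t)-\nabla\bar h\|_\infty\Bigr)\le C_R\bar T^{\rho}.
\]
Choosing $\bar T$ small therefore gives $|u_R|+|\nabla u_R|\le 2R$ on $I_{\bar T}$. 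On that interval $H_R(\cdot,u_R,\nabla u_R)=H(\cdot,u_R,\nabla u_R)$, so $u_R$ is a paracontrolled solution of \eqref{eq:section4-singular-HJB}.

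\textbf{Local uniqueness.} Let $u_1,u_2$ be two paracontrolled solutions on $I_{\bar T'}$. By continuity, both stay in a common ball, of radius $R'$ say, on some subinterval. There they both solve the equation with generator $H_{R'}$, and the uniqueness part of Proposition~\ref{prop:global-lipschitz-paracontrolled} gives $u_1=u_2$ on that subinterval. Repeating the argument backwards from $T$, a standard connectedness argument on the set of times where $u_1=u_2$ propagates the equality to the whole common interval.

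\textbf{Global extension.} This is the step I expect to be the main obstacle, because the regularity of the restarted terminal datum $u(T-\bar T)$ has to be controlled, not just its size. The plan is as follows.
\begin{enumerate}
\item Under \eqref{eq:local-lipschitz-apriori-bound}, fix the truncation $R:=M+1$ once and for all.
\item Replace $H$ by $H_R$. Then $H_R$ is globally Lipschitz, and the step length $\bar T$ in Proposition~\ref{prop:global-lipschitz-paracontrolled} is independent of the terminal value.
\item On each step, $u$ and $\nabla u$ are bounded by $M<R$, so $H_R(\cdot,u,\nabla u)=H(\cdot,u,\nabla u)$ along the solution.
\item Concatenate consecutive intervals as in \cite[Theorem 3.8]{kremp_multidimensional_2022}. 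By the local uniqueness above, the global $H_R$-solution coincides with every paracontrolled solution of the original equation on the latter's interval of existence.
\end{enumerate}
For each restart I must check that the new terminal datum $u(T-k\bar T)$ lies in $\mathcal C^\gamma$, so that the global Lipschitz proposition applies again. The intended fix is to run the argument with the same $\gamma$ but slightly smaller exponents $a<\theta$ at each step, or to appeal to the regularizing effect of the equation, which puts $u(t)$ in $\mathcal C^{(2-\alpha)-}$ for $t<T$. Both routes rely on the interpolation argument of \cite[Lemma~2.11]{KPZreloaded}.
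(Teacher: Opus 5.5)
Your local existence step is fine. It is a quantitative version of the paper's argument, which only uses that $t\mapsto(u_R(t),\nabla u_R(t))$ is continuous in $L^\infty$ and that $\bar h$ lies strictly inside the cutoff region. The global extension, however, has a genuine gap.

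In step 3 you assert that $u$ and $\nabla u$ are bounded by $M$ on each step. But \eqref{eq:local-lipschitz-apriori-bound} applies only to paracontrolled solutions of the \emph{original} equation. The $H_R$-solution is such a solution only where the cutoff is inactive. The contraction step length $\bar T$ says nothing about whether the solution stays in that region, so the step is circular. Step 4 does not close the loop either: coinciding with every original solution on its interval of existence says nothing about how long that interval is.

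The missing ingredient is the continuity (bootstrap) argument used in the paper:
\begin{enumerate}
\item Fix $R>M$ and take the $H_R$-solution $u_R$ on all of $[0,T]$. You already obtained this from Proposition~\ref{prop:global-lipschitz-paracontrolled} in your first step.
\item Let $(s_*,T]$ be the maximal interval on which the cutoff is inactive along $u_R$.
\item For every $s>s_*$, the restriction of $u_R$ to $[s,T]$ is a paracontrolled solution of the original equation. The hypothesis therefore gives the bound $M<R$ on $(s_*,T]$.
\item If $s_*>0$, time continuity in $L^\infty$ and the strict gap $M<R$ keep the cutoff inactive slightly to the left of $s_*$. This contradicts maximality, so $s_*=0$.
\end{enumerate}

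Conversely, the obstacle you single out (regularity of restarted terminal data) does not arise at this level. Proposition~\ref{prop:global-lipschitz-paracontrolled} already produces $u_R$ on $[0,T]$ from the terminal datum $\bar h\in\mathcal C^\gamma$ at time $T$; the concatenation over short intervals is internal to that result. No restart, exponent lowering or regularization argument is needed here.

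The same remark applies to your local uniqueness step. The connectedness argument would force you to restart the problem at interior times, and it is unnecessary. Since $(u_i,\nabla u_i)\in\mathscr D$, both solutions and their gradients are bounded on the whole common interval. Choose $R'$ above this bound and invoke the uniqueness part of Proposition~\ref{prop:global-lipschitz-paracontrolled} once to get $u_1=u_2$.
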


\begin{proof}
  Fix
  \[
    R>\|\bar h\|_{L^\infty}+\|\nabla\bar h\|_{L^\infty},
  \]
  and define $\pi_R^y:\mathbb R\to[-R,R]$ and
  $\pi_R^z:\mathbb R^d\to\overline B_R(0)$ by
  \[
    \pi_R^y(y):=
    \begin{cases}
      y,&|y|\leq R,\\
      \displaystyle R\frac{y}{|y|},&|y|>R,
    \end{cases}
    \qquad
    \pi_R^z(z):=
    \begin{cases}
      z,&|z|\leq R,\\
      \displaystyle R\frac{z}{|z|},&|z|>R.
    \end{cases}
  \]
  These are the metric projections onto the closed convex sets $[-R,R]$ and
  $\overline B_R(0)$, respectively. In particular, they are $1$-Lipschitz
  and equal the identity on their respective target sets. Define
  \[
    H_R(t,x,y,z)
    :=H\bigl(t,x,\pi_R^y(y),\pi_R^z(z)\bigr).
  \]
  The local assumptions on $H$ imply that $H_R$ is globally Lipschitz in
  $(y,z)$ and that $H_R(\cdot,\cdot,0,0)$ is bounded. The required
  $L^\infty$-continuity in time is also inherited by $H_R$. Moreover,
  \[
    H_R(t,x,y,z)=H(t,x,y,z)
    \quad\text{whenever } |y|\leq R\text{ and }|z|\leq R.
  \]
  Proposition~\ref{prop:global-lipschitz-paracontrolled} therefore gives a
  unique global paracontrolled solution $u_R$ of
  \[
    \left(\partial_t+\frac12\Delta+b\cdot\nabla\right)u_R
    =-H_R(t,x,u_R,\nabla u_R),
    \qquad u_R(T,\cdot)=\bar h.
  \]

  Since the terminal datum lies strictly inside the cutoff region and $u_R$
  and $\nabla u_R$ are continuous in time with values in
  $L^\infty$, there exists $\bar T>0$ such that
  \[
    \sup_{t\in[T-\bar T,T]}
    \left(\|u_R(t)\|_{L^\infty}
    +\|\nabla u_R(t)\|_{L^\infty}\right)<R.
  \]
  On this interval the cutoff is inactive, so $u_R$ solves the equation with
  generator $H$, which proves local existence.

  If $u$ and $v$ are two local paracontrolled solutions, their norms and the
  norms of their gradients are bounded on their common interval. Choosing a
  larger cutoff radius if necessary, both solve there the equation with the
  same globally Lipschitz truncated generator. The uniqueness argument from
  Proposition~\ref{prop:global-lipschitz-paracontrolled} therefore yields
  $u=v$.

  Suppose now that the a priori estimate
  \eqref{eq:local-lipschitz-apriori-bound} is available, and choose from the
  outset $R>M$.
  Let $(s_*,T]$ be the maximal interval on which the cutoff is inactive.
  On this interval the a priori estimate gives
  \[
    \sup_t\left(\|u_R(t)\|_{L^\infty}
      +\|\nabla u_R(t)\|_{L^\infty}\right)\leq M<R.
  \]
  If $s_*>0$, time continuity shows that the cutoff remains inactive on a
  strictly larger interval to the left of $s_*$, contradicting maximality.
  Hence $s_*=0$, and $u_R$ is a global solution of the original equation.
  Global uniqueness follows from the local uniqueness just proved.
\end{proof}

\begin{theorem}[Quadratic generators]
  \label{thm:quadratic-paracontrolled-global}
  Let $4/3<a<\theta<\gamma<2-\alpha$,
  $\rho\in((\theta-1)/2,(\gamma-1)/2)$ and
  $\bar h\in\mathcal C^\gamma$.
{Assume that Assumptions \assP{} and \assPContinuous{} hold,
  and suppose in addition that either Assumptions \assAMixed{} and \assB{} or
  Assumptions \assA{} and \assC{} hold.} Then
  \eqref{eq:section4-singular-HJB} admits a unique global paracontrolled
  solution $(u,\nabla u)\in\mathscr D_{T,T,b}^{a,\theta,\rho}$.
\end{theorem}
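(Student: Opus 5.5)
The plan is to reduce the theorem to Proposition~\ref{prop:local-lipschitz-paracontrolled}. Local existence and uniqueness come from its first part. The a priori bound \eqref{eq:local-lipschitz-apriori-bound} will be assembled from the $L^\infty$ estimates of Section~\ref{sec:control} or Section~\ref{sec:BSDE} together with the Lipschitz estimate of Corollary~\ref{cor:singular-viscosity-lipschitz-estimate}.

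\textbf{Step 1: hypotheses of Proposition~\ref{prop:local-lipschitz-paracontrolled}.} Assumption~\assP{} gives the second and third inequalities there. These make $H$ locally Lipschitz in $(y,p)$, uniformly in $(t,x)$, with constants $C(1+R^{\kappa-1})$ and $C(1+R^q)$ on $\{|y|+|p|\le R\}$. Boundedness on bounded sets follows by combining this local Lipschitz property with a bound on $\sup_{t,x}|H(t,x,0,0)|$. Under \assB{} that bound is the growth bound \eqref{assump:quadratic growth}; under \assC{} it is the bound on $H(t,x,0)$. Assumption~\assPContinuous{} is assumed directly. Hence there is a local paracontrolled solution on $[T-\bar T,T]$, and it is unique. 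Since $\bar h\in\mathcal C^\gamma$ with $\gamma>1$, we also have $\bar h\in W^{1,\infty}$.

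\textbf{Step 2: uniform $L^\infty$ bound.} Let $u$ be a paracontrolled solution on some interval $[T-\delta,T]$. In case \assA{}+\assC{}, Corollary~\ref{maxpple} applies verbatim on the subinterval. The verification argument only uses the martingale property on $[t,T]\subset[T-\delta,T]$, so this gives $\|u\|_\infty\le\|\bar h\|_\infty+T\sup(L(\cdot,\cdot,0)\vee H(\cdot,\cdot,0))$, independent of $\delta$. In case \assAMixed{}+\assB{}, $u$ and $\nabla u$ are bounded on the interval, so Proposition~\ref{prop:converse-FK} (applied on the subinterval) identifies $u(t,X_t^{s,x})$ with the BSDE solution $Y$. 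Theorem~\ref{thm:quadratic-BSDE-apriori-estimates} then gives $\|u\|_\infty\le K_Y$, which again depends only on $T$, $C_0$ and $\|\bar h\|_\infty$.

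\textbf{Step 3: uniform gradient bound.} By Theorem~\ref{paravicosity}, in the control case, or by Corollary~\ref{cor:paracontrolled-viscosity-B}, in the BSDE case, $u$ is a bounded viscosity solution on $[T-\delta,T]$. In the BSDE case one needs joint continuity of $H$, which is part of \assP{}. The viscosity notion and the doubling argument are local in time, so they apply on the subinterval. Theorem~\ref{thm:uniqueness} and Corollary~\ref{cor:singular-viscosity-lipschitz-estimate} with Lipschitz terminal data then give $\|\nabla u\|_{L^\infty}\le C$. Here $C$ depends only on the data in \assA{} and \assP{}, on the bound from Step~2 and on $\|\bar h\|_{\mathrm{Lip}}$, and not on $\delta$. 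Setting $M$ to be the sum of the bounds from Steps~2 and~3 yields \eqref{eq:local-lipschitz-apriori-bound}, and the second part of Proposition~\ref{prop:local-lipschitz-paracontrolled} gives the unique global solution in $\mathscr D_{T,T,b}^{a,\theta,\rho}$.

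\textbf{Main obstacle.} The delicate point is that the cited results are stated on $[0,T]$, while they must be applied to solutions that exist only on $[T-\delta,T]$. This affects Proposition~\ref{prop:converse-FK}, the viscosity results and the doubling estimate. I would handle it by restricting all processes to start at times $s\ge T-\delta$, and by checking that the maximum point in the proof of Theorem~\ref{thm:viscosity-lipschitz-estimate} still lies in $[T-\delta,T)$, since the terminal time is unchanged. A secondary check is that the constant in Corollary~\ref{cor:singular-viscosity-lipschitz-estimate} does not depend on $\delta$; it does not, because $\Phi$ is defined on all of $[0,T]$.
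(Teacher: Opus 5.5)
Your proposal is correct and follows the paper's proof essentially step for step. It obtains local well-posedness from Proposition~\ref{prop:local-lipschitz-paracontrolled}, the uniform $L^\infty$ bound from Corollary~\ref{maxpple} (control case) or from Proposition~\ref{prop:converse-FK} together with Theorem~\ref{thm:quadratic-BSDE-apriori-estimates} (BSDE case), and the gradient bound from the viscosity property combined with Corollary~\ref{cor:singular-viscosity-lipschitz-estimate}; it then concludes with the continuation part of Proposition~\ref{prop:local-lipschitz-paracontrolled}. Your explicit check that these results apply to solutions defined only on $[T-\delta,T]$, with constants independent of $\delta$, is a point the paper only glosses over, but it does not change the argument.
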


\begin{proof}
  Under \AssumptionB{}, the generator $H$ is locally Lipschitz in $(y,z)$,
  uniformly in $(t,x)$, and bounded on bounded subsets of $(y,z)$. Under
  Assumptions \assC{} and \assP{}, the same properties follow from the independence of
  $H$ from $y$, the local Lipschitz condition in $z$ from \AssumptionP{}, and
  the uniform bound on $H(t,x,0)$ from \AssumptionC{}. Thus, in either case,
  Proposition~\ref{prop:local-lipschitz-paracontrolled} gives a unique
  paracontrolled solution $u$ on $[T-\bar T,T]$ for some $\bar T>0$.

  If \AssumptionB{} holds, Proposition~\ref{prop:converse-FK} identifies
  $(X^{s,x},u(\cdot,X^{s,x}),\nabla u(\cdot,X^{s,x}))$ with a solution of the
  singular FBSDE. Theorem~\ref{thm:quadratic-BSDE-apriori-estimates} then
  yields a bound on $u$, uniformly over its interval of existence, by taking
  $s=t$ and using $X_t^{t,x}=x$. Since $u$ is continuous,
  Theorem~\ref{thm:nonlinear-FK} shows that it is a viscosity solution.
  If \AssumptionC{} holds, Corollary~\ref{maxpple} gives the uniform bound on
  $u$, while Theorem~\ref{paravicosity} shows directly that $u$ is a viscosity
  solution.

 {In either case, the preceding uniform bound on $u$, together
  with Corollary~\ref{cor:singular-viscosity-lipschitz-estimate}, gives a
  uniform bound on the spatial Lipschitz constant of $u$, and hence on
  $\nabla u$.}
  We may consequently invoke the second part of
  Proposition~\ref{prop:local-lipschitz-paracontrolled}, which extends the
  local solution uniquely to $[0,T]$.
\end{proof}

\section*{Acknowledgements}
The first, second, and third author acknowledge support from DFG CRC/TRR 388 ``Rough Analysis, Stochastic Dynamics and Related Fields"  - Project ID 516748464. The second and fourth authors are grateful for funding by the Deutsche Forschungsgemeinschaft (DFG, German Research Foundation) through EXC 2046, Berlin Mathematical School, and through IRTG 2544 ``Stochastic Analysis in Interaction" - Project ID 410208580. This material is partly based upon work supported by the National Science Foundation under Grant No. DMS-2424139, while the second author was in residence at the Simons Laufer Mathematical Sciences Institute in Berkeley, California, during the Fall 2025 semester.

\section*{AI usage statement}

Generative AI tools, specifically ChatGPT (versions GPT-5.4–GPT-5.6), were used during the preparation of this manuscript to assist with language editing, exposition, literature searches, and feedback on mathematical arguments. In addition, ChatGPT GPT-5.6 proposed using for Theorem 5.4 the proof strategy developed in \cite{LeyNguyen2017,PorrettaPriola2013}, the resulting argument was subsequently adapted to the present setting, developed, verified, and written by the authors. All mathematical claims, proofs, references, and final content were independently checked and approved by the authors, who take full responsibility for the correctness and content of the manuscript.

\end{document}